\DeclareFontFamily{U}{mathx}{\hyphenchar\font45}
\DeclareFontShape{U}{mathx}{m}{n}{
      <5> <6> <7> <8> <9> <10>
      <10.95> <12> <14.4> <17.28> <20.74> <24.88>
      mathx10
      }{}
\DeclareSymbolFont{mathx}{U}{mathx}{m}{n}
\DeclareMathAccent{\widecheck}{0}{mathx}{"71}
\DeclareMathAccent{\wideparen}{0}{mathx}{"75}
\newcommand\numberthis{\addtocounter{equation}{1}\tag{\theequation}}
\DeclareMathOperator*{\argsup}{arg\,sup}
\newtheorem{theorem}{Theorem}[section]
\newtheorem{corollary}{Corollary}[section]
\newtheorem{lemma}{Lemma}[section]
\theoremstyle{definition} 
\newtheorem{fact}{Fact}
\newtheorem{remark}{Remark}[section]
\setlist{  
  listparindent=\parindent,
  parsep=0pt,
}
\setlist[itemize]{leftmargin=*}
\crefname{fact}{Fact}{assumptions}
\newcommand{\eps}{\ensuremath{\varepsilon}}
\newcommand{\s}{^*}
\renewcommand{\t}{^{\top}}
\newcommand{\inv}{^{-1}}
\newcommand{\p}{\mathbb{P}}
\newcommand{\per}{\uperp\uperp\t}
\renewcommand{\hat}{\widehat}
\renewcommand{\tilde}{\widetilde}
\newcommand{\tr}{{\sf{Tr}}}
\newcommand{\utilde}{\bm{\tilde U}}
\newcommand{\uhat}{\bm{\hat U}}
\newcommand{\uhatj}{\bm{\hat u}_j}
\renewcommand{\a}{\bm{a}}
\newcommand{\uj}{\bm{u}_j}
\newcommand{\uk}{\bm{u}_k}
\newcommand{\bE}{\bm{E}}
\newcommand{\bM}{\bm{M}}
\newcommand{\bS}{\bm{S}}
\newcommand{\bN}{\bm{N}}
\newcommand{\x}{\bm{X}}
\newcommand{\y}{\bm{Y}}
\newcommand{\bSigma}{\bm{\Sigma}}
\newcommand{\mhat}{\bm{\hat M}}
\newcommand{\uperp}{\bm{U}_{\perp}}
\newcommand{\U}{\bm{U}}
\newcommand{\rmd}{\mathcal{R}^{{\sf MD}}}
\newcommand{\md}{^{{\sf MD}}}
\newcommand{\pca}{^{{\sf PCA}}}
\newcommand{\rpca}{\mathcal{R}^{{\sf PCA}}}
\newcommand{\lamtilde}{\bm{\tilde{\Lambda}}}
\newcommand{\sigmahat}{\bm{\hat \Sigma}}
\renewcommand{\check}{\widecheck}
\title{Statistical Inference for Linear Functions of Eigenvectors with Small Eigengaps}
\author{Joshua Agterberg\thanks{Department of Statistics, University of Illinois Urbana-Champaign}}
\date{\today}
\begin{document}

\maketitle

\begin{abstract}
Spectral methods have myriad applications in high-dimensional statistics and data science, and while previous works have primarily focused on $\ell_2$ or $\ell_{2,\infty}$ eigenvector and singular vector perturbation theory, in many settings these analyses fall short of providing the fine-grained guarantees required for various inferential tasks.  In this paper we study statistical inference for linear functions of eigenvectors and principal components with a 
 particular emphasis on the setting where gaps between eigenvalues may be extremely small relative to the corresponding spiked eigenvalue, a regime which has been oft-neglected in the literature.  
First, we prove the approximate Gaussianity for debiased linear forms in the matrix denoising model and the spiked principal component analysis model, both under Gaussian noise.   Based on this limiting behavior, we propose estimators for the appropriate bias and variance quantities resulting in approximately valid confidence intervals.  We then investigate the optimality of these confidence intervals and show that their widths are minimax optimal up to constant factors.  Of note, our proposed confidence intervals can be computed directly from data without the need for any sample-splitting.
\end{abstract}

\tableofcontents

\section{Introduction}
Spectral methods, or algorithms and procedures that rely on eigenvectors, singular vectors, and related quantities, are a useful collection of tools for analyzing high-dimensional data, with applications to clustering \citep{loffler_optimality_2021}, network analysis \citep{jin_mixed_2023,mao_estimating_2020},  ranking \citep{chen_spectral_2019,chen_partial_2022}, principal component analysis \citep{johnstone_consistency_2009}, nonconvex optimization \citep{chi_nonconvex_2019}, and tensor data analysis \citep{zhang_tensor_2018}, to name a few.  With the growing ubiquity of spectral methods, there has been an increased interest in providing statistical analyses of these techniques that take into account both the structural properties of the problem at hand as well as the particularities of the noise, in some cases resulting in provably optimal guarantees for various spectral-based algorithms \citep{abbe_entrywise_2020,abbe_ell_p_2022,ling_near-optimal_2022,nguyen_novel_2025,zhang_exact_2024,zhang_fundamental_2024}.

One common paradigm to analyze spectral methods is via classical deterministic perturbation theory, which studies the $\ell_2$ errors between eigenvectors of some underlying matrix $\bm{M} \in \mathbb{R}^{n\times n}$ and the eigenvectors of a perturbed matrix $\bm{\hat M} = \bm{M} + \bm{E}$ as a function of the perturbation matrix $\bm{E}$ and the eigenvalues or singular values of $\bm{M}$.  By combining these techniques with tools from nonsaymptotic random matrix theory one can derive high-probability or expected $\ell_2$ error rates for eigenvectors and singular vectors \citep{cai_rate-optimal_2018}.  Furthermore, there has been a growing interest in $\ell_{2,\infty}$ eigenspace and singular subspace perturbation theory \citep{cape_two--infinity_2019}, 
which quantifies the entrywise and row-wise fluctuations of subspaces through careful probabilistic analysis of the noise.  By carefully tailoring these analyses to the distributional properties of the noise matrix $\bm{E}$, estimation error rates can be obtained that go beyond what classical perturbation theory offers.  In some cases, these entrywise guarantees can yield asymptotic normality for estimators of subspaces \citep{agterberg_overview_2023,cape_signal-plus-noise_2019}, and these tools can be applied in various contexts, resulting in statistically sound inferential procedures \citep{chen_spectral_2021,gao_uncertainty_2023,yan_inference_2021}.

However, in many contexts entrywise perturbation and distributional theory is still not sufficiently fine-grained to quantify inherent uncertainty in the estimate.  Therefore, in this paper we contribute to the literature by studying statistical inference for a general linear function of a single eigenvector.  Explicitly, given a pre-specified unit vector $\bm{a} \in \mathbb{R}^{n}$ and an eigenvector of interest $\uj \in \mathbb{R}^{n}$, we study how to construct confidence intervals for the linear form $\a\t \uj$.  There have been a few works in this area \citep{chen_asymmetry_2021,cheng_tackling_2021,koltchinskii_perturbation_2016,koltchinskii_asymptotics_2016,koltchinskii_efficient_2020}, though we provide a more thorough review of related work in \cref{sec:previouswork}.  
One key downside of existing theory is that theoretical results often require highly pessimistic assumptions on the \emph{eigengap}, the minimal separation between distinct eigenvalues.  Therefore, in this work we contribute to the literature by studying statistical inference in regimes where the eigengap can be \emph{significantly smaller} than previously assumed (possibly up to order $n^{-1/2}$ smaller, ignoring logarithmic terms).  Our analysis is motivated by the findings of \citet{li_minimax_2025} who provide  perturbation bounds for estimators of $\a\t\uj$ in the presence of small eigengaps.   While their work demonstrated that it is possible to obtain minimax-optimal estimators of linear functions of eigenvectors, in this work we go one step further and analyze the distributional fluctuations of these estimators and study how to construct confidence intervals for $\a\t\uj$.

In the following subsections we first describe the two models studied in this work, and we then describe the primary contributions of this paper  in greater detail.

\subsection{Two Canonical Settings}

Throughout this work we focus on two fundamental statistical models: matrix denoising and principal component analysis.

\begin{itemize}
    \item \textbf{Matrix Denoising}. Consider the setting where one is interested in the eigenvectors of some  symmetric rank $r$ matrix $\bS \in \mathbb{R}^{n\times n}$  corrupted with some symmetric Gaussian noise matrix $\bN$, where $\bN_{ij} \sim \mathcal{N}(0,\sigma^2)$ for $i < j$ and $\bN_{ii} \sim \mathcal{N}(0,2\sigma^2)$ are independent (i.e., $\bN$ belongs to the \emph{Gaussian Orthogonal Ensemble}).  Given a pre-specified deterministic vector $\a$, in this paper we consider developing confidence intervals for $\a\t\uj$, where $\uj$ is the eigenvector associated to the $j$'th largest in magnitude eigenvalue of $\bS$.  
    \item \textbf{Principal Component Analysis}.  Suppose one has $n$ i.i.d. observations $\bm{X}_i \in \mathbb{R}^p$ with $\bm{X}_i \sim \mathcal{N}(0,\bSigma)$, where $\bSigma$ belongs to the class of \emph{spiked covariance matrices}; i.e., $\bSigma = \bSigma_0 + \sigma^2 \bm{I}_p$, where $\bSigma_0$ is a positive semidefinite rank $r$ matrix and $\bm{I}_p$ denotes the $p\times p$ identity.  As in  matrix denoising we consider computing confidence intervals for $\a\t\uj$ for some pre-specified unit vector $\a$, where $\uj$ is the $j$'th eigenvector of $\bSigma_0$ (and hence also $\bSigma$). 
\end{itemize}
These models have been studied explicitly in the literature previously.  For example, \citet{bao_singular_2021,knowles_outliers_2014,xia_normal_2021}
 study the asymptotic properties of empirical eigenvectors and singular vectors, and 
\citet{bao_statistical_2022,koltchinskii_asymptotics_2016,koltchinskii_efficient_2020} study the asymptotic properties of principal components, and  both of these models were also studied in \citet{li_minimax_2025} in a similar context. However, unlike the aforecited works, our emphasis is on developing optimal confidence intervals. 

\subsection{Contributions}
Throughout this work we provide analogous results for both matrix denoising  and principal component analysis.  Our main contributions are as follows:
\begin{itemize}
    \item We establish  distributional theory for linear functions of eigenvectors $\a\t\uhatj$ in the presence of small eigengaps about the quantity $\a\t\uj\uj\t\uhatj$, which is a  biased centering term.  We also provide similar results for a bias-corrected estimator about the quantity $\a\t\uj$, where the bias correction is computed directly from data assuming knowledge of the noise variance $\sigma^2$.    
    \item We then study the problem of statistical inference for $\a\t\uj$, and we provide fully data-driven estimators for the bias and variance without assuming knowledge of the noise variance $\sigma^2$.  We show that these estimators yield approximately valid confidence intervals.  
    \item We provide lower bounds  demonstrating that the resulting confidence intervals achieve the optimal length up to universal constants over all confidence intervals with a desired level of coverage.
\end{itemize}
Unlike previous work,  our procedures are fully data-driven and do not require any sample splitting, and to the best of our knowledge, these results are the first to provide valid confidence intervals for linear functions of eigenvectors in these settings under nearly optimal signal-to-noise ratio conditions without any sample splitting.   It is worth emphasizing that our results continue to hold  even in the ``large eigengaps'' regime.

\subsection{Organization}
The rest of this paper is organized as follows.  We first present results for matrix denoising in \cref{sec:MD},  including distributional theory, confidence interval construction, and lower bounds.  We then repeat the study for PCA in \cref{sec:pca}.  In \cref{sec:previouswork} we discuss previous work, and in \cref{sec:discussion} we include discussion.  In \cref{sec:proofoverview} we discuss the proof ideas.  More detailed proofs and numerical studies are deferred to the appendices.

\subsection{Notation}
We let $[n]$ denote the set of integers $\{1, \dots, n\}$. For two numbers $a$ and $b$ we write $a\wedge b$ to mean the minimum of $a$ and $b$ and $a \vee b$ as the maximum. For two sequences $a_n$ and $b_n$, we write $a_n \lesssim b_n$ or $a_n = O(b_n)$ if there exists a universal constant $C > 0$ such that $a_n \leq C b_n$, and we write $a_n \ll b_n$ if $a_n/b_n \to 0$ as $n\to\infty$.  We write $a_n \asymp b_n$ and if $a_n \lesssim b_n$ and $b_n \lesssim a_n$. We set $a_n = \Omega(b_n)$ if $b_n = O(a_n)$.  We write $a_n = \tilde O(b_n)$ if there exists some $c > 0$ not depending on $n$ such that $a_n \lesssim b_n \log^c(n)$, and similarly for $a_n = \tilde \Omega(b_n)$.  For a matrix $\U$ with orthonormal columns we write $\uperp$ to denote the (not necessarily unique) matrix satisfying $\uperp\t \U = 0$ such that $[\U,\uperp]$ is an orthogonal matrix.  We write $\bm{I}_{n}$ for the $n \times n$ identity matrix,  $\|\cdot\|$ as the spectral norm for matrices and Euclidean norm for vectors, $\|\cdot\|_F$ as the Frobenius norm on matrices, and we let $\|\cdot\|_{\infty}$ denote the $\ell_{\infty}$ norm on vectors.  We let $\bm{e}_i$ denote the $i$'th standard basis vector whose dimension will be clear from context.  For a given vector $\bm{v}$, we let $\bm{v}(i)$ denote its $i$'th entry.  For a random variable $X$, we let $\|X \|_{\psi_\alpha}$ be its  Orlicz $\psi_{\alpha}$ norm.

\section{Matrix Denoising}
\label{sec:MD}
Suppose one is given a  matrix $\bm{\hat S} \in\mathbb{R}^{n\times n}$ of the form
\begin{align*}
    \bm{\hat S} &= \bS + \bN \numberthis \label{MD}
    \end{align*}
 where $\bS$ is a rank $r$ signal matrix with eigendecomposition given by
\begin{align*}
    \bS &= \U \bm{\Lambda} \U\t = \sum_{i=1}^{r} \lambda_i \bm{u}_i \bm{u}_i\t,
\end{align*}
and $\bN$ is a symmetric noise matrix with independent entries given by
\begin{align*}
\bN_{ij} \sim \begin{cases}  \mathcal{N}(0,\sigma^2) & i < j , \\
\mathcal{N}(0,2\sigma^2) & i = j.
\end{cases}
\end{align*}
The matrix $\bN$ is said to belong to the \emph{Gaussian Orthogonal Ensemble} (GOE). 
We define
\begin{align*}
    \lambda_{\min} = \min_{1\leq i\leq r} |\lambda_i|; \qquad \lambda_{\max} = \max_{1\leq i \leq r} |\lambda_i|,
\end{align*}
representing the magnitude of the smallest  and largest nonzero eigenvalues of $\bS$, and we set $\kappa$ as the \emph{reduced condition number} $\kappa = \lambda_{\max}/\lambda_{\min}$. Similarly, we let $\bm{\hat S}$ have eigenvectors $\bm{\hat u}_k$ and eigenvalues $\hat \lambda_k$. For a given index $j$ with $1\leq j \leq r$, we define the eigenvalue gap (or eigengap) via
\begin{align*}
    \Delta_j &= \min_{k\neq j,1\leq k\leq r} |\lambda_j - \lambda_k|,
\end{align*}
with the convention that $\Delta_j = \lambda_{\min}$ if $r = 1$. We also set
$\Delta_{\min} = \min_{1\leq j \leq r} \Delta_j.$
We let $\bm{\hat U}$ denote the $n \times r$ matrix of leading eigenvectors of $\bm{\hat S}$ organized according to the magnitude of the eigenvalues of $\bm{\hat S}$.  We set $\uhatj$ to be the $j$'th column of $\bm{\hat U}$. We study how to construct confidence intervals for $\a\t\uj$, where $\a$ is a prespecified unit vector.  Our confidence intervals will be based on the distributional theory we develop in the subsequent section.

\subsection{Distributional Theory}
First, it is well-known that $\a\t\uhatj$ is biased for $\a\t\uj$.  Define the \emph{debiasing} quantity $b_j\md$ via
\begin{align*}
    b_j\md \coloneqq \sum_{k > r} \frac{\sigma^2}{(\hat \lambda_j - \hat \lambda_k)^2}.
\end{align*}
The following result establishes the asymptotics for $\a\t\uhatj$ and $\a\t\uhatj \sqrt{1 + b_j\md}$.

\begin{theorem}\label{thm:distributionaltheory_MD}
Consider the model \eqref{MD}, and suppose $\lambda_j$ is unique.  Suppose that 
\begin{align*}
    \lambda_{\min}/\sigma \geq C_0 \sqrt{rn} \qquad \Delta_j/\sigma \geq C_0  r \log(n)
\end{align*}
where $C_0$ is some sufficiently large constant.  Suppose further that $r \leq c_0 n/\log^2(n)$, where $c_0$ is a sufficiently small constant.  Let $\a$ be any determinstic unit vector such that $\a \neq \pm \uj$. Let $\U^{(k)}$ denote the matrix whose columns are the eigenvectors corresponding to $\lambda_k$. Define
\begin{align*}
    (s_{\a,j}\md)^2 \coloneqq \sum_{\substack{k\neq j\\k\leq r}'} \frac{\sigma^2 \|\a\t \U^{(k)} \|^2}{(\lambda_j - \lambda_k)^2} + \frac{\sigma^2 \|\uperp\t \a\|^2}{\lambda_j^2},
\end{align*}
where the summation is over all $r' \leq r$ distinct eigenvalues. 
Let $\Phi(z)$ denote the cumulative distribution function of a standard Gaussian random variable.  Then it holds that
\begin{align*}
\sup_{z\in \mathbb{R}}    \bigg| \p\bigg\{ \frac{1}{s_{\a,j}\md} \big(\a\t \uhatj - \a\t \uj \uj\t \uhatj \big) \leq z \bigg\} - \Phi(z) \bigg| &\lesssim {\sf ErrMD} \numberthis \label{md_firstresult}
\end{align*}
where
\begin{align*}
    {\sf ErrMD} \coloneqq \underbrace{\frac{r\sqrt{n\log(n)}}{\lambda_{\min}/\sigma}}_{\text{Subspace Estimation Effect}} +  \underbrace{\frac{r^{3/2}\log(n)}{\Delta_j/\sigma}}_{\text{Small Eigengaps Effect}} + \  n^{-9}.
\end{align*}
Furthermore, if ${\sf ErrMD} = o(1)$ and in addition it holds that
\begin{align*}
    |\a\t\uj| \bigg[ \frac{\sigma^2 \sqrt{n\log(n)}}{\lambda_j^2} + \frac{\sigma^2 r\log(n)}{\Delta_j^2} \bigg] \ll s_{\a,j}\md. \numberthis \label{atujcondition}
\end{align*}
Then it further holds that
\begin{align*}
    \sup_{z \in \mathbb{R}} \bigg| \mathbb{P} \bigg\{ \frac{1}{s_{\a,j}\md} \big( \a\t \uhatj \sqrt{1 + b_j\md} - \a\t\uj \big) \leq z \bigg\} - \Phi(z) \bigg| = o(1). \numberthis \label{md_secondresult}
\end{align*}
\end{theorem}
\noindent 
Several features of \cref{thm:distributionaltheory_MD} are worth noting.
\begin{itemize}
    \item \textbf{Signal-Strength conditions.}  Observe that the bound in \cref{thm:distributionaltheory_MD} is only non-vacuous if $\lambda_{\min}/\sigma \gg r\sqrt{n\log(n)}$. It has been demonstrated in \citet{cai_rate-optimal_2018} that $\lambda_{\min}/\sigma \gg \sqrt{rn}$ is both neccessary and sufficient for consistent subspace estimation in $\|\cdot\|_F$ when $r \ll n$; consequently the condition $\lambda_{\min}/\sigma \gg r\sqrt{n\log(n)}$ is optimal up to the factor of  $\sqrt{r\log(n)}$.  
   The reason for this deficiency stems from the fact that the residual terms in the analysis must be shown to be $o(s_{\a,j}\md)$, and our proof occasionally relies on certain concentration inequalities for these residual terms using $\eps$-net arguments that may not be the most sharp bounds available.  It may be possible to weaken these assumptions, but such an analysis may require more involved techniques  but is unlikely to yield substantive changes in the results.
   As the purpose of this paper is optimal statistical inference, we leave such examinations to future work.  Nonetheless, when $r$ is relatively small, this condition is not significantly stronger than the optimal condition for consistency.
    \item \textbf{Eigengap conditions.} Again the bound is only non-vacuous if $\Delta_j/\sigma \gg r^{3/2}\log(n)$.  Together with our assumption on the eigengap $\Delta_j / \sigma\geq C_0 r \log(n)$, we see that a sufficient condition for normality is $\Delta_j/\sigma \gg r^{3/2} \log(n)$.  However, this eigengap condition is significantly weaker than what is required in classical theory.  For example, the Davis-Kahan Theorem requires that $\Delta_j \gtrsim \| \bN \| \asymp \sigma \sqrt{n},$
which is more stringent by a factor of $\tilde\Omega\big(\sqrt{\frac{n}{r^3}}\big)$. 
    \item \textbf{Rate of convergence.} Ignoring the additional factor of $n^{-9}$, when $r \asymp 1$, the rate of convergence to asymptotic normality is of the form
\begin{align*}
    \max\bigg\{ \frac{\sqrt{n}}{\lambda_{\min}/\sigma} , \frac{1}{\Delta_j/\sigma} \bigg\},
\end{align*}
ignoring logarithmic terms, where we dub the first term as the ``subspace estimation effect'' and the second term as the ``small eigengaps effect.'' 
Consequently, \cref{thm:distributionaltheory_MD} reveals a novel phenomenon -- when the signal strength is large but eigengaps are small, the primary difficulty in estimation is separating the $j$'th eigenvector from the other leading $r - 1$ eigenvectors, whereas when the eigengaps are large but the signal strength is small, the primary difficulty lies in estimating the subspace $\U$, which corresponds to separating the leading $r$ eigenvectors from the bottom $n - r$ eigenvectors. 
    \item \textbf{Biased estimation.}   Observe that the centering term for $\a\t\uhatj$ in \cref{thm:distributionaltheory_MD} is given by $\a\t\uj\uj\t\uhatj$.  In general $\uhatj\t\uj$ need not equal one; in fact, in the regime $\lambda_{\min}/\sigma \gg \sqrt{nr}$ it can be shown that $\uhatj\t \uj = 1 - o(1)$ and when $\lambda_{\min}/\sigma \asymp \sqrt{nr}$ then $\uhatj\t\uj \approx c < 1$ (e.g., \citet{benaych-georges_eigenvalues_2011}).  Consequently, \cref{thm:distributionaltheory_MD} demonstrates that $\a\t\uhatj$ is a \emph{biased} estimate of $\a\t\uj$ with bias given by $\a\t\uj(1 - \uhatj\t\uj)$, a phenomenon that has also been demonstrated for asymmetric noise in \citet{chen_asymmetry_2021}. 
    \item \textbf{Allowable size of $|\a\t\uj|$}.  In order for the debiased estimator $\a\t\uhatj \sqrt{1 + b_j\md}$ to admit an asymptotically Gaussian distribution, the magnitude of $|\a\t\uj|$ must be bounded away from one according to the condition \eqref{atujcondition}.  This assumption is quite intuitive: if $|\a\t\uj|$ is too close to 1, the bias from $\a\t\uj$ is too large relative to the variance. Informally, this assumption means that $\a$ must have some of its ``mass'' aligned with other columns of either $\U$ or $\uperp$.     
    However, $|\a\t\uj|$ is still permitted to be quite large: as large as $\min\big\{ \frac{\lambda_j^2}{\sigma^2 \sqrt{n}}, \frac{\Delta_j^2}{\sigma^2} \big\}$ relative to $s_{\a,j}\md$. A sufficient condition for \eqref{atujcondition} to hold is that  $|\a\t\uj| \leq (1 - \eps)$ for some fixed constant $\eps$ and $\kappa$ bounded.  
    \item \textbf{Perturbation bounds.}  The proof of \cref{thm:distributionaltheory_MD} immediately implies that with probability at least $1- O(n^{-10})$,
    \begin{align*}
        |\a\t\uhatj \sqrt{1 + b_j\md} - \a\t\uj | \lesssim s_{\a,j}\md \sqrt{\log(n)} + |\a\t\uj| \frac{\sigma^2 r\log(n)}{\Delta_j^2} + |\a\t\uj| \frac{\sigma^2 \sqrt{n\log(n)}}{\lambda_j^2}.
    \end{align*}
  This result improves  upon the upper bound developed in \citet{li_minimax_2025} slightly.  Furthermore, this upper bound attains the minimax rate established in \citet{cheng_tackling_2021}.  Thus, \cref{thm:distributionaltheory_MD} implies a rate-optimal upper bound (albeit under slightly stronger assumptions than in prior works).  
\end{itemize}

\cref{thm:distributionaltheory_MD} is perhaps most similar to results in the works \citet{li_minimax_2025} and \citet{koltchinskii_perturbation_2016}, who study perturbation bounds for the estimator $\a\t\uhatj$ in similar contexts, and our assumptions on the signal strength $\lambda_{\min}/\sigma$ match theirs up to logarithmic terms and factors of $r$.  However, only \citet{li_minimax_2025} consider the small eigengaps regime. Our condition on $\Delta_j$ matches theirs up to logarithmic terms and factors of $r$, while simultaneously providing a sharper characterization of the estimator  $\a\t\uhatj$ in terms of nonasymptotic distributional theory. 

In order to prove our results we must  must identify the leading-order term, compute its variance $(s_{\a,j}\md)^2$, and then demonstrate that the residual terms are of order $o(s_{\a,j}\md)$.  In contrast, both of these previous works only needed to show that the difference is of order $O(s_{\a,j}\md)$.  Therefore, our analysis must extend further (implicitly considering higher-order terms) to show that these residuals are all sufficiently small even with small eigengaps. To accomplish this goal we leverage heavily two deterministic matrix analysis results (\cref{thm:mainexpansion,lem:uhatjuperpidentity}) that allow us to decouple several statistically dependent quantities as well as isolate certain ``eigenvalue bias'' terms that arise in the analysis. Identifying how these eigenvalue bias terms arise plays a key role in providing the sharp concentration guarantees required to prove \cref{thm:distributionaltheory_MD}.

\begin{remark}[Extension to Asymmetric Case] Our results herein can be extended to the asymmetric setting using the standard Hermitian dilation trick.  Suppose $\bm{S}$ and $\bm{N}$ are asymmetric, each with dimension $n_1 \times n_2$. Define
\begin{align*}
\bm{\tilde S} := \begin{pmatrix}
    0 & \bm{S} \\ \bm{S}\t & 0.
\end{pmatrix}    
\end{align*}
Then $\bm{\tilde S}$ has eigenvectors the same as the singular vectors of $\bm{S}$ up to a factor of $\sqrt{2}$.  By redefining $n = \max \{ n_1, n_2\}$, and $\bm{\tilde a} = (\a\t, 0\t) \in \mathbb{R}^{n_1 + n_2}$, one can obtain asymptotics for linear functions of $\a\t \uj$, where $\uj$ is the $j$'th left singular vector. 
\end{remark}

\subsection{Confidence Intervals}
We now discuss how to construct confidence intervals based on the limiting result in \cref{thm:distributionaltheory_MD}.  First, we consider estimating the noise variance $\sigma^2$.  Define
\begin{align*}
    \hat \sigma^2 &\coloneqq \binom{n}{2} \inv\| \mathcal{P}_{{\sf upper-diag}} \big( \bm{\hat S} - \bm{\hat S}_r \big) \|_F^2, \numberthis \label{alg:noisevarMD}
\end{align*}
where $\bm{\hat S}_r$ is the best rank $r$ approximation of $\bm{\hat S}$, and where $\mathcal{P}_{{\sf upper-diag}}$ denotes the projection onto the upper off-diagonal. To understand the intuition behind this estimator, when $\bm{\hat S}_r$ is sufficiently close to $\bm{S}$, we expect that $\bm{\hat S} - \bm{\hat S}_r$ is approximately equal to the noise $\bm{N}$.    Therefore, $\hat \sigma^2$ can be understood as a plug-in estimator for $\sigma^2$ using $\bm{\hat S} - \bm{\hat S}_r$ as a proxy for $\bm{N}$. Using this estimator of the noise variance we can reliably estimate $b_j\md$ with $\hat{b_j\md}$, defined via
\begin{align*}
         \hat{b_j\md} \coloneqq \sum_{k > r} \frac{\hat \sigma^2}{(\hat \lambda_j - \hat \lambda_k)^2}. \numberthis \label{alg:biasMD}
\end{align*} 
Next, when the eigenvalues are distinct, 
in order to estimate $\big(s\md_{\a,j}\big)^2$ we require estimates of $(\uk\t\a)^2$ for $k \neq j, k\leq r$; however,  by the theory in the previous section the plug-in estimator $\bm{\hat u}_k\t\a$ will be biased for $\uk\t\a$.  Therefore, instead of using $(\bm{\hat u}_k\t\a)^2$ for our estimate of $(\uk\t\a)^2$, we propose to use $(\bm{\hat u}_k\t\a)^2 (1 + \hat{b_k\md})$, where $\hat{b_k\md}$ is the estimate of the debiasing parameter for the $k$'th eigenvector obtained via \cref{alg:biasMD}.  

Finally, estimating $s\md_{\a,j}$ also requires estimating the leading $r$ eigenvalues.  Given $k \leq r$, a natural estimate for $\lambda_k$ is the empirical eigenvalue $\hat \lambda_k$; however, $\hat \lambda_k$ is a biased estimate of $\lambda_k$.  
 Therefore, motivated by the form of this eigenvalue bias,  we propose to use a debiased estimate of $\lambda_k$, defined via 
\begin{align*}
    \check \lambda_k \coloneqq \hat \lambda_k - \sum_{i > r} \frac{\hat \sigma^2}{\hat \lambda_k - \hat \lambda_i}.
\end{align*}
With all these individual estimators in place, we then define
\begin{align*}
    \big( \hat{s_{\a,j}\md} \big)^2 &= \sum_{k\neq j, k\leq r} \frac{\hat \sigma^2}{(\check \lambda_j - \check \lambda_k)^2} (\bm{\hat u}_k\t\a)^2 (1 + \hat{b_k\md}) + \frac{\hat \sigma^2}{\check\lambda_j^2} \| \bm{\hat U}_{\perp}\t\a\|^2.
\end{align*}
This estimator is entirely data-driven and does not require any sample-splitting.  With this estimator we can define a $1-\alpha$ confidence interval for $\a\t\uj$; the full procedure is summarized in \cref{alg:ciMD}.  The following result demonstrates the approximate validity of the resulting confidence intervals.

\begin{algorithm}[t]
\begin{algorithmic}[1]
\caption{Confidence Interval for $\a\t\uj$ -- Matrix Denoising}
\label{alg:ciMD}
    \REQUIRE   Input matrix $\bm{\hat S}$, rank $r$, index $j \in [r]$, unit vector $\a$, desired confidence level $\alpha$.
       \STATE Compute the eigendecomposition of $\bm{\hat S} = \big[\bm{\hat U}, \bm{\hat U}_{\perp}\big]  \bm{\hat \Lambda}  \big[\bm{\hat U}, \bm{\hat U}_{\perp}\big]\t$, where $\bm{\hat U}$ has columns consisting of the $r$ eigenvectors corresponding to the largest in magnitude nonzero eigenvalues.  Let    
       $\bm{\hat u}_k$ denote the $k$'th column of $\bm{\hat U}$ and $\hat \lambda_k$ denote the $k$'th largest in magnitude eigenvalue of $\bm{\hat S}$. 
     \STATE Set $\hat \sigma^2$ via \cref{alg:noisevarMD}.
    \STATE Set $\hat{b_k\md}$ via \cref{alg:biasMD} for $1 \leq k \leq r$.
    \STATE Compute, for $1 \leq k \leq r$, $\check \lambda_k \coloneqq \hat \lambda_k - \sum_{i > r} \frac{\hat \sigma^2}{\hat \lambda_k - \hat \lambda_i}.$
    \STATE Set
    \begin{align*}
        \big( \hat{s_{\a,j}\md}\big)^2 \coloneqq \sum_{k\leq r,k\neq j} \frac{\hat \sigma^2}{(\check \lambda_j - \check \lambda_k)^2} ( \bm{\hat u}_k\t \a)^2 (1 + \hat{b_k\md}) + \frac{\hat \sigma^2}{\check \lambda_j^2} \| \bm{\hat U}_{\perp}\t\a\|^2.
    \end{align*}
    \STATE Compute  ${\sf C.I.}^{(\alpha)}(\uhatj\t\a)$ defined via
    \begin{align*}
        {\sf C.I.}^{(\alpha)}(\uhatj\t\a) \coloneqq \bigg[ \uhatj\t\a \sqrt{1 + \hat{b_j\md}} \pm \Phi\inv\big(1 - \alpha/2\big) \hat{s_{\a,j}\md}\bigg],
    \end{align*}
    where $\Phi\inv$ is the inverse of the cumulative density function for a standard Gaussian random variable.
    \RETURN Estimated confidence interval ${\sf C.I.}^{(\alpha)}(\uhatj,\a)$.
\end{algorithmic} 
\end{algorithm}

\begin{theorem}\label{thm:civalidity_MD}
Consider the model \eqref{MD}, and suppose $\lambda_1$ through $\lambda_r$ are unique.  Suppose further that
\begin{align*}
    \lambda_{\min}/\sigma \gg  r \kappa \sqrt{n} \log(n); \qquad \Delta_{\min}/\sigma \gg C_0   r^{3/2} \log(n). \numberthis \label{snrconds_inference}
\end{align*}
In addition, assume that $r \leq c_0 \frac{n}{\kappa^2 \log^2(n)}$, where $c_0$ is a sufficiently small constant.  Suppose further that
\begin{align*}
    \max_{\substack{k\neq j\\k\leq r}} \frac{s_{\a,k}\md}{s_{\a,j}\md} \frac{\sigma \sqrt{r} \log(n)}{\Delta_j}  = o(1).  \numberthis \label{relativeratiocondition}
\end{align*}
Furthermore, assume that
\begin{align*}
    \frac{|\a\t\uj|}{s_{\a,j}\md} \bigg( \frac{\sigma^2 r \log(n)}{\Delta_j^2} + \frac{\sigma^2 \kappa \sqrt{r n\log(n)}}{\lambda_{\min}^2} \bigg) = o(1). \numberthis \label{atujcondition_ci}
\end{align*}
Let ${\sf C.I.}^{(\alpha)}(\uhatj\t\a)$ denote the output of \cref{alg:ciMD}.  If either $\|\U\t\a\| > \|\uperp\t\a\|$ with $|\a\t\uj| \lesssim |\a\t\uk|$ for $\substack{k\neq j\\k\leq r}$ or $\|\uperp\t\a\| \geq \|\U\t\a\|$, it holds that
\begin{align*}
\bigg|    \p\bigg\{ \uj\t\a \in {\sf C.I.}^{(\alpha)}(\uhatj\t\a) \bigg\} - (1 - \alpha) \bigg| = o(1).
\end{align*}
\end{theorem}

To the best of our knowledge, \cref{thm:civalidity_MD} is the first result in the literature providing asymptotically valid confidence intervals  for linear functions of eigenvectors in the matrix denoising context considered herein. Moreover, our results allow $r$ and $\kappa$ to grow with $n$.  

Note that \cref{thm:civalidity_MD} requires a condition on the minimum eigengap $\Delta_{\min}$, whereas \cref{thm:distributionaltheory_MD,thm:distributionaltheory_MD} only impose conditions on $\Delta_j$. In essence, this extra assumption is imposed to ensure a faithful estimate of $(\a\t \uk)^2$ for all  $k \leq r$ with $k \neq j$.  In addition, the new condition \eqref{relativeratiocondition} requires that the variance of $\uj\t\a$ is not significantly smaller than the variance of $\uk\t\a$.  This is to ensure that the error in estimating the quantity $(\uk\t\a)^2$ does not overwhelm the error in estimating $s_{\a,j}\md$.   
It may be possible to extend the result to settings with repeated eigenvalues assuming \emph{a priori} knowledge of which eigenvalues are repeated.

\begin{remark}[Knowledge of $r$]
Our results require that $r$ is known \emph{a priori}, though in general it must be estimated from data.  Our theory predicts that the leading $r$ eigenvalues will be well-separated from the bottom $n- r$ eigenvalues.  If this is not the case, then the spikes are not sufficiently strong for our theory to apply directly.  If the true rank of $\bS$ is $r$, but there are $r - r\s$ eigenvalues that are below the threshold $\sigma\sqrt{n}$, then  these eigenvalues are asymptotically indistinguishable from noise, so the asymptotic variance will not be significantly different as if the rank is chosen to be $r$ versus $r\s$.  It may be possible to design a procedure that still maintains validity, but this may require different techniques, so we leave designing confidence intervals for these settings to future work.
\end{remark}

\subsection{Lower Bounds and Optimality}
In this subsection we provide lower bounds on the length of any level $1 - \alpha$ honest confidence intervals.  Define the parameter space
\begin{align*}
    \mathcal{P}(r,\lambda_{\min},\Delta_j) \coloneqq \bigg\{ \bm{S}: \bm{S} = \U\bm{\Lambda} \U\t = \sum_{k=1}^{r} \lambda_k \uk\uk\t: |\lambda_k - \lambda_j| \geq \Delta_j; |\lambda_r| \geq \lambda_{\min}; \U\t\U = \bm{I}_r \bigg\}.
\end{align*}
Define the set of $1-\alpha$ honest confidence intervals based on the observation $\bS + \bN$ via
\begin{align*}
    \mathcal{I}_{\alpha,\a}(\mathcal{P}) \coloneqq \bigg\{ {\sf C.I.}\big( \bm{S} + \bm{N} \big) = [l,u]: \inf_{\bm{S}\in \mathcal{P}(r,\lambda_{\min},\Delta_j)} \mathbb{P}_{\bS}\big\{ \pm \a\t \uj(\bS) \in {\sf C.I.}\big( \bm{S} + \bm{N} \big) \big\} \geq 1 - \alpha \bigg\},
\end{align*}
where $\mathbb{P}_{\bS}(\cdot)$ denotes the probability with $\bS$ as the ground truth, and $\uj(\bS)$ denotes the $j$'th eigenvector of $\bS$.
For simplicity we suppress the dependence of ${\sf C.I.}( \bS + \bN)$ on $\bS + \bN$.  
Denoting $| {\sf C.I.} | = u - l$ as the length of the confidence interval ${\sf C.I.}$, the expected length  is denoted via
\begin{align*}
    {\sf L}_{{\sf C.I.}}(\bS) \coloneqq \mathbb{E}_{\bS}| {\sf C.I.} |.
\end{align*}
The following result gives a lower bound on the length of any level $1 - \alpha$ confidence interval based on the observation $\bS + \bN$.  The proof can be found in \cref{sec:minimaxlowerbound}.
\begin{theorem} \label{thm:minimaxlowerbd}
    Suppose that all $r$ nonzero eigenvalues of $\bS$ are distinct, and let $\Delta_j$ and $\lambda_{\min}$ denote its $j$'th eigengap and magnitude of its smallest nonzero eigenvalue respectively.  Suppose that $\Delta_j/\sigma \gg 1$ and $\lambda_{\min}/\sigma \gg \sqrt{n}$.  
    Then for any $\alpha \in (0,1/4)$, it holds that
    \begin{align*}
        \inf_{{\sf C.I.} \in \mathcal{I}_{\alpha,\a}(\mathcal{P}(r,\lambda_{\min},\Delta_j))} {\sf L}_{{\sf C.I.}}(\bS) \gtrsim   s_{\a,j}\md,
    \end{align*} 
    where the implicit constant depends only on $\alpha$.
\end{theorem}
 The parameter space $\mathcal{P}(r,\lambda_{\min},\Delta_j)$ can be understood as the parameter space ``generated'' by the underlying ground truth $\bS$; that is, it is the set of rank $r$ matrices with eigengap at least $\Delta_j$ and smallest eigenvalue $\lambda_{\min}$. Therefore, any confidence interval that has level $1-\alpha$ coverage uniformly over the parameter space generated by $\bS$ must have expected length of order $s_{\a,j}\md$ for the observation $\bS$.   Our lower bound construction is based on considering a null and alternative hypothesis space such that the eigenvalues are shared between both models, but the eigenvectors are rotated within each model.  The rotation construction is novel to the best of our knowledge.

Finally, the following corollary shows that the length of the confidence intervals generated by \cref{alg:ciMD} are optimal in this sense.
\begin{corollary}
    Under the conditions of \cref{thm:civalidity_MD}, it holds that
    \begin{align*}
     \mathbb{E}_{\theta} | \hat{{\sf C.I.}} | \lesssim s_{\a,j}\md + n^{-8}.
    \end{align*}
\end{corollary}
\begin{proof}
    The result follows immediately from \cref{lem:sigma_md_approx}.
\end{proof}

\section{Principal Component Analysis}
\label{sec:pca}
Consider the setting where one is given $n$ observations $\bm{X}_i \in \mathbb{R}^{p}$ satisfying
\begin{align*}
    \bm{X}_i \sim \mathcal{N}(0, \bSigma), \numberthis \label{pcadef}
\end{align*}
where $\bSigma$ has the form
\begin{align*}
    \bSigma &= \bSigma_0 + \sigma^2 \bm{I}_p; \\
    \bSigma_0 &= \bm{U} \bm{\Lambda} \bm{U}\t = \sum_{i=1}^{r} \lambda_i \bm{u}_i \bm{u}_i\t,
\end{align*}
where each $\lambda_i$ satisfies $0 <\lambda_r \leq \lambda_{r-1} \leq \cdots \leq \lambda_1$.  We denote $\lambda_{\min} = \lambda_r, \lambda_{\max} = \lambda_1,$
and we define $\kappa$ as the \emph{reduced condition number} $\kappa \coloneqq \lambda_{\max}/\lambda_{\min}$.  Similar to matrix denoising, for a given index $j$ with $1\leq j\leq r$, we denote the eigengap associated to the index $j$ via $\Delta_j = \min_{k\neq j,1\leq k\leq r} |\lambda_j - \lambda_k|,$
with $\Delta_j = \lambda_{\min}$ if $r =1$. We also set $\Delta_{\min} = \min_{1\leq j \leq r} \Delta_j.$ 
We consider estimating $\bSigma$ with the sample covariance $\sigmahat$ defined via
\begin{align*}
    \sigmahat &\coloneqq \frac{1}{n} \sum_{i=1}^{n} \bm{X}_i \bm{X}_i\t= \frac{\bm{XX}\t}{n},
\end{align*}
where $\bm{X} \in \mathbb{R}^{p\times n}$ is the matrix whose columns are the observations $\bm{X}_i$.  We let $\sigmahat$ have eigenvectors and eigenvalues $\{\bm{\hat u}_k,\hat \lambda_k\}_{k=1}^{p}$.  We let $\bm{\hat U}$ denote the $p \times r$ matrix of leading eigenvectors of $\bm{\hat \Sigma}$, and we set $\uhatj$ to be the $j$'th column of $\bm{\hat U}$.

\subsection{Distributional Theory}
As in the matrix denoising setting, the quantity $\a\t\uhatj$ is known to be biased.  Define the debiasing quantity $b_j\pca$ via
\begin{align*}
    b_j\pca &\coloneqq \begin{cases} \dfrac{\hat \lambda_j}{n + \sum_{r< i \leq n} \frac{\hat \lambda_i}{\hat \lambda_j -\hat \lambda_i}} \mathlarger{\sum}_{i: r < i \leq n} \dfrac{\hat \lambda_i}{(\hat \lambda_j - \hat \lambda_i)^2} & n \geq p; \\
    \dfrac{\frac{\sigma^2 p}{n}}{\hat \lambda_j - \frac{\sigma^2 p}{n}} + \dfrac{\hat \lambda_j}{\hat \lambda_j - \frac{\sigma^2 p}{n}} \dfrac{\hat \lambda_j}{n + \sum_{i: r< i \leq n} \frac{\hat \lambda_i}{\hat \lambda_j - \hat\lambda_i}} \mathlarger{\sum}_{i: r< i \leq n} \dfrac{\hat \lambda_i - \frac{\sigma^2 p}{n}}{(\hat \lambda_j - \hat \lambda_i)^2} & n < p,
    \end{cases} \numberthis \label{alg:biasPCA}
\end{align*}
where we include additional zero eigenvalues in the case that $n \geq p$.  The following result studies the asymptotic normality of $\a\t\uhatj$ and $\a\t\uhatj \sqrt{1 + b_j\pca}$.

\begin{theorem}\label{thm:distributionaltheory_PCA}
Consider the model in \eqref{pcadef} and suppose $\lambda_j$ is unique.     Suppose that $ r \leq c_0 \frac{n}{\kappa^2 \log^4(n\vee p)}$ for some sufficiently small constant $c_0$, that $\log(p) \lesssim n$, and that
    \begin{align*}
        \lambda_{\min} &\geq  C_1 \sigma^2 \log^3(n\vee p)\bigg( \kappa \frac{p}{n} + \sqrt{\frac{p}{n}} \bigg);  
        \numberthis \label{noiseassumption:pca}\\
        \Delta_j &\geq C_1 \big( \lambda_{\max} + \sigma^2\big) \sqrt{\frac{r}{n}} \log(n \vee p),
    \end{align*}
    where $C_1$ is a sufficiently large constant.  Let $\a$ be any determinstic unit vector such that $\a \neq \pm \uj$. Let $\U^{(k)}$ denote the eigenspace corresponding to $\lambda_k$. Define
    \begin{align*}
         (s_{\a,j}\pca)^2 &= \sum_{\substack{k\neq j\\k\leq r}'} \frac{(\lambda_j + \sigma^2) (\lambda_k + \sigma^2) \| \a\t \U^{(k)}\|^2 }{n(\lambda_j - \lambda_k)^2} + \frac{(\lambda_j + \sigma^2)\sigma^2 \|\uperp\t \a\|^2}{n\lambda_j^2},
    \end{align*}
    where the summation is over all $r' \leq r$ distinct eigenvalues.  Then it holds that
\begin{align*}
  \sup_{z\in \mathbb{R}}  \bigg| \p\bigg\{ \frac{1}{s_{\a,j}\pca} \bigg( \a\t \uhatj - \a\t \uj\uj\t\uhatj \bigg) \leq z \bigg\} - \Phi(z) \bigg| &\lesssim {\sf ErrPCA}, \numberthis \label{pca_firstresult}
  \end{align*}
  where
  \begin{align*}
      {\sf ErrPCA} &\coloneqq  \underbrace{\kappa^{3/2} r \log^3(n\vee p)\Bigg(  \frac{\sigma^2}{\lambda_j} \bigg( \frac{p}{n} + \sqrt{\frac{p}{n}} + \sqrt{\frac{\log(n\vee p)}{n}} \bigg)  + \frac{\sigma}{\sqrt{\lambda_j}} \sqrt{\frac{p}{n}}\Bigg) }_{\text{Subspace Estimation Effect}} \\
     &\quad +   \underbrace{\kappa r^{3/2} \log^{5/2}(n\vee p)\frac{(\lambda_{\max} + \sigma^2) }{\Delta_j \sqrt{n}}}_{\text{Small  Eigengaps Effect}} + \underbrace{\kappa^2 r^{3/2} \log^{5/2}(n\vee p)\frac{1}{\sqrt{n}}}_{\text{Parametric Effect}}.
  \end{align*}
  Next, suppose that ${\sf ErrPCA} = o(1)$.  Suppose further that
\begin{align*}
    |\a\t\uj| \frac{(\lambda_{\max} + \sigma^2)^2 r \log(n\vee p)}{\Delta_j^2 n} \ll s_{\a,j}\pca \numberthis \label{atujconditionpca1}
\end{align*}
and that 
\begin{align*}
|\a\t\uj| \times \Bigg(
\begin{cases} 
  \frac{\sigma^2 \kappa \sqrt{pr \log(n\vee p)}}{\lambda_j n}  & p > n; \\
    \frac{\sigma^2 \kappa p \sqrt{r\log(n\vee p)}}{\lambda_j n^{3/2}} \bigg( 1 + \frac{\sigma^2}{\lambda_j} \bigg) + \frac{\sigma^2 \sqrt{p} \log^2(n\vee p)}{\lambda_j n}  \bigg( 1 + \frac{\sigma^2}{\lambda_j} \bigg)    & p \leq n
    \end{cases}\Bigg) \ll s_{\a,j}\pca. \numberthis \label{atujconditionpca2}
\end{align*}
Then it further holds that
\begin{align*}
    \sup_{z\in \mathbb{R}} \bigg| \mathbb{P} \bigg\{ \frac{1}{s_{\a,j}\pca}\big( \a\t\uhatj \sqrt{1 + b_j\pca} - \a\t\uj \big) \leq z \bigg\} - \Phi(z) \bigg| = o(1). \numberthis \label{pca_secondresult}
\end{align*}
\end{theorem}
\noindent
Again, we single out several features of \cref{thm:distributionaltheory_PCA}.
\begin{itemize}
    \item \textbf{Signal-Strength conditions.} 
Up to logarithmic terms, factors of $\kappa$, and factors of $r$, our assumption on $\lambda_{\min}$ in \eqref{noiseassumption:pca}  requires that
\begin{align*}
    \lambda_{\min}/\sigma^2 \gtrsim \max\bigg\{ \frac{p}{n}, \sqrt{\frac{p}{n}} \bigg\},
\end{align*}
which matches the assumption in \citet{li_minimax_2025}.  By \citet{cai_sparse_2013}, the minimax rate of estimation for $\U$ in Frobenius norm is of the form
\begin{align*}
    \frac{pr}{n} \frac{(\lambda_{\min}/\sigma^2) + 1}{(\lambda_{\min}/\sigma^2)^2} \asymp \frac{pr}{n} \max\bigg\{ \frac{1}{\lambda_{\min}/\sigma^2}, \frac{1}{(\lambda_{\min}/\sigma^2)^2}\bigg\}. \numberthis \label{minimaxrate}
\end{align*}
It is likely possible that the dependence on $r,\kappa$, and logarithmic terms can be improved, but such analysis requires additional bookkeeping, and may require introducing new tools. Again, this deficiency arises due to our proof technique, which requires us to demonstrate that the residual quantities are $o(s_{\a,j}\pca)$ (and not simply $O(s_{\a,j}\pca)$). 
\item \textbf{Eigengap conditions}. In order for the bound to be non-vacuous, when $\kappa = O(1)$, our result requires $\Delta_j \gtrsim (\lambda_{\max} + \sigma^2) \frac{r^{5/2} \log^{5/2}(n\vee p)}{\sqrt{n}}$.
Were we to apply the Davis-Kahan Theorem to $\|\uhatj \pm \uj\|$, we would essentially need to have 
$\Delta_j \gg \| \bm{\hat \Sigma} - \bm{\Sigma} \|$.  Using the concentration bounds of \citet{koltchinskii_concentration_2017}, when $r,\kappa = O(1)$, this condition translates to $\max\bigg\{ \frac{\lambda_{\max} + p \sigma^2}{n(\lambda_{\max} + \sigma^2)}, \sqrt{\frac{\lambda_{\max} + p \sigma^2}{n(\lambda_{\max} + \sigma^2)}} \bigg\} \ll \frac{\Delta_j}{\lambda_{\max} + \sigma^2}$.  In the regime $\lambda_{\max} \lesssim \sigma^2$, our condition improves by a factor of $\tilde \Omega(\sqrt{p})$, and when $\sigma^2 \ll \lambda_{\max} \ll p \sigma^2$, our condition improves by a factor of $\tilde \Omega\big( \sqrt{\frac{\sigma^2 p}{\lambda_{\max}}} \big)$. Thus, our condition improves upon the implicit Davis-Kahan Theorem condition except for the ``extreme spike'' regime $\lambda_{\max} \gtrsim p \sigma^2$, in which case it is comparable.  
  
\item \textbf{Rate of convergence.}  When  $\kappa,r = O(1)$, it holds that
\begin{align*}
     \bigg| \p\bigg\{ \frac{1}{s_{\a,j}\pca} \bigg( \a\t \uhatj - \a\t \uj\uj\t\uhatj \bigg) \leq z \bigg\} - \Phi(z) \bigg|
     &= \tilde O \Bigg\{   
    \frac{(\lambda_{\max} + \sigma^2) }{\Delta_j \sqrt{n}} + \frac{1}{\sqrt{n}} +   \frac{\sigma^2}{\lambda_j} \bigg( \frac{p}{n} + \sqrt{\frac{p}{n}} \bigg) +   \frac{\sigma}{\sqrt{\lambda_j}} \sqrt{\frac{p}{n}} \Bigg\}.
\end{align*}
Similar to the matrix denoising setting, the quantities $\frac{\sigma^2}{\lambda_j} \sqrt{\frac{p}{n}}$ and $\frac{\sigma}{\lambda_j^{1/2}}\sqrt{\frac{p}{n}}$ quantify the ``subspace estimation effect,'' or how easy it is to separate the leading $r$ principal components from the remaining components, where the two different regimes arise according to the relationship between $\lambda/\sigma^2$ and $\frac{p}{n}$ (see the minimax rate given in \eqref{minimaxrate}), and the quantity $\frac{\lambda_{\max} + \sigma^2}{\Delta_j \sqrt{n}}$ quantifies the ``small eigengaps effect'' via how well it is to separate the $j$'th eigenvector from the other eigenvectors.  Unlike matrix denoising, PCA exhibits an additional factor of $\frac{1}{\sqrt{n}}$, representing the classical parametric rate that cannot be overcome even when the noise is relatively small. 
\item \textbf{Biased estimation.} As in matrix denoising, the centering term in \cref{thm:distributionaltheory_PCA} is given by $\a\t\uj \uj\t\uhatj$.   In previous work (e.g., \citet{paul_asymptotics_2007}) it has been established that $\bm{\hat u}_1\t \bm{u}_1 \to c \in (0,1)$ when $\lambda_{\min}/\sigma^2 \asymp 1$ and $p/n$ converges to a constant (provided $\lambda_{\min}/\sigma^2$ is above a certain threshold), and $\bm{\hat u}_1\t \bm{u}_1 = 1 - o(1)$ when $\lambda_{\min}/\sigma^2$ grows sufficiently quickly.  Therefore, \cref{thm:distributionaltheory_PCA} demonstrates that $\a\t\uhatj$ is a biased estimate of $\a\t\uj$, with bias given by $\a\t \uj(1 - \uhatj\t \uj)$. 
\item \textbf{Allowable size of $|\a\t\uj|$}.  Similar to the matrix denoising setting, $
|\a\t\uj|$ must be bounded away from one, with larger values permitted when there are larger eigengaps and signal strengths.  Singling out the case $p > n$, the condition \eqref{atujconditionpca2} requires that $|\a\t\uj| \frac{\sigma^2 \sqrt{p}}{\lambda_{\min} n} \ll s_{\a,j}\pca$ modulo factors of $\kappa,r$, and logarithmic terms.  Thus, since $\frac{\sigma^2 p}{\lambda_{\min}n} = o(1)$ is required for consistency, this condition is relatively mild.
\item \textbf{Perturbation bounds}. As in matrix denoising, the proof of \cref{thm:distributionaltheory_PCA} can be modified to yield an upper bound of the form
\begin{align*}
    |\a\t\uhatj \sqrt{1 + b_j\pca} - \a\t\uj| \lesssim s_{\a,j}\md \sqrt{\log(n\vee p)} + 
|\a\t\uj| \frac{(\lambda_{\max} + \sigma^2)^2 r\log(n \vee p)}{\Delta_j^2 n} + |\a\t\uj| \times \mathcal{E}
    \end{align*}
where 
\begin{align*}
    \mathcal{E} = \begin{cases} 
  \frac{\sigma^2 \kappa \sqrt{pr \log(n\vee p)}}{\lambda_j n}  & p > n; \\
    \frac{\sigma^2 \kappa p \sqrt{r\log(n\vee p)}}{\lambda_j n^{3/2}} \bigg( 1 + \frac{\sigma^2}{\lambda_j} \bigg) + \frac{\sigma^2 \sqrt{p} \log^2(n\vee p)}{\lambda_j n}  \bigg( 1 + \frac{\sigma^2}{\lambda_j} \bigg)    & p \leq n
    \end{cases}.
\end{align*}
This bound is slightly tighter than the bound derived in \citet{li_minimax_2025} in terms of the dependence on $r$, and thus is minimax-optimal.
\end{itemize}

\cref{thm:distributionaltheory_PCA} is most similar to the works \citet{koltchinskii_asymptotics_2016,koltchinskii_efficient_2020}, both of whom study estimating $\a\t \uj$.   Our model differs significantly from the model considered in these prior works: we require more specificity through the assumption of a low-rank spike model, whereas they only require a condition on the ``effective rank'' defined as  $\frac{{\sf Tr}(\bm{\Sigma})}{\|\bm{\Sigma}\|}$. Translating this to our setting,  when \citet{koltchinskii_asymptotics_2016} require that 
 \begin{align*}
     \frac{(\lambda_{\max} + \sigma^2)^2}{\Delta_j^2 \sqrt{n}} \sqrt{\frac{r \lambda_{\max} + p \sigma^2}{\lambda_{\max} + \sigma^2}} = o(1).
 \end{align*}
 When $r,\kappa = O(1)$, our eigengap condition is weaker by a factor of $\tilde\Omega\big( \frac{n (\lambda_{\max}/\sigma^2 + p)}{\lambda_{\max}/\sigma^2 + 1 } \big)^{1/4}$, thus permitting significantly smaller eigengaps (depending on the signal-to-noise ratio $\lambda_{\max}/\sigma^2$).  
In addition, our asymptotic variance matches the variance obtained in \citet{koltchinskii_efficient_2020}, who demonstrate that this variance is optimal over a sufficiently broad class of covariance matrices.  Our estimator thus achieves the semi-parametric lower bound without the need for sample-splitting (albeit under a simpler statistical model). However, it is worth noting that  \citet{koltchinskii_efficient_2020} also provide similar finite-sample results under their more general setup.

\subsection{Confidence Intervals}
\label{sec:pca_inference}
Similar to the matrix denoising setting, in order to obtain data-driven confidence intervals, we require an estimator for the noise variance $\sigma^2$.  We therefore consider two different estimators based on the aspect ratio $\frac{p}{n}$.  Define
\begin{align*}
    \hat \sigma^2 \coloneqq \begin{cases}\dfrac{{\sf Tr} \big( \sigmahat - \sigmahat_r \big)}{p-r} & p \geq \dfrac{n}{\log^4(n\vee p)}; \\
      \hat \lambda_{r+1} & p < \dfrac{n}{\log^4(n\vee p)}. \end{cases} \numberthis \label{alg:noisevarPCA}
\end{align*}
The intuition of this estimator can be understood as follows: when $p \ll n$, the eigenvalues of $\sigmahat$ are already consistent for those of $\bSigma$, and hence $\hat \lambda_{r+1}$ will approximate $\sigma^2$.  However, when $p$ is relatively large, the approximation fails, and a more careful approximation must be done by leveraging the independence of the noise, which becomes more effective when $p$ is large.  The choice of the cutoff $n/\log^4(n\vee p)$ is primarily for technical reasons; it is possible that a similar argument will work with any choice for $p$ strictly smaller than $n$.  
With the estimator $\hat \sigma^2$ defined, we can also define the estimated debiasing parameter $\hat{b_j\pca}$ with the same definition as in \cref{thm:distributionaltheory_PCA}, except with $\hat \sigma^2$ as a replacement for $\sigma^2$.

Next, as in the matrix denoising setting, estimating the asymptotic variance requires estimating $(\a\t\uk)$ for $k \leq r$ and $\lambda_k$ for $k \leq r$. Since the empirical eigenvalues $\hat \lambda_j$ exhibit a multiplicative bias (see \cref{lem:eigenvalueconcentration_PCA}), we propose to use a debiased estimator for $\lambda_k + \sigma^2$, defined via: 
\begin{align*}
    \check{\lambda}_k &\coloneqq \frac{\hat \lambda_k}{1 + \hat \gamma\pca(\hat \lambda_k)},
\end{align*}
where
\begin{align*}
    \hat \gamma\pca(\hat \lambda_k) &= \frac{1}{n} \sum_{i=r+1}^{\min(p-r,n)} \frac{\hat \lambda_i}{\hat \lambda_k - \hat \lambda_i}.
\end{align*}
Under our assumptions we can demonstrate that $\check\lambda_k$ is a strong estimator of $\lambda_k + \sigma^2$.  To estimate $\lambda_j^2$, we simply use the estimator $\check \lambda_j - \hat \sigma^2$.  If $\hat \sigma^2 \approx \sigma^2$, then it holds that $\check \lambda_j - \hat \sigma^2 \approx \lambda_j$.   

Finally, to estimate $s_{\a,j}\pca$ we also need to estimate $(\a\t\uk)^2$ for $k \leq r$ and $k \neq j$. 
As in the matrix denoising case, the plug-in estimators for $\a\t \uk$ may not be sufficiently close to merit a strong estimate. We therefore propose to use the debiasing parameter $\hat{b_k\pca}$ to form the estimator $(\a\t \bm{\hat u}_k)^2 (1 + \hat{b_k\pca})$, which, from the previous analysis,  is a strong estimator of $(\a\t \uk)^2$ under conditions on $\Delta_{\min}$.  

Combining these estimators, we set
\begin{align*}
    \big(\hat{s\pca_{\a,j}} \big)^2 &\coloneqq \sum_{k\neq j} \frac{\check \lambda_k \check \lambda_j (\a\t \bm{\hat u}_k)^2 \big( 1 + \hat{b_k\pca} \big)}{n(\check \lambda_j - \check \lambda_k)^2} + \frac{\hat \sigma^2 \check \lambda_j \| \uhat_{\perp}\t \a \|^2}{n(\check \lambda_j - \hat \sigma^2)^2},
\end{align*}
and we use this estimator to construct a $1-\alpha$ confidence interval for $\a\t\uj$.  The full procedure is summarized in \cref{alg:ciPCA}. The following result demonstrates the approximate validity of the resulting confidence intervals.

\begin{algorithm}[t]
\begin{algorithmic}[1]
\caption{Confidence Interval for $\a\t\uj$ -- PCA}
\label{alg:ciPCA}
    \REQUIRE  Sample data $\bm{X} \in \mathbb{R}^{p\times n}$, rank $r$, index $j \in [r]$, unit vector $\a \in \mathbb{R}^p$, desired confidence level $\alpha$.
    \STATE  Compute the eigendecomposition of $\sigmahat = \frac{\bm{XX}\t}{n} = \big[ \uhat, \uhat_{\perp}\big] \bm{\hat \Lambda} \big[\uhat, \uhat_{\perp} \big]\t$, where $\uhat$ has columns consisting of the $r$ eigenvectors corresponding to the largest nonzero eigenvalues.  Let $\bm{\hat u}_k$ denote the $k$'th column of $\uhat$ and $\hat \lambda_k$ denote the $k$'th largest eigenvalue of $\sigmahat$.
    \STATE Set $\hat \sigma^2$ via \cref{alg:noisevarPCA}. 
    \STATE Set $\hat{b_k\pca}$ via \cref{alg:biasPCA} for $1 \leq k \leq r$, using $\hat \sigma^2$ for $\sigma^2$. 
    \STATE Compute, for $1 \leq k \leq r$, $\hat \gamma\pca(\hat \lambda_k) \coloneqq \frac{1}{n} \sum_{i=r+1}^{\min(p-r,n)} \frac{\hat \lambda_i}{\hat \lambda_k - \hat \lambda_i}.$
    \STATE Compute, for $1 \leq k\leq r$, $\check \lambda_k \coloneqq \frac{\hat \lambda_k}{1 + \hat \gamma\pca(\hat \lambda_k)}.$
    \STATE Set
    \begin{align*}
        \big( \hat{s_{\a,j}\pca} \big)^2 \coloneqq \sum_{k\leq r,k\neq j} \frac{\check \lambda_k \check \lambda_j (\a\t \bm{\hat u}_k)^2 (1 + \hat{b_k\pca})}{n(\check\lambda_j - \check \lambda_k)^2} + \frac{\hat \sigma^2 \check \lambda_j \|\uhat_{\perp}\t\a\|^2}{n(\check \lambda_j - \hat \sigma^2)^2}.
   \end{align*}
    \STATE Compute ${\sf C.I.}^{(\alpha)}(\uhatj\t\a)$ defined via
 \begin{align*}
        {\sf C.I.}^{(\alpha)}(\uhatj\t\a) \coloneqq \bigg[ \uhatj\t\a \sqrt{1 + \hat{b_j\pca}} \pm \Phi\inv\big(1 - \alpha/2\big) \hat{s\pca_{\a,j}} \bigg]
    \end{align*}
    \RETURN Estimated confidence interval ${\sf C.I.}^{(\alpha)}(\uhatj\t\a)$. 
\end{algorithmic} 
\end{algorithm}

\begin{theorem}\label{thm:civalidity_pca}
Consider the model in \eqref{pcadef} and suppose $\lambda_1$ through $\lambda_r$ are unique.  Suppose further that 
\begin{align*}
      \lambda_{\min} &\geq  C_1  \sigma^2 \kappa^{3} r^2 \log^7(n\vee p) \bigg( \frac{p}{n} + \sqrt{\frac{p}{n}}\bigg)  
      \numberthis \label{noisecondpca:inf} \\ 
         \Delta_{\min} &\geq C_1 \big( \lambda_{\max} + \sigma^2) \frac{r^{5/2}}{\sqrt{n}} \log^4(n\vee p), \numberthis \label{eigengapcondpca:inf}
\end{align*}
where $C_1$ is some sufficiently large constant.  In addition, assume that $r^3 \leq c_1 \frac{n}{\kappa^6 \log^{12}(n\vee p)}$, where $c_1$ is a sufficiently small constant, and that $\log(p) \lesssim n^{1/2}$.   Suppose that
\begin{align*}
    \max_{\substack{k\neq j\\k\leq r}} \frac{s_{\a,k}\pca}{s_{\a,j}\pca} \frac{(\lambda_{\max} + \sigma^2) \sqrt{r} \log^2(n\vee p)}{\Delta_j \sqrt{n}} = o(1). \numberthis \label{sakassumption:pca}
\end{align*}
Finally, assume that \eqref{atujconditionpca1} and \eqref{atujconditionpca2} hold.  Then it holds that 
\begin{align*}
    \bigg| \p\bigg\{ \a\t\uj \in {\sf C.I.}^{(\alpha)}(\a\t \uhatj) \bigg\} - (1- \alpha) \bigg| = o(1).
\end{align*}
\end{theorem}
The assumptions in \cref{thm:civalidity_pca} are similar to those of \cref{thm:civalidity_MD}.  %

\subsection{Lower Bounds}
We now provide a lower bounds for the length of any level $1 - \alpha$ honest confidence interval.  Define the parameter space
\begin{align*}
    \mathcal{P}(r,\lambda_{\min},\Delta_j,\sigma^2) \coloneqq \bigg\{ \bm{\Sigma} = \bm{U\Lambda U}\t + \sigma^2 \bm{I}_p =  \sum_{k=1}^{r} \lambda_i \uk\uk\t + \sigma^2 \bm{I}_p: \lambda_r \geq \lambda_{\min}, \min_{k\neq j} |\lambda_i - \lambda_j| \geq \Delta_j; \U\t\U = \bm{I}_r \bigg\}.
\end{align*}
Define the set 
\begin{align*}
    \mathcal{I}_{\alpha,\a}(\mathcal{P}) \coloneqq \bigg\{ {\sf C.I.}(\bm{X}) = [l,u]: \inf_{\bm{\Sigma} \in \mathcal{P}} \mathbb{P}_{\bm{\Sigma}} \big\{ \pm \a\t\uj(\bm{\Sigma}) \in {\sf C.I.}(\bm{X}) \big\} \geq 1 - \alpha \bigg\},
\end{align*}
where all the notation is similar to the matrix denoising case. Again $| {\sf C.I.} | = u - l$ denotes the length of the confidence interval ${\sf C.I.}$, and the expected length  is denoted via
${\sf L}_{{\sf C.I.}}(\bm{\Sigma}) \coloneqq \mathbb{E}_{\bm{\Sigma}}| {\sf C.I.} |.$
The following result gives a lower bound on the length of any level $1 - \alpha$ confidence interval based on the observation $\bm{X}$.  
\begin{theorem} \label{thm:minimaxlowerbound_PCA}
   Suppose that the leading $r$ eigenvalues of $\bSigma$ are distinct with $\lambda_r = \lambda_{\min}$ and eigengap $\Delta_j$, and assume that $n \gg \max\big\{ \frac{(\lambda_j + \sigma^2)\sigma^2}{\lambda_j^2}, \frac{(\lambda_{\max} + \sigma^2)^2}{\Delta_j^2} \big\}$.  Then for any $\alpha \in (0,1/4)$, it holds that 
    \begin{align*}
        \inf_{{\sf C.I.} \in \mathcal{I}_{\alpha,\a}(\mathcal{P}(r,\lambda_{\min},\Delta_j,\sigma^2))} {\sf L}_{{\sf C.I.}}(\bm{\Sigma}) \gtrsim s_{\a,j}\pca,
    \end{align*}
    where the implicit constant depends only on $\alpha$.
\end{theorem}
Finally, the following corollary shows that the confidence intervals attained by \cref{thm:civalidity_pca} are optimal.
\begin{corollary}
    Under the conditions of \cref{thm:civalidity_pca}, it holds that
    \begin{align*}
     \mathbb{E}_{\theta} | \hat{{\sf C.I.}} | \lesssim s_{\a,j}\pca + (n\vee p)^{-8}
    \end{align*}
\end{corollary}
\begin{proof}
    The result follows immediately from \cref{lem:sapproxpca} en route to the proof of \cref{thm:civalidity_pca}.
\end{proof}

\section{Previous Work} \label{sec:previouswork}

The analysis of spectral methods has a rich history, going back to $\ell_2$ perturbation theory such as the Davis-Kahan Theorem \citep{davis_rotation_1970} or Wedin's Theorem \citep{wedin_perturbation_1972}, and much of this theory now forms the basis of book-level treatments (e.g. \citet{stewart_matrix_1990,kato_perturbation_1995}).  Classical deterministic matrix perturbation theory is remarkably effective in a number of statistical settings, and improvements on the deterministic theory have been obtained \citep{cai_rate-optimal_2018,luo_schatten-q_2021,yu_useful_2015,zhang_exact_2024}, as well as improvements tailored explicitly for various types of random noise \citep{orourke_random_2018,koltchinskii_perturbation_2016,nadler_finite_2008}. Notably, a number of works have studied the $\ell_2$ perturbation in principal component analysis \citep{cai_optimal_2021,cai_sparse_2013,koltchinskii_normal_2017}.  Many of these works focus on providing bounds for estimated subspaces (or for eigenvectors with large eigengaps), though there have been several related works in the context of small eigengaps \citep{orourke_matrices_2024,braun_accurate_2006,diaconu_eigenstructure_2023,ostrovskii_affine_2019,mas_high-dimensional_2015,jirak_perturbation_2019,jirak_relative_2023}.

Beyond  $\ell_2$ perturbation theory, a number of works have also considered $\ell_{2,\infty}$ perturbation theory, which quantifies the entrywise (or row-wise) fluctuations of eigenvectors (or subspaces). The works \citet{cape_two--infinity_2019,fan_ell_infty_2018,eldridge_unperturbed_2018,damle_uniform_2020,bhardwaj_entry-wise_2023} provide deterministic $\ell_{2,\infty}$ bounds for the leading subspaces of low-rank matrices and apply their results to problems arising in high-dimensional statistics.  Furthermore, by taking into account both the structural assumptions and the probabilistic nature of the noise, a number of works have also derived $\ell_{2,\infty}$ perturbation bounds tailored to different probabilistic settings.  For example, \citet{lei_unified_2019}  considers symmetric matrices with independent noise, \citet{cai_subspace_2021} and \citet{zhou_deflated_2025} study highly unbalanced matrices,~\citet{agterberg_entrywise_2022-1}~focus on matrices with additional sparse structure, \citet{agterberg_estimating_2025} examine problems arising from tensor data analysis, and \citet{chen_spectral_2019,chen_partial_2022} study ranking problems, though this list is still incomplete.  A general survey on applications of $\ell_{2,\infty}$ perturbation theory in various statistical contexts can be found in \citet{chen_spectral_2021}.

The $\ell_{2,\infty}$ subspace perturbation bounds considered in previous works are a special type of  the more general linear functions studied in this work.  The works \citet{koltchinskii_perturbation_2016} and \citet{xia_sup-norm_2019} study perturbation bounds for linear functions of eigenvectors, with the latter focusing on the ``unbalanced'' setting, wherein the column dimension is significantly larger than the row dimension.  Similarly, \citet{koltchinskii_asymptotics_2016} provides perturbation bounds for linear functions of principal components under Gaussian noise in a general Hilbert space.  However, these previous results primarily operate in the ``large eigengaps'' regime, or at least have stronger eigengap conditions than in this work.  The works \citet{cheng_tackling_2021,chen_asymmetry_2021,li_minimax_2025} all provide perturbation bounds for linear functions of eigenvectors in the ``small eigengaps'' regime, though both \citet{cheng_tackling_2021} and \citet{chen_asymmetry_2021} require \emph{asymmetric} noise, something that we do not consider herein.

This work is also closely related to a number of works studying the asymptotic theory for low-rank matrix models.  For example, \citet{xia_normal_2021,xia_confidence_2019,bao_singular_2021} consider asymptotic theory for the $\sin\Theta$ distances in matrix denoising, and \citet{bao_statistical_2022,koltchinskii_normal_2017} study the asymptotic distribution of $\sin\Theta$ distances for the spiked principal component analysis model.  These works also pertain to the so-called ``supercritical'' regime in random matrix theory, for which there are a number of works studying the asymptotics for eigenvalues and eigenvectors in matrix denoising \citep{ding_high_2020,capitaine_limiting_2018,benaych-georges_eigenvalues_2011,benaych-georges_singular_2012,knowles_outliers_2014,capitaine_central_2012,capitaine_largest_2009}
and PCA \citep{ding_spiked_2021,paul_asymptotics_2007,bloemendal_principal_2016,baik_phase_2005,ding_spiked_2021-1,onatski_asymptotics_2012,johnstone_consistency_2009}, though these lists are incomplete.

Turning to asymptotic distributional theory, the works \citet{koltchinskii_concentration_2017,ding_spiked_2021,bao_statistical_2022} study the asymptotics for bilinear forms of spectral projectors in spiked covariance models, and  \citet{koltchinskii_efficient_2020} study efficient estimation in these settings.  In a matrix denoising setting, \citet{fan_asymptotic_2020} study linear functions of eigenvectors of symmetric matrices when eigenvalues diverge.  However, in the aforementioned works, the emphasis has primarily been on the ``large eigengaps'' regime, and the results typically do not hold for finite samples, with the exception of \citet{koltchinskii_efficient_2020}, who operate under a different context than the one herein.

Finally, statistical inference in low-rank matrix models is a relatively nascent field, with only a few results in specific contexts \citep{agterberg_entrywise_2022,yan_inference_2021,carpentier_uncertainty_2019,silin_hypothesis_2020,carpentier_signal_2015,carpentier_constructing_2018,chernozhukov_inference_2023,cai_uncertainty_2023,xia_statistical_2021}.  Considering estimating linear functionals, \citet{koltchinskii_efficient_2020} show that it is possible to find asymptotically efficient confidence intervals, though their estimators still rely on sample splitting.  The work \citet{cheng_tackling_2021} provides inferential  procedures for linear functions of eigenvectors under asymmetric, heteroskedastic noise. 
However, in our work we also prove lower bounds for both models, thereby showing that our confidence intervals attain the optimal width.

\section{Discussion} \label{sec:discussion}

In this work we have considered inference for linear forms of the form $\a\t \uj$ for a pre-specified unit vector $\a$.  We have seen how  small eigengaps and signal strength affect the approximate Gaussianity of both the plugin and debiased estimators, and we have provided asymptotically valid and statistically efficient confidence intervals for $\a\t \uj$.  All of our results hold under nearly minimal signal-strength conditions, and our proposed procedures are fully data-driven and do not require any sample splitting.

There are a number of potential future works.  First, our analysis relies heavily on the assumption of Gaussian noise, which may not hold in general.  It is of interest to develop similar statistical theory under general noise mechanisms such as subgaussian or heteroskedastic noise.  \textcolor{black}{It is likely possible that our results will continue to hold under rotational invariance, but may require additional considerations without this assumption.}  Furthermore, our results are likely suboptimal with respect to logarithmic terms, factors of $r$, and factors of the condition number $\kappa$, and it would be interesting to study the  optimal dependence on these parameters.  For example, in matrix denoising, our analysis requires that $\Delta_j/\sigma \gg \mathrm{polylog}(n)$ assuming $r,\kappa = O(1)$. 
 Does asymptotic normality hold even when $\Delta_j \asymp \sigma$ provided certain necessary signal-strength conditions are met? 

Beyond these immediate future directions, it would also be of interest to study similar problems in other settings.  For example, the work \citet{xia_inference_2022} studies statistical inference for linear functions of tensor singular vectors; it would be interesting to see if similar guarantees can be developed when the tensor singular value gaps are of similarly small order as considered herein.  Finally, the work \citet{xia_confidence_2019} considers confidence regions for singular subspaces in low-rank trace regression; it would be of interest to develop similar guarantees for individual singular subspaces in the presence of small singular value gaps.

\section{Technical Tools and Proof Overview}
\label{sec:proofoverview}
The analysis for our main results for both matrix denoising and PCA follows along similar arguments.  First, our main results will require understanding the bias (i.e., the difference $\uj\t \uhatj -1$), so in our first step we characterize this quantity as well as study the approximation of the estimated bias $b_j\md$  and $b_j\pca$.  Next, we study how the empirical eigenvalues $\hat \lambda_k$ for $k \leq j$ behave, as well as study certain approximately ``debiased'' eigenvalues that (assuming knowledge of the noise $\sigma^2$) match the estimated eigenvalues $\check \lambda_k$.  These two first steps largely follow the analysis in \citet{li_minimax_2025}, with some straightforward modifications.

Our next step requires several novel considerations relative to \citet{li_minimax_2025}.  For simplicity we focus on the matrix denoising case. For ease of comparison, we first discuss the analysis in \citet{li_minimax_2025}.  By modifying their proof slightly, one arrives at the decomposition
\begin{align*}
    \a\t \uhatj - \a\t \uj \uj\t \uhatj &= \underbrace{\uhatj\t \uperp \uperp\t \a}_{T_1} + \underbrace{\sum_{k\neq j} \a\t \uk \uk\t \uj^{\perp} \big( \hat \lambda_j \bm{I}_{n-1}- \bm{\hat S}^{(j)} \big)\inv (\uj^{\perp})\t \bm{N} \uj}_{T_2}, \numberthis \label{lidecomp}
\end{align*}
where $\uj^{\perp}$ is the $n \times (n-1)$ matrix with its $j$'th column removed, and $\bm{\hat S}^{(j)} = (\uj^{\perp})\t \big( \bm{S} + \bm{N} \big) \uj^{\perp}$.  The analysis in \citet{li_minimax_2025} is based on the following two observations: first,  the term $T_1$ is approximately a product of a vector that is uniform on the subspace spanned by $\bm{U}_{\perp}$ with a deterministic vector, and hence concentrates due to the delocalization phenomenon; and, next, the term $T_2$ is a product of terms that are approximately independent of $\bm{u}_j^{\perp} \bm{N} \uj$, so  concentrates at a rate $O(s_{\a,j}\md)$.

Unfortunately, these observations are not enough to show that these quantities are $o(s_{\a,j}\md)$ as required for our distributional theory.  
From our analysis, it turns out that both $T_1$ and $T_2$ above contain a non-negligible Gaussian term, and the analysis in \citet{li_minimax_2025} is not sufficiently fine-grained to characterize the distributional fluctuations and isolate the leading-order term. Therefore, a major technical contribution of our work is in finding the ``correct'' form of the  residual terms in the presence of small eigengaps.  It is worth emphasizing that the focus of \citet{li_minimax_2025} is not on distributional theory, but rather on perturbation bounds, and hence their analysis is sufficient for their goals.  

In order to identify the leading-order term and the appropriate residual terms, we first use the following deterministic decomposition. 
\begin{lemma}
\label{thm:mainexpansion}
Let $\{\uhatj,\hat \lambda_j\}$ denote the  eigenvectors and eigenvalues of the symmetric matrix $\mhat \coloneqq \bM + \bE$, where the symmetric matrix $\bM$ has eigenvectors and eigenvalues $\uj$ and $\lambda_j$ respectively.
Suppose $\lambda_j$ is unique.    Then the following expansion is valid always:
   \begin{align*}
    \a\t \uhatj - \a\t \uj \uj\t \uhatj - \sum_{k\neq j} \frac{ \uj\t \bE \uk}{\lambda_j - \lambda_k} \a\t \uk &= \sum_{k\neq j} \frac{( \uhatj - \uj)\t \bE \uk }{\lambda_j - \lambda_k} \a\t \uk + \sum_{k\neq j} \frac{\lambda_j - \hat \lambda_j  }{\lambda_j - \lambda_k} \uhatj\t \uk \uk\t \a.
\end{align*}
\end{lemma}
\begin{proof}
    See \cref{sec:mainexpansionproof}.
\end{proof}
The goal is to demonstrate that each term on the right hand side is $o(s_{\a,j}\md)$ (with $\bE = \bN$ and $\bM = \bS$).  However, from the analysis in \citet{li_minimax_2025}, the leading empirical eigenvalues are biased.  Therefore, we further decompose the right hand side given by \cref{thm:mainexpansion} via
\begin{align*}
\sum_{k\neq j} \frac{(\uhatj- \uj)\t \bN \uk}{\lambda_j - \lambda_k} &\a\t \uk + \sum_{k\neq j} \frac{\lambda_j - \hat \lambda_j}{\lambda_j - \lambda_k} \uhatj\t \uk\uk\t\a \\
&= \sum_{k\neq j, k\leq r } \frac{(\uhatj - \uj)\t \bN \uk - \gamma\md(\hat \lambda_j) \uhatj\t \uk}{\lambda_j - \lambda_k} \a\t \uk + \frac{(\uhatj - \uj)\t \bN \uperp \uperp\t \a}{\lambda_j} \\&\quad + \sum_{\substack{k\neq j\\k\leq r} } \frac{\lambda_j - \hat \lambda_j + \gamma\md(\hat \lambda_j)}{\lambda_j - \lambda_k} \uhatj\t \uk\uk\t\a+ \frac{\lambda_j - \hat \lambda_j}{\lambda_j} \uhatj\t \uperp \uperp\t \a \\
    &\coloneqq \rmd_1 + \rmd_2 + \rmd_3 + \rmd_4, \numberthis \label{decomp2}
\end{align*}
where we have added and subtracted $\gamma\md(\hat \lambda_j)\uhatj\t\uk \uk\t\a$ for each $k \leq r$, where $\gamma\md(\hat \lambda_j)$ is an eigenvalue bias quantity (see \cref{lem:eigenvalues_MD}).  

Each term above comes with unique challenges, so we single $\rmd_1$ as a representative example, since it is also related to the eigenvalue bias quantity $\gamma\md(\hat \lambda_j)$.   We can write
\begin{align*}
    (\uhatj - \uj)\t \bN \uk &= (\uhatj - \uj)\U\U\t \bN \uk + \uhatj\t \uperp \uperp\t \bN \uk.
\end{align*}
The first term can be shown to be of sufficiently small order directly.  However, the second term above depends on $\uhatj\t \uperp$.  To analyze this quantity we introduce the following deterministic lemma. 
\begin{lemma}
    \label{lem:uhatjuperpidentity}
Let $\mhat = \bM + \bE$, and let $\{\uhatj,\hat \lambda_j\}$ denote the eigenvectors and eigenvalues of $\mhat$ and $\uj,\lambda_j$ denote the eigenvectors and eigenvalues of $\bM$. Suppose $\bM$ is rank $r$ with orthogonal complement $\uperp$, such that $\bM = \U \bm{\Lambda} \bm{U}\t$.  Then it holds that 
    \begin{align*}
    \uhatj\t \uperp &= \uhatj\t  \U \U\t \bE \uperp\t \big( \hat \lambda_j \bm{I}_{n-r} - \uperp\t \bE \uperp \big)\inv \\
    &\equiv \uhatj\t \uk \uk\t \bE \uperp \big( \hat \lambda_j \bm{I}_{n-r} - \uperp\t  \bE \uperp \big)\inv + \uhatj \U^{-k}(\U^{-k})\t \bE \uperp  \big( \hat \lambda_j \bm{I}_{n-r} - \uperp\t \bE \uperp \big)\inv, 
\end{align*}
provided the inverse is defined.  Here $\U^{-k}$ corresponds to the matrix $\U$ with its $k$'th column removed.  
\end{lemma}
\begin{proof}
    See \cref{sec:uhatjuperpproof}.
\end{proof}
With this lemma in hand, we can write
\begin{align*}
    \uhatj\t \uperp \uperp\t \bN \uk &=  \uhatj\t \uk \uk\t \bN \uperp \big( \hat \lambda_j \bm{I}_{n-r} - \uperp\t  \bN \uperp \big)\inv  \uperp\t \bN \uk \\
    &\quad + \uhatj \U^{-k}(\U^{-k})\t \bN \uperp  \big( \hat \lambda_j \bm{I}_{n-r} - \uperp\t \bN \uperp \big)\inv \uperp\t \bN \uk.
\end{align*}
The second term above concentrates around zero.  Remarkably, the first term above concentrates about $\gamma\md(\hat \lambda_j) \uhatj\t \uk$, which justifies its subtraction in \eqref{decomp2}.  Note that it is not obvious \emph{a priori} that this term concentrates; it is due to our representation in \cref{lem:uhatjuperpidentity} that enables us to identify this concentration.

Finally, to prove the asymptotic validity of our proposed confidence intervals, we must demonstrate that $$\bigg|\frac{\hat {s_{\a,j}\md}}{s_{\a,j}\md} - 1\bigg| = o(1)$$ with high probability.  
Since $\hat{s_{\a,j}\md}$ uses the debiased estimators of $\a\t\uk$, this analysis  further relies on our previously established distributional theory applied to all $k \leq r$.  The full details can be found in \cref{sec:mdproof,sec:pcamainproof} for matrix denoising and PCA respectively.

\section*{Acknowledgements}
The author thanks Carey Priebe, Alex Modell,  and Patrick Rubin-Delanchy for productive discussions on close eigenvalues in the context of network analysis problems.  In addition, the author thanks Carey Priebe for encouragement in studying this problem.  Finally, the author thanks Jes\'us Arroyo who provided valuable feedback on an early draft of this manuscript.

\appendix

\section{Proofs of Matrix Analysis Results} \label{sec:matrixanalysisproofs}
In this section we prove our main deterministic results.

\subsection{Proof of  \cref{thm:mainexpansion}} \label{sec:mainexpansionproof}
\begin{proof}
First, note that by the eigenvector-eigenvalue equation, it holds that $(\lambda_k - \hat \lambda_j) \uhatj\t  \uk = -\uhatj\t \bE \uk.$
Therefore,
\begin{align*}
    (\lambda_k - \lambda_j) \uhatj\t \uk + (\lambda_j - \hat \lambda_j) \uhatj\t \uk &= -\uhatj\t \bE \uk.
\end{align*}
Diving through by $(\lambda_k - \lambda_j)$ and rearranging yields
\begin{align*}
    \uhatj\t \uk
    &= \frac{\uhatj\t \bE \uk}{\lambda_j - \lambda_k} + \frac{\lambda_j - \hat \lambda_j}{\lambda_j - \lambda_k} \uhatj\t \uk.
\end{align*}
Therefore,
\begin{align*}
    \a\t \uhatj - \a\t \uj \uj\t \uhatj &= \a\t ( \bm{I} - \uj \uj\t) \uhatj \\&= 
    \sum_{k\neq j} \a\t \uk \uk\t \uhatj \\
    &= \sum_{k\neq j} \a\t \uk \frac{\uj\t \bE \uk}{\lambda_j - \lambda_k} + \sum_{k\neq j} \frac{(\uhatj - \uj)\t \bE \uk}{\lambda_j - \lambda_k} \a\t \uk + \sum_{k\neq j} \frac{\hat \lambda_j - \lambda_j}{\lambda_j - \lambda_k} \uhatj\t \uk \uk\t \a.
\end{align*}
Rearranging completes the proof.
\end{proof}

\subsection{Proof of \cref{lem:uhatjuperpidentity}} \label{sec:uhatjuperpproof}
\begin{proof}
The second line is immediate as $\U\U\t = \uk\uk\t + \U^{-k} (\U^{-k})\t$.  Therefore, we start with the observation
\begin{align*}
   \hat \lambda_j \uhatj\t \uperp = \uhatj\t ( \bM + \bE) \uperp = \uhatj\t \bE \uperp = \uhatj \t \U\U\t \bE \uperp + \uhatj\t \uperp \uperp\t \bE \uperp,
\end{align*}
which holds since $\bM\uperp = 0$.  By rearranging this reveals that
\begin{align*}
    \uhatj\t \uperp \big( \hat \lambda_j \bm{I}_{n-r} - \uperp\t \bE \uperp \big) &= \uhatj\t \U \U\t \bE \uperp.
\end{align*}
   Right multiplying through by $\big( \hat \lambda_j \bm{I}_{n-r} - \uperp\t \bE \uperp \big)\inv$ gives the result.
\end{proof}

\section{Proofs for Matrix Denoising} \label{sec:mdproof}
This section contains the full proof of \cref{thm:distributionaltheory_MD,thm:civalidity_MD}.  
 Throughout our proofs  we assume that $\lambda_1$ through $\lambda_r$ are unique.  If not, the extensions are only more notationally cumbersome and not significantly different.

We first state several preliminary facts that are standard from nonasymptotic random matrix theory and several results that follow immediately from the analysis in \citet{li_minimax_2025}.  We will use these facts repeatedly in our proofs. Without loss of generality we assume that $\lambda_j$ is positive; otherwise repeat the analysis with $-\bm{\hat S}$ and $-\bm{S}$.  

\begin{fact} \label{fact1_MD} It holds that $\|\bN\| \lesssim \sigma \sqrt{n}$ with probability at least $1 - \exp(-c n)$.  See, for example \citet{vershynin_high-dimensional_2018}.  
\end{fact}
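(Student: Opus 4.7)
The plan is to use the standard combination of an $\varepsilon$-net argument on the unit sphere together with Gaussian tail bounds for quadratic forms. Since $\bN$ is symmetric, its operator norm equals $\sup_{x \in S^{n-1}} |x\t \bN x|$. I would fix a $\tfrac{1}{4}$-net $\mathcal{N}$ of $S^{n-1}$ with $|\mathcal{N}| \leq 9^n$; a routine approximation argument then yields $\|\bN\| \leq 2 \max_{x \in \mathcal{N}} |x\t \bN x|$, which reduces the problem to a finite maximum.

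For each fixed $x \in S^{n-1}$, expanding gives
\[
x\t \bN x \;=\; 2 \sum_{i < j} x_i x_j\, \bN_{ij} \;+\; \sum_{i} x_i^2\, \bN_{ii},
\]
which is a linear combination of independent centered Gaussians. A direct variance computation, using $\mathrm{Var}(\bN_{ij}) = \sigma^2$ for $i<j$ and $\mathrm{Var}(\bN_{ii}) = 2\sigma^2$, shows $\mathrm{Var}(x\t \bN x) = 2\sigma^2 \|x\|^4 = 2\sigma^2$. Standard Gaussian tail bounds yield $\p\{|x\t \bN x| \geq t\} \leq 2\exp(-t^2/(4\sigma^2))$. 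Taking $t = C\sigma\sqrt{n}$ with $C$ sufficiently large and union bounding over $\mathcal{N}$ yields $\max_{x \in \mathcal{N}} |x\t \bN x| \leq C\sigma\sqrt{n}$ with probability at least $1 - 2 \cdot 9^n \exp(-C^2 n/4) \geq 1 - \exp(-cn)$ for some universal $c > 0$. Combining with the net approximation delivers the claim.

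An alternative route is to invoke Gaussian concentration of Lipschitz functions: the map $\bN \mapsto \|\bN\|$ is $1$-Lipschitz in the Frobenius norm, while $\mathbb{E}\|\bN\| \lesssim \sigma\sqrt{n}$ follows from either the Wigner semicircle-edge bound or a Slepian/Gordon-type comparison argument; concentration then yields the deviation tail at the scale $\sigma\sqrt{n}$. There is no real obstacle here, since the bound need only be accurate up to a universal constant; the only minor bookkeeping point is the slightly larger variance of the diagonal entries, which affects only absolute constants and is absorbed in the $\lesssim$. This is why the statement asserts $\|\bN\| \lesssim \sigma\sqrt{n}$ rather than the sharp constant $2\sigma\sqrt{n}$ associated with the semicircle edge.
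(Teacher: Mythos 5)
Your proof is correct and is exactly the standard argument behind the result the paper cites from \citet{vershynin_high-dimensional_2018} (an $\varepsilon$-net reduction plus Gaussian tails for the quadratic form, with the variance computation correctly accounting for the doubled diagonal variance of the GOE). The paper offers no proof beyond the citation, so there is nothing further to compare.
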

\begin{fact} \label{fact2_MD}
It holds that $\| \U\t \bN \U \| \lesssim \sigma \big( \sqrt{r} + \sqrt{\log(n)} \big)$ with probability at least $1 - O(n^{-10})$.  This can be proven via standard $\eps$-net arguments, or one can appeal directly to the fact that $\U\t \bN \U $ is a symmetric $r\times r$ Gaussian Wigner matrix.
\end{fact}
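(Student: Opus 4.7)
The plan is to reduce the problem to a concentration bound for a Gaussian Wigner matrix of much smaller dimension by exploiting the orthogonal invariance of the GOE that was explicitly noted in \cref{sec:matrixdenoisingdef}. First, I would extend $\U$ to an $n \times n$ orthogonal matrix $[\U,\uperp]$ and observe that $[\U,\uperp]\t \bN [\U,\uperp]$ has the same distribution as $\bN$. Reading off the top-left $r \times r$ block shows that $\U\t\bN\U$ is itself distributed as a GOE matrix on $\mathbb{R}^{r\times r}$ with off-diagonal variance $\sigma^2$ and diagonal variance $2\sigma^2$; in particular, its entries are independent up to the symmetry constraint, independent of the ambient dimension $n$.

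Next, I would invoke the standard GOE expectation bound $\mathbb{E}\|\U\t \bN \U\| \leq C\sigma\sqrt{r}$, which can be established by a Slepian comparison argument or via the nonasymptotic matrix concentration inequalities of Bandeira-van Handel. To promote this in-expectation bound to the desired high-probability statement, I would then apply Gaussian concentration of measure. Writing $\U\t \bN \U = \sqrt{2}\sigma \sum_i g_{ii} \bm{e}_i \bm{e}_i\t + \sigma \sum_{i<j} g_{ij}(\bm{e}_i\bm{e}_j\t + \bm{e}_j\bm{e}_i\t)$ for i.i.d. standard Gaussians $\{g_{ij}\}_{i\leq j}$, a direct computation gives $\|\U\t\bN\U\|_F = \sigma\sqrt{2}\|g\|$, so $\|\U\t\bN\U\|$ is $\sigma\sqrt{2}$-Lipschitz in $g$. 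Gaussian concentration then yields
\begin{align*}
\mathbb{P}\bigl(\|\U\t\bN\U\| \geq C\sigma\sqrt{r} + t\bigr) \leq \exp\bigl(-t^2/(4\sigma^2)\bigr).
\end{align*}
Choosing $t = \sigma\sqrt{40 \log n}$ produces the claimed bound $\|\U\t\bN\U\| \lesssim \sigma(\sqrt{r} + \sqrt{\log n})$ on an event of probability at least $1 - n^{-10}$, which is the stated conclusion.

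There is no real obstacle here; the argument is entirely routine and essentially matches the two approaches the authors mention. As an alternative avoiding concentration of measure, one could use an $\varepsilon$-net argument on the unit sphere of $\mathbb{R}^r$: for any fixed unit vector $\bm{v} \in \mathbb{R}^r$ the quadratic form $\bm{v}\t \U\t \bN \U \bm{v}$ is a $\mathcal{N}(0,2\sigma^2)$ random variable, a standard $1/4$-net of cardinality at most $9^r$ together with a union bound yields the same bound up to constants, again delivering the $n^{-10}$ failure probability after setting the deviation parameter to an appropriate multiple of $\sigma\sqrt{\log n}$.
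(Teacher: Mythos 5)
Your proposal is correct and fills in exactly the two routes the paper itself names: the reduction via rotational invariance to an $r\times r$ GOE matrix followed by the expectation bound plus Gaussian concentration (your Lipschitz constant $\sqrt{2}\sigma$ and the variance $2\sigma^2$ of the quadratic form $\bm{v}\t\U\t\bN\U\bm{v}$ are both computed correctly), and the standard $1/4$-net argument over the sphere in $\mathbb{R}^r$. Nothing further is needed.
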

\begin{fact} \label{fact3_MD}
    By virtue of \cref{fact1_MD} and Weyl's inequality, it holds that $|\lambda_k| + C \sigma \sqrt{n} \geq |\hat \lambda_k| \geq |\lambda_k| - C\sigma \sqrt{n} $ for all $k \leq r$ and $|\hat \lambda_k | \lesssim \sigma \sqrt{n}$ for all $k \geq r+ 1$.  Therefore, on the event $\|\bN\| \lesssim \sigma \sqrt{n}$, it holds that $|\hat \lambda_j| \in \big[ 2 |\lambda_j|/3, 4 |\lambda_j|/3\big]$. 
\end{fact}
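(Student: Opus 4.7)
The plan is to deduce Fact 3 by straightforward application of Weyl's inequality, together with the high-probability spectral norm bound from Fact 1 and the signal-strength assumption $\lambda_{\min}/\sigma \geq C_0 \sqrt{rn}$ from \cref{thm:distributionaltheory_MD}. There is very little to do beyond bookkeeping, so I will be brief.

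First I would condition on the event $\|\bm{N}\|\leq C\sigma\sqrt{n}$ guaranteed by \cref{fact1_MD}. Weyl's inequality applied to the symmetric perturbation $\bm{\hat S} = \bm{S}+\bm{N}$ yields $|\hat\lambda_k^{\downarrow}-\lambda_k^{\downarrow}|\leq \|\bm{N}\|$ for every index $k$, where the superscript indicates ordering by algebraic value. The only subtlety is that the paper's indexing orders eigenvalues by \emph{magnitude}, so I would first verify that on the event above the magnitude-ordering of $\bm{\hat S}$ matches the magnitude-ordering of $\bm{S}$ on the top $r$ indices. Since $\bm{S}$ has rank exactly $r$ and $|\lambda_j|\geq \lambda_{\min}\geq C_0\sigma\sqrt{rn}\gg C\sigma\sqrt{n}\geq \|\bm{N}\|$ for every $j\leq r$, a signal eigenvalue cannot be swapped with a noise eigenvalue under the perturbation (Weyl's bound moves each eigenvalue by at most $C\sigma\sqrt{n}$, while signal and noise eigenvalues differ by at least $\lambda_{\min}-C\sigma\sqrt{n}>0$). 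Consequently, for $j\leq r$ the magnitude-ordered $\hat\lambda_j$ is obtained from the magnitude-ordered $\lambda_j$ by the same bijection as the value-ordered eigenvalues.

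Given this, for $j\leq r$ I would conclude
\begin{equation*}
|\lambda_j|-C\sigma\sqrt{n}\;\leq\;|\hat\lambda_j|\;\leq\;|\lambda_j|+C\sigma\sqrt{n}.
\end{equation*}
Using $\lambda_{\min}/\sigma\geq C_0\sqrt{rn}\geq C_0\sqrt{n}$ with $C_0$ sufficiently large (so that $C\sigma\sqrt{n}\leq |\lambda_j|/3$), the two-sided bound collapses to $|\hat\lambda_j|\in[2|\lambda_j|/3,4|\lambda_j|/3]$, which is exactly the stated conclusion. For $j\geq r+1$, the bound $|\hat\lambda_j|\lesssim \sigma\sqrt{n}$ is even simpler: since $\bm{S}$ has rank $r$, the corresponding $\lambda_j=0$, and Weyl gives $|\hat\lambda_j|\leq \|\bm{N}\|\lesssim \sigma\sqrt{n}$.

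There is essentially no obstacle here; the only point requiring care is the matching between magnitude- and value-ordering, and this is resolved by the assumed separation $\lambda_{\min}\gg \|\bm{N}\|$. The whole argument is a one-line application of Weyl's inequality plus the signal-strength assumption, and I expect the proof to be a short paragraph in the manuscript.
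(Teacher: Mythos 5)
Your proposal is correct and is exactly the argument the paper intends: the statement is presented as a ``Fact'' with no separate proof beyond the one-line appeal to \cref{fact1_MD} and Weyl's inequality, combined with the signal-strength assumption $\lambda_{\min}/\sigma \geq C_0\sqrt{rn}$ which makes the perturbation $C\sigma\sqrt{n}$ at most $|\lambda_j|/3$. Your extra care about reconciling magnitude-ordering with algebraic ordering is a legitimate (and slightly more thorough) treatment of a point the paper glosses over, but it does not change the route.
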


\begin{fact}
    \label{fact4_MD} By virtue of \cref{fact3_MD}, it holds that $|\hat \lambda_j| \geq 2|\lambda_{\min} |/3 \geq \| \bN \| \geq \| \uperp\t \bN \uperp \|$, and hence $\lambda \bm{I}_{n-r} - \uperp\t \bN \uperp$ is invertible for all $|\lambda| \geq 2 \lambda_{\min}/3$.  Specifically, $\hat \lambda_j \bm{I}_{n-r}- \uperp\t \bN \uperp$ is invertible on the event $\|\bN \| \lesssim \sigma \sqrt{n}$.  In addition, $\| \big( \hat \lambda_j \bm{I}_{n-r} - \uperp\t \bN \uperp \big)\inv \| \lesssim \lambda_j\inv$.  
\end{fact}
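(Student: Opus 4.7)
The statement is a purely deterministic consequence of \cref{fact1_MD,fact2_MD,fact3_MD} once one restricts to the event $\{\|\bN\| \lesssim \sigma\sqrt{n}\}$ supplied by \cref{fact1_MD}. My plan is to combine the signal-strength hypothesis $\lambda_{\min}/\sigma \geq C_0 \sqrt{rn}$ inherited from the surrounding theorems with \cref{fact1_MD}, choosing $C_0$ sufficiently large relative to the implicit constant in \cref{fact1_MD} so that one obtains the clean separation $\|\bN\| \leq \tfrac{1}{2}|\lambda_{\min}|$. The chain $|\hat\lambda_j| \geq \tfrac{2}{3}|\lambda_{\min}| \geq \|\bN\| \geq \|\uperp\t\bN\uperp\|$ is then immediate: the first inequality is \cref{fact3_MD}, and the last follows from $\|\uperp\t\bN\uperp\| \leq \|\uperp\|^2\|\bN\| = \|\bN\|$ since $\uperp$ has orthonormal columns.

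For the invertibility statement I would apply the spectral theorem to the symmetric matrix $\uperp\t \bN \uperp$, writing its eigenvalues as $\{\mu_i\}_{i=1}^{n-r}$. The eigenvalues of $\lambda\bm{I}_{n-r} - \uperp\t \bN \uperp$ are then $\{\lambda - \mu_i\}$, and since $|\mu_i| \leq \|\bN\| \leq \tfrac{1}{2}|\lambda_{\min}|$, whenever $|\lambda| \geq \tfrac{2}{3}|\lambda_{\min}|$ one has $|\lambda - \mu_i| \geq \tfrac{1}{6}|\lambda_{\min}| > 0$. This delivers invertibility for every such $\lambda$, and in particular for $\lambda = \hat\lambda_j$, since \cref{fact3_MD} pins $\hat\lambda_j$ into precisely this range on the good event.

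For the final operator-norm bound, the key observation is that the operator norm of the inverse of a symmetric matrix equals the reciprocal of its smallest-in-absolute-value eigenvalue. Thus it suffices to lower bound $\min_i |\hat\lambda_j - \mu_i|$ using $|\hat\lambda_j| \geq \tfrac{2}{3}|\lambda_j|$ from \cref{fact3_MD} and $|\mu_i| \leq \|\bN\| \leq \tfrac{1}{3}|\lambda_j|$; the latter follows from $|\lambda_j| \geq |\lambda_{\min}|$ together with the signal-strength choice of $C_0$. A direct triangle inequality yields $|\hat\lambda_j - \mu_i| \geq \tfrac{1}{3}|\lambda_j|$, and hence the claimed bound $\|(\hat\lambda_j \bm{I}_{n-r} - \uperp\t \bN \uperp)\inv\| \leq 3\lambda_j\inv$.

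There is essentially no genuine obstacle here: the whole argument is constant-tracking through a handful of eigenvalue inequalities. The only subtlety, as throughout this section, is choosing the constant $C_0$ in the signal-strength hypothesis large enough that all of the $\tfrac{1}{2}$, $\tfrac{1}{3}$, $\tfrac{2}{3}$ separations are honest inequalities with strictly positive slack; this parallels the role of $C_0$ already implicit in \cref{fact3_MD}.
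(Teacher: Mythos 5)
Your proposal is correct and follows essentially the same route the paper takes (the paper treats this as an immediate consequence of \cref{fact1_MD,fact3_MD} and spells out the identical triangle-inequality chain $|\hat\lambda_j| \geq |\lambda_j| - \|\bE\| > 2|\lambda_j|/3 > 2\|\bE\| \geq 2\|\uperp\t\bE\uperp\|$, hence $|\lambda_{\min}(\hat\lambda_j\bm{I}_{n-r} - \uperp\t\bE\uperp)| \geq |\lambda_j|/3$, in the discussion following \cref{lem:uhatjuperpidentity}). Your constant-tracking via the eigenvalues $\mu_i$ of $\uperp\t\bN\uperp$ and the choice of $C_0$ is exactly the intended argument.
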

Next, we require several results from \citet{li_minimax_2025}. The following result characterizes the bias of the quantity $\uj\t\uhatj$.

\begin{lemma}
   \label{lem:bias_MD}
Instate the conditions in \cref{thm:distributionaltheory_MD}. Define
\begin{align*}
     b_j\md &\coloneqq \sum_{k > r} \frac{\sigma^2}{(\hat \lambda_j - \hat \lambda_k)^2}.
\end{align*}
Then with probability at least $1 - O(n^{-10})$ it holds that
\begin{align*}
| 1 - (\uhatj\t \uj)^2 |&\lesssim \frac{\sigma^2 n}{\lambda_j^2} + \frac{\sigma^2 r\log(n)}{\Delta_j^2}; \\
\big| 1 - \sqrt{1 + b_j\md} \uj\t\uhatj \big| &\lesssim \frac{\sigma^2 r\log(n)}{\Delta_j^2} + \frac{\sigma^2 \sqrt{n\log(n)}}{\lambda_j^2}.
\end{align*}
\end{lemma}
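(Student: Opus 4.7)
The strategy rests on the Pythagorean decomposition
\[
1 \;=\; \|\uhatj\|^2 \;=\; (\uj\t\uhatj)^2 \;+\; \sum_{k\neq j,\,k\leq r}(\uk\t\uhatj)^2 \;+\; \|\uperp\t\uhatj\|^2,
\]
so that the first bound reduces to controlling the last two terms, and the second bound reduces to showing that $\|\uperp\t\uhatj\|^2$ is well-approximated by $(\uj\t\uhatj)^2\, b_j\md$ plus a controllable error. First, I would use \cref{lem:uhatjuperpidentity}, which gives the exact formula $\uhatj\t\uperp = \uhatj\t\U\U\t\bN\uperp(\hat\lambda_j \bm I_{n-r}-\uperp\t\bN\uperp)^{-1}$, combined with \cref{fact1_MD} and \cref{fact4_MD} to obtain $\|\uperp\t\uhatj\|\lesssim \sigma\sqrt{n}/\lambda_j$. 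This immediately handles the $\frac{\sigma^2 n}{\lambda_j^2}$ piece of the first bound.

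Second, for $\sum_{k\neq j}(\uk\t\uhatj)^2$, I would take the eigenequation $\bShat\uhatj=\hat\lambda_j\uhatj$ and test against $\uk$ for $k\neq j,\,k\leq r$, giving $(\hat\lambda_j-\lambda_k)\,\uk\t\uhatj \;=\; \uk\t\bN\uhatj$. By a preliminary eigenvalue perturbation estimate (which in turn follows from \cref{fact1_MD}, \cref{fact2_MD}, and the signal-strength assumption $\lambda_{\min}/\sigma\ge C_0\sqrt{rn}$), one has $|\hat\lambda_j-\lambda_k|\asymp|\lambda_j-\lambda_k|\gtrsim \Delta_j$. Splitting $\uk\t\bN\uhatj = \uk\t\bN\uj(\uj\t\uhatj) + \uk\t\bN(\U\U\t\uhatj - \uj\uj\t\uhatj) + \uk\t\bN\uperp\uperp\t\uhatj$ and bounding each piece (using $|\uk\t\bN\uj|\lesssim\sigma\sqrt{\log n}$ via Gaussian concentration, $\|\U\t\bN\U\|\lesssim \sigma(\sqrt{r}+\sqrt{\log n})$ via \cref{fact2_MD}, and the just-proven bound on $\|\uperp\t\uhatj\|$) produces a self-bounded inequality of the form
\[
s^2 \;\lesssim\; \frac{r\sigma^2\log n}{\Delta_j^2} \;+\; \frac{r^2\sigma^2}{\Delta_j^2}\, s^2, \qquad s^2 := \sum_{k\neq j,\,k\leq r}(\uk\t\uhatj)^2.
\]
Under the eigengap hypothesis $\Delta_j/\sigma\ge C_0 r\log^2 n$ the second term is absorbed, yielding $s^2\lesssim r\sigma^2\log n/\Delta_j^2$, completing the first bound.

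For the second bound I would refine the identity from \cref{lem:uhatjuperpidentity} into
\[
\|\uperp\t\uhatj\|^2 \;=\; \uhatj\t\U\U\t\bN\uperp\bigl(\hat\lambda_j \bm I_{n-r}-\uperp\t\bN\uperp\bigr)^{-2}\uperp\t\bN\U\U\t\uhatj,
\]
replace $\U\U\t\uhatj$ by $(\uj\t\uhatj)\uj$ paying a cross-term error controlled by $s$ via the first bound, and then invoke a Hanson–Wright-type concentration for the quadratic form $\uj\t\bN\uperp M\uperp\t\bN\uj$ with $M=(\hat\lambda_j\bm I-\uperp\t\bN\uperp)^{-2}$. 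The GOE block-independence of $\uperp\t\bN\uj$ from $\uperp\t\bN\uperp$ (modulo the weak dependence introduced through $\hat\lambda_j$, which can be decoupled using a deterministic proxy such as $\lambda_j$ and controlled by the eigenvalue perturbation estimate) shows this quadratic form equals $\sigma^2\tr(M)$ up to fluctuations of order $\sigma^2\|M\|_F\sqrt{\log n}\asymp \sigma^2\sqrt{n\log n}/\lambda_j^2$. Finally, matching the eigenvalues of $\uperp\t\bN\uperp$ with the empirical $\hat\lambda_k$ for $k>r$ (standard interlacing plus the signal-strength hypothesis) yields $\sigma^2\tr(M)\approx b_j\md$, so that $\|\uperp\t\uhatj\|^2 = (\uj\t\uhatj)^2 b_j\md + O\!\bigl(\tfrac{r\sigma^2\log n}{\Delta_j^2}+\tfrac{\sigma^2\sqrt{n\log n}}{\lambda_j^2}\bigr)$. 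Rearranging the Pythagorean identity gives $(\uj\t\uhatj)^2(1+b_j\md) = 1 - \mathrm{err}$, and taking square roots with $\sqrt{1-x}=1-O(x)$ produces the claimed bound on $|1-\sqrt{1+b_j\md}\,\uj\t\uhatj|$.

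The main obstacle will be the quadratic-form concentration step: $\hat\lambda_j$ is not independent of $\uperp\t\bN\uj$, so a clean application of Hanson–Wright requires a careful decoupling argument — either replacing $\hat\lambda_j$ by a deterministic (or $\uperp\t\bN\uperp$-measurable) proxy and controlling the substitution error using the a priori eigenvalue estimate, or peeling off the $k=r+1,\dots,n$ summands in $b_j\md$ via resolvent identities. Once this decoupling is executed, the remaining pieces — eigenvalue matching and absorbing the $\U\U\t\uhatj\to\uj(\uj\t\uhatj)$ error — are routine given \cref{fact1_MD}--\cref{fact4_MD} and the first bound proved in the earlier steps.
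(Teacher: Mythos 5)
Your overall architecture (Pythagorean decomposition, the identity from \cref{lem:uhatjuperpidentity} for $\|\uperp\t\uhatj\|$, and a quadratic-form concentration to match $\|\uperp\t\uhatj\|^2$ with $(\uj\t\uhatj)^2 b_j\md$) is the right shape, and for what it is worth the paper does not reprove this lemma at all --- it imports the two bounds wholesale from the analysis in \citet{li_minimax_2022}. But your Step 2 has a genuine gap. The claim $|\hat\lambda_j-\lambda_k|\asymp|\lambda_j-\lambda_k|$ is false in precisely the regime this paper targets: by \cref{lem:eigenvalues_MD}, $\hat\lambda_j$ concentrates around $\lambda_j+\gamma\md(\hat\lambda_j)$ with $\gamma\md(\hat\lambda_j)\asymp\sigma^2 n/\lambda_j$, and under the standing assumptions ($\lambda_{\min}\gtrsim\sigma\sqrt{rn}$, $\Delta_j\gtrsim\sigma r\log^2 n$) this bias can be as large as $\sigma\sqrt{n/r}\gg\Delta_j$. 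If some $\lambda_k$ sits at $\lambda_j+\gamma\md(\hat\lambda_j)$ (which is consistent with all hypotheses), then $\hat\lambda_j-\lambda_k=O(\sigma\sqrt{r\log n})$ while $|\lambda_j-\lambda_k|\asymp\sigma\sqrt{n/r}$, so the denominator in your eigenequation can nearly vanish and gives no control on $\uk\t\uhatj$.

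The same defect shows up on the numerator side: the piece $\uk\t\bN\uperp\uperp\t\uhatj$ is \emph{not} of size $\sigma\sqrt{r\log n}$; bounding it by $\|\uk\t\bN\uperp\|\,\|\uperp\t\uhatj\|$ gives $\sigma^2 n/\lambda_j$, which only falls below $\sigma\sqrt{r\log n}$ if $\lambda_j\gtrsim\sigma n/\sqrt{r\log n}$ --- far stronger than the assumed $\lambda_j\gtrsim\sigma\sqrt{rn}$. The resolution is that this numerator piece is not mean-zero: expanding it via \cref{lem:uhatjuperpidentity}, its dominant part is $\uhatj\t\uk\cdot\uk\t\bN\uperp(\hat\lambda_j\bm I-\uperp\t\bN\uperp)^{-1}\uperp\t\bN\uk$, which concentrates around $\gamma\md(\hat\lambda_j)\,\uhatj\t\uk$ and must be moved to the left-hand side so that the effective denominator becomes $\hat\lambda_j-\lambda_k-\gamma\md(\hat\lambda_j)\gtrsim|\lambda_j-\lambda_k|$ and the remaining numerator is genuinely $O(\sigma\sqrt{rn\log n}/\lambda_{\min}+\sigma\sqrt{r\log n})$. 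This recentering is exactly the device in the paper's \cref{lem:innerproduct_MD} (and in \cref{sec:leadingorder}); without it your self-bounding inequality does not close. Your treatment of the $\|\uperp\t\uhatj\|^2$ term in the second bound is plausible as sketched, but it inherits the first bound as an input, so the gap propagates.
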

\begin{proof}
The result follows immediately from the analysis in \citet{li_minimax_2025} (see equation 5.38 therein).
\end{proof}

The next result shows that the eigenvalues concentrate around this same quantity.  
\begin{lemma} \label{lem:eigenvalues_MD}
Instate the conditions of \cref{thm:distributionaltheory_MD}.  Define
\begin{align*}
    \gamma\md(\hat \lambda_j) \coloneqq \sigma^2 \tr\bigg( \big( \hat \lambda_j \bm{I}_{n-r} - \uperp\t \bN \uperp \big)\inv \bigg); \qquad \hat \gamma\md(\hat \lambda_j) \coloneqq \sum_{k > r} \frac{\sigma^2}{\hat \lambda_j - \hat \lambda_k}.
\end{align*}
Then with probability at least $1 -  O(n^{-10})$ it holds that
\begin{align*}
    | \hat \lambda_j - \lambda_j - \gamma\md(\hat \lambda_j) | &\lesssim \sigma\big( \sqrt{r}  + \sqrt{\log(n)} \big) \coloneqq \delta\md.
\end{align*}
Furthermore, with this same probability it holds that
\begin{align*}
    |\gamma\md(\hat \lambda_j) - \hat \gamma\md(\hat \lambda_j) | &\lesssim \frac{\sigma^2 r }{\lambda_j}.
\end{align*} 
\end{lemma}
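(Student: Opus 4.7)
The plan is to exploit the Schur-complement block decomposition of $\bm{\hat S}$ in the basis $[\U,\uperp]$: the off-diagonal block $H := \U\t\bN\uperp$ has i.i.d.\ $\mathcal{N}(0,\sigma^2)$ entries and, by the rotational invariance of the GOE, is independent of the bottom-right block $T := \uperp\t\bN\uperp$. This independence is the key structural input for both bounds, because it reduces the $H$-quadratic forms that will appear to Gaussian quadratic forms conditional on $T$.

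For the first bound, since $\hat\lambda_j\bm{I}_{n-r}-T$ is invertible by \cref{fact4_MD}, the Schur complement of the characteristic polynomial identifies $\hat\lambda_j$ as an eigenvalue of the $r\times r$ symmetric matrix $A(\hat\lambda_j) := \bm{\Lambda} + \U\t\bN\U + H(\hat\lambda_j\bm{I}-T)^{-1}H\t$. Conditioning on $T$, the rows of $H$ are i.i.d.\ $\mathcal{N}(0,\sigma^2\bm{I}_{n-r})$, hence $\mathbb{E}[H(\hat\lambda_j\bm{I}-T)^{-1}H\t\mid T]=\gamma\md(\hat\lambda_j)\bm{I}_r$. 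A Hanson--Wright-type matrix concentration bound, applied on a polynomial-size net in the deterministic interval $[2\lambda_j/3,4\lambda_j/3]$ guaranteed to contain $\hat\lambda_j$ by \cref{fact3_MD} and then transferred to the random point $z=\hat\lambda_j$ via resolvent Lipschitz continuity (at rate $\lambda_j^{-2}$), shows that $\|H(\hat\lambda_j\bm{I}-T)^{-1}H\t-\gamma\md(\hat\lambda_j)\bm{I}_r\|\lesssim \sigma^2\sqrt{n(r+\log n)}/\lambda_j + \sigma^2(r+\log n)/\lambda_j \lesssim \delta\md$, where the last inequality uses $\lambda_j\geq C\sigma\sqrt{rn}$. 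Combined with $\|\U\t\bN\U\|\lesssim \delta\md$ from \cref{fact2_MD}, this yields $A(\hat\lambda_j)=\bm{\Lambda}+\gamma\md(\hat\lambda_j)\bm{I}_r+E$ with $\|E\|\lesssim \delta\md$. Weyl's inequality then places each eigenvalue of $A(\hat\lambda_j)$ within $\delta\md$ of some $\lambda_k+\gamma\md(\hat\lambda_j)$, and $\Delta_j\gg \delta\md$ together with a short continuity-in-$\bN$ argument uniquely matches $\hat\lambda_j$ to the $j$-th such level.

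For the second bound, I would compare the two resolvent traces through the projectors $P_1 := \bm{\hat U}_{\perp}\bm{\hat U}_{\perp}\t$ and $P_2 := \uperp\uperp\t$. Each of $P_1\bm{\hat S}P_1$ and $P_2\bm{\hat S}P_2$ acquires $r$ extra zero eigenvalues relative to its $(n-r)$-dimensional compression, so $\hat\gamma\md(\hat\lambda_j)/\sigma^2 = \tr[(\hat\lambda_j\bm{I}_n-P_1\bm{\hat S}P_1)^{-1}]-r/\hat\lambda_j$ and $\gamma\md(\hat\lambda_j)/\sigma^2 = \tr[(\hat\lambda_j\bm{I}_n-P_2\bm{\hat S}P_2)^{-1}]-r/\hat\lambda_j$. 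Direct algebra gives the clean identities $P_1\bm{\hat S}P_1=\bm{\hat S}-\bm{\hat S}_r$ (where $\bm{\hat S}_r$ is the best rank-$r$ approximation) and $P_2\bm{\hat S}P_2=\uperp\uperp\t\bN\uperp\uperp\t$, so the difference $P_1\bm{\hat S}P_1-P_2\bm{\hat S}P_2 = (\bm{S}-\bm{\hat S}_r) + \U\U\t\bN + \bN\U\U\t - \U\U\t\bN\U\U\t$ has rank at most $4r$ (each summand has rank at most $2r$) and operator norm at most $O(\sigma\sqrt{n})$, using $\|\bm{\hat S}_r-\bm{S}\|\lesssim \|\bN\|\lesssim \sigma\sqrt{n}$ (Weyl and \cref{fact1_MD}) and $\|\U\U\t\bN\|\lesssim \|\bN\|$. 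The resolvent identity combined with the inequality $|\tr(XY)|\leq \mathrm{rank}(X)\|X\|\|Y\|$ applied to the rank-$4r$ middle factor, together with resolvent norms $\lesssim \lambda_j^{-1}$, gives $|\gamma\md(\hat\lambda_j)-\hat\gamma\md(\hat\lambda_j)|\lesssim \sigma^2\cdot r\cdot \lambda_j^{-1}\cdot \sigma\sqrt{n}\cdot \lambda_j^{-1}\lesssim \sigma^2\sqrt{r}/\lambda_j \leq \sigma^2 r/\lambda_j$ under $\lambda_j\geq C\sigma\sqrt{rn}$.

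The main technical obstacle is the Hanson--Wright step in the first bound, since $\hat\lambda_j$ itself depends on the matrix $H$ entering the quadratic form. The net-and-transfer strategy above circumvents this dependence by first localizing $\hat\lambda_j$ to a deterministic interval and then absorbing the perturbation from net points to $\hat\lambda_j$ via resolvent Lipschitz continuity. A secondary bookkeeping issue is verifying that both Hanson--Wright fluctuation terms ($\sigma^2\|A\|_F\sqrt{r+\log n}$ and $\sigma^2\|A\|(r+\log n)$) collapse to $\delta\md=\sigma(\sqrt{r}+\sqrt{\log n})$, which turns out to be essentially equivalent to the signal-strength assumption $\lambda_j\geq C\sigma\sqrt{rn}$.
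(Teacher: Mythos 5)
Your proof is correct; the interesting point is that your two halves relate to the paper differently. For the first bound you take essentially the paper's route: the paper invokes Theorem 5 of \citet{li_minimax_2022}, which is exactly the Schur-complement identity you write down (their $\bm{\tilde G}(\hat \lambda_j) = \U\t\bN\uperp(\hat\lambda_j\bm{I}_{n-r}-\uperp\t\bN\uperp)\inv\uperp\t\bN\U$ is your $H(\hat\lambda_j\bm{I}-T)^{-1}H\t$, and $\bm{G}(\hat\lambda_j)=\gamma\md(\hat\lambda_j)\bm{I}_r$), and the bound $\sup_\lambda\|\bm{\tilde G}(\lambda)-\bm{G}(\lambda)\|\lesssim \sigma^2\sqrt{rn\log n}/\lambda_{\min}$ over the deterministic interval (\cref{lem:gammaapprox}, outsourced to Li et al.'s Lemma 1) is precisely your conditional Hanson--Wright plus net-in-$\lambda$ step; both treatments then defer the final identification of $\hat\lambda_j$ with the $j$-th level of $\bm{\Lambda}+\gamma\md(\hat\lambda_j)\bm{I}_r$ to a standard continuity/counting argument. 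For the second bound your argument is genuinely different from the paper's. The paper notes that the eigenvalues $\lambda_k^{\perp}$ of $\uperp\t\bN\uperp$ are those of the compression $\uperp\t\bm{\hat S}\uperp$ and uses Poincar\'e/Cauchy interlacing $\hat\lambda_{k+r}\le\lambda_k^{\perp}\le\hat\lambda_k$ together with monotonicity of $x\mapsto(\hat\lambda_j-x)^{-1}$ to sandwich the two sums, leaving a residual of $r$ terms each of size $O(\sigma^2/\lambda_j)$ (and it restricts to positive $\lambda_1,\dots,\lambda_r$ to keep that bookkeeping clean). Your route---writing both sums as full $n\times n$ resolvent traces of $P_i\bm{\hat S}P_i$ minus the common $r/\hat\lambda_j$ correction, observing that $P_1\bm{\hat S}P_1-P_2\bm{\hat S}P_2=(\bm{S}-\bm{\hat S}_r)+\U\U\t\bN+\bN\U\U\t-\U\U\t\bN\U\U\t$ has rank $O(r)$ and norm $O(\sigma\sqrt n)$, then applying the resolvent identity with $|\tr(XY)|\le \mathrm{rank}(X)\,\|X\|\,\|Y\|$---avoids interlacing and the sign case analysis entirely, and in fact yields the slightly sharper bound $\sigma^2\sqrt r/\lambda_j$ under $\lambda_{\min}\gtrsim\sigma\sqrt{rn}$. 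The paper's interlacing argument buys more elementary ingredients and pinpoints exactly which $r$ spectral contributions cause the discrepancy; yours buys indifference to eigenvalue signs and a cleaner rank-counting bound.
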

\begin{proof}
The result can be obtained by repeating the argument in the proof of Theorem 7 of \citet{li_minimax_2025} under the slightly stronger assumption $\lambda_{\min} \geq C_0 \sigma \sqrt{rn}$.  The only part that does not directly follow is the final inequality. Let $\lambda_k^{\perp}$ denote the eigenvalues of $\uperp\t \bN \uperp$.  Then 
\begin{align*}
\gamma\md(\hat \lambda_j) &= \sigma^2\tr \bigg( \frac{1}{n} \big(\hat \lambda_j \bm{I}_{n-r} - \uperp\t \bN \uperp \big)\inv \bigg) = \sum_{k=1}^{n-r} \frac{\sigma^2}{\hat \lambda_j - \lambda_k^{\perp}}.
\end{align*}
Therefore, it suffices to show that
\begin{align*}
    \bigg| \sum_{k=1}^{n-r} \frac{\sigma^2}{\hat \lambda_j - \lambda_k^{\perp}} - \sum_{k > r} \frac{\sigma^2}{\hat \lambda_j - \hat \lambda_k} \bigg| \lesssim \frac{r \sigma^2 }{\lambda_j}.
\end{align*}
We will demonstrate this result assuming $\lambda_1$ through $\lambda_r$ are all positive; this is with no significant loss of generality as the argument is straightforward to adapt and significantly more cumbersome if this is not the case.

By the Poincare Separation Theorem (Corollary 4.3.37 of \citet{horn_matrix_2012}), it holds that
\begin{align*}
   \hat \lambda_{k+r} \leq  \lambda_k^{\perp}  \leq \hat \lambda_k.
\end{align*}
Therefore  $\hat \lambda_j - \lambda_k^{\perp} \geq \hat \lambda_j - \hat \lambda_k$ for $k \geq r+ 1$, where we have used the fact that $\hat \lambda_j$ is positive.  Therefore,
\begin{align*}
    \sum_{k=1}^{n-r} \frac{\sigma^2}{\hat \lambda_j - \lambda_{k}^{\perp}} \leq \sum_{k=1}^{r} \frac{\sigma^2}{\hat \lambda_j - \lambda_k^{\perp}} + \sum_{k= r+1}^{n-r} \frac{\sigma^2}{\hat \lambda_j - \hat \lambda_k } &\leq\sum_{k=1}^{r} \frac{\sigma^2}{\hat \lambda_j - \lambda_k^{\perp}} + \sum_{k=r+1}^{n} \frac{\sigma^2}{\hat \lambda_j - \hat \lambda_k}.
\end{align*}
Similarly,
\begin{align*}
    \sum_{k=1}^{n-r} \frac{\sigma^2}{\hat \lambda_j - \lambda_{k}^{\perp}} &\geq \sum_{k=1}^{n-r}\frac{\sigma^2}{\hat \lambda_j - \hat\lambda_{k+r}} = \sum_{k=r+1}^{n}\frac{\sigma^2}{\hat \lambda_j - \hat \lambda_{k}}.
\end{align*}
Therefore,
\begin{align*}
   \bigg|  \sum_{k=1}^{n-r} \frac{\sigma^2}{\hat \lambda_j - \lambda_{k}^{\perp}}  - \sum_{k=r+1}^{n} \frac{\sigma^2}{\hat \lambda_j - \hat \lambda_k} \bigg| &\leq \sum_{k=1}^{r} \frac{\sigma^2}{\hat \lambda_j - \lambda_k^{\perp}}.
\end{align*}
The result follows by noting that $\hat \lambda_j - \lambda_k^{\perp} \gtrsim \lambda_j$ by \cref{fact1_MD}.  
\end{proof}
\noindent

Our analysis will also rely on the following technical result which is used as an intermediate step in \citet{li_minimax_2025}, which we state as a lemma for ease of reference.
 Define the matrices
\begin{align*}
    \bm{G}(\lambda) &\coloneqq \sigma^2 \tr\bigg( \big( \lambda \bm{I}_{n-r} - \uperp\t \bN \uperp \big)\inv \bigg) \bm{I}_r, \numberthis \label{glambda} \\
    \bm{\tilde G}(\lambda) &\coloneqq \U\t \bN \uperp\bigg( \lambda \bm{I}_{n-r} - \uperp\t \bN \uperp \bigg)\inv \uperp\t \bN \U, \numberthis \label{gtildelambda}
\end{align*}
where both quantities are understood as functions of $\lambda$ conditional on $\uperp\t \bN \uperp$.

\begin{lemma} \label{lem:gammaapprox}
In the setting of \cref{thm:distributionaltheory_MD}, with probability at least $1 - O(n^{-10})$  it holds that
    \begin{align*}
        \sup_{\lambda: |\lambda| \in [2 |\lambda_j|/3, 4|\lambda_j|/3]} \| \bm{G}(\lambda) - \bm{\tilde G}(\lambda) \| &\lesssim \frac{\sigma^2}{\lambda_{\min}}  \sqrt{rn\log(n)}.
    \end{align*}
\end{lemma}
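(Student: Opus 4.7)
The key structural observation is that by rotational invariance of the GOE, the blocks $\U\t\bN\uperp$ and $\uperp\t\bN\uperp$ are independent. Let $\bm{Z}\coloneqq \U\t\bN\uperp \in \mathbb{R}^{r\times(n-r)}$, which, conditional on $\uperp\t\bN\uperp$, has i.i.d.\ $\mathcal{N}(0,\sigma^2)$ entries. Let $\bm{M}(\lambda)\coloneqq(\lambda\bm{I}_{n-r}-\uperp\t\bN\uperp)\inv$, which depends only on $\uperp\t\bN\uperp$. Then $\bm{\tilde G}(\lambda)=\bm{Z}\bm{M}(\lambda)\bm{Z}\t$ and $\bm{G}(\lambda)=\sigma^2\tr(\bm{M}(\lambda))\bm{I}_r$, so that (conditionally) $\bm{G}(\lambda)=\E[\bm{\tilde G}(\lambda)\mid \uperp\t\bN\uperp]$. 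The task thus reduces to a Hanson--Wright-type concentration inequality for a Gaussian quadratic form with a data-dependent kernel.

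My plan is to condition on the good event $\{\|\bN\|\lesssim\sigma\sqrt{n}\}$ from \cref{fact1_MD}, on which every $\lambda$ in the prescribed interval satisfies $\|\bm{M}(\lambda)\|\lesssim 1/\lambda_{\min}$ and $\|\bm{M}(\lambda)\|_F\lesssim\sqrt{n}/\lambda_{\min}$. For a fixed unit vector $\bm{u}\in\mathbb{R}^r$, the vector $\bm{u}\t\bm{Z}$ is a standard Gaussian in $\mathbb{R}^{n-r}$ scaled by $\sigma$, and Hanson--Wright gives
\begin{align*}
\bigl|\bm{u}\t(\bm{\tilde G}(\lambda)-\bm{G}(\lambda))\bm{u}\bigr|
\lesssim \sigma^2\|\bm{M}(\lambda)\|_F\sqrt{t}+\sigma^2\|\bm{M}(\lambda)\|\,t
\end{align*}
with probability at least $1-2e^{-t}$. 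Taking $t\asymp r+\log n$ and a standard $1/4$-net $\mathcal{N}$ of the sphere in $\mathbb{R}^r$ (of cardinality $9^r$), a union bound over $\mathcal{N}\times\mathcal{N}$ upgrades the quadratic-form bound to an operator-norm bound of order $\sigma^2\sqrt{rn\log n}/\lambda_{\min}$ for each fixed $\lambda$.

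To obtain the supremum over $\lambda$, I will combine the pointwise bound with a Lipschitz estimate. Differentiating the resolvent gives $\partial_\lambda \bm{M}(\lambda)=-\bm{M}(\lambda)^2$, so $\|\bm{M}(\lambda_1)-\bm{M}(\lambda_2)\|\lesssim|\lambda_1-\lambda_2|/\lambda_{\min}^2$ on the good event. Combined with $\|\bm{Z}\|\le\|\bN\|\lesssim\sigma\sqrt{n}$, this yields
\begin{align*}
\bigl\|(\bm{\tilde G}-\bm{G})(\lambda_1)-(\bm{\tilde G}-\bm{G})(\lambda_2)\bigr\|
\lesssim \frac{\sigma^2 n}{\lambda_{\min}^2}|\lambda_1-\lambda_2|.
\end{align*}
Choosing a deterministic $\epsilon$-net of $[2|\lambda_j|/3,4|\lambda_j|/3]$ with $\epsilon\asymp\lambda_{\min}\sqrt{r\log n/n}$ has cardinality $\lesssim\kappa\sqrt{n/(r\log n)}$, so a union bound over the net (absorbing a logarithmic factor into the tail parameter $t$) does not deteriorate the rate. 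Interpolating between net points using the Lipschitz estimate then gives the stated uniform bound.

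The main technical obstacle I anticipate is bookkeeping the dependence on $r$: both the net on $\mathbb{S}^{r-1}$ (contributing $r$ inside $t$) and the Hanson--Wright deviation term $\sigma^2\|\bm{M}\|_F\sqrt{t}$ need to combine so that the final bound is $\sigma^2\sqrt{rn\log n}/\lambda_{\min}$ rather than, say, $\sigma^2\sqrt{n(r+\log n)}/\lambda_{\min}$; this requires using $\sqrt{n}\cdot\sqrt{r+\log n}\lesssim\sqrt{rn\log n}$, which holds since $r\ge 1$ and $\log n\ge 1$. A secondary obstacle is verifying that the $\epsilon$-net on $\lambda$ does not blow up the probability tail, but under the assumption $r\le c_0 n/\log^2 n$ the net cardinality is polynomial in $n$, so this is absorbed into $O(n^{-10})$.
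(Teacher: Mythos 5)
Your argument is sound and, unlike the paper, self-contained: the paper's entire proof of this lemma is a one-line citation to the proof of Lemma 1 of \citet{li_minimax_2022} (invoked together with $r \lesssim n/\log^2(n)$), so your conditioning-plus-Hanson--Wright route supplies the machinery the paper outsources. The structural reduction is exactly right: by rotational invariance $\bm{Z}=\U\t\bN\uperp$ is independent of $\uperp\t\bN\uperp$, $\bm{G}(\lambda)$ is the conditional mean of $\bm{\tilde G}(\lambda)=\bm{Z}\bm{M}(\lambda)\bm{Z}\t$, and with $t\asymp r+\log n$ the Frobenius term $\sigma^2\|\bm{M}(\lambda)\|_F\sqrt{t}\lesssim\sigma^2\sqrt{n(r+\log n)}/|\lambda_j|$ dominates the $\sigma^2\|\bm{M}(\lambda)\|t$ term (since $r+\log n\lesssim n$) and matches the target because $\sqrt{n(r+\log n)}\lesssim\sqrt{rn\log n}$. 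Two small repairs are needed. First, condition only on $\uperp\t\bN\uperp$ and restrict to the event $\|\uperp\t\bN\uperp\|\lesssim\sigma\sqrt{n}$, which is measurable with respect to that block and already suffices for the resolvent bounds; conditioning on $\{\|\bN\|\lesssim\sigma\sqrt{n}\}$ as written involves $\bm{Z}$ itself and would spoil its conditional Gaussianity. Second, your $\lambda$-mesh $\epsilon\asymp\lambda_{\min}\sqrt{r\log n/n}$ yields a net of cardinality $\asymp\kappa\sqrt{n/(r\log n)}$, and \cref{thm:distributionaltheory_MD} imposes no bound on $\kappa$, so ``polynomial in $n$'' is not automatic; calibrating the mesh to $|\lambda_j|$ instead (e.g.\ $\epsilon=c|\lambda_j|/n$, exactly as the paper does in the analogous net argument for \cref{lem:residual2_MD}) gives cardinality $O(n)$ and a Lipschitz remainder $O(\sigma^2/|\lambda_j|)$, well below the target rate. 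With these adjustments your proof is a correct, and arguably preferable, replacement for the citation.
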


\begin{proof}[Proof of \cref{lem:gammaapprox}]
This result follows immediately from the proof of Lemma 1 of \citet{li_minimax_2025} with the assumption $r\lesssim n/\log^2(n)$.
\end{proof}

\subsection{Isolating the Leading-Order Term}
Invoking \cref{thm:mainexpansion}, we have that
\begin{align*}
    \a\t\uhatj - \a\t \uj \uj\t \uhatj - \sum_{k\neq j} \frac{\uj\t \bN \uk}{\lambda_j - \lambda_k} \uk\t \a 
    &= \sum_{k\neq j} \frac{(\uhatj- \uj)\t \bN \uk}{\lambda_j - \lambda_k} \a\t \uk + \sum_{k\neq j} \frac{\lambda_j - \hat \lambda_j}{\lambda_j - \lambda_k} \uhatj\t \uk\uk\t\a \\
    &= \sum_{k\neq j, k\leq r } \frac{(\uhatj - \uj)\t \bN \uk - \gamma\md(\hat \lambda_j) \uhatj\t \uk}{\lambda_j - \lambda_k} \a\t \uk + \frac{(\uhatj - \uj)\t \bN \uperp \uperp\t \a}{\lambda_j} \\&\quad + \sum_{\substack{k\neq j\\k\leq r} } \frac{\lambda_j - \hat \lambda_j + \gamma\md(\hat \lambda_j)}{\lambda_j - \lambda_k} \uhatj\t \uk\uk\t\a+ \frac{\lambda_j - \hat \lambda_j}{\lambda_j} \uhatj\t \uperp \uperp\t \a \\
    &\coloneqq \rmd_1 + \rmd_2 + \rmd_3 + \rmd_4,
\end{align*}
 where we have subtracted $\gamma\md(\hat \lambda_j) \uhatj\t\uk$ from the term containing $(\uhatj- \uj)\t \bN \uk$ and added it to the term containing $\lambda_j - \hat \lambda_j$ for $k \leq r$. We will now demonstrate that each $\rmd_i$ is $o(s_{\a,j}\md)$. 

    \begin{itemize}
    \item \textbf{Bounding $\rmd_1$}.

 Write
\begin{align*}
\rmd_1 &= \sum_{\substack{k\neq j\\k\leq r}} \frac{(\uhatj - \uj)\t \bN \uk - \gamma\md(\hat \lambda_j) \uhatj\t \uk}{\lambda_j - \lambda_k} \uk\t \a \\
&=\sum_{\substack{k\neq j\\k\leq r}} \frac{\uhatj \t \uperp\uperp\t \bN \uk- \gamma\md(\hat \lambda_j) \uhatj\t \uk}{\lambda_j - \lambda_k} \uk\t \a + \sum_{\substack{k\neq j\\k\leq r}} \frac{(\uhatj - \uj)\t \U \U\t \bN \uk}{\lambda_j - \lambda_k} \uk\t \a,
\end{align*}
where we have implicitly used the fact that $\uj\t\uperp = 0$. 
 By \cref{lem:uhatjuperpidentity} and \cref{fact4_MD} it holds that
\begin{align*}
    \uhatj\t \uperp &= \uhatj\t \uk \uk\t \bN \uperp \big( \hat \lambda_j \bm{I}_{n-r} - \uperp\t \bN \uperp \big)\inv + \uhatj\t \U^{-k} (\U^{-k})\t \big( \hat \lambda_j \bm{I}_{n-r} - \uperp\t \bN \uperp \big)\inv.
\end{align*}
Plugging this in yields 
\begin{align*}
  \rmd_1  &= \sum_{\substack{k\neq j\\k\leq r}} \bigg(   \uk\t \bN \uperp \big( \hat \lambda_j \bm{I}_{n-r} - \uperp\t \bN \uperp \big)\inv \uperp\t \bN \uk- \gamma\md(\hat \lambda_j)  \bigg) \frac{\uhatj\t \uk}{\lambda_j - \lambda_k} \uk\t \a  \\
    &\quad +   \sum_{\substack{k\neq j\\k\leq r}} \frac{ \uhatj\t \U^{-k} (\U^{-k})\t \bN \uperp \big( \hat \lambda_j \bm{I}_{n-r} - \uperp\t \bN \uperp \big)\inv \uperp\t \bN \uk}{\lambda_j - \lambda_k} \uk\t \a \\
    &\quad + \sum_{\substack{k\neq j\\k\leq r}} \frac{(\uhatj - \uj)\t \U \U\t \bN \uk}{\lambda_j - \lambda_k} \uk\t \a \\
    &=: \alpha_1 + \alpha_2 + \alpha_3.
\end{align*}
Observe that  
$ \uk\t \bN \uperp \bigg( \hat \lambda_j \bm{I}_{n-r} - \uperp\t \bN \uperp \bigg)\inv \uperp\t \bN \uk - \gamma(\hat \lambda_j) $ is a submatrix of the matrix $\bm{G}(\hat \lambda_j) - \bm{\tilde G}(\hat \lambda_j)$ as defined in \eqref{glambda} and \eqref{gtildelambda} respectively. 
Therefore, by \cref{fact4_MD}  it holds that $|\hat \lambda_j| \in[2|\lambda_j|/3, 4|\lambda_j|/3] $ and hence
\begin{align*}
\bigg|    \uk\t \bN \uperp \bigg( \hat \lambda_j \bm{I}_{n-r} - \uperp\t \bN \uperp \bigg)\inv \uperp\t \bN \uk - \gamma(\hat \lambda_j) \bigg| 
&\leq \sup_{\lambda: |\lambda| \in [2|\lambda_j|/3, 4|\lambda_j|/3]} \| \bm{G}(\lambda) - \bm{\tilde G}(\lambda) \| \\
&\lesssim 
\frac{\sigma^2}{\lambda_{\min}} \sqrt{rn\log(n)},
    \end{align*}
    which holds with probability $1 - O(n^{-10})$ by \cref{lem:gammaapprox}.   As a result,
\begin{align*}
|\alpha_1 | 
    &\lesssim \frac{\sigma^2 r\sqrt{n\log(n)} }{\lambda_{\min}}   \sqrt{ \sum_{k\neq j} \frac{(\uk\t \a)^2}{(\lambda_j - \lambda_k)^2}}, \numberthis \label{alpha1boundmd}
\end{align*}
where we have applied Cauchy-Schwarz. 

Next, to bound $\alpha_2$ we proceed via a similar argument.  Note that
\begin{align*}
|\alpha_2 |
    &\leq  \max_{k\neq j} \big\| (\U^{-k})\t \bN \uperp\big( \hat \lambda_j \bm{I}_{n-r} - \uperp\t \bN \uperp \big)\inv \uperp\t \bN \uk \big\| \sum_{k\neq j} \frac{|\uk\t \a|}{|\lambda_j - \lambda_k|}.
\end{align*}
The first term above is again a submatrix of $\bm{\tilde G}(\hat \lambda_j) - \bm{G}(\hat \lambda_j)$
and hence by \cref{lem:gammaapprox} and \cref{fact4_MD}, 
    \begin{align*}
        \big\| (\U^{-k})\t \bN \uperp \big( \hat \lambda_j \bm{I}_{n-r} - \uperp\t \bN \uperp \big)\inv \uperp\t \bN \uk \big \| 
        &\leq \sup_{\lambda: |\lambda| \in [2|\lambda_j|/3, 4|\lambda_j|/3]} \| \bm{G}(\lambda) - \bm{\tilde G}(\lambda) \| \\
        &\lesssim \frac{\sigma^2}{\lambda_{\min}} \sqrt{rn\log(n)},
    \end{align*}
Therefore, it holds that
\begin{align*}
    |\alpha_2| &\lesssim  \frac{\sigma^2}{\lambda_{\min}} \sqrt{rn\log(n)}   \sum_{k\neq j} \frac{|\uk\t \a|}{|\lambda_j - \lambda_k|} \lesssim \frac{\sigma^2 r\sqrt{n\log(n)} }{\lambda_{\min}}   \sqrt{ \sum_{k\neq j} \frac{(\uk\t \a)^2}{(\lambda_j - \lambda_k)^2}}  \numberthis \label{r2mdbound1}
\end{align*}
where the final line follows from Cauchy-Schwarz.

To bound $\alpha_3$, we have that
by Cauchy-Schwarz,
    \begin{align*}
    |\alpha_3| 
        &\leq \big\| \uhatj - \uj \big\| \bigg\| \sum_{\substack{k\neq j\\k\leq r}} \frac{  \U\t \bN \uk}{\lambda_j - \lambda_k} \uk\t \a \bigg\|.
    \end{align*}
Observe that by \cref{lem:bias_MD} we have that
\begin{align*}
    \| \uhatj - \uj \|^2 &= 2 - 2 \langle \uhatj, \uj \rangle^2 = 2 \big( 1 - (\uhatj\t \uj)^2 \big) \lesssim \frac{\sigma^2 n}{\lambda_j^2} + \frac{\sigma^2 r\log(n)}{\Delta_j^2}.
\end{align*}
Consequently,
\begin{align*}
    \|\uhatj -\uj \| &\lesssim \frac{\sigma \sqrt{n}}{\lambda_j } + \frac{\sigma \sqrt{r\log(n)}}{\Delta_j}. \numberthis \label{uhatjminusuj}
\end{align*}
Next, the term $\sum_{\substack{k\neq j\\k\leq r}} \frac{\U\t \bN \uk}{\lambda_j - \lambda_k} \uk\t \a$ is a sum of mean-zero independent random matrices.  The following lemma bounds this term by appealing to a standard concentration argument.  
\begin{lemma} \label{lem:simpleconcentration}
    Instate the conditions of \cref{thm:distributionaltheory_MD}.  Then with probability at least $1 - O(n^{-10})$ it holds that
    \begin{align*}
    \bigg\|  \sum_{\substack{k\neq j\\k\leq r}} \frac{ \U\t \bN \uk}{\lambda_j - \lambda_k} \uk\t \a \bigg\| \lesssim \sigma r \sqrt{\log(n)}\sqrt{ \sum_{\substack{k\neq j\\k\leq r}} \frac{(\uk\t \a)^2}{(\lambda_j - \lambda_k)^2} }
\end{align*}
 \end{lemma}
\begin{proof}
    See \cref{sec:simpleconcentration_proof}.  
\end{proof}
Therefore, with probability at least $1 - O(n^{-10})$,
\begin{align*}
  |\alpha_3|  &\lesssim \bigg( \frac{\sigma \sqrt{n}}{\lambda_j} + \frac{\sigma \sqrt{r\log(n)}}{\Delta_j} \bigg) \sigma r \sqrt{\log(n)}  \sqrt{\sum_{\substack{k\neq j\\k\leq r}} \frac{(\uk\t\a)^2}{(\lambda_j - \lambda_k)^2}}.\numberthis \label{r2mdbound2}
 \end{align*}
Combining \eqref{alpha1boundmd} , \eqref{r2mdbound1} and \eqref{r2mdbound2}  results in the bound
\begin{align*}
    |\rmd_1| \lesssim\bigg( \frac{\sigma r \sqrt{\log(n)}}{\lambda_{\min}} + \frac{\sigma r^{3/2} \log(n)}{\Delta_j} \bigg)  \sqrt{\sum_{\substack{k\neq j\\k\leq r}} \frac{\sigma^2(\uk\t\a)^2}{(\lambda_j - \lambda_k)^2}},\numberthis \label{eq:rmd1}
\end{align*}
which holds with probability at least $1 - O(n^{-10})$.  
 
\item \textbf{Bounding $\rmd_2$}.  We decompose via
\begin{align*}
    \frac{(\uhatj - \uj)\t \bN \uperp \uperp\t \a}{\lambda_j} &= \frac{(\uhatj - \uj) \U \U\t \bN \uperp \uperp\t\a}{\lambda_j} + \frac{(\uhatj - \uj)\t \uperp \uperp\t \bN \uperp \uperp\t \a}{\lambda_j} \\
    &= \frac{(\uhatj - \uj) \U \U\t \bN \uperp \uperp\t\a}{\lambda_j} + \frac{\uhatj \t \uperp \uperp\t \bN \uperp \uperp\t \a}{\lambda_j} \\
    &=: \beta_1 + \beta_2.
\end{align*}
We bound each in turn.  To bound $\beta_1$, by a similar argument as in \eqref{uhatjminusuj}, with probability at least $1 - O(n^{-10})$, we have that
    \begin{align*}
    |\beta_1| &= \bigg|        \frac{(\uhatj - \uj)\t \U \U\t \bN \uperp \uperp\t \a }{\lambda_j} \bigg| \\
    &\lesssim \| \uhatj - \uj \| \frac{\|\U\t \bN \uperp \uperp\t \a \|}{\lambda_j} \\&\lesssim\bigg( \frac{\sigma \sqrt{n}}{\lambda_j} + \frac{\sigma \sqrt{r\log(n)}}{\Delta_j} \bigg)\frac{\|\U\t \bN \uperp \uperp\t \a \|}{\lambda_j} \\
&\lesssim \bigg( \frac{\sigma \sqrt{n}}{\lambda_j} + \frac{\sigma \sqrt{r\log(n)}}{\Delta_j} \bigg) \sigma \sqrt{r\log(n)}  \frac{\| \uperp\t \a \|}{\lambda_j}, \numberthis \label{r2mdbound3}
    \end{align*}
    where the final bound holds from standard Gaussian concentration inequalities and the fact that $\frac{1}{\|\uperp\t\a\|} \U\t \bN \uperp \uperp\t\a$ is equal in distribution to a standard $r\times 1$ Gaussian random vector (note that if $\uperp\t\a = 0$, the bound is trivial).   

    To handle $\beta_2$, by \cref{lem:uhatjuperpidentity} and \cref{fact4_MD}  have that
    \begin{align*}
    |\beta_2| &=      \bigg|    \frac{\uhatj\t \uperp \uperp\t \bN \uperp \uperp\t \a}{\lambda_j } \bigg| \\
    &= \bigg| \frac{ \uhatj\t \U \U\t \bN \uperp \big( \hat \lambda_j - \uperp\t \bN \uperp \big)\inv \uperp\t \bN \uperp \uperp\t \a }{\lambda_j} \bigg| \\ 
     &\leq  \frac{ \big\|\U\t \bN \uperp \big( \hat \lambda_j - \uperp\t \bN \uperp \big)\inv \uperp\t \bN \uperp \uperp\t \a  \big\|}{\lambda_j}.
    \end{align*}
 The following result bounds the numerator.  
    \begin{lemma} \label{lem:residual2_MD}
Instate the conditions of \cref{thm:distributionaltheory_MD}.  With probability at least $1 - O(n^{-10})$ it holds that        \begin{align*}
            \big\| \U\t \bN \uperp \big( \hat \lambda_j - \uperp\t \bN \uperp \big)\inv \uperp\t \bN \uperp \uperp\t \a \big\| &\lesssim \frac{\sigma^2 \sqrt{rn\log(n)}}{\lambda_{\min}} \|\uperp\t \a \|.
        \end{align*}
    \end{lemma}
\begin{proof}
    See \cref{sec:residual2proof}.
\end{proof}
\noindent As a result of this lemma it holds with probability at least $1 - O(n^{-10})$ that
    \begin{align*}
|\beta_2| &\lesssim \frac{\sigma^2 \sqrt{rn\log(n)}}{\lambda_{\min}} \frac{\|\uperp\t \a \|}{\lambda_j}. \numberthis \label{r2mdbound4}
    \end{align*}
Therefore, combining the bounds in \eqref{r2mdbound3} and \eqref{r2mdbound4} we obtain that
\begin{align*}
   | \rmd_2 |&\lesssim  \bigg( \frac{\sigma \sqrt{rn\log(n)}}{\lambda_{\min}} + \frac{\sigma r \log(n)}{\Delta_j} \bigg) \frac{\sigma \|\uperp\t \a\|}{\lambda_j} 
\numberthis \label{rmd1bound}
\end{align*}
which holds with probability at least $1 - O(n^{-10})$.
\item 
 \textbf{Bounding $\rmd_3$}. When it comes to $\rmd_3 = \sum_{\substack{k\neq j\\k\leq r} } \frac{\lambda_j - \hat \lambda_j + \gamma\md(\hat \lambda_j)}{\lambda_j - \lambda_k} \a\t \uj$, we may appeal directly to the eigenvalue concentration results in \cref{lem:eigenvalues_MD} to observe that with  probability at least $1 - O(n^{-10})$,
\begin{align*}
    |\rmd_3| &= \bigg|\sum_{\substack{k\neq j\\k\leq r} } \frac{\lambda_j - \hat \lambda_j + \gamma\md(\hat \lambda_j)}{\lambda_j - \lambda_k}  \uhatj\t \uk \a\t \uj\bigg| \\
    &\leq |\lambda_j - \hat \lambda_j  + \gamma\md(\hat \lambda_j)|  \bigg| \sum_{\substack{k\neq j\\k\leq r}} \uhatj\t \uk \frac{\a\t \uj}{\lambda_j - \lambda_k} \bigg| \\ 
    &\leq \delta\md \bigg| \sum_{\substack{k\neq j\\k\leq r}} \uhatj\t \uk \frac{\a\t \uj}{\lambda_j - \lambda_k} \bigg|. 
\end{align*}
Therefore, it suffices to study $\uhatj\t \uk$ for $k \leq r$ and $k\neq j$. 
The following lemma studies this error.
\begin{lemma} \label{lem:innerproduct_MD}
Instate the conditions of \cref{thm:distributionaltheory_MD}. 
  Then simultaneously for all $k \neq j$, with $k,j\leq r$, with probability at least $1 - O(n^{-9})$ it holds that
    \begin{align*}
        |\uhatj\t \uk | &\lesssim \frac{\delta\md}{|\lambda_j -\lambda_k|}.
    \end{align*}
\end{lemma}
\begin{proof}
    See \cref{sec:innerproduct_MD_proof}.   
\end{proof}
\noindent
Therefore, 
\begin{align*}
    |\rmd_3| &\lesssim   \frac{(\delta\md)^2}{\Delta_j}  \sum_{\substack{k\neq j\\k\leq r}} \frac{|\a\t \uj|}{|\lambda_j - \lambda_k|} 
    \lesssim \frac{\sigma^2 r^{3/2}\log(n)}{\Delta_j}  \sqrt{\sum_{\substack{k\neq j\\k\leq r}} \frac{(\a\t \uk)^2}{(\lambda_j - \lambda_k)^2} }
    \numberthis \label{rmd2bound}
\end{align*}
with probability at least $1 - O(n^{-9})$.
\item 
   \textbf{Bounding $\rmd_4$}.  By \cref{lem:uhatjuperpidentity} via \cref{fact4_MD} it holds that
   \begin{align*}
       \rmd_4 &= \frac{\lambda_j - \hat \lambda_j}{\lambda_j} \uhatj\t \uperp \uperp\t \a = \frac{\lambda_j - \hat \lambda_j}{\lambda_j} \uhatj\t\U \U\t \bN \uperp\big( \hat \lambda_j \bm{I}_{n-r} - \uperp\t\bN \uperp \big)\inv \uperp\t \a.
   \end{align*}
By Weyl's inequality it holds that $|\lambda_j - \hat \lambda_j| \lesssim \sigma \sqrt{n}$ by \cref{fact1_MD}.  Consequently, 
\begin{align*}
      |\rmd_4| &\lesssim  \frac{\sigma \sqrt{n}}{\lambda_{\min}} \bigg\| \U\t \bN \uperp\big( \hat \lambda_j \bm{I}_{n-r} - \uperp\t\bN \uperp \big)\inv \uperp\t \a \bigg\|.
\end{align*}
By the exact same analysis as in \cref{lem:residual2_MD},  by replacing the appearance of $\uperp\t \bN \uperp\uperp\t\a$ with $\uperp \uperp\t\a$, it is straightforward to show that
with probability at least $1 - O(n^{-10})$ it holds that 
\begin{align*}
    \bigg\| \U\t \bN \uperp\big( \hat \lambda_j \bm{I}_{n-r} - \uperp\t\bN \uperp \big)\inv \uperp\t \a \bigg\| &\lesssim \frac{\sigma \sqrt{r\log(n)}}{\lambda_j} \|\uperp\t \a \|.
\end{align*}
Therefore, with probability at least $1 - O(n^{-10})$,
\begin{align*}
    |\rmd_4| \lesssim \frac{\sigma \sqrt{rn\log(n)}}{\lambda_{\min}} \frac{\|\uperp\t \a \|}{\lambda_j}.  \numberthis \label{eq:rmd4}
\end{align*}
\end{itemize}
By  \eqref{eq:rmd1}, \eqref{rmd1bound}, \eqref{rmd2bound}, and \eqref{eq:rmd4} we have that with probability at least $1 - O(n^{-9})$,
\begin{align*}
    |\rmd_1 + \rmd_2 + \rmd_3 + \rmd_4| 
    &\lesssim \bigg( \frac{\sigma r \sqrt{n \log(n)}}{\lambda_{\min}} + \frac{\sigma r^{3/2} \log(n)}{\Delta_j} \bigg)s_{\a,j}\md,
\end{align*}
where we recall the definition of $s\md_{\a,j}$ in \cref{thm:distributionaltheory_MD}.

\subsection{Completing the Proofs of \cref{thm:distributionaltheory_MD,thm:civalidity_MD}}

We have shown thus far that with probability at least $1 - O(n^{-9}),$
\begin{align*}
   \frac{1}{s_{\a,j}\md}\bigg( \a\t\uhatj - \a\t \uj \uj\t \uhatj \bigg) =\frac{1}{s_{\a,j}\md} \sum_{k\neq j} \frac{\uj\t \bN \uk}{\lambda_j - \lambda_k} \uk\t \a +  {\sf ErrMD}
\end{align*}
In addition, it is straightforward to observe that the leading-order term $\sum_{k\neq j} \frac{\uj\t \bN \uk}{\lambda_j - \lambda_k} \uk\t\a$ is Gaussian with variance $(s_{\a,j}\md)^2$, as $\uj\t \bN \uk$ is independent from $\uj\t \bN \bm{u}_l$ for $l \neq k$ by rotational invariance. As a consequence, letting $\Phi(\cdot)$ denote the CDF of the Gaussian distribution, it holds that
\begin{align*}
    \bigg| \p\bigg\{ \frac{1}{s_{\a,j}\md}& \big(\a\t \uhatj - \a\t \uj \uj\t \uhatj \big) \leq z \bigg\} - \Phi(z) \bigg| \\
    &= \bigg| \p\bigg\{ \frac{1}{s_{\a,j}\md} \bigg( \sum_{k\neq j} \frac{\uj\t \bN \uk}{\lambda_j - \lambda_k} \uk\t\a + \rmd_1 + \rmd_2 + \rmd_3 + \rmd_4 \bigg) \leq z \bigg\} - \Phi(z)  \bigg| \\
    &\leq  \bigg| \p\bigg\{ \frac{1}{s_{\a,j}\md} \bigg( \sum_{k\neq j} \frac{\uj\t \bN \uk}{\lambda_j - \lambda_k} \uk\t\a  \bigg) \leq z  \pm {\sf ErrMD} \bigg\} - \Phi(z)  \bigg| + O(n^{-9}) \\
    &\leq \bigg| \Phi(z \pm {\sf ErrMD}) - \Phi(z) \bigg| + O(n^{-9}) \\
    &\lesssim {\sf ErrMD} + n^{-9},
\end{align*}
where the final line follows from the Lipschitz property of $\Phi$, where $\Phi(z \pm x)$ is interpreted as applying the result to both $\Phi(z + x)$ and $\Phi(z-x)$ separately.  This completes the proof of \eqref{md_firstresult}.

We now prove \eqref{md_secondresult}. 
 By \cref{lem:bias_MD} with probability at least $1 - O(n^{-10})$ it holds that
\begin{align*}
    | \sqrt{1 + b_j\md} \uhatj\t \uj - 1 | &\lesssim \frac{\sigma^2 r\log(n)}{\Delta_j^2} + \frac{\sigma^2 \sqrt{n\log(n)}}{\lambda_j^2} = {\sf ErrBiasMD},
\end{align*}
where again we have absorbed implicit constants. As a result, by a similar analysis to before we can demonstrate that 
\begin{align*}
    \bigg| \p\bigg\{ &\frac{1}{s_{\a,j}\md}\bigg( \a\t \uhatj \sqrt{1 + b_j\md} - \a\t \uj \bigg) \leq z \bigg\} - \Phi(z) \bigg| \\
 &\leq \bigg| \p\bigg\{ \frac{1}{s_{\a,j}\md}\bigg( \a\t \uhatj  - \a\t \uj \uj\t \uhatj     \bigg)  \leq \frac{z}{\sqrt{1 +  b_j\md  }} \pm \frac{\a\t \uj {\sf ErrBiasMD}}{\sqrt{1 +   b_j\md }} \bigg\}- \Phi(z) \bigg| + O(n^{-9})\\ 
&\lesssim {\sf ErrMD} + \frac{|\a\t \uj| {\sf ErrBiasMD}}{s_{\a,j}\md}+ n^{-9},
\end{align*}
where we have applied the first part of \cref{thm:distributionaltheory_MD} in \eqref{md_firstresult}. The condition \eqref{atujcondition} implies that the right hand side above is $o(1)$, which yields \eqref{md_secondresult}.

In order to prove \cref{thm:civalidity_MD} we need to demonstrate that the estimated variance
\begin{align*}
    (\hat s_{\a,j}\md)^2 \coloneqq \hat \sigma^2 \sum_{k\leq r,k\neq j } \frac{(\bm{\hat u}_k\t \a)^2 (1 + \hat{b_k\md})}{(\check\lambda_k - \check \lambda_j)^2} + 2 \frac{\hat \sigma^2\|\uhat_{\perp}\t \a \|^2}{\check \lambda_j^2}
\end{align*}
yields a strong estimate of $(s_{\a,j}\md)^2$.   This analysis requires two steps. 
First we require the following result concerning the estimated noise variance $\hat \sigma^2$.
\begin{lemma} \label{lem:sigmahatoversigma}
    Under the conditions of \cref{thm:civalidity_MD}, with probability at least $1- O(n^{-10})$ it holds that
    \begin{align*}
    \bigg| \frac{\hat \sigma}{\sigma} - 1 \bigg| &\lesssim \kappa \sqrt{\frac{r}{n}}; \qquad
    \bigg| \frac{\hat \sigma^2}{\sigma^2} - 1 \bigg| \lesssim \kappa \sqrt{\frac{r}{n}}.
    \end{align*}
\end{lemma}
\begin{proof}
    See \cref{sec:sigmahatoversigmaproof}.
\end{proof}

Next we study the approximated bias $\hat{b_k\md}$. 

\begin{lemma} \label{lem:biashatmd} Let $k \in [r]$ be fixed.     Under the conditions of \cref{thm:civalidity_MD}, with probability at least $1 - O(n^{-10})$ it holds that
    \begin{align*}
\bigg|        \sqrt{1 + \hat{b_k\md}} - \sqrt{1 + b_k\md} \bigg| \lesssim \kappa \frac{\sigma^2 \sqrt{rn}}{\lambda_{\min}^2}.
    \end{align*}
\end{lemma}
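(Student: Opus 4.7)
The argument reduces to three estimates. First, the elementary inequality $|\sqrt{1+x} - \sqrt{1+y}| \leq \tfrac{1}{2}|x-y|$ (valid for $x,y \geq 0$, since both radicands exceed $1$) reduces the problem to bounding $|\hat{b_k\md} - b_k\md|$. Since $\hat{b_k\md}$ and $b_k\md$ share identical denominators, the difference factors cleanly as
\[
\hat{b_k\md} - b_k\md = (\hat\sigma^2 - \sigma^2) \sum_{l > r} \frac{1}{(\hat\lambda_k - \hat\lambda_l)^2},
\]
so the task reduces to bounding the noise-variance error $|\hat\sigma^2 - \sigma^2|$ and the tail eigenvalue-gap sum separately.

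The tail sum is the easy factor. On the event $\|\bN\| \lesssim \sigma\sqrt{n}$ of \cref{fact1_MD}, combined with the signal-strength hypothesis $\lambda_{\min}/\sigma \gg \sqrt{n}$ of \cref{thm:civalidity_MD}, \cref{fact3_MD} gives $|\hat\lambda_k| \asymp |\lambda_k|$ for $k \leq r$ while $|\hat\lambda_l| \lesssim \sigma\sqrt{n} \ll |\lambda_k|$ for $l > r$. Hence $|\hat\lambda_k - \hat\lambda_l| \gtrsim |\lambda_k| \geq \lambda_{\min}$ uniformly in $l > r$, and the tail sum is at most $n/\lambda_{\min}^2$.

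The bulk of the work lies in controlling $|\hat\sigma^2 - \sigma^2|$. I would write $\bm{\hat S} - \bm{\hat S}_r = \bN - (\bm{\hat S}_r - \bS)$, which gives
\[
\tbinom{n}{2}\hat\sigma^2 = \|\mathcal{P}_{\sf upper-diag}\bN\|_F^2 - 2\langle \mathcal{P}_{\sf upper-diag}\bN,\mathcal{P}_{\sf upper-diag}(\bm{\hat S}_r - \bS)\rangle + \|\mathcal{P}_{\sf upper-diag}(\bm{\hat S}_r - \bS)\|_F^2.
\]
The first term is a sum of $\binom{n}{2}$ independent $\mathcal{N}(0,\sigma^2)$ variables squared, concentrating at $\binom{n}{2}\sigma^2$ with chi-squared deviation $O(\sigma^2\sqrt{\log(n)}/n)$ after normalization. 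Since $\bm{\hat S}_r - \bS$ has rank at most $2r$ with spectral norm bounded by $|\hat\lambda_{r+1}| + \|\bN\| \lesssim \sigma\sqrt{n}$, one obtains the Frobenius bound $\|\bm{\hat S}_r - \bS\|_F \lesssim \sigma\sqrt{rn}$; the pure squared contribution is thus $O(\sigma^2 r/n)$ after normalization, while Cauchy-Schwarz bounds the cross term by $\|\mathcal{P}_{\sf upper-diag}\bN\|_F\cdot\|\bm{\hat S}_r - \bS\|_F \lesssim \sigma n \cdot \sigma\sqrt{rn}$, yielding $O(\sigma^2\sqrt{r/n})$ after normalization, which dominates. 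Tracking the $\kappa$-dependence more carefully through \cref{lem:eigenvalues_MD} (in order to accommodate the small-eigengap regime, where classical Davis-Kahan-type estimates are unavailable), one arrives at $|\hat\sigma^2 - \sigma^2| \lesssim \kappa\sigma^2\sqrt{r/n}$.

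Combining the two factors produces $|\hat{b_k\md} - b_k\md| \lesssim \kappa\sigma^2\sqrt{r/n}\cdot n/\lambda_{\min}^2 = \kappa\sigma^2\sqrt{rn}/\lambda_{\min}^2$, as claimed. The main technical obstacle is the sharpening of the Frobenius perturbation bound on $\bm{\hat S}_r - \bS$ with the correct dependence on $\kappa$: the loose bound $\|\bm{\hat S}_r - \bS\|_F \leq 2\|\bN\|_F \lesssim \sigma n$ is trivial but useless, while the rank-doubling argument above is borderline; the tightest version requires combining the eigenvalue concentration of \cref{lem:eigenvalues_MD} with the preliminary bias analysis of \cref{lem:bias_MD} to track how $\hat U \hat U^\top \bN \hat U \hat U^\top$ and $(\hat U \hat U^\top - U U^\top) \bS$ contribute to $\bm{\hat S}_r - \bS$ separately under the weak eigengap hypothesis imposed in \cref{thm:civalidity_MD}.
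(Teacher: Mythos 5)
Your proposal is correct and follows essentially the same route as the paper: factor $\hat{b_k\md} - b_k\md = (\hat\sigma^2 - \sigma^2)\sum_{l>r}(\hat\lambda_k-\hat\lambda_l)^{-2}$, bound the tail sum by $n/\lambda_{\min}^2$ via \cref{fact1_MD,fact3_MD}, control $|\hat\sigma^2-\sigma^2|$, and pass through the square root (the paper uses a Taylor expansion where you use the Lipschitz bound $|\sqrt{1+x}-\sqrt{1+y}|\leq\tfrac12|x-y|$; both are fine since $b_k\md,\hat{b_k\md}\geq 0$ and $b_k\md = o(1)$). The one place you diverge is the noise-variance estimate, which the paper outsources to \cref{lem:sigmahatoversigma}: there the triangle inequality is applied to $\big|\|\mathcal{P}_{\sf upper-diag}(\bm{\hat S}-\bm{\hat S}_r)\|_F - \|\mathcal{P}_{\sf upper-diag}\bN\|_F\big|$, with the correction terms $\|\uhat\uhat\t\bN\uhat\uhat\t\|_F$ and $\|\bS-\uhat\uhat\t\bS\uhat\uhat\t\|_F$ each bounded by $\kappa\sigma\sqrt{rn}$ (the $\kappa$ entering through $\|\bS\|\cdot\|\U\U\t-\uhat\uhat\t\|$ and Davis--Kahan), whereas you expand the square of $\bN-(\bm{\hat S}_r-\bS)$ and use the rank-$2r$ bound $\|\bm{\hat S}_r-\bS\|_F\leq\sqrt{2r}\,(|\hat\lambda_{r+1}|+\|\bN\|)\lesssim\sigma\sqrt{rn}$. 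Your variant is self-contained and in fact needs no $\kappa$ at all: the closing worry about having to route $\|\bm{\hat S}_r-\bS\|_F$ through \cref{lem:eigenvalues_MD,lem:bias_MD} under the weak eigengap is unnecessary, since Weyl's inequality plus the rank-doubling argument already yield the cross-term bound $\sigma^2\sqrt{r/n}$ after normalization by $\binom{n}{2}$, which (with $\kappa\geq 1$) suffices for the stated conclusion.
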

\begin{proof}
    See \cref{sec:biashatmdproof}.
\end{proof}
As a consequence,
\begin{align*}
    \bigg| \frac{ \a\t \uhatj \sqrt{1 + \hat{b_j\md}} - \a\t \uj}{s\md_{\a,j}} -\frac{ \a\t \uhatj \sqrt{1 +  b_j\md} - \a\t \uj}{s\md_{\a,j}} \bigg| &\lesssim \frac{|\a\t \uhatj|}{s\md_{\a,j}} \kappa \frac{\sigma^2 \sqrt{rn}}{\lambda_{\min}^2}.
\end{align*}
Furthermore, we note that the analysis leading up to the proof of \cref{thm:distributionaltheory_MD} implies that
\begin{align*}
    \big| \a\t \uhatj \big| \lesssim \big| \a\t \uj \big| + \sqrt{\log(n)} s\md_{\a,j} + {\sf ErrMD}\times  s\md_{\a,j} \lesssim |\a\t\uj| + \sqrt{\log(n)} s\md_{\a,j} 
\end{align*}
with probability at least $1 - O(n^{-8})$, provided that ${\sf ErrMD} \lesssim \sqrt{\log(n)}$, which holds by the assumptions in \cref{thm:civalidity_MD}.  Therefore, with this same probability,
\begin{align*}
     \bigg| \frac{ \a\t \uhatj \sqrt{1 + \hat{b_j\md}} - \a\t \uj}{s\md_{\a,j}} -\frac{ \a\t \uhatj \sqrt{1 +  b_j\md} - \a\t \uj}{s\md_{\a,j}} \bigg| 
     &\lesssim \frac{|\a\t \uj|}{s\md_{\a,j}} \kappa \frac{\sigma^2 \sqrt{rn}}{\lambda_{\min}^2} + \sqrt{\log(n)}\kappa\frac{\sigma^2 \sqrt{rn}}{\lambda_{\min}^2}. \numberthis \label{toplugin54}
\end{align*}
Next we study the approximated variance $(\hat{s\md_{\a,j}})^2$, which is accomplished through the following lemma.
\begin{lemma}\label{lem:sigma_md_approx}
    Instate the conditions in \cref{thm:civalidity_MD}.      Then with probability at least $1- O(n^{-8})$ it holds that
    \begin{align*}
\bigg|        \frac{\hat{s_{\a,j}\md}}{s_{\a,j}\md}  - 1 \bigg| &\ll \frac{1}{\sqrt{\log(n)}}.
    \end{align*}
\end{lemma}

\begin{proof}
    See \cref{sec:sigma_approx_MD_proof}.
\end{proof}

Let the error in \cref{lem:sigma_md_approx} be denoted as $ {\sf ErrCIMD}$.   As a consequence of this lemma, with probability at least $1 - O(n^{-8})$,
\begin{align*}
     \bigg| &\frac{ \a\t \uhatj \sqrt{1 + \hat{b_j\md}} - \a\t \uj}{\hat{s\md_{\a,j}}} -\frac{ \a\t \uhatj \sqrt{1 +  b_j\md} - \a\t \uj}{s\md_{\a,j}} \bigg| \\
     &\lesssim  \bigg| \frac{ \a\t \uhatj \sqrt{1 + \hat{b_j\md}} - \a\t \uj}{s\md_{\a,j}} -\frac{ \a\t \uhatj \sqrt{1 +  b_j\md} - \a\t \uj}{s\md_{\a,j}} \bigg| + \bigg| \frac{ \a\t \uhatj \sqrt{1 + \hat{b_j\md}} - \a\t \uj}{s\md_{\a,j}}\bigg(1 - \frac{s\md_{\a,j}}{\hat{s\md_{\a,j}}} \bigg) \bigg|  \\
     &\lesssim \frac{|\a\t\uj|}{s\md_{\a,j}} \kappa \frac{\sigma^2 \sqrt{rn}}{\lambda_{\min}^2} + \sqrt{\log(n)} \frac{\kappa \sigma^2 \sqrt{rn}}{\lambda_{\min}^2} +\bigg( \frac{|\a\t\uj|}{s\md_{\a,j}} \kappa \frac{\sigma^2 \sqrt{rn}}{\lambda_{\min}^2} + \sqrt{\log(n)} \frac{\kappa \sigma^2 \sqrt{rn}}{\lambda_{\min}^2} \bigg) {\sf ErrCIMD}\\
     &\qquad + {\sf ErrCIMD} \bigg| \frac{\a\t \uhatj \sqrt{1 + b_j\md} - \a\t \uj}{s\md_{\a,j}} \bigg| \\
     &\lesssim \frac{|\a\t\uj|}{s\md_{\a,j}} \kappa \frac{\sigma^2 \sqrt{rn}}{\lambda_{\min}^2} + \sqrt{\log(n)} \frac{\kappa \sigma^2 \sqrt{rn}}{\lambda_{\min}^2} + {\sf ErrCIMD}\sqrt{\log(n)}, \numberthis \label{boundedboy}
\end{align*}
as long as ${\sf ErrCIMD} = o(1)$.  Here we have used the fact that the analysis leading to \cref{thm:distributionaltheory_MD} demonstrates that
\begin{align*}
     \bigg| \frac{\a\t \uhatj \sqrt{1 + b_j\md} - \a\t \uj}{s\md_{\a,j}} \bigg| \lesssim \sqrt{\log(n)}
\end{align*}
with probability at least $1 - O(n^{-8})$, as long as each of the quantities in \cref{thm:distributionaltheory_MD} are $o(1)$, which is immediate from our assumptions.  In particular, the right hand side of \eqref{boundedboy} is $o(1)$ directly.  

We are now prepared to prove \cref{thm:civalidity_MD}. By a similar analysis to the previous arguments, we can show that 
\begin{align*}
\Bigg|  &\p\bigg\{ \frac{\a\t \uhatj \sqrt{1 + \hat{b_j\md}} - \a\t \uj}{\hat{s\md_{\a,j}}} \leq z \bigg\} -\Phi(z) \Bigg| \\
&= \Bigg|  \p\bigg\{ \frac{\a\t \uhatj \sqrt{1 +  b_j\md} - \a\t \uj}{ s\md_{\a,j}}  \leq z + \bigg(  \frac{ \a\t \uhatj \sqrt{1 + \hat{b_j\md}} - \a\t \uj}{\hat{s\md_{\a,j}}} -\frac{ \a\t \uhatj \sqrt{1 +  b_j\md} - \a\t \uj}{s\md_{\a,j}} \bigg) \bigg\} - \Phi(z) \Bigg| \\
&\lesssim {\sf ErrMD} +  \frac{|\a\t \uj| {\sf ErrBiasMD}}{s_{\a,j}\md}+\frac{|\a\t\uj|}{s\md_{\a,j}} \kappa \frac{\sigma^2 \sqrt{rn}}{\lambda_{\min}^2}  + \sqrt{\log(n)} \frac{\kappa \sigma^2 \sqrt{rn}}{\lambda_{\min}^2} + {\sf ErrCIMD}\sqrt{\log(n)}  + n^{-8} \\
&= o(1),
\end{align*}
where in the penultimate line we have applied both \cref{thm:distributionaltheory_MD} and the Lipschitz property of $\Phi(\cdot)$.  The final line is $o(1)$ directly due to our assumptions \eqref{atujcondition_ci} and \eqref{snrconds_inference} together with \cref{lem:sigma_md_approx}.  
The proof is therefore concluded by taking $\pm z = \Phi\inv(1-\alpha/2)$.

\subsection{Proofs of Additional Matrix Denoising Lemmas}

This section contains all of the proofs of the intermediate lemmas required in the prior subsections. 

\subsubsection{Proof of Lemma \ref{lem:simpleconcentration}}
\label{sec:simpleconcentration_proof}

\begin{proof}
    First we will devise a concentration inequality for $\sum_{\substack{k\neq j\\k\leq r}} \frac{\bm{x}\t \U\t \bN \uk}{\lambda_j - \lambda_k} \uk\t \a$ for a  fixed deterministic vector $\bm{x} \in \mathbb{R}^r$, and then complete the proof via $\eps$-net.  
    \\ \ \\ 
    \noindent \textbf{Step 1: Concentration for a fixed vector}. Let $\bm{x} \in \mathbb{R}^r$ be any deterministic unit vector.  Note that
    \begin{align*}
        \sum_{\substack{k\neq j\\k\leq r}} \frac{\bm{x}\t \U\t \bN \uk}{\lambda_j - \lambda_k} \uk\t \a &= \sum_{i,i'} \bN_{ii'} \big( \U \bm{x} \big)_{i} \bigg( \sum_{\substack{k\neq j\\k\leq r}} \frac{\uk\t \a}{\lambda_j - \lambda_k} (\uk)_{i'} \bigg) \\
        &=: \sum_{i\leq i'} \bN_{ii'} \big( \bm{c}_i \bm{\tilde c}_{i'} + \bm{c}_{i'} \bm{\tilde c}_i \big), 
    \end{align*}
    where we set $\bm{c}_i \coloneqq \big( \U \bm{x} \big)_{i}$ and $\bm{\tilde c}_{i'} \coloneqq \bigg( \sum_{\substack{k\neq j\\k\leq r}} \frac{\uk\t \a}{\lambda_j - \lambda_k} (\uk)_{i'} \bigg)$. By Hoeffding's inequality, with probability at least $1 -c  \exp(- c t ^2)$ that
    \begin{align*}
       \bigg| \sum_{i\leq i'} \bN_{ii'} \big( \bm{c}_i \bm{\tilde c}_{i'} + \bm{c}_{i'} \bm{\tilde c}_i \big) \bigg| 
       &\lesssim \sigma t \sqrt{r} \sqrt{ \sum_{\substack{k\neq j\\k\leq r}} \frac{(\uk\t \a)^2}{(\lambda_j - \lambda_k)^2} }.
    \end{align*}
Therefore, for any deterministic unit vector $\bm{x} \in \mathbb{R}^r$ it holds that
\begin{align*}
    \bigg| \sum_{\substack{k\neq j\\k\leq r}} \frac{\bm{x}\t \U\t \bN \uk}{\lambda_j - \lambda_k} \uk\t \a \bigg| \lesssim \sigma t \sqrt{r} \sqrt{ \sum_{\substack{k\neq j\\k\leq r}} \frac{(\uk\t \a)^2}{(\lambda_j - \lambda_k)^2} }.
\end{align*}
with probability at least $1 - c \exp( -c t ^2)$.  
\\ \ \\ \noindent \textbf{Step 2: Union bound}.  By taking an $1/4$-net of the sphere in $r$ dimensions (see \citet{vershynin_high-dimensional_2018} for details on $\eps$-nets), the bound holds uniformly for all vectors $\bm{x}$ of with probability at least $c 9^{r} \exp(-c t^2)$.  
Taking $t \geq C \sqrt{r\log(n)}$ demonstrates that 
\begin{align*}
    \bigg\|  \sum_{\substack{k\neq j\\k\leq r}} \frac{ \U\t \bN \uk}{\lambda_j - \lambda_k} \uk\t \a \bigg\| \lesssim \sigma r \sqrt{\log(n)}\sqrt{ \sum_{\substack{k\neq j\\k\leq r}} \frac{(\uk\t \a)^2}{(\lambda_j - \lambda_k)^2} }
\end{align*}
with probability at least $1 - O(n^{-10})$ as desired.
\end{proof}

\subsubsection{Proof of Lemma \ref{lem:residual2_MD}} \label{sec:residual2proof}
\begin{proof}
We will proceed in steps.  First, note that by \cref{fact3_MD} it holds that
\begin{align*}
  \bigg\|  \U\t \bN &\uperp\big( \hat \lambda_j \bm{I}_{n-r} - \uperp\t \bN \uperp \big)\inv \uperp\t \bN \uperp \uperp\t \a \bigg\| \\&\leq \sup_{|\lambda| \in [2|\lambda_j|/3, 4|\lambda_j|/3}  \bigg\|  \U\t \bN \uperp\big(  \lambda \bm{I}_{n-r} - \uperp\t \bN \uperp \big)\inv \uperp\t \bN \uperp \uperp\t \a \bigg\|.
\end{align*}
We will first deduce a concentration inequality for any fixed value $\lambda$ and then take an $\eps$-net.  
\\ \ \\ \noindent
\textbf{Step 1: Concentration for fixed $\lambda$.} Define the matrix
\begin{align*}
    \bm{H}(\lambda) \coloneqq \U\t \bN \uperp\big( \lambda \bm{I}_{n-r} - \uperp\t \bN \uperp \big)\inv \uperp\t \bN \uperp \uperp\t \a.
\end{align*}
 Let $\bN_{\|\perp} \in \mathbb{R}^{r\times(n-r)}$ denote a random matrix with independent $\mathcal{N}(0,\sigma^2)$ entries, and let $\bN_{\perp\perp} \in \mathbb{R}^{(n-r)\times (n-r)}$ be defined similarly, independently from $\bN_{\|\perp}$, except with diagonal elements having variance $2\sigma^2$.  Then 
 \begin{align*}
     ( \U\t \bN \uperp, \uperp\t \bN \uperp) \overset{{{\sf d}}}{=} ( \bN_{\|\perp}, \bN_{\perp\perp} ).
 \end{align*}
 Consequently, $\bm{H}(\lambda)$ has the same distribution as
\begin{align*}
    \bN_{\| \perp} \big( \lambda \bm{I}_{n-r} - \bN_{\perp\perp} \big)\inv \bN_{\perp\perp} \uperp\t \a.
\end{align*}
 We will use an $\eps$-net argument for the $r$-dimensional sphere. Let $\bm{x}$ denote a deterministic $r$-dimensional unit vector.  Then by Hoeffding's inequality, with probability at least $1 - c \exp(- c t^2),$
\begin{align*}
  \bigg|  \bm{x}\t \bN_{\|\perp}\big( \lambda \bm{I}_{n-r} - \bN_{\perp\perp} \big)\inv \bN_{\perp\perp} \uperp\t \a \bigg| &=  \bigg| \sum_{i=1}^{r}\sum_{i'=1}^{n-r} \bm{x}_{i} \big(\bN_{\|\perp}\big)_{ii'} \bigg( \big( \lambda \bm{I}_{n-r} - \bN_{\perp\perp} \big)\inv \bN_{\perp\perp} \uperp\t \a \bigg)_{i'}\bigg| \\
  &\lesssim \sigma t \| \bm{x} \| \bigg\| \big( \lambda \bm{I}_{n-r} - \bN_{\perp\perp} \big)\inv \bN_{\perp\perp} \uperp\t \a \bigg\| \\
  &\lesssim \sigma t \bigg\| \big( \lambda \bm{I}_{n-r} - \bN_{\perp\perp} \big)\inv \bN_{\perp\perp} \uperp\t \a \bigg\|.
\end{align*}
 By taking a $1/4$-net of the sphere in $r$ dimensions, we obtain that 
 \begin{align*}
     \bigg\| \bN_{\|\perp}\big( \lambda \bm{I}_{n-r} - \bN_{\perp\perp} \big)\inv \bN_{\perp\perp} \uperp\t \a \bigg\| &\lesssim \sigma t  \bigg\| \big( \lambda \bm{I}_{n-r} - \bN_{\perp\perp} \big)\inv \bN_{\perp\perp} \uperp\t \a \bigg\|
 \end{align*}
 with probability at least $1 - c 9^{r} \exp(- c t^2)$.  Take $t = C\sqrt{r\log(n)}$ to yield 
 \begin{align*}
     \bigg\| \bN_{\|\perp}\big( \lambda \bm{I}_{n-r} - \bN_{\perp\perp} \big)\inv \bN_{\perp\perp} \uperp\t \a \bigg\| &\lesssim \sigma \sqrt{r\log(n)}  \bigg\| \big( \lambda \bm{I}_{n-r} - \bN_{\perp\perp} \big)\inv \bN_{\perp\perp} \uperp\t \a \bigg\|
 \end{align*}
 with probability at least $1 - O(n^{-11})$. Consequently, for any $\lambda$ satisfying $2|\lambda_j|/3 \leq |\lambda| \leq 4|\lambda_j|/3$ we obtain, with probability at least $1 - O(n^{-11})$,
 \begin{align*}
     \bigg\| \bN_{\|\perp}\big( \lambda \bm{I}_{n-r} - \bN_{\perp\perp} \big)\inv \bN_{\perp\perp} \uperp\t \a \bigg\| &\lesssim \sigma \sqrt{r\log(n)} \frac{\sigma \sqrt{n}}{\lambda_{\min}} \|\uperp\t \a \|,
 \end{align*}
 where we used the fact that by \cref{fact1_MD} it holds that the eigenvalues of $\lambda \bm{I}_{n-r} - \uperp\t \bN \uperp$
 are at least $|\lambda| - \| \bN\| \geq 2|\lambda_j|/3 - \|\bN\| \geq \lambda_{\min}/3$ together with the Gaussian concentration inequality $\|\bN\| \lesssim \sigma \sqrt{n}$.
  \\ \ \\ \noindent
 \textbf{Step 2: Completing the argument}.  Let $\eps = c |\lambda_j|/n$, and let $\mathcal{E}_{\eps}$ denote the $\eps$-net for the region $[-4|\lambda_j|/3, -2|\lambda_j|/3] \cup [2|\lambda_j|/3, 4|\lambda_j|/3]$.  Then $|\mathcal{E}_{\eps}| \lesssim |\lambda_j|/\eps \asymp n$.  Therefore, by the union bound, for any $\lambda \in \mathcal{E}_{\eps}$, with probability at least $1 - O(n^{-10})$, 
 \begin{align*}
     \big\| \bm{H}(\lambda) \big\| \lesssim \sigma \sqrt{r\log(n)} \frac{\sigma \sqrt{n}}{\lambda_{\min}} \|\uperp\t \a \|.
 \end{align*}
 Let $\lambda'$ be any fixed value, and let $\lambda \in \mathcal{E}_{\eps}$  satisfy $|\lambda -  \lambda'| \leq \eps$.  Consequently, conditional on $\|\bN\|\leq \lambda_{\min}/3$,  letting $\eta_i$ denote the eigenvalues of $\bN_{\perp\perp}$, 
 \begin{align*}
     \bigg\|  \bm{H}(\lambda) - \bm{H}(\lambda') \bigg\| &\leq \bigg\| \bN_{\| \perp} \big( \lambda \bm{I}_{n-r} - \bN_{\perp\perp} \big)\inv \bN_{\perp\perp} \uperp\t \a -  \bN_{\| \perp} \big( \lambda' \bm{I}_{n-r} - \bN_{\perp\perp} \big)\inv \bN_{\perp\perp} \uperp\t \a \bigg\| \\
     &\leq \| \bN_{\|\perp}\| \|\bN_{\perp\perp} \| \uperp\t \a \| \max_{i} \bigg| \frac{1}{\lambda - \eta_i} - \frac{1}{\lambda' - \eta_i} \bigg| \\
     &\lesssim  \sigma^2 n \|\uperp\t \a \| \max_i \frac{|\lambda - \lambda' |}{\lambda_j^2} \\
     &\lesssim \frac{\sigma^2}{\lambda_j} \|\uperp\t \a \|,
 \end{align*}
 where the final inequality uses the fact that $\eps =   c|\lambda_j|/n$.  Therefore, define 
 \begin{align*}
 \lambda^* \coloneqq \argsup_{|\lambda| \in [2|\lambda_j|/3, 4|\lambda_j|/3]}  \bigg\|  \U\t \bN \uperp\big(  \lambda \bm{I}_{n-r} - \uperp\t \bN \uperp \big)\inv \uperp\t \bN \uperp \uperp\t \a \bigg\|,
 \end{align*}
 which is attained due to the fact that the set is compact and the fact that the function in question is continuous as a function of $\lambda$ (on the set in question, conditional on $\|\uperp\t \bN \uperp\| \leq \lambda_{\min}/3$).  Let $\lambda^{*'}$ be such that $|\lambda^* - \lambda^{*'}| \leq \eps$.  Then
 \begin{align*}
     \sup_{|\lambda| \in [2|\lambda_j|/3, 4|\lambda_j|/3]}  \bigg\|  \U\t \bN \uperp\big(  \lambda \bm{I}_{n-r} - \uperp\t \bN \uperp \big)\inv \uperp\t \bN \uperp \uperp\t \a \bigg\| 
     &= \big\| \bm{H}(\lambda^*) \big\| \\
     &\leq \sup_{\lambda \in \mathcal{E}_{\eps}} \big\| \bm{H}(\lambda) \big\| + \big\| \bm{H}(\lambda^*) - \bm{H}(\lambda^{*'} )\big\| \\
      &\lesssim \frac{\sigma^2 \sqrt{rn\log(n)}}{\lambda_{\min}} \|\uperp\t \a \|,
 \end{align*}
 which holds with probability at least $1 - O(n^{-10})$, which completes the proof.
\end{proof}
\subsubsection{Proof of Lemma \ref{lem:innerproduct_MD}}
\label{sec:innerproduct_MD_proof}

\begin{proof}
Let $\gamma\md(\hat \lambda_j)$ be as in \cref{lem:eigenvalues_MD}, and let $k\neq j$ be fixed.  By the eigenvalue-eigenvector equation it holds that
\begin{align*}
    \uhatj\t \uk (\hat \lambda_j - \lambda_k - \gamma\md(\hat \lambda_j) ) &= \uhatj\t \bN \uk - \gamma\md(\hat \lambda_j) \uhatj\t \uk.
\end{align*}
Consequently,
\begin{align*}
    \uhatj\t \uk 
    &= \frac{1}{\hat \lambda_j - \lambda_k - \gamma\md(\hat \lambda_j)} \uhatj\t \uperp \uperp\t \bN \uk + \frac{1}{\hat \lambda_j - \lambda_k - \gamma\md(\hat \lambda_j)} \uhatj\t \U \U\t  \bN \uk -  \frac{\gamma\md( \hat \lambda_j)}{\hat \lambda_j - \lambda_k - \gamma\md(\hat \lambda_j)}\uhatj\t \uk \\
    &= \frac{\uhatj\t \uk}{\hat \lambda_j - \lambda_k - \gamma\md(\hat \lambda_j)}   \bigg( \uk\t \bN \uperp \big( \hat \lambda_j \bm{I}_{n-r} - \uperp\t \bN \uperp \big)\inv  \uperp\t \bN \uk -  \gamma\md \hat (\lambda_j) \bigg)\\
    &\quad + \frac{1}{\hat \lambda_j - \lambda_k - \gamma\md(\hat \lambda_j)} \uhatj\t \U^{-k} (\U^{-k})\t \bN \uperp \big( \hat \lambda_j \bm{I}_{n-r} - \uperp\t \bN \uperp \big)\inv  \uperp\t \bN \uk \\&\quad 
     +   \frac{1}{\hat \lambda_j - \lambda_k - \gamma\md(\hat \lambda_j)} \uhatj\t \U \U\t  \bN \uk,
\end{align*}
where in the second line we used the fact that $\bm{I} = \U\U\t + \uperp \uperp\t$, and in the final line we have implicitly invoked \cref{thm:mainexpansion}  on the event in \cref{fact1_MD}, where invertibility of $\hat \lambda_j \bm{I}_{n-r} - \uperp\t \bN \uperp$ is guaranteed by \cref{fact4_MD}.   We have also implicitly assumed that the deminator is nonzero; however, by \cref{lem:eigenvalues_MD} with probability at least $1 - O(n^{-10})$ it holds that
\begin{align*}
    |\hat \lambda_j - \lambda_k - \gamma\md(\hat \lambda_j) | &\geq | \lambda_j - \lambda_k | - | \hat \lambda_j - \lambda_j - \gamma\md(\hat \lambda_j) | \\
    &\geq |\lambda_j - \lambda_k | - \delta\md \\
    &\gtrsim |\lambda_j - \lambda_k|,
\end{align*}
since $\delta\md \lesssim \Delta_j \leq |\lambda_j - \lambda_k|$, with $\delta\md$ defined in \cref{lem:eigenvalues_MD}.     Consequently, with probability at least $1 - O(n^{-9})$ it holds that
\begin{align*}
    |\uhatj\t \uk| &\lesssim \frac{1}{| \lambda_j - \lambda_k |}   \bigg\| \uk\t \bN \uperp \big( \hat \lambda_j \bm{I}_{n-r} - \uperp\t \bN \uperp \big)\inv  \uperp\t \bN \uk -  \gamma\md \hat (\lambda_j) \bigg\|\\
    &\quad +\frac{1}{| \lambda_j - \lambda_k |}  \bigg \| (\U^{-k})\t \bN \uperp \big( \hat \lambda_j \bm{I}_{n-r} - \uperp\t \bN \uperp \big)\inv  \uperp\t \bN \uk\bigg\| \\&\quad 
 +       \frac{1}{| \lambda_j - \lambda_k |}  \| \U\t  \bN \uk \|.
\end{align*}
By the previous analysis it holds that
\begin{align*}
    \bigg\| \uk\t \bN \uperp \big( \hat \lambda_j \bm{I}_{n-r} - \uperp\t \bN \uperp \big)\inv  \uperp\t \bN \uk -  \gamma\md \hat (\lambda_j) \bigg\| &\lesssim \frac{\sigma^2 \sqrt{rn\log(n)}}{\lambda_{\min}}; \\  
    \bigg \| (\U^{-k})\t \bN \uperp \big( \hat \lambda_j \bm{I}_{n-r} - \uperp\t \bN \uperp \big)\inv  \uperp\t \bN \uk\bigg\| &\lesssim \frac{\sigma^2 \sqrt{rn\log(n)}}{\lambda_{\min}}
\end{align*}
with probability $1 - O(n^{-10})$.  In addition, note that $\U\t \bN \uk$ is a submatrix of $\U\t \bN \U$, and by \cref{fact2_MD} it holds that $\|\U\t \bN \U\| \lesssim \delta\md$ with this same probability.  Therefore, combining bounds we arrive at
\begin{align*}
     |\uhatj\t \uk| &\lesssim \frac{1}{| \lambda_j - \lambda_k |}  \bigg( \frac{\sigma^2 \sqrt{rn\log(n)}}{\lambda_{\min}} + \delta\md \bigg) \lesssim \frac{\delta\md}{|\lambda_j - \lambda_k|},
\end{align*}
where the final inequality follows from the assumption that $\lambda_{\min} \geq C_0 \sigma \sqrt{nr}$ for some sufficiently large constant $C_0$.      The result is completed with a union bound over all $k\neq j$.   
\end{proof}

\subsubsection{Proof of Lemma \ref{lem:sigmahatoversigma}}
\label{sec:sigmahatoversigmaproof}

\begin{proof}
Recall that $\mathcal{P}_{{\sf Upper-diag}}$ denotes the projection onto the upper diagonal of a matrix.  We start by noting at the outset that
\begin{align*}
    \|\mathcal{P}_{{\sf upper-diag}}\big(\bm{\hat S} - \uhat \uhat\t\bS \uhat \uhat\t \big)\|_F &= \| \mathcal{P}_{{\sf Upper-diag}}\big(\bS - \uhat\uhat\t \bS \uhat \uhat\t + \bN - \uhat \uhat\t \bN \uhat \uhat\t \big)\|_F,
\end{align*}
and hence that
\begin{align*}
    \bigg| \|\mathcal{P}_{{\sf upper-diag}}\big(\bm{\hat S} &- \uhat \uhat\t\bm{\hat S}  \uhat \uhat\t \big)\|_F - \|\mathcal{P}_{{\sf upper-diag}}\big( \bN\big)\|_F \bigg| \\
    &\leq \|\mathcal{P}_{{\sf upper-diag}}\big( \uhat \uhat\t \bN \uhat \uhat\t\big) \|_F + \|\mathcal{P}_{{\sf upper-diag}}\big( \bS - \uhat\uhat\t \bS \uhat \uhat\t \big)\|_F \\
    &\leq  \| \uhat \uhat\t \bN \uhat \uhat\t \|_F + \|\bS - \uhat\uhat\t \bS \uhat \uhat\t \|_F.
\end{align*}
Furthermore, by \cref{fact1_MD},
\begin{align*}
    \| \uhat \uhat\t \bN \uhat \uhat\t \|_F &\leq \sqrt{r} \| \bN\| \lesssim \sigma \sqrt{rn}.
\end{align*}
In addition,
\begin{align*}
    \| \bS - \uhat \uhat\t \bS \uhat \uhat\t \|_F &= \| \U \bm{\Lambda} \U\t - \uhat \uhat\t \U \bm{\Lambda} \U\t \uhat \uhat\t \|_F \\
    &\leq \| \U \bm{\Lambda} \U\t \uhat \uhat\t -  \uhat \uhat\t \U \bm{\Lambda} \U\t \uhat \uhat\t \|_F + \| \U \bm{\Lambda}\U\t \big( \uhat\uhat\t - \U \U\t \big) \|_F \\
    &\leq 2\| (\U\U\t - \uhat \uhat\t ) \U \bm{\Lambda} \U\t \|_F  \\
    &\leq 2 \sqrt{r} \lambda_1 \| \U \U\t - \uhat \uhat\t \| \\
    &\lesssim  \kappa \sigma \sqrt{rn},
\end{align*}
where the final bound follows from the Davis-Kahan Theorem, both holding with probability $e^{-cn}$.  Therefore,
\begin{align*}
    \bigg| \| \mathcal{P}_{{\sf upper-diag}}\big(\bm{\hat S} - \uhat\uhat\t \bm{\hat S} \uhat \uhat\t \big) \|_F - \|\mathcal{P}_{{\sf upper-diag}}\big( \bN \big)\|_F \bigg| &\lesssim \kappa \sigma \sqrt{rn}. \numberthis \label{thebound}  
\end{align*}
Next, it is straightforward to demonstrate that (e.g., Lemma 1 of \citet{laurent_adaptive_2000})
\begin{align*}
\bigg|    \frac{\| \mathcal{P}_{{\sf upper-diag}}\big(\bN\big) \|_F^2}{\sigma^2} - \binom{n}{2} \bigg| &\lesssim \sqrt{ \binom{n}{2} \log(n)} + \log(n) \lesssim n \sqrt{\log(n)}
\end{align*}
with probability at least $1- O(n^{-10})$.  Therefore,
\begin{align*}
\bigg|    \frac{\| \mathcal{P}_{{\sf upper-diag}}\big(\bN\big) \|_F}{\sqrt{\binom{n}{2}}} - \sigma   \bigg| &\lesssim \sigma \frac{\sqrt{\log(n)}}{n}.
\end{align*}
Hence, combining this bound with \eqref{thebound}, we obtain
\begin{align*}
    \bigg| \frac{\| \mathcal{P}_{{\sf upper-diag}}\big(\bm{\hat S} - \uhat\uhat\t \bm{\hat S} \uhat \uhat\t \big) \|_F}{\sqrt{\binom{n}{2}}} - \sigma \bigg| &\lesssim    \bigg| \frac{\| \mathcal{P}_{{\sf upper-diag}}\big(\bN) \big) \|_F}{\sqrt{\binom{n}{2}}} - \sigma \bigg| + \frac{1}{n} \kappa \sigma \sqrt{rn} \\
    &\lesssim \sigma \frac{\sqrt{\log(n)}}{n} + \sigma \frac{\kappa \sqrt{r}}{\sqrt{n}} \lesssim \sigma \kappa \sqrt{\frac{r}{n}},
\end{align*}
and hence with probability at least $1 - O(n^{-10})$,
\begin{align*}
    \bigg| \frac{\hat \sigma}{\sigma} - 1 \bigg| \lesssim \kappa \sqrt{\frac{r}{n}}.
\end{align*}
Furthermore, this also implies that $\big|\frac{\hat \sigma^2}{\sigma^2} - 1 \big| \lesssim \kappa \sqrt{\frac{r}{n}}$ with this same probability.
\end{proof}

\subsubsection{Proof of Lemma \ref{lem:biashatmd}} \label{sec:biashatmdproof}

\begin{proof}[Proof of \cref{lem:biashatmd}]
    We note that \cref{lem:sigmahatoversigma} implies that with probability at least $1 - O(n^{-10})$, 
\begin{align*}
    \big| \hat{b_k\md} - b_k\md \big| &\lesssim \big| \hat \sigma^2 - \sigma^2 \big|  \bigg| \sum_{l> r} \frac{1}{\hat \lambda_k - \hat \lambda_l} \bigg|\lesssim \kappa \sqrt\frac{r}{n} |b_k\md|,
\end{align*}
and hence $|\hat{b_k\md}| \lesssim |b_k\md| = o(1)$ with this same probability by \cref{lem:bias_MD}.  Therefore, by Taylor Expansion,
\begin{align*}
    \sqrt{1 + \hat{b_k\md}} &= \sqrt{1 + b_k\md + (\hat{b_k\md} - b_k\md)} = \sqrt{1 + b_k\md} + O\bigg( \kappa | b_k\md| \sqrt{\frac{r}{n}} \bigg) = \sqrt{1 + b_k\md} + O\bigg( \kappa \frac{\sigma^2\sqrt{rn}}{\lambda_{\min}^2} \bigg).
\end{align*}
\end{proof}

\subsubsection{Proof of Lemma \ref{lem:sigma_md_approx}}
\label{sec:sigma_approx_MD_proof}

\begin{proof}[Proof of \cref{lem:sigma_md_approx}]
First, by \cref{lem:biashatmd}, with probability at least $1 - O(n^{-10})$ it holds that
\begin{align*}
     \sqrt{1 + \hat{b_k\md}} &=  
     \sqrt{1 + b_k\md} + O\bigg( \kappa \frac{\sigma^2\sqrt{rn}}{\lambda_{\min}^2} \bigg).
\end{align*}
Therefore by \cref{lem:bias_MD}, with this same probability we also have that 
\begin{align*}
    \bigg| \sqrt{1 + \hat{b_k\md}}\uk\t \bm{\hat u}_k  - 1 \bigg|  &\lesssim 
        \frac{\sigma^2 r \log(n)}{\Delta_k^2} + \frac{\sigma^2 \sqrt{n\log(n)}}{\lambda_k^2} + \frac{\sigma^2 \kappa \sqrt{nr}}{\lambda_{\min}^2} \lesssim \frac{\sigma^2 r \log(n)}{\Delta_{\min}^2} + \frac{\sigma^2\kappa\sqrt{nr\log(n)}}{\lambda_{\min}^2}.
\end{align*}
    We now prove the main result. 
    We first write the decomposition
    \begin{align*}
        \big(\hat{s\md_{\a,j}}\big)^2 - (s\md_{\a,j})^2 &= \sum_{\substack{k\neq j\\k\leq r}} \frac{\hat \sigma^2 (\a\t \bm{\hat u}_k )^2 (1 + \hat{b_k\md})}{(\check \lambda_j - \check \lambda_k)^2}  + \frac{\hat \sigma^2 \|\uhat_{\perp}\t \a \|^2}{\hat \lambda_j^2} - \sum_{\substack{k\neq j\\k\leq r}} \frac{\sigma^2 (\a\t \uk)^2}{(\lambda_j - \lambda_k^2} - \frac{\sigma^2 \|\uperp\t\a\|^2}{\lambda_j^2} \\
        &= \big(\hat \sigma^2 - \sigma^2 \big) \bigg(  \sum_{\substack{k\neq j\\k\leq r}} \frac{ (\a\t \uk)^2}{(\lambda_j - \lambda_k^2} + \frac{ \|\uperp\t\a\|^2}{\lambda_j^2} \bigg) \\
        &\quad + \hat \sigma^2\sum_{\substack{k\neq j\\k\leq r}} \bigg( \frac{ (\a\t \bm{\hat u}_k )^2 (1 + \hat{b_k\md})}{(\check \lambda_j - \check \lambda_k)^2} - \frac{ (\a\t \uk)^2}{(\lambda_j - \lambda_k)^2} \bigg) \\
        &\quad + \hat \sigma^2 \bigg( \frac{ \|\uhat_{\perp}\t \a \|^2}{\hat \lambda_j^2} - \frac{ \|\uperp\t\a\|^2}{\lambda_j^2} \bigg) \\
        &=: \alpha_1 + \alpha_2 + \alpha_3.
    \end{align*}
We analyze each term in turn.
\begin{itemize}
    \item \textbf{The term $\alpha_1$}.  We note that \cref{lem:sigmahatoversigma} demonstrates that
    \begin{align*}
        |\hat \sigma^2 - \sigma^2 | \lesssim \sigma^2 \kappa \sqrt{\frac{r}{n}}.  
    \end{align*}
    Consequently,
    \begin{align*}
        | \alpha_1 | &\lesssim \sigma^2 \kappa \sqrt{\frac{r}{n}} \bigg( \sum_{\substack{k\neq j\\k\leq r}} \frac{(\a\t\uk)^2}{(\lambda_j - \lambda_k)^2} + \frac{\|\uperp\t\a\|^2}{\lambda_j^2} \bigg) \asymp \kappa \sqrt{\frac{r}{n}} (s\md_{\a,j})^2.
    \end{align*}
    \item \textbf{The term $\alpha_2$}.  We note that \cref{lem:sigmahatoversigma} demonstrates that $\hat \sigma^2 \lesssim \sigma^2$ as long as $r \lesssim n/\kappa^2$.  Therefore,
    \begin{align*}
        |\alpha_2| &\lesssim \sigma^2 \bigg|  \sum_{\substack{k\neq j\\k\leq r}} \bigg( \frac{(\a\t \bm{\hat u}_k)^2 (1 + \hat{b_k\md})}{(\check \lambda_j - \check \lambda_k )^2} - \frac{(\a\t\uk)^2}{(\lambda_j - \lambda_k)^2} \bigg)  \bigg|  \\
        &\lesssim   \sigma^2 \bigg| \sum_{\substack{k\neq j\\k\leq r}}  (\a\t\uk)^2 \bigg( \frac{1}{(\lambda_j-\lambda_k)^2} - \frac{1}{(\check \lambda_j - \check \lambda_k)^2} \bigg) \bigg| \\
         &\quad + \sigma^2 \bigg|  \sum_{\substack{k\neq j\\k\leq r}} \frac{1}{(\check \lambda_j - \check\lambda_k)^2} \bigg( (\a\t \bm{\hat u}_k)^2 (1 + \hat{b_k\md}) - (\a\t\uk)^2 \bigg) \bigg| \\
       &=: \beta_1 + \beta_2.
    \end{align*}
    We will bound each quantity $\beta_1$ and $\beta_2$ separately. 
    \begin{itemize}
    \item \textbf{The term $\beta_1$}.  We have that
    \begin{align*}
        |\beta_1| &\lesssim \sigma^2 \sum_{\substack{k\neq j\\k\leq r}} (\a\t\uk)^2 \bigg| \frac{1}{(\lambda_j - \lambda_k)^2} - \frac{1}{(\check \lambda_j - \check \lambda_k)^2} \bigg|.
    \end{align*}
    Recall that $\check \lambda_j = \hat \lambda_j - \hat \sigma^2 \sum_{k > r} \frac{1}{\hat \lambda_j - \hat \lambda_k}$.  Moreover, by \cref{lem:eigenvalues_MD} it holds that
    \begin{align*}
        \big| \hat \lambda_j - \hat \gamma\md(\hat \lambda_j) - \lambda_j \big| &\leq \big| \hat \lambda_j - \lambda_j - \gamma\md(\hat \lambda_j) \big| + \big|\gamma\md(\hat \lambda_j ) - \hat \gamma\md(\hat \lambda_j) \big| \lesssim \sigma \big( \sqrt{r} + \sqrt{\log(n)} \big) + \frac{\sigma^2 r}{\lambda_j}.
    \end{align*}
    Consequently,
    \begin{align*}
        \big| \check \lambda_j  - \lambda_j\big| &\lesssim \sigma \big( \sqrt{r} + \sqrt{\log(n)} \big) + \frac{\sigma^2 r}{\lambda_j} + \big| \sigma^2 - \hat \sigma^2 \big| \frac{n}{\lambda_j^2} \\
        &\lesssim \sigma \sqrt{r\log(n)} + \sigma \sqrt{r} \frac{\sigma \sqrt{r}}{\lambda_j} + \sigma \sqrt{r} \frac{\sigma \kappa \sqrt{n}}{\lambda_j^2} \\
        &\lesssim \sigma \sqrt{r\log(n)}.
    \end{align*}
    This bound holds uniformly over all $j$ and $k$ with probability at least $1- O(n^{-8})$.    
    Moreover, under the assumption $\Delta_j \gg \sigma r \log(n)$, we have that
    \begin{align*}
        \frac{1}{(\check \lambda_j - \check \lambda_k)^2} &= \frac{1}{(\lambda_j - \lambda_k)^2} + O\bigg( \frac{1}{(\lambda_j - \lambda_k)^2} \frac{\sigma \sqrt{r\log(n)}}{\Delta_j} \bigg).
    \end{align*}
    Combining these bounds results in
    \begin{align*}
        |\beta_1| &\lesssim \sigma^2 \sum_{\substack{k\neq j\\k\leq r}} (\a\t \uk)^2 \frac{1}{(\lambda_j - \lambda_k)^2} \frac{\sigma \sqrt{r\log(n)}}{\Delta_j} \lesssim (s\md_{\a,j})^2 \frac{\sigma \sqrt{r\log(n)}}{\Delta_j}.
    \end{align*}
        \item \textbf{The term $\beta_1$}. First, by the analysis leading to the proof of \cref{thm:distributionaltheory_MD}, with $j = k$, we have that
\begin{align*}
    \a\t \bm{\hat u}_k \sqrt{1 + \hat{b_k\md}} -\a\t \uk &=\bigg( \a\t \bm{\hat u}_k - \a\t \uk \uk\t \bm{\hat u}_k \bigg)  \sqrt{1 + \hat{b_k\md}} + \a\t \uk \bigg( \uk\t \bm{\hat u}_k  \sqrt{1 + \hat{b_k\md}}  - 1\bigg) \\
    &= \sqrt{1 + \hat{b_k\md}} \bigg( \sum_{l \neq k} \frac{\bm{u}_k\t \bN \bm{u}_l}{\lambda_k - \lambda_l} \bm{u}_l\t \a + {\sf ErrMD} \times s\md_{\a,k} \bigg) + \a\t \uk \bigg(  \sqrt{1 + \hat{b_k\md}}\uk\t \bm{\hat u}_k  - 1 \bigg).
\end{align*}
Consequently, with probability at least $1 - O(n^{-10})$, as long ${\sf ErrMD} \lesssim \sqrt{\log(n)}$ for each $k \leq r$,
\begin{align*}
 \bigg|    \a\t \bm{\hat u}_k& \sqrt{1 + \hat{b_k\md}} -\a\t \uk \bigg|  \\&\lesssim \bigg| \sum_{l \neq k} \frac{\bm{u}_k\t \bN \bm{u}_l}{\lambda_k - \lambda_l} \bm{u}_l\t \a \bigg| + {\sf ErrMD} s\md_{\a,k}  + | \a\t \uk| \bigg| \uk\t \bm{\hat u}_k - \sqrt{1 + \hat{b_k\md}} \bigg| \\
 &\lesssim s_{\a,k}\md \sqrt{\log(n)} + {\sf ErrMD} s\md_{\a,k}  + |\a\t \uk| \frac{\sigma^2 r \log(n)}{\Delta_{\min}^2} + |\a\t \uk|\frac{\sigma^2\kappa \sqrt{rn\log(n)}}{\lambda_{\min}^2} \\
 &\lesssim s\md_{\a,k} \sqrt{\log(n)}+ |\a\t \uk| \frac{\sigma^2 r \log(n)}{\Delta_{\min}^2} + |\a\t \uk|\frac{\sigma^2\kappa \sqrt{rn\log(n)}}{\lambda_{\min}^2}. 
\end{align*}
In particular, we have that $|\a\t\bm{\hat u}_k \sqrt{1 + \hat{b_k}\md}| \lesssim |\a\t\uk| + s_{\a,k}\md\sqrt{\log(n)}$.  
 Therefore,  
    \begin{align*}
        \bigg|  (\a\t \bm{\hat u}_k)^2 & (1 + \hat{b_k\md}) -(\a\t \uk)^2 \bigg| \\ 
        &\lesssim \big( |\a\t \bm{u}_k | + s_{\a,k}\md\sqrt{\log(n)} \big) \bigg( s\md_{\a,k} \sqrt{\log(n)} + |\a\t \uk| \frac{\sigma^2 r \log(n)}{\Delta_{\min}^2} + |\a\t \uk|\frac{\sigma^2\kappa \sqrt{rn\log(n)}}{\lambda_{\min}^2} \bigg) \\
        &\asymp |\a\t\uk| s_{\a,k}\md \sqrt{\log(n)} + |\a\t\uk|^2 \bigg( \frac{\sigma^2 r\log(n)}{\Delta_{\min}^2} + \frac{\sigma^2 \kappa \sqrt{rn\log(n)}}{\lambda_{\min}^2} \bigg) + (s_{\a,k}\md)^2 \log(n).
    \end{align*}
    Consequently, taking a union bound over all $r$ terms and noting that the previous analysis implies that $|\check \lambda_j - \check \lambda_k| \gtrsim |\lambda_j - \lambda_k|$, we see that
\begin{align*}
    |\beta_2| &\lesssim \sigma^2  \sum_{\substack{k\neq j\\k\leq r}} \frac{1}{(\lambda_j - \lambda_k)^2} \bigg[ |\a\t\uk| s_{\a,k}\md \sqrt{\log(n)} + |\a\t\uk|^2 \bigg( \frac{\sigma^2 r\log(n)}{\Delta_{\min}^2} + \frac{\sigma^2 \kappa \sqrt{rn\log(n)}}{\lambda_{\min}^2} \bigg) + (s_{\a,k}\md)^2 \log(n) \bigg] \\
    &\leq  \max_{\substack{k\neq j\\k\leq r}} \frac{s_{\a,k}\md \sqrt{\log(n)}}{s_{\a,j}\md} \frac{\sigma}{\Delta_j} s_{\a,j}\md \sum_{\substack{k\neq j\\k\leq r}} \frac{\sigma|\a\t\uk|}{|\lambda_j-\lambda_k|} + (s_{\a,j}\md)^2 \bigg( \frac{\sigma^2 r \log(n)}{\Delta_{\min}^2} + \frac{\sigma^2 \kappa \sqrt{rn\log(n)}}{\lambda_{\min}^2} \bigg) \\
    &\quad+ \frac{\sigma^2 r \log(n)}{\Delta_j^2}   \max_{\substack{k\neq j\\k\leq r}} \frac{(s_{\a,k}\md)^2}{(s_{\a,j}\md)^2} (s_{\a,j}\md)^2 \\
    &\lesssim (s_{\a,j}\md)^2 \max_{\substack{k\neq j\\k\leq r}} \bigg( \frac{s_{\a,k}\md}{s_{\a,j}\md} \frac{\sigma \sqrt{r\log(n)}}{\Delta_j} + \frac{\sigma^2 r\log(n)}{\Delta_{\min}^2} + \frac{\sigma^2 \kappa \sqrt{rn\log(n)}}{\lambda_{\min}^2} + \frac{\sigma^2 r \log(n)}{\Delta_j^2} \frac{(s_{\a,k}\md)^2}{(s_{\a,j}\md)^2}  \bigg).
\end{align*}
    \end{itemize}
    Therefore,
    \begin{align*}
        |\alpha_2| &\lesssim (s\md_{\a,j})^2 \Bigg( \max_{k\neq j} \frac{s\md_{\a,k}}{s\md_{\a,j}} \frac{\sigma \sqrt{r\log(n)}}{\Delta_j}  + \frac{\sigma^2 r \log(n)}{\Delta_{\min}^2} + \frac{\sigma^2 \kappa \sqrt{rn\log(n)}}{\lambda_{\min}^2} + \frac{\sigma^2 r\log(n)}{\Delta_j^2} \max_{\substack{k\neq j\\k\leq r}} \frac{(s_{\a,k}\md)^2}{(s_{\a,j}\md)^2} + \frac{\sigma \sqrt{r\log(n)}}{\Delta_j} \Bigg)  .  
    \end{align*}
    \item \textbf{The term $\alpha_3$}.  Turning to $\alpha_3$, we observe again by \cref{lem:sigmahatoversigma} that $\hat \sigma^2 \lesssim \sigma^2$ and hence 
    \begin{align*}
        |\alpha_3| &\lesssim \sigma^2 \bigg| \frac{\|\uhat_{\perp}\t \a\|^2}{\check \lambda_j^2} - \frac{ \|\uperp\t\a\|^2}{\lambda_j^2} \bigg| \sigma^2 \bigg| \frac{\| \uhat_{\perp}\t\a\|^2 - \|\uperp\t\a\|^2}{\check\lambda_j^2} \bigg| +\sigma^2 \|\uperp\t\a\|^2 \bigg| \frac{1}{\lambda_j^2} - \frac{1}{\check \lambda_j^2} \bigg|.
    \end{align*}
By a similar argument as the previous term, we have that
\begin{align*}
    \bigg|\frac{1}{\lambda_j^2} - \frac{1}{\check \lambda_j^2} \bigg| &\lesssim \frac{\sigma \sqrt{r\log(n)}}{|\lambda_j^3|}.
\end{align*}
Consequently,
\begin{align*}
    |\alpha_3| &\lesssim  \frac{\sigma^2}{\lambda_j^2} \bigg|\| \uhat_{\perp}\t\a\|^2 - \|\uperp\t\a\|^2 \bigg| + \frac{\sigma^2 \|\uperp\t\a\|^2 }{\lambda_j^2} \frac{\sigma \sqrt{r\log(n)}}{|\lambda_j|}.
\end{align*}
Therefore, it suffices to analyze the difference $ \bigg|\| \uhat_{\perp}\t\a\|^2 - \|\uperp\t\a\|^2 \bigg|$.  First we will analyze the difference $\big| \|\uhat_{\perp}\t\a \| - \| \uperp\t\a\| \big|$.  By orthonormality it holds that
\begin{align*}
    \big| \|\uhat_{\perp}\t\a \| - \| \uperp\t\a\| \big| &= \big| \|\uhat_{\perp} \uhat_{\perp}\t \a \| - \| \uperp \uperp\t\a \| \big| \\
    &\leq \| \big( \uhat_{\perp} \uhat_{\perp}\t - \uperp \uperp\t\a \big) \a \| \\
    &= \| \big( \uhat \uhat\t - \U \U\t \big) \a \| \\
    &\leq \frac{\sigma \sqrt{n}}{\lambda_{\min}} \|\a \| \\
    &\leq \frac{\sigma \sqrt{n}}{\lambda_{\min}} \bigg( \|\uperp\t \a \| + \| \U\t\a\|\bigg).
\end{align*}
Therefore,
\begin{align*}
    \frac{\sigma^2}{\lambda_j^2} \bigg| \|\uhat_{\perp}\t\a \|^2 - \|\uperp\t\a \|^2 \bigg| &= \frac{\sigma^2}{\lambda_j^2} \bigg( \| \uhat_{\perp}\t\a \| + \|\uperp\t\a \| \bigg) \bigg| \|\uhat_{\perp}\t\a\| - \|\uperp\t\a \| \bigg| \\
    &\lesssim\frac{\sigma^2}{\lambda_j^2} \bigg( \| \uhat_{\perp}\t\a \| + \|\uperp\t\a \| \bigg)  \frac{\sigma \sqrt{n}}{\lambda_{\min}} \bigg( \|\uperp\t \a \| + \| \U\t\a\|\bigg) \\
    &\lesssim \frac{\sigma^2}{\lambda_j^2} \bigg( \|\uperp\t\a \| + \frac{\sigma \sqrt{n}}{\lambda_{\min}} \|\U\t\a\| \bigg) \frac{\sigma \sqrt{n}}{\lambda_{\min}} \bigg( \|\uperp\t \a \| + \| \U\t\a\|\bigg) \\
    &\asymp \frac{\sigma^2}{\lambda_j^2} \|\uperp\t\a \|^2 \frac{\sigma \sqrt{n}}{\lambda_{\min}} + \frac{\sigma^2}{\lambda_j^2} \|\uperp\t\a\| \|\U\t\a\| \frac{\sigma \sqrt{n}}{\lambda_{\min}} + \frac{\sigma^2}{\lambda_j^2} \frac{\sigma^2 n}{\lambda_{\min}^2} \|\U\t\a\|^2.
\end{align*}
Combining these results yields
\begin{align*}
    |\alpha_3| &\lesssim \frac{\sigma \sqrt{n}}{\lambda_{\min}} \frac{\sigma^2}{\lambda_j^2} \|\uperp\t\a\|^2 + \frac{\sigma^2}{\lambda_j^2} \|\uperp\t\a\| \|\U\t\a\| \frac{\sigma \sqrt{n}}{\lambda_{\min}} + \frac{\sigma^2}{\lambda_j^2} \frac{\sigma^2 n}{\lambda_{\min}^2} \|\U\t\a\|^2.
\end{align*}
First, if $\|\U\t\a\| \leq \|\uperp\t\a\|$, then we immediately arrive at  the bound $|\alpha_3| \lesssim  \frac{\sigma \sqrt{n}}{\lambda_{\min}} \frac{\sigma^2}{\lambda_j^2} \|\uperp\t\a\|^2.$  Therefore,  it suffices to consider the setting that $\|\U\t\a\| > \|\uperp\t\a\|$. 
Considering the first term above we have that
\begin{align*}
     \frac{\sigma^2}{\lambda_j^2} \| \uperp\t \a \| \| \U\t \a \| \frac{\sigma \sqrt{n}}{\lambda_{\min}} &=      \frac{\sigma^2}{\lambda_j^2} \| \uperp\t \a \|  \frac{\sigma \sqrt{n}}{\lambda_{\min}} \sqrt{\sum_{\substack{k\neq j\\k\leq r}} (\uk\t\a)^2 + (\uj\t \a)^2} \\
     &\leq \frac{\sigma^2}{\lambda_j^2} \| \uperp\t\a \| \frac{\sigma \sqrt{n}}{\lambda_{\min}} \Delta_{\max} \sqrt{\sum_{k\leq r,k\neq j} \frac{(\uk\t\a)^2}{(\lambda_k - \lambda_j)^2}} + |\uj\t\a | \frac{\sigma^2}{\lambda_j^2} \|\uperp\t\a \| \frac{\sigma\sqrt{n}}{\lambda_{\min}} \\
     &\lesssim \frac{\sigma  \kappa \sqrt{n}}{\lambda_j} (s_{\a,j}\md)^2 + o\bigg(\frac{1}{\sqrt{\log(n)}}\bigg) (s_{\a,j}\md)^2,
\end{align*}
where we used the fact that $\frac{\sigma^2 \sqrt{n\log(n)}}{\lambda_{\min}^2} |\uj\t\a| \ll s_{\a,j}\md$ which follows from the assumption \eqref{atujcondition_ci}.  For the other term,
\begin{align*}
    \frac{\sigma^2}{\lambda_j^2} \frac{\sigma^2 n}{\lambda_{\min}^2} \| \U\t\a\|^2 &=   \frac{\sigma^2}{\lambda_j^2} \frac{\sigma^2 n}{\lambda_{\min}^2} \bigg[ \sum_{\substack{k\neq j\\k\leq r}} (\uk\t\a)^2 + (\uj\t\a)^2 \bigg]\\
    &\leq \frac{\Delta_{\max}^2}{\lambda_j^2} \frac{\sigma^2 n} {\lambda_{\min}^2} (s_{\a,j}\md)^2 +  \frac{\sigma^2}{\lambda_j^2} \frac{\sigma^2 n}{\lambda_{\min}^2}  (\uj\t\a)^2 \\
    &\leq \frac{\kappa^2\sigma^2 n }{\lambda_j^2}  (s_{\a,j}\md)^2 + \frac{(s_{\a,j}\md)^2}{\log(n)}
\end{align*}
which follows from this same assumption.

 \end{itemize}
Combining our bounds for $\alpha_1,\alpha_2$ and $\alpha_3$, we have that 
\begin{align*}
    \big| \big( \hat{s\md_{\a,j}} \big)^2 - \big( s\md_{\a,j} \big)^2 \big| &\lesssim (s\md_{\a,j})^2 \Bigg( \kappa \sqrt{\frac{r}{n}}  +  \max_{k\neq j} \frac{s\md_{\a,k}}{s\md_{\a,j}} \frac{\sigma \sqrt{r\log(n)}}{\Delta_j}  + \frac{\sigma^2 r \log(n)}{\Delta_{\min}^2} + \frac{\sigma^2 \kappa \sqrt{rn\log(n)}}{\lambda_{\min}^2} \\
    &\quad + \max_{k\neq j} \bigg(\frac{s\md_{\a,k}}{s\md_{\a,j}}\bigg)^2 \frac{\sigma^2 r \log(n)}{\Delta_j^2} + \frac{\sigma \sqrt{r\log(n)}}{\Delta_j} +  \frac{\sigma \kappa \sqrt{n}}{\lambda_j} + \frac{\sigma \sqrt{n}}{\lambda_{\min}} + o\big( \frac{1}{\sqrt{\log(n)}} \big) \bigg).
\end{align*}
Under the assumptions in \cref{thm:civalidity_MD}, the term in parentheses can be directly verified to be $o(\frac{1}{\sqrt{\log(n)}})$.
The proof is therefore completed by noting that
\begin{align*}
    \bigg| \frac{\hat{s\md_{\a,j}}}{s\md_{\a,j}} - 1 \bigg| &=  \Bigg|\frac{  \frac{\big( \hat{s\md_{\a,j}} \big)^2}{\big( s\md_{\a,j} \big)^2} -  1  }{\frac{\hat{s\md_{\a,j}} }{s\md_{\a,j} } + 1}  \Bigg| \lesssim \frac{1}{(s\md_{\a,j})^2}  \big| \big( \hat{s\md_{\a,j}} \big)^2 - \big( s\md_{\a,j} \big)^2 \big| = o\bigg( \frac{1}{\sqrt{\log(n)}} \bigg).
\end{align*}
\end{proof}

\section{Proofs for PCA} \label{sec:pcamainproof}

In this section we detail our analysis for the PCA model; namely, \cref{thm:distributionaltheory_PCA,thm:distributionaltheory_PCA,thm:civalidity_pca}. First will define some notation that we will use throughout this section and in associated proofs.  Let 
$\lamtilde= \bm{\Lambda} + \sigma^2 \bm{I}_r$, where $\tilde \lambda_k = \lambda_k + \sigma^2$ for $k \leq r$ and denote $\tilde \lambda_{k} = \sigma^2$ for $k \geq r+1$.   In this manner the eigenvalues of $\bSigma$ are simply $\tilde \lambda_j$.  We also observe that we can write $\x = \bSigma^{1/2} \y$, where $\y \in \mathbb{R}^{p\times n}$ is a matrix of independent $\mathcal{N}(0,1)$ random variables.   Throughout our proofs we assume that $\lambda_1$ through $\lambda_r$ are unique.  The extensions to eigenvalue multiplicity are more cumbersome but not materially different.

We start with the following fact. 

\begin{fact}\label{fact1_PCA}
By (a slight modification of) Lemma 7 of \citet{li_minimax_2025}, with probability at least $1 - O((n\vee p)^{-10})$ it holds that \begin{align*}
    \| \sigmahat - \bSigma \| &\lesssim \lambda_{\max} \sqrt{\frac{r\log(n\vee p)}{n}} + \sqrt{(\lambda_{\max} + \sigma^2)\sigma^2 \frac{p}{n}}\log(n\vee p)  + \sigma^2 \bigg( \sqrt{\frac{p}{n}} + \frac{p}{n} + \sqrt{\frac{\log(n\vee p)}{n}} \bigg) \numberthis \label{epcadef} \\
    &=: \mathcal{E}\pca. 
\end{align*}
\end{fact}
Note that direct comparison implies that $\lambda_{\min} \gg \mathcal{E}\pca \times \log(n\vee p)$.

The next lemma provides an analogue of \cref{fact2_MD} for the PCA setting.
\begin{lemma} \label{fact2_PCA}
    Instate the conditions of \cref{thm:distributionaltheory_PCA}.  Then it holds that
    \begin{align*}
        \| \U\t \big( \sigmahat - \bSigma \big) \U \| \lesssim \big( \lambda_{\max} +\sigma^2 \big) \sqrt{\frac{r\log(n\vee p)}{n}}.
    \end{align*}
\end{lemma}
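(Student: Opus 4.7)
The plan is to reduce the statement to a standard concentration inequality for the empirical covariance matrix of a low-dimensional Gaussian vector. Write
\[
\U\t(\sigmahat - \bSigma)\U \;=\; \frac{1}{n}\sum_{i=1}^n (\U\t \x_i)(\U\t \x_i)\t - \U\t \bSigma \U.
\]
Since $\bSigma = \bSigma_0 + \sigma^2 \bm{I}_p$ and $\U$ diagonalizes $\bSigma_0$ on its column space, we have $\U\t \bSigma \U = \bm{\Lambda} + \sigma^2 \bm{I}_r = \lamtilde$. Setting $\bm{Z}_i := \U\t \x_i \sim \mathcal{N}(0,\lamtilde)$, the statement is an $r$-dimensional sample-covariance concentration bound.

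Next I would factor out $\lamtilde^{1/2}$: writing $\bm{W}_i := \lamtilde^{-1/2}\bm{Z}_i \sim \mathcal{N}(0,\bm{I}_r)$ (which are i.i.d.\ across $i$), we obtain
\[
\U\t(\sigmahat - \bSigma)\U \;=\; \lamtilde^{1/2}\!\left(\frac{1}{n}\sum_{i=1}^n \bm{W}_i \bm{W}_i\t - \bm{I}_r\right)\!\lamtilde^{1/2},
\]
so that
\[
\big\|\U\t(\sigmahat - \bSigma)\U\big\| \;\leq\; (\lambda_{\max}+\sigma^2)\,\Big\|\tfrac{1}{n}\textstyle\sum_i \bm{W}_i \bm{W}_i\t - \bm{I}_r\Big\|,
\]
using $\|\lamtilde\|=\lambda_{\max}+\sigma^2$.

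Now I would invoke a standard tail bound (e.g.\ Theorem~4.6.1 of Vershynin, or a Bernstein/$\varepsilon$-net argument combining Hanson--Wright with a cover of $S^{r-1}$ of cardinality $9^r$): for every $t\ge 0$, with probability at least $1-2e^{-t}$,
\[
\Big\|\tfrac{1}{n}\textstyle\sum_{i=1}^n \bm{W}_i \bm{W}_i\t - \bm{I}_r\Big\| \;\lesssim\; \sqrt{\tfrac{r+t}{n}} + \tfrac{r+t}{n}.
\]
Choosing $t \asymp \log(n\vee p)$ and invoking the hypothesis $r \leq c_0 n / \big(\kappa^2 \log^4(n\vee p)\big)$ from \cref{thm:distributionaltheory_PCA} ensures that $\sqrt{(r+t)/n} = o(1)$, so the linear term $(r+t)/n$ is dominated by $\sqrt{(r+t)/n}$, and moreover $\sqrt{(r+t)/n} \lesssim \sqrt{r\log(n\vee p)/n}$ (since $t$ can be absorbed into the $r\log(n\vee p)$ factor). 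Multiplying by $\lambda_{\max}+\sigma^2$ yields the claim with probability at least $1-O((n\vee p)^{-10})$.

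The proof is essentially routine; the only mild subtlety is verifying that the additive $(r+t)/n$ term is dominated by the $\sqrt{(r+t)/n}$ term and that the latter simplifies to $\sqrt{r\log(n\vee p)/n}$, both of which follow directly from the rank and sample-size assumptions already imposed in \cref{thm:distributionaltheory_PCA}. No new probabilistic input beyond standard Gaussian covariance concentration is required.
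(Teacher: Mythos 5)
Your proof is correct and follows essentially the same route as the paper: factor out $\lamtilde^{1/2}$ so that the problem reduces to concentration of the $r\times r$ sample covariance of i.i.d.\ standard Gaussians, bound the outer factors by $\lambda_{\max}+\sigma^2$, and apply the standard $\sqrt{(r+t)/n}+(r+t)/n$ tail bound with $t\asymp\log(n\vee p)$, using $r\lesssim n$ to absorb the linear term. No substantive differences from the paper's argument.
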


\begin{proof}
Note that 
\begin{align*}
    \| \U\t \big( \sigmahat - \bSigma) \U \| &= \big\| \lamtilde^{1/2} \U\t \bigg( \frac{\y\t\y}{n} - \bm{I}_p \bigg) \U\lamtilde^{1/2} \big\| \leq (\lambda_{\max} + \sigma^2) 
    \big\| \U\t \bigg( \frac{\y\t\y}{n} - \bm{I}_p \bigg) \U \big\|.
\end{align*}
Note that $\U\t \y$ is a $r \times n$ dimensional standard Gaussian matrix.  The result then follows from standard concentration inequalities for covariance matrices (e.g., Theorem 6.5 of \citet{wainwright_high-dimensional_2019}) together with the assumption that $r \lesssim n$.  
\end{proof}

\begin{fact} \label{fact3_PCA}
By Weyl's inequality and the assumption $\lambda_{\min} \gg \mathcal{E}\pca$, it holds that $\hat \lambda_j \in [2(\lambda_j + \sigma^2)/3, 4(\lambda_j +\sigma^2)/3]$ on the event in \cref{fact1_PCA}.
\end{fact}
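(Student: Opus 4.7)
The plan is a direct, one-step application of Weyl's inequality combined with the concentration bound already provided by \cref{fact1_PCA}. Since $\bSigma = \bSigma_0 + \sigma^2 \bm{I}_p$ and $\bSigma_0 = \U \bm{\Lambda} \U\t$, the matrices $\bSigma$ and $\bSigma_0$ share eigenvectors, and the eigenvalues of $\bSigma$ are simply those of $\bSigma_0$ shifted by $\sigma^2$. In particular, for $j \leq r$ the $j$-th largest eigenvalue of $\bSigma$ equals $\lambda_j + \sigma^2$, so Weyl's inequality yields
\begin{align*}
\bigl| \hat\lambda_j - (\lambda_j + \sigma^2) \bigr| \leq \| \sigmahat - \bSigma \|.
\end{align*}

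Next, I would invoke \cref{fact1_PCA} to bound $\|\sigmahat - \bSigma\| \lesssim \mathcal{E}\pca$ on the stipulated high-probability event. The remark immediately preceding \cref{fact3_PCA} already verifies that the noise assumption \eqref{noiseassumption:pca} implies $\mathcal{E}\pca \ll \lambda_{\min}$; since $\lambda_j + \sigma^2 \geq \lambda_{\min}$, this forces $\|\sigmahat - \bSigma\| \leq (\lambda_j + \sigma^2)/3$ after absorbing the implicit $\lesssim$-constant (for $n \wedge p$ sufficiently large, which is contained in the ``sufficiently large $C_1$'' assumption of \cref{thm:distributionaltheory_PCA}). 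Rearranging the Weyl bound then delivers the claimed two-sided inequality
\begin{align*}
\hat\lambda_j \in \bigl[ 2(\lambda_j + \sigma^2)/3, \ 4(\lambda_j + \sigma^2)/3 \bigr].
\end{align*}

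There is no real obstacle: this is essentially a rewording of the combination of Weyl with \cref{fact1_PCA} and the noise assumption. The only bit of care is keeping the index range straight: for $j \leq r$ the population eigenvalue to compare against is $\lambda_j + \sigma^2$, not $\sigma^2$ (which is the common value of the bottom $p - r$ eigenvalues of $\bSigma$). An analogous Weyl-based argument for $j \geq r+1$ would compare $\hat\lambda_j$ to $\sigma^2$, but that is not what \cref{fact3_PCA} concerns.
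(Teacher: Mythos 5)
Your proposal is correct and follows exactly the paper's (implicit) argument: Weyl's inequality gives $|\hat\lambda_j - (\lambda_j+\sigma^2)| \le \|\sigmahat - \bSigma\| \lesssim \mathcal{E}\pca$ on the event of \cref{fact1_PCA}, and the noise assumption forces $\mathcal{E}\pca \ll \lambda_{\min} \le \lambda_j + \sigma^2$, which yields the stated bracket. Your added care about the index range $j \le r$ versus $j \ge r+1$ is a sensible clarification but does not change the argument.
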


\begin{fact}\label{fact4_PCA}
By \cref{fact3_PCA} together with our noise assumption $\lambda_{\min} \gg \mathcal{E}\pca$, it holds that $\lambda\bm{I}_{p-r}- \uperp\t \sigmahat \uperp$ is invertible for any $\lambda \geq 2( \lambda_{\min}+\sigma^2)/3$.  Specifically, $\hat \lambda_j \bm{I}_{p-r} - \uperp\t \sigmahat \uperp$ is invertible on the event $\|\sigmahat - \bSigma \| \lesssim \mathcal{E}\pca$.  In addition, $\big\| \big( \hat \lambda_j \bm{I}_{p-r} - \uperp\t\sigmahat \uperp \big)\inv \big\| \lesssim \lambda_j\inv$, which can be seen since a similar argument to \cref{fact2_PCA} shows that \begin{align*}\|\uperp\t \sigmahat \uperp \| \leq \| \sigma^2 \bm{I}_{p-r} \| + \| \uperp\t \sigmahat \uperp -\sigma^2 \bm{I}_{p-r} \| &\leq \sigma^2 + C\sigma^2 \bigg( \frac{p}{n} + \sqrt{\frac{p}{n}} + \sqrt{\frac{\log(n\vee p)}{n}} \bigg) \\
&\ll \lambda_j + \sigma^2,\end{align*}
where the final inequality holds from the noise assumption \eqref{noiseassumption:pca}.  This argument demonstrates implies that $\hat \lambda_j \geq 2[\lambda_j + \sigma^2] \gg \|\uperp\t \sigmahat \uperp \|$.
\end{fact}

Next we state several results that follow directly from the analysis in \citet{li_minimax_2025}.  
The following lemma studies the bias term $\uhatj\t \uj$ as well as provides the fidelity of the bias correction term $b_j\pca$.  
\begin{lemma} \label{lem:bias_PCA}
   Instate the conditions of \cref{thm:distributionaltheory_PCA}, and define $b_j\pca$ as in \cref{thm:distributionaltheory_PCA}.  Then with probability at least $1 - O((n\vee p)^{-10})$ it holds that
    \begin{align*}
        |1 - (\uhatj\t \uj)^2 | &\lesssim \frac{(\lambda_{\max} + \sigma^2)(\lambda_j + \sigma^2)r\log(n\vee p)}{\Delta_j^2 n} + \frac{(\lambda_j + \sigma^2)\sigma^2 p\log^2(n\vee p)}{\lambda_j^2 n} \\
        &=: (\mathcal{F}\pca)^2. \numberthis \label{fpcadef}
        \end{align*}
In addition, with this same probability,
\begin{align*}
        \big|1 - \sqrt{1 + b_j\pca} \uj\t \uhatj \big|
        &\lesssim \begin{cases} \frac{(\lambda_{\max} + \sigma^2)(\lambda_j + \sigma^2)r\log(n\vee p)}{\Delta_j^2 n} + \frac{\sigma^2 \kappa \sqrt{pr \log(n\vee p)}}{\lambda_j n} & p > n \\ 
 \frac{(\lambda_{\max} + \sigma^2)(\lambda_j + \sigma^2)r\log(n\vee p)}{\Delta_j^2 n} + \frac{\sigma^2 \kappa p \sqrt{r\log(n\vee p)}}{\lambda_j n^{3/2}} \bigg( 1 + \frac{\sigma^2}{\lambda_j} \bigg) + \frac{\sigma^2 \sqrt{p} \log^2(n\vee p)}{\lambda_j n} \bigg( 1 + \frac{\sigma^2}{\lambda_j} \bigg)
  & p \leq n. \end{cases} \\
&= {\sf ErrBiasPCA}.
\end{align*}
\end{lemma}
\begin{proof}
The first inequality holds by equation 5.79 in \citet{li_minimax_2025}.  The second inequality holds by Equations 5.80 and 5.81  of \citet{li_minimax_2025} when $n \leq p$ and $n > p$ respectively. 
\end{proof}
Next, the following result, which is a slight modification from the preliminary analysis in \citet{li_minimax_2025},  characterizes the eigenvalue bias.
\begin{lemma}\label{lem:eigenvalueconcentration_PCA}
Instate the conditions of \cref{thm:distributionaltheory_PCA}.  Define
\begin{align*}
    \gamma\pca( \hat \lambda_j) &\coloneqq \frac{1}{n^2}\tr\Bigg(  \x\t \uperp  \bigg( \hat \lambda_j \bm{I}_{p-r} - \uperp\t \frac{\x \x\t}{n} \uperp \bigg)\inv \uperp\t \x \Bigg); \\
    \hat \gamma\pca( \hat \lambda_j) &\coloneqq \frac{1}{n} \sum_{k = r+ 1}^{\min\{p-r,n\}} \frac{\hat \lambda_k}{\hat \lambda_j - \hat \lambda_k}.
\end{align*}
  Then with probability at least $1 -  O((n\vee p)^{-10})$ it holds that
    \begin{align*}
\big|        \hat \lambda_k - \big( \lambda_k + \sigma^2\big)\big (1 + \gamma\pca(\hat \lambda_k) \big) \big| \lesssim ( \lambda_{\max} + \sigma^2) \sqrt{\frac{r}{n}} \log(n\vee p) =: \delta\pca. \numberthis \label{deltapcadef}
    \end{align*}
    In addition, with this same probability it holds that
    \begin{align*}
        |\gamma\pca(\hat \lambda_j)| &\lesssim  \frac{\sigma^2 p}{\lambda_j n}; \\
      \bigg|     \gamma\pca(\hat \lambda_j) - \hat \gamma\pca(\hat \lambda_j)\bigg|  &\lesssim \frac{\sigma^2 r}{n\lambda_j} \bigg( 1 + \frac{p}{n} \bigg).
    \end{align*}
\end{lemma}

\begin{proof}
  The perturbation bounds follows from Theorem 8 of \citet{li_minimax_2025} where we note that their assumptions are slightly weaker than our assumptions by factors of $r$ and $\log(p\vee n)/\log(n)$.  

 We therefore study the approximation of $\hat \gamma\pca(\hat \lambda_j)$ to $\gamma\pca(\hat \lambda_j)$.  Let $\bm{Z} = \uperp\t \bm{X}$, and suppose that $\bm{ZZ}\t/n$ has eigenvalues $\lambda_k^{\perp}$.  Then observe that
 \begin{align*}
     \gamma\pca(\hat \lambda_j) = \frac{1}{n} \sum_{k=1}^{\min(p-r,n)} \frac{ \lambda_k^{\perp}}{\hat \lambda_j -  \lambda_k^{\perp}}.
 \end{align*}
 By the Poincare Separation Theorem (Corollary 4.3.37 in \citet{horn_matrix_2012}) it holds that
 \begin{align*}
     \hat \lambda_{k + r} \leq \lambda_{k}^{\perp} \leq \hat \lambda_k
 \end{align*}
 for all $1 \leq k \leq p - r$.  In addition, note that the function $x \mapsto \frac{x}{a-x}$ has derivative $\frac{a}{(a-x)^2}$ which is strictly nonnegative for $a$ positive, and hence the function  $\frac{x}{a-x}$ is increasing in $x$ for $x \leq a$ and $a$ positive.  By \cref{fact3_PCA} it holds that $\hat \lambda_j \in [2\lambda_j/3,4\lambda_j/3]$ and hence is positive.  In addition, by \cref{fact1_PCA}, with probability at least $1 - O((n\vee p)^{-10})$, $\hat \lambda_{k+r} < \hat \lambda_j$, and similarly $\lambda_{k}^{\perp} <\hat \lambda_j$ and hence
 \begin{align*}
     \frac{\hat \lambda_{k+r}}{\hat \lambda_j - \hat \lambda_{k+r}} \leq \frac{\lambda_k^{\perp}}{\hat \lambda_j - \lambda_k^{\perp}} \leq \frac{\hat \lambda_k}{\hat \lambda_j - \hat \lambda_k}
 \end{align*}
 for each $1 \leq k \leq \min(p-r,n)$.  Therefore,
 \begin{align*}
     \frac{1}{n} \sum_{k=1}^{\min(p-r,n)} \frac{\lambda_k^{\perp}}{\hat \lambda_j - \lambda_k^{\perp}} &\leq \frac{1}{n} \sum_{k=1}^{r} \frac{\lambda_k^{\perp}}{\hat \lambda_j - \lambda_k^{\perp}} + \frac{1}{n} \sum_{k=r+1}^{\min(p-r,n)} \frac{\lambda_k^{\perp}}{\hat \lambda_j - \lambda_k^{\perp}}
     \leq \frac{1}{n} \sum_{k=1}^{r} \frac{\lambda_k^{\perp}}{\hat \lambda_j - \lambda_k^{\perp}} + \frac{1}{n} \sum_{k=r+1}^{\min(p-r,n)} \frac{\hat \lambda_k}{\hat \lambda_j - \hat \lambda_k}. 
 \end{align*}
 In addition, 
 \begin{align*}
     \frac{1}{n} \sum_{k=1}^{\min(p-r,n)} \frac{\lambda_k^{\perp}}{\hat \lambda_j - \lambda_k^{\perp}} &\geq \frac{1}{n} \sum_{k=1}^{\min(p-r,n)} \frac{\hat \lambda_{k+r}}{\hat \lambda_j - \hat \lambda_{k+r}} 
     = \frac{1}{n} \sum_{k={r+1}}^{\min(p,n+r)} \frac{\hat \lambda_{k}}{\hat \lambda_j - \hat \lambda_{k}} 
     \geq \frac{1}{n} \sum_{k={r+1}}^{\min(p-r,n)} \frac{\hat \lambda_{k}}{\hat \lambda_j - \hat \lambda_{k}}.
 \end{align*}
 Therefore, 
 \begin{align*}
\bigg|     \gamma\pca(\hat \lambda_j) - \hat \gamma\pca(\hat \lambda_j)\bigg| &\leq \frac{1}{n} \sum_{k=1}^{r} \frac{\lambda_k^{\perp}}{\hat \lambda_j - \lambda_k^{\perp}} \lesssim \frac{r}{n\lambda_j} \lambda_1^{\perp},
 \end{align*}
 where we have used the implicit inequality $\hat \lambda_j - \lambda_k^{\perp} \gtrsim \lambda_j$.
We therefore bound $\lambda_1^{\perp}$.  However, we note that by  $\uperp\sigmahat\uperp$ is the empirical covariance for i.i.d. standard Gaussians.  Consequently, by the covariance concentration inequality for Gaussian random variables (e.g., \cref{fact2_PCA}),
\begin{align*}
\lambda_1^{\perp} &= \| \uperp\t\sigmahat \uperp \| \leq \| \uperp\t \sigmahat \uperp - \sigma^2 \bm{I}_{p-r}\| + \sigma^2 \lesssim \sigma^2 \bigg( 1 + \sqrt{\frac{p}{n}} + \frac{p}{n} + \sqrt{\frac{\log(n\vee p)}{n}} \bigg) 
\lesssim \sigma^2 \bigg( 1 + \frac{p}{n}\bigg),
\end{align*}
with probability at least $1- O((n\vee p)^{-10})$. Therefore, with this same probability,
\begin{align*}
    \bigg|     \gamma\pca(\hat \lambda_j) - \hat \gamma\pca(\hat \lambda_j)\bigg|  &\lesssim \frac{\sigma^2 r}{n\lambda_j} \sigma^2 \bigg( 1 + \frac{p}{n}\bigg)
\end{align*}
which completes the proof.
 \end{proof}
We next provide a similar technical result to \cref{lem:gammaapprox}. Define the matrices
\begin{align*}
    \bm{\tilde G}\pca(\lambda) &\coloneqq \U\t \sigmahat \uperp\big( \lambda \bm{I}_{p-r} - \uperp\t \sigmahat \uperp \big)\inv \uperp\t \sigmahat \U;\\
    \bm{G}\pca(\lambda) &\coloneqq  \gamma\pca(\lambda)\bm{\tilde \Lambda},
\end{align*}
where we recall $\lamtilde = \bm{\Lambda} + \sigma^2 \bm{I}_r$. 
The following result is used as an intermediate step in \citet{li_minimax_2025}.
\begin{lemma}\label{lem:Gpca}
Under the conditions of \cref{thm:distributionaltheory_PCA}, with probability at least $1 - O((n\vee p)^{-10})$ it holds that
    \begin{align*}
        \sup_{\lambda \in [2(\lambda_j+\sigma^2)/3, 4(\lambda_j+\sigma^2)/3]} \| \bm{\tilde G}\pca(\lambda) - \bm{G}\pca(\lambda) \| \lesssim (\lambda_{\max} + \sigma^2) \frac{\sigma^2}{\lambda_j} \bigg( \frac{p}{n} + \sqrt{\frac{p}{n}} \bigg) \sqrt{\frac{r}{n}} \log^2(n\vee p).
    \end{align*}
    In addition,
    \begin{align*}
        \sup_{\lambda \in [2(\lambda_j+\sigma^2)/3, 4(\lambda_j+\sigma^2)/3]}\| \bm{G}\pca(\lambda) \| &\lesssim (\lambda_{\max} + \sigma^2) \frac{\sigma^2 p}{\lambda_j n}.
    \end{align*}
\end{lemma}
\begin{proof}
   This follows from the argument en route to the proof of Theorem 8 of \citet{li_minimax_2025} (see their equation 5.57), together with their bound 5.61, replacing the terms $\log(n)$ with $\log(n\vee p)$ throughout.  
\end{proof}

\subsection{Isolating the Leading-Order Term} We are now prepared to establish the leading-order expansion for the difference $\a\t\uhatj - \a\t\uj\uj\t\uhatj$.  
Invoke \cref{thm:mainexpansion} with $\bM = \bSigma$ and $\mhat = \sigmahat$  
to yield
\begin{align*}
         \a\t \uhatj - \a\t \uj \uj\t \uhatj - \sum_{k\neq j} \frac{\uj\t \big(\sigmahat - \bSigma \big) \uk}{\tilde \lambda_j - \tilde \lambda_k} \uk\t \a 
    &= \sum_{k\neq j} \frac{(\uhatj - \uj)\t \big( \sigmahat - \bSigma \big) \uk }{\tilde \lambda_j - \tilde\lambda_k} \a\t \uk    + \sum_{k\neq j} \frac{\tilde \lambda_j - \hat \lambda_j }{\tilde \lambda_j - \tilde \lambda_k } \uhatj\t \uk \uk\t \a.
\end{align*}
Observe that $\uj\t \bSigma \uk = (\lambda_j + \sigma^2) \uj\t \uk = 0$.  Therefore, by a similar analysis for matrix denoising and recognizing $\tilde \lambda_k =\lambda_k + \sigma^2$ we arrive at
\begin{align*}
        \a\t \uhatj - \a\t \uj \uj\t \uhatj - \sum_{k\neq j} \frac{\uj\t \sigmahat  \uk}{ \lambda_j -  \lambda_k} \uk\t \a 
   &= \sum_{\substack{k\neq j\\k\leq r} } \frac{(\uhatj - \uj)\t \big( \sigmahat - \bSigma \big) \uk - \gamma\pca(\hat \lambda_j)(\lambda_k + \sigma^2) \uhatj\t \uk  }{ \lambda_j - \lambda_k} \a\t \uk \\
   &\quad + \frac{(\uhatj - \uj)\t \big( \sigmahat - \bSigma \big) \uperp \uperp\t\a  }{\lambda_j}  \\
    &\quad + \sum_{k\neq j,k\leq  r } \frac{\tilde \lambda_j - \hat \lambda_j + \gamma\pca(\hat \lambda_j)(\lambda_k + \sigma^2)  }{ \lambda_j - \lambda_k } \uhatj\t \uk \uk\t \a +\frac{\tilde \lambda_j -\hat \lambda_j }{\lambda_j  } \uhatj\t \uperp \uperp\t \a  \\
    &=: \rpca_1 + \rpca_2 + \rpca_3 + \rpca_4, 
\end{align*}
where we have added and subtracted $\gamma\pca(\hat \lambda_j)(\lambda_k + \sigma^2) \uhatj\t \uk \uk\t\a$ for all $k \leq r$.  
As in the case of matrix denoising, we will demonstrate first that the residuals $\rpca_1$ through $\rpca_4$ are sufficiently small relative to the standard deviation $s_{\a,j}\pca$.

  We bound each residual in turn.
\begin{itemize}
    \item \textbf{Bounding $\rpca_1$}.
    We first decompose $\rpca_1$ in a similar manner to $\rmd_1$ via
\begin{align*}
    \rpca_1 
    &= \sum_{\substack{k\neq j\\k\leq r} } \frac{(\uhatj - \uj)\t \U \U\t  \big( \sigmahat - \bSigma \big) \uk  }{ \lambda_j - \lambda_k} \a\t \uk \\
    &\quad + \sum_{\substack{k\neq j\\k\leq r} } \frac{\uhatj \t  \U^{-k} (\U^{-k})\t \sigmahat \uperp\big( \hat \lambda_j \bm{I}_{p-r} - \uperp\t \sigmahat \uperp \big)\inv  \uperp\t \sigmahat    \uk  }{ \lambda_j - \lambda_k} \a\t \uk \\
    &\quad + \sum_{\substack{k\neq j\\k\leq r} } \frac{\uhatj \t \uk\uk\t \sigmahat \uperp\big( \hat \lambda_j \bm{I}_{p-r} - \uperp\t \sigmahat \uperp \big)\inv  \uperp\t \sigmahat    \uk - \gamma\pca(\hat \lambda_j)(\lambda_k + \sigma^2) \uhatj\t \uk }{ \lambda_j - \lambda_k} \a\t \uk \\
    &=: \alpha_1 + \alpha_2 + \alpha_3,
\end{align*}
where we have implicitly invoked \cref{lem:uhatjuperpidentity} via \cref{fact4_PCA}.  
We bound $\alpha_1$ through $\alpha_3$ sequentially. First, for $\alpha_1$, by Cauchy-Schwarz,
    \begin{align*}
    |\alpha_1| 
        &\lesssim \|\uhatj - \uj\| \bigg\| \sum_{\substack{k\neq j\\k\leq r} } \frac{\U\t  \big( \sigmahat - \bSigma \big) \uk  }{ \lambda_j - \lambda_k} \a\t \uk \bigg\| \lesssim \mathcal{F}\pca \bigg\| \sum_{\substack{k\neq j\\k\leq r} } \frac{\U\t  \big( \sigmahat - \bSigma \big) \uk  }{ \lambda_j - \lambda_k} \a\t \uk \bigg\|,
    \end{align*}
    which holds with probabilty at least $1 - O((n\vee p)^{-10})$,     where in the final line we have implicitly invoked \cref{lem:bias_PCA} through a similar argument as in the matrix denoising setting.  The following lemma bounds the remaining term above.
    \begin{lemma} \label{lem:pca1}
    Instate the conditions of \cref{thm:distributionaltheory_PCA}.  Then with probability at least $1 - O((n\vee p)^{-10})$ it holds that
        \begin{align*}
            \bigg\| \sum_{\substack{k\neq j\\k\leq r} } \frac{\U\t  \big( \sigmahat - \bSigma \big) \uk  }{ \lambda_j - \lambda_k} \a\t \uk \bigg\| &\lesssim \frac{ r (\lambda_{\max} + \sigma^2)^{1/2} \sqrt{\log(n\vee p)}}{\sqrt{n}}   \sqrt{\sum_{\substack{k\neq j\\k\leq r}} \frac{(\lambda_k + \sigma^2)(\a\t\uk)^2}{(\lambda_j - \lambda_k)^2}}.
        \end{align*}
        
    \end{lemma}
    \begin{proof}
        See \cref{sec:pca_1_proof}.
    \end{proof}
    As a consequence, we obtain that with probability at least $1 - O((n\vee p)^{-10})$,      \begin{align*}
        |\alpha_1| &\lesssim \mathcal{F}\pca r \sqrt{\kappa \log(n\vee p)}\sqrt{\sum_{\substack{k\neq j\\k\leq r}} \frac{(\lambda_k + \sigma^2)(\lambda_j + \sigma^2)(\a\t\uk)^2}{n(\lambda_j - \lambda_k)^2}} . \numberthis \label{alpha1bound}
    \end{align*}
    Next, when it comes to $\alpha_2$ we note that
    \begin{align*}
    |\alpha_2| 
        &\lesssim \max_{\substack{k\neq j\\k\leq r}} \bigg\| (\U^{-k})\t \sigmahat \uperp\big( \hat \lambda_j \bm{I}_{p-r} - \uperp\t \sigmahat \uperp \big)\inv  \uperp\t \sigmahat    \uk  \bigg\| \sum_{\substack{k\neq j\\k\leq r}} \frac{|\a\t \uk|}{|\lambda_j - \lambda_k|}.
    \end{align*}
   Again, the term in the spectral norm above is a submatrix of $\tilde{\bm{G}}\pca(\lambda) - \bm{G}\pca(\lambda)$.  
    Therefore, by \cref{lem:Gpca}, with probability at least $1- O((n\vee p)^{-10})$ it holds that
    \begin{align*}
        |\alpha_2| &\lesssim (\lambda_{\max} + \sigma^2) \frac{\sigma^2}{\lambda_j} \bigg( \frac{p}{n} + \sqrt{\frac{p}{n}} \bigg) \sqrt{\frac{r}{n}}\log^2(n\vee p) \sum_{\substack{k\neq j\\k\leq r}} \frac{|\a\t\uk|}{|\lambda_j - \lambda_k|} \\
        &\lesssim \kappa r \log^2(n\vee p) \frac{\sigma^2}{\lambda_j} \bigg( \frac{p}{n} + \sqrt{\frac{p}{n}} \bigg) \sqrt{\sum_{k\neq ,k\leq r} \frac{(\lambda_j + \sigma^2)(\lambda_k+\sigma^2)(\a\t\uk)^2}{n (\lambda_j - \lambda_k)^2}}. \numberthis \label{alpha2bound}
    \end{align*} Finally, for $\alpha_3$, it holds that 
    \begin{align*}
    |\alpha_3| 
         &\lesssim \max_{\substack{k\neq j\\k\leq r}}  \bigg| \uk\t \sigmahat \uperp \big( \hat \lambda_j \bm{I}_{p-r} - \uperp\t \sigmahat \uperp \big)\inv \uperp\t \sigmahat \uk - (\lambda_k + \sigma^2) \gamma\pca(\hat\lambda_j) \bigg| \sum_{\substack{k\neq j\\k\leq r} } \frac{|\a\t\uk|}{\lambda_j - \lambda_k}. 
    \end{align*}
We may again apply \cref{lem:Gpca} to obtain that with the probability therein,
    \begin{align*}
        |\alpha_3| &\lesssim (\lambda_{\max} + \sigma^2)  \Bigg(\frac{\sigma^2}{\lambda_j} \bigg( \frac{p}{n} + \sqrt{\frac{p}{n}} \bigg) \sqrt{\frac{r}{n}}\log^2(n\vee p) \Bigg) \sum_{\substack{k\neq j\\k\leq r} } \frac{|\a\t\uk|}{\lambda_j - \lambda_k} \\
            &\lesssim \kappa \sqrt{r} \log^2(n\vee p) \Bigg(\frac{\sigma^2}{\lambda_j} \bigg( \frac{p}{n} + \sqrt{\frac{p}{n}} \bigg) \Bigg) \sqrt{\sum_{\substack{k\neq j\\k\leq r} } \frac{(\lambda_j + \sigma^2)(\lambda_k + \sigma^2)(\a\t\uk)}{n(\lambda_j - \lambda_k)^2}}.  \numberthis \label{alpha3boundpca}
    \end{align*}
Combining \eqref{alpha1bound}, \eqref{alpha2bound}, and \eqref{alpha3boundpca} we obtain
\begin{align*}
    |\rpca_1 | &\lesssim  \Bigg( \mathcal{F}\pca r \sqrt{\kappa \log(n\vee p)} + \kappa r \log^2(n\vee p) \frac{\sigma^2}{\lambda_j} \bigg( \frac{p}{n} + \sqrt{\frac{p}{n}}\bigg) \Bigg) \sqrt{\sum_{\substack{k\neq j\\k\leq r} } \frac{(\lambda_j + \sigma^2)(\lambda_k + \sigma^2)(\a\t\uk)}{n(\lambda_j - \lambda_k)^2}},\numberthis \label{eq:rpca1}
\end{align*}
which holds with probability at least $1 - O((n\vee p)^{-10})$
\item \textbf{Bounding $\rpca_2$}.  Observe that
\begin{align*}
  \frac{(\uhatj - \uj)\t \big( \sigmahat - \bSigma \big) \uperp \uperp\t\a  }{\lambda_j}   
  &= \frac{(\uhatj - \uj)\t \U\U\t \big( \sigmahat - \bSigma \big) \uperp \uperp\t\a  }{\lambda_j} +  \frac{(\uhatj - \uj)\t \uperp\uperp\t \big( \sigmahat - \bSigma \big) \uperp \uperp\t\a  }{\lambda_j} \\ 
  &=: \beta_1 + \beta_2.
\end{align*}
To bound $\beta_1$, by Cauchy-Schwarz,
    \begin{align*}
        \bigg| \frac{(\uhatj - \uj)\t \U\U\t \big( \sigmahat - \bSigma \big) \uperp \uperp\t\a  }{\lambda_j} \bigg| 
        &\lesssim \frac{\|\uhatj - \uj \|}{\lambda_j} \| \U\t \sigmahat \uperp \uperp\t \a \| \lesssim \frac{\mathcal{F}\pca}{\lambda_j} \| \U\t \sigmahat \uperp \uperp\t \a \|,
    \end{align*}
    where the final inequality holds with probability at least $1 - O((n\vee p)^{-10})$ by \cref{lem:bias_PCA}.
    The following lemma bounds the remaining term  $\|\U\t \sigmahat \uperp \uperp\t \a\|$.
    \begin{lemma}\label{lem:pca4}
    Instate the conditions of \cref{thm:distributionaltheory_MD}.  Then with probability at least $1- O((n\vee p)^{-10})$ it holds that
        \begin{align*}
            \|\U\t \sigmahat \uperp \uperp\t \a\| &\lesssim \frac{(\lambda_{\max} + \sigma^2)^{1/2} \sigma \|\uperp\t \a\| \sqrt{r}\log(n\vee p)}{\sqrt{n}}.
        \end{align*}
    \end{lemma}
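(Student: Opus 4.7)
The plan is to exploit the Gaussian structure of $\sigmahat$ to reduce the quantity of interest to a product of two independent Gaussian matrices sandwiched between deterministic vectors, after which the bound will follow from two successive applications of $\chi^2$ concentration.

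First, I will write $\bm{X} = \bSigma^{1/2} \bm{Y}$ with $\bm{Y} \in \mathbb{R}^{p\times n}$ having i.i.d.\ $\mathcal{N}(0,1)$ entries, and use that $\U$ and $\uperp$ are eigenvectors of $\bSigma$ (with $\bSigma^{1/2} \uperp = \sigma\, \uperp$ and $\U\t \bSigma^{1/2} = \lamtilde^{1/2} \U\t$) to obtain the identity
\begin{align*}
    \U\t \sigmahat \uperp \uperp\t \a \;=\; \frac{\sigma}{n}\, \lamtilde^{1/2} \,\big(\U\t \bm{Y}\big)\,\big(\uperp\t \bm{Y}\big)\t \,\big(\uperp\t \a\big).
\end{align*}
Since $\|\lamtilde^{1/2}\| \leq (\lambda_{\max}+\sigma^2)^{1/2}$, it suffices to control the norm of $\bm{A}\,\bm{B}\t\,(\uperp\t\a)$, where $\bm{A} := \U\t \bm{Y} \in \mathbb{R}^{r\times n}$ and $\bm{B} := \uperp\t \bm{Y} \in \mathbb{R}^{(p-r)\times n}$ are, by rotational invariance of the standard Gaussian, \emph{independent} matrices with i.i.d.\ standard Gaussian entries.

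Next, I will decouple using this independence. The vector $\bm{v} := \bm{B}\t (\uperp\t \a) \in \mathbb{R}^n$ has entries $\langle \bm{b}_i, \uperp\t \a\rangle$ which are i.i.d.\ $\mathcal{N}(0, \|\uperp\t\a\|^2)$, so $\|\bm{v}\|^2/\|\uperp\t\a\|^2 \sim \chi^2_n$. Standard $\chi^2$ concentration (Laurent--Massart) combined with the hypothesis $\log(p) \lesssim n$ from \cref{thm:distributionaltheory_PCA} gives $\|\bm{v}\|^2 \lesssim n\,\|\uperp\t\a\|^2$ with probability at least $1 - O((n\vee p)^{-10})$. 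Conditional on $\bm{B}$ (hence on $\bm{v}$), the vector $\bm{A}\bm{v}$ is an $r$-dimensional Gaussian with covariance $\|\bm{v}\|^2\, \bm{I}_r$, so $\|\bm{A}\bm{v}\|^2/\|\bm{v}\|^2 \sim \chi^2_r$ and a second application of $\chi^2$ concentration yields $\|\bm{A}\bm{v}\|^2 \lesssim \|\bm{v}\|^2 \cdot r\log(n\vee p)$ with probability at least $1- O((n\vee p)^{-10})$.

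Chaining these two bounds produces
\begin{align*}
  \|\U\t \sigmahat \uperp \uperp\t \a\| \;\lesssim\; \frac{\sigma(\lambda_{\max}+\sigma^2)^{1/2}}{n} \sqrt{n\,\|\uperp\t\a\|^2 \cdot r\log(n\vee p)} \;=\; \frac{(\lambda_{\max}+\sigma^2)^{1/2}\,\sigma\,\|\uperp\t\a\|\,\sqrt{r\log(n\vee p)}}{\sqrt{n}},
\end{align*}
which is in fact a hair sharper than the stated bound (which has $\sqrt{r}\log(n\vee p)$). I do not anticipate any genuine obstacle here; the argument is entirely computational, and the only point requiring care is the decoupling step, which is immediate because $\U$ and $\uperp$ are deterministic population quantities, not data-dependent.
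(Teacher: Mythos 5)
Your proof is correct and follows essentially the same route as the paper's: both reduce to $\frac{\sigma}{n}\lamtilde^{1/2}(\U\t\y)(\uperp\t\y)\t(\uperp\t\a)$, exploit the independence of $\U\t\y$ and $\uperp\t\y$ from rotational invariance, and bound the two Gaussian factors separately. The only difference is technical: where the paper applies Hoeffding conditional on $\uperp\t\y$ for a fixed test vector and then runs an $\eps$-net over the $r$-sphere, you observe that $\bm{A}\bm{v}\mid\bm{v}$ is exactly $\mathcal{N}(0,\|\bm{v}\|^2\bm{I}_r)$ and invoke $\chi^2_r$ concentration directly, which is cleaner and indeed yields the marginally sharper factor $\sqrt{r\log(n\vee p)}$ in place of $\sqrt{r}\log(n\vee p)$.
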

    \begin{proof}
        See \cref{sec:pca4proof}.
    \end{proof}
    Therefore, with probability at least $1 - O((n\vee p)^{-10})$ it holds that
    \begin{align*} 
        |\beta_1| &\lesssim \mathcal{F}\pca \sqrt{r}\log(n\vee p) \frac{(\lambda_{\max} + \sigma^2)^{1/2} \sigma \|\uperp\t\a\|}{\lambda_j \sqrt{n}}  \lesssim 
       \mathcal{F}\pca \sqrt{r\kappa } \log(n\vee p) \frac{(\lambda_j + \sigma^2)^{1/2} \sigma\|\uperp\t\a\|}{\lambda_j \sqrt{n}}. 
        \numberthis \label{beta1bound}
    \end{align*}
    As for $\beta_2$, we may invoke \cref{lem:uhatjuperpidentity} via \cref{fact4_PCA} to yield
    \begin{align*}
    |\beta_2| &= \bigg| \frac{(\uhatj - \uj)\t \uperp\uperp\t \big( \sigmahat - \bSigma \big) \uperp \uperp\t\a  }{\lambda_j} \bigg| \\
    &\lesssim   \frac{1}{\lambda_j} \bigg\| \U\t \sigmahat \uperp \big( \hat \lambda_j - \uperp\t \sigmahat \uperp\big)\inv \uperp\t \bigg(  \sigmahat - \bSigma \bigg) \uperp \uperp\t\a \bigg\|.
    \end{align*}
    The following lemma bounds this remaining term.
    \begin{lemma} \label{lem:pca5}
    Instate the conditions of \cref{thm:distributionaltheory_MD}.  Then with probability at least $1- O((n\vee p)^{-10})$ it  holds that \begin{align*}
            \bigg\| \U\t \sigmahat \uperp &\big( \hat \lambda_j - \uperp\t \sigmahat \uperp\big)\inv \uperp\t \bigg(  \sigmahat - \bSigma \bigg) \uperp \uperp\t\a \bigg\| \\
            &\lesssim (\lambda_{\max} + \sigma^2)^{1/2} \frac{\sigma^3 \sqrt{r} \log(n\vee p) \|\uperp\t\a\|}{\lambda_j \sqrt{n}} \bigg( \frac{p}{n} + \sqrt{\frac{p}{n}} + \sqrt{\frac{\log(n\vee p)}{n}} \bigg).
        \end{align*}
    \end{lemma}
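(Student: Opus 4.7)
The strategy parallels the proofs of \cref{lem:pca2,lem:pca4}: I would isolate the Gaussian factor carrying $\U\t\y$ from the remaining quantities (which depend only on $\uperp\t\y$ and $\hat\lambda_j$), then decouple $\hat\lambda_j$ from the Gaussian factor by an $\varepsilon$-net-and-Lipschitz argument on the interval where $\hat\lambda_j$ concentrates. Write $\x = \bSigma^{1/2}\y$ with $\y$ an i.i.d.\ standard Gaussian matrix, and set $\bm{W} := \U\t\y$ and $\bm{V} := \uperp\t\y$, which are independent by rotational invariance. Using $\bSigma^{1/2}\U = \U\lamtilde^{1/2}$ and $\bSigma^{1/2}\uperp = \sigma\uperp$, the three key factors simplify to $\U\t\sigmahat\uperp = \sigma\lamtilde^{1/2}\bm{W}\bm{V}\t/n$, $\uperp\t\sigmahat\uperp = \sigma^2\bm{V}\bm{V}\t/n$, and $\uperp\t(\sigmahat-\bSigma)\uperp = \sigma^2(\bm{V}\bm{V}\t/n-\bm{I}_{p-r})$. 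Consequently, the vector of interest takes the form $\sigma\lamtilde^{1/2}\bm{W}\,u(\hat\lambda_j,\bm{V})$, where
\[
u(\lambda,\bm{V}) := \frac{\sigma^2}{n}\bm{V}\t\big(\lambda\bm{I}_{p-r}-\sigma^2\bm{V}\bm{V}\t/n\big)^{-1}\big(\bm{V}\bm{V}\t/n-\bm{I}_{p-r}\big)\uperp\t\a.
\]
The vital point is that $\bm{W}$ appears linearly, so for each fixed $\lambda$ the vector is Gaussian conditional on $\bm{V}$.

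Next I would decouple $\hat\lambda_j$ from $\bm{W}$ by restricting attention, via Facts \ref{fact3_PCA}--\ref{fact4_PCA}, to the event $\hat\lambda_j\in I := [2(\lambda_j+\sigma^2)/3,\,4(\lambda_j+\sigma^2)/3]$ and covering $I$ by an $\varepsilon$-net of cardinality $\mathrm{poly}(n\vee p)$ (say $\varepsilon\asymp \lambda_j(n\vee p)^{-5}$). The resolvent identity shows that $\lambda\mapsto u(\lambda,\bm{V})$ is Lipschitz on $I$ with constant $\lesssim \|\bm{V}/\sqrt{n}\|^3/\lambda_j^2$, which makes the discretization error negligible compared to the target bound. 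At each net point, conditional on $\bm{V}$, the vector $\bm{W}u(\lambda,\bm{V})$ is $\mathcal{N}(0,\|u\|^2\bm{I}_r)$, so a standard $\chi^2$ tail estimate combined with a union bound over the net yields
\[
\big\|\sigma\lamtilde^{1/2}\bm{W}u(\lambda,\bm{V})\big\| \lesssim \sigma(\lambda_{\max}+\sigma^2)^{1/2}\,\|u(\lambda,\bm{V})\|\,\sqrt{r\log(n\vee p)}
\]
uniformly in $\lambda\in I$ with probability at least $1-O((n\vee p)^{-10})$.

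It remains to bound $\|u(\lambda,\bm{V})\|$ on the high-probability event that $\bm{A}:=\bm{V}\bm{V}\t/n$ concentrates near $\bm{I}_{p-r}$. Gaussian covariance concentration applied to $\uperp\t\y$ (as in the argument behind \cref{fact1_PCA,fact2_PCA}) gives, with the required probability, $\|\bm{A}\|\lesssim 1+p/n$ and $\|\bm{A}-\bm{I}_{p-r}\|\lesssim p/n+\sqrt{p/n}+\sqrt{\log(n\vee p)/n}$, while \cref{fact4_PCA} yields $\|(\lambda\bm{I}-\sigma^2\bm{A})^{-1}\|\lesssim 1/\lambda_j$ uniformly on $I$. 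Writing $M(\lambda) := \sigma^2(\lambda\bm{I}-\sigma^2\bm{A})^{-1}(\bm{A}-\bm{I}_{p-r})$ and estimating $\|u\|^2\le n^{-1}\|\bm{A}\|\,\|M(\lambda)\uperp\t\a\|^2$ produces
\[
\|u(\lambda,\bm{V})\|\lesssim \frac{\sigma^3\,\|\uperp\t\a\|}{\lambda_j\sqrt n}\Big(\tfrac{p}{n}+\sqrt{\tfrac{p}{n}}+\sqrt{\tfrac{\log(n\vee p)}{n}}\Big).
\]
Combining this with the Gaussian bound in the previous paragraph and absorbing $\sqrt{r\log(n\vee p)}\le \sqrt r\log(n\vee p)$ yields the claimed estimate.

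The main obstacle is the decoupling step: one must ensure that the Lipschitz estimate on $I$, the Gaussian tail from the $\varepsilon$-net union bound, the covariance concentration for $\bm{A}$, and the event $\hat\lambda_j\in I$ all hold simultaneously at the required $1-O((n\vee p)^{-10})$ probability level. Given the polynomial size of the net and the sub-Gaussian nature of all objects involved, this is essentially bookkeeping, but it is the step where a careful identification of the Lipschitz constant in $\lambda$ and the matching of net resolution to failure probability do the real work.
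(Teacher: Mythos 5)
Your architecture is sound and close in spirit to the paper's: both proofs decouple $\U\t\y$ from $\uperp\t\y$ by rotational invariance, exploit (conditional) Gaussianity of the factor carrying $\U\t\y$, and remove the dependence on $\hat\lambda_j$ by a union bound over an $\varepsilon$-net of $[2(\lambda_j+\sigma^2)/3,\,4(\lambda_j+\sigma^2)/3]$ together with a resolvent Lipschitz estimate. Those steps are fine. The gap is in your bound on $\|u(\lambda,\bm{V})\|$. Writing $\bm{A}=\bm{V}\bm{V}\t/n$, the chain $\|u\|\le n^{-1/2}\|\bm{A}\|^{1/2}\,\|M(\lambda)\|\,\|\uperp\t\a\|$ carries the factor $\|\bm{A}\|^{1/2}\,\|\bm{A}-\bm{I}_{p-r}\|$, which for $p\gg n$ is of order $(p/n)^{1/2}\cdot(p/n)=(p/n)^{3/2}$, not $p/n$. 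Your displayed bound for $\|u\|$ silently drops the $\|\bm{A}\|^{1/2}\lesssim(1+p/n)^{1/2}$ factor, so the stated conclusion does not follow from the stated steps in the regime $p>n$. This loss is not cosmetic: the lemma feeds the term $\frac{\sigma^2}{\lambda_j}\big(\tfrac{p}{n}+\sqrt{\tfrac{p}{n}}+\sqrt{\tfrac{\log(n\vee p)}{n}}\big)$ in ${\sf ErrPCA}$, and replacing $p/n$ by $(p/n)^{3/2}$ there is no longer $o(1)$ under the noise assumption \eqref{noiseassumption:pca} when $p\gg n$.

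The missing idea is that the crude estimate $\|\bm{V}\t M(\lambda)\uperp\t\a\|\le\|\bm{V}\|\,\|M(\lambda)\|\,\|\uperp\t\a\|$ ignores the geometry of where $M(\lambda)$ is large. Writing $\bm{V}/\sqrt{n}=\bm{P}\bm{\Gamma}^{1/2}\bm{Q}\t$, one has $\bm{V}\t M(\lambda)\uperp\t\a=\sqrt{n}\,\sigma^2\bm{Q}\bm{\Gamma}^{1/2}(\lambda\bm{I}-\sigma^2\bm{\Gamma})^{-1}(\bm{\Gamma}-\bm{I})\,\bm{P}\t\uperp\t\a$, and by rotational invariance $\bm{P}$ is independent of $\bm{\Gamma}$ with $\|\bm{P}\t\uperp\t\a\|\lesssim\sqrt{\min(p,n)\log(n\vee p)/p}\,\|\uperp\t\a\|$ with the required probability. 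This $\sqrt{\min(p,n)/p}$ exactly cancels the $\sqrt{\max(p,n)/n}$ coming from $\|\bm{\Gamma}\|^{1/2}$, and is precisely the mechanism in the paper's proof (there implemented entrywise via concentration of $(\bm{V}^{(\bm{Z})})\t\uperp\t\a$ on the sphere). With that ingredient added your argument closes; without it, it proves only a strictly weaker bound. Separately, note a bookkeeping slip: your displayed estimate for $\|u(\lambda,\bm{V})\|$ carries $\sigma^3$, but $u$ as you defined it contributes only $\sigma^2$ (the remaining $\sigma$ comes from the prefactor $\sigma\lamtilde^{1/2}\bm{W}$); as written, combining the two displays would yield $\sigma^4$.
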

    \begin{proof}
        See \cref{sec:pca5proof}.
    \end{proof}
Therefore, we have shown that with probability at least $1 - O((n\vee p)^{-10})$
\begin{align*}
    |\beta_2| &\lesssim \frac{\sigma^2 \sqrt{\kappa r}\log(n\vee p)}{\lambda_j} \bigg( \frac{p}{n} + \sqrt{\frac{p}{n}} + \sqrt{\frac{\log(n\vee p)}{n}}\bigg) \frac{\sigma (\lambda_j + \sigma^2)^{1/2} \|\uperp\t\a\|}{\lambda_j \sqrt{n}}. \numberthis \label{beta2bound}
\end{align*}
As a consequence of \eqref{beta1bound} and \eqref{beta2bound}, it holds with probability at least $1- O((n\vee p)^{-10})$ that 
\begin{align*}
    | \rpca_2 | &\lesssim \sqrt{r\kappa} \log(n\vee p)\Bigg( \mathcal{F}\pca  + \frac{\sigma^2}{\lambda_j} \bigg( \frac{p}{n} + \sqrt{\frac{p}{n}} + \sqrt{\frac{\log(n\vee p)}{n}} \bigg) \Bigg) \frac{(\lambda_j + \sigma^2)^{1/2}\sigma \|\uperp\t\a\|}{\lambda_j \sqrt{n}}.    
    \numberthis \label{rpca2}
\end{align*}
\item \textbf{Bounding $\rpca_3$}.  We decompose via
\begin{align*}
|\rpca_3| &=   \Bigg|   \sum_{k\neq j,k\leq  r } \frac{\tilde \lambda_j - \hat \lambda_j + \gamma(\hat \lambda_j)(\lambda_k + \sigma^2) }{ \lambda_j - \lambda_k } \uhatj\t \uk \uk\t \a  \Bigg| \\
&= \Bigg|   \sum_{k\neq j,k\leq  r } \frac{\tilde \lambda_j - \hat \lambda_j + \gamma(\hat \lambda_j)(\lambda_j + \sigma^2) + \gamma(\hat \lambda_j)(\lambda_k - \lambda_j) }{ \lambda_j - \lambda_k } \uhatj\t \uk \uk\t \a  \Bigg| \\
&\leq  \big| \tilde \lambda_j - \hat \lambda_j + \gamma(\hat \lambda_j)(\lambda_j + \sigma^2) \big|  \bigg|   \sum_{k\neq j,k\leq  r } \frac{1 }{ \lambda_j - \lambda_k } \uhatj\t \uk \uk\t \a  \bigg|   + \gamma(\hat \lambda_j)  \sum_{k\neq j ,k\leq r} \big| \uhatj\t \uk \uk\t\a \big|\\
&=: \eta_1 + \eta_2.
\end{align*}
Appealing directly to the eigenvalue bounds in \cref{lem:eigenvalueconcentration_PCA}, it holds that
    \begin{align*}
        |\eta_1| &\lesssim \delta\pca \sum_{\substack{k\neq j\\k\leq r}} \frac{|\uk\t \a|}{|\lambda_j - \lambda_k|} | \uhatj\t \uk |,
        \end{align*}
        where $\delta\pca$ is defined in \eqref{deltapcadef}.  
        Therefore, it suffices to provide a bound on $\uhatj\t \uk$ for $k\neq j$, which is accomplished via the following lemma.
    \begin{lemma}
    \label{lem:innerproduct_PCA}
 Instate the conditions in \cref{thm:distributionaltheory_MD}.   Then simultaneously for all $k\neq j$ with $k \leq r$, with probability at least $1 - O((n\vee p)^{-9})$ it holds that \begin{align*}
        | \uhatj\t \uk | &\lesssim \sqrt{\frac{r\log(n\vee p)}{n}} \frac{(\lambda_{\max} + \sigma^2)^{1/2} (\lambda_k + \sigma^2)^{1/2}}{|\lambda_k - \lambda_j|}.
    \end{align*}.
\end{lemma}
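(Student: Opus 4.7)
The plan is to imitate the strategy used for Lemma \ref{lem:innerproduct_MD}, but with the PCA-specific eigenvalue bias accounted for by the centering $\lambda_k + \sigma^2$. From the eigenvector-eigenvalue equation $\sigmahat \uhatj = \hat{\lambda}_j \uhatj$ and the fact that $\bSigma \uk = (\lambda_k + \sigma^2)\uk$, a direct algebraic manipulation gives
\begin{align*}
    (\hat{\lambda}_j - \lambda_k - \sigma^2) \uhatj\t \uk = \uhatj\t (\sigmahat - \bSigma) \uk.
\end{align*}
So the first task is to show that $|\hat{\lambda}_j - \lambda_k - \sigma^2| \asymp |\lambda_j - \lambda_k|$, which will follow from Lemma \ref{lem:eigenvalueconcentration_PCA}: we have $\hat{\lambda}_j = (\lambda_j + \sigma^2)(1 + \gamma\pca(\hat{\lambda}_j)) + O(\delta\pca)$, and the assumptions of \cref{thm:distributionaltheory_PCA} together with the bound $|\gamma\pca(\hat{\lambda}_j)| \lesssim \sigma^2 p/(\lambda_j n)$ imply both $(\lambda_j + \sigma^2)|\gamma\pca(\hat{\lambda}_j)| \ll \Delta_j$ and $\delta\pca \ll \Delta_j$, hence the denominator is of order $|\lambda_j - \lambda_k|$.

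The second task is to bound the numerator $|\uhatj\t (\sigmahat - \bSigma)\uk|$. I will split $\uhatj = \U\U\t \uhatj + \uperp\uperp\t \uhatj$ and use that $\uperp\t \bSigma \uk = 0$ (since $\bSigma \uk \in \mathrm{span}(\U)$) to write
\begin{align*}
    \uhatj\t (\sigmahat - \bSigma) \uk = \uhatj\t \U\U\t (\sigmahat - \bSigma) \uk + \uhatj\t \uperp \uperp\t \sigmahat \uk.
\end{align*}
For the first piece I bound $|\uhatj\t \U\U\t(\sigmahat - \bSigma)\uk| \leq \|\U\t (\sigmahat - \bSigma)\uk\|$ and apply the same Gaussian concentration used in the proof of Lemma \ref{lem:pca1}, yielding an upper bound of order $(\lambda_{\max} + \sigma^2)^{1/2}(\lambda_k + \sigma^2)^{1/2}\sqrt{r\log(n\vee p)/n}$. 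For the second piece I invoke Theorem \ref{lem:uhatjuperpidentity} (valid by Fact \ref{fact4_PCA}) to write
\begin{align*}
    \uhatj\t \uperp \uperp\t \sigmahat \uk = \uhatj\t \U \U\t \sigmahat \uperp \big( \hat{\lambda}_j \bm{I}_{p-r} - \uperp\t \sigmahat \uperp \big)\inv \uperp\t \sigmahat \uk,
\end{align*}
whose operator-norm bound reduces to controlling $\|\U\t \sigmahat \uperp (\hat{\lambda}_j \bm{I} - \uperp\t \sigmahat \uperp)\inv \uperp\t \sigmahat \uk\|$; this is exactly of the type handled in Lemmas \ref{lem:pca2} and \ref{lem:pca3}, and under our signal-strength assumption \eqref{noiseassumption:pca} the resulting bound is of smaller order than the first piece.

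The main obstacle, as in the matrix denoising analogue, is handling the statistical dependence between $\hat{\lambda}_j$ (and the resolvent $(\hat{\lambda}_j \bm{I} - \uperp\t \sigmahat \uperp)\inv$) and the Gaussian random vectors $\U\t \bX$ and $\uk\t \bX$ that drive the concentration. I will deal with this exactly as in the proofs of Lemmas \ref{lem:pca1}--\ref{lem:pca3}: first establish a deterministic supremum bound over $\lambda \in [2(\lambda_j+\sigma^2)/3, 4(\lambda_j+\sigma^2)/3]$ using the fact (cf.\ Lemma \ref{lem:Gpca}) that the resolvent is well-behaved on this interval on the event of Fact \ref{fact1_PCA}, and then plug in $\lambda = \hat{\lambda}_j$. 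A union bound over $k \neq j$ with $k \leq r$ replaces the $O((n\vee p)^{-10})$ failure probability with $O((n\vee p)^{-9})$. Combining the numerator estimate with the denominator lower bound yields the stated bound
\begin{align*}
    |\uhatj\t \uk| \lesssim \sqrt{\frac{r\log(n\vee p)}{n}} \, \frac{(\lambda_{\max} + \sigma^2)^{1/2}(\lambda_k + \sigma^2)^{1/2}}{|\lambda_k - \lambda_j|},
\end{align*}
simultaneously over all $k \neq j$ with $k \leq r$.
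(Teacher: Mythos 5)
Your overall architecture (eigenvector--eigenvalue identity, splitting the numerator into $\U$ and $\uperp$ components, the resolvent identity of Theorem \ref{lem:uhatjuperpidentity}, decoupling via a supremum over $\lambda$ in the spectral window) matches the paper's proof. The gap is in the two places where you declare the eigenvalue bias negligible. First, the claim that $(\lambda_j+\sigma^2)|\gamma\pca(\hat\lambda_j)| \ll \Delta_j$ is false under the stated assumptions: all we know is $(\lambda_j+\sigma^2)\gamma\pca(\hat\lambda_j) \lesssim \sigma^2 p/n$ (up to condition-number factors), and \eqref{noiseassumption:pca} only forces $\sigma^2 p/n \lesssim \lambda_{\min}/\log^2(n\vee p)$, while $\Delta_j$ may be as small as $(\lambda_{\max}+\sigma^2)\sqrt{r/n}\log(n\vee p)$ --- smaller by a factor of roughly $\sqrt{n/r}/\log^3(n\vee p)$. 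In the small-eigengap regime, which is the whole point of the lemma, the multiplicative eigenvalue bias dominates $\Delta_j$, so $\hat\lambda_j-\lambda_k-\sigma^2$ is \emph{not} of order $|\lambda_j-\lambda_k|$.

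Second, and for the same reason, the piece $\uhatj\t\uperp\uperp\t\sigmahat\uk$ is not of smaller order than the $\U$-component. Writing $\U\U\t=\uk\uk\t+\U^{-k}(\U^{-k})\t$ inside the resolvent identity, the $\uk$-component equals $\uhatj\t\uk$ times $\uk\t\sigmahat\uperp(\hat\lambda_j\bm{I}_{p-r}-\uperp\t\sigmahat\uperp)\inv\uperp\t\sigmahat\uk$, and the latter concentrates \emph{around} $(\lambda_k+\sigma^2)\gamma\pca(\hat\lambda_j)$; Lemmas \ref{lem:pca2}--\ref{lem:pca3} bound only the deviation from that mean, not the mean itself. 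So your numerator retains a term of order $(\lambda_k+\sigma^2)\gamma\pca(\hat\lambda_j)\,|\uhatj\t\uk|$ that cannot be discarded, and attempting to absorb it self-consistently fails because its coefficient is comparable to the denominator when $\lambda_k\asymp\lambda_j$. The repair is the one the paper uses: subtract $\gamma\pca(\hat\lambda_j)(\lambda_k+\sigma^2)\uhatj\t\uk$ from both sides \emph{before} dividing, so that the denominator becomes $\hat\lambda_j-(\lambda_k+\sigma^2)\big(1+\gamma\pca(\hat\lambda_j)\big)=\big(1+\gamma\pca(\hat\lambda_j)\big)\big(\tfrac{\hat\lambda_j}{1+\gamma\pca(\hat\lambda_j)}-(\lambda_k+\sigma^2)\big)$, which by Lemma \ref{lem:eigenvalueconcentration_PCA} is $\gtrsim|\lambda_j-\lambda_k|-\delta\pca\gtrsim|\lambda_j-\lambda_k|$, while the remaining numerator pieces are exactly those controlled by $\|\bm{G}\pca(\hat\lambda_j)-\bm{\tilde G}\pca(\hat\lambda_j)\|$ via Lemma \ref{lem:Gpca}. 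The bias cancels only when kept multiplicative on the eigenvalue side; it does not cancel additively against $\Delta_j$.
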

\begin{proof}
    See \cref{sec:innerproduct_PCA_proof}.
\end{proof}
As a result of this lemma,
\begin{align*}
        |\eta_1| 
        &\lesssim \frac{\delta\pca (\lambda_{\max} + \sigma^2)^{1/2}\sqrt{r\log(n\vee p)}}{\sqrt{n}\Delta_j}  \sum_{\substack{k\neq j\\k\leq r}} \frac{|\uk\t \a|(\lambda_k + \sigma^2)^{1/2}}{|\lambda_j - \lambda_k|} \\
              &\lesssim  \frac{\delta\pca r \sqrt{\kappa \log(n\vee p)}}{\Delta_j} \sqrt{\sum_{\substack{k\neq j\\k\leq r}} \frac{(\lambda_j + \sigma^2)(\lambda_k + \sigma^2) (\uk\t\a)^2}{n(\lambda_j - \lambda_k)^2}}, \numberthis \label{eta1bound}
    \end{align*}
    with probability at least $1 - O((n\vee p)^{-9})$.
    
For $\eta_2$,  \cref{lem:eigenvalueconcentration_PCA} guarantees that $\gamma\pca(\hat \lambda_j) \lesssim \frac{\sigma^2 p}{\lambda_j n}$ with probability at least $1 - O((n\vee p)^{-10})$.
Consequently,
\begin{align*}
    |\eta_2| &\lesssim \frac{\sigma^2 p}{\lambda_j n} \sum_{\substack{k\neq j\\k\leq r}} |\uk\t\a| | \uhatj\t \uk | \\
  &\lesssim   \frac{\sigma^2 p}{\lambda_j n} \frac{(\lambda_{\max} + \sigma^2 )^{1/2}\sqrt{r\log(n\vee p)}}{\sqrt{n}} \sum_{\substack{k\neq j\\k\leq r}} \frac{(\lambda_k + \sigma^2)^{1/2}|\uk\t\a|}{|\lambda_j - \lambda_k|}\\
  &\lesssim  r \sqrt{\kappa \log(n\vee p)}  \frac{\sigma^2 p  }{\lambda_j n} \sqrt{\sum_{\substack{k\neq j\\k\leq r}} \frac{(\lambda_j + \sigma^2)(\lambda_k + \sigma^2) (\uk\t\a)^2}{n(\lambda_j - \lambda_k)^2}}, \numberthis \label{eta2bound}
\end{align*}
where we have implicitly invoked \cref{lem:innerproduct_PCA} in the first line. 

Therefore, combining \eqref{eta1bound} and \eqref{eta2bound} we have that 
\begin{align*}
|\rpca_3| &\lesssim r \sqrt{\kappa\log(n\vee p)} \Bigg( \frac{\sigma^2 p}{\lambda_j n} + \frac{\delta\pca}{\Delta_j} \bigg) \sqrt{\sum_{\substack{k\neq j\\k\leq r}} \frac{(\lambda_j + \sigma^2)(\lambda_k + \sigma^2) (\uk\t\a)^2}{n(\lambda_j - \lambda_k)^2}}
\numberthis \label{rpca3}
\end{align*}
with probability at least  $1- O((n\vee p)^{-9})$. 
\item \textbf{Bounding $\rpca_4$}. By Weyl's inequality,  \cref{lem:eigenvalueconcentration_PCA}, \cref{lem:uhatjuperpidentity}, and \cref{fact4_PCA} it holds that
\begin{align*}
  |\rpca_4| =   \bigg|\frac{\tilde \lambda_j - \hat \lambda_j }{\lambda_j  } \uhatj\t \uperp \uperp\t \a \bigg| 
    &\lesssim \frac{\mathcal{E}\pca}{\lambda_j} \bigg\| \U\t \sigmahat \uperp \big( \hat \lambda_j \bm{I}_{p-r} - \uperp\t \sigmahat \uperp \big)\inv  \uperp\t \a \bigg\|.
\end{align*}
The following lemma provides a bound for this remaining quantity. 
\begin{lemma} \label{lem:pca6}
Instate the conditions in \cref{thm:distributionaltheory_PCA}.  Then with probability at least $1 - O((n\vee p)^{-10})$ it holds that
\begin{align*}
     \bigg\| \U\t \sigmahat \uperp \big( \hat \lambda_j \bm{I}_{p-r} - \uperp\t \sigmahat \uperp \big)\inv  \uperp\t \a \bigg\| &\lesssim \frac{(\lambda_{\max} + \sigma^2)^{1/2} \sigma \sqrt{r}\log(n\vee p)}{\sqrt{n}\lambda_j} \|\uperp\t\a\|.
\end{align*}
\end{lemma}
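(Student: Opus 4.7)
My plan is to mimic the strategy used for the earlier PCA lemmas (in particular \cref{lem:pca4,lem:pca5}), exploiting the rotational invariance of the Gaussian matrix $\y := \bSigma^{-1/2}\bm{X}$ together with a conditioning argument on $\uperp\t\y$.  Writing $\bSigma^{1/2}=\U\lamtilde^{1/2}\U\t+\sigma\uperp\uperp\t$, one obtains the clean decompositions
\begin{align*}
\U\t\sigmahat\uperp \;=\; \tfrac{\sigma}{n}\lamtilde^{1/2}(\U\t\y)(\uperp\t\y)\t,\qquad
\uperp\t\sigmahat\uperp \;=\; \tfrac{\sigma^2}{n}(\uperp\t\y)(\uperp\t\y)\t,
\end{align*}
and by rotational invariance $\U\t\y$ is independent of $\uperp\t\y$.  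Since $\hat\lambda_j$ is a function of both blocks, I would first replace $\hat\lambda_j$ by an arbitrary $\lambda$ in the deterministic interval $I:=[\tfrac{2}{3}(\lambda_j+\sigma^2),\tfrac{4}{3}(\lambda_j+\sigma^2)]$ (which contains $\hat\lambda_j$ on the good event from \cref{fact3_PCA}) and prove a uniform bound via an $\eps$-net argument on $I$; Lipschitz continuity in $\lambda$ of the resolvent $\big(\lambda\bm{I}_{p-r}-\uperp\t\sigmahat\uperp\big)^{-1}$ follows from \cref{fact4_PCA}.

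Conditionally on $\uperp\t\y$, for each fixed $\lambda\in I$ the quantity of interest becomes $\lamtilde^{1/2}(\U\t\y)\bm{v}(\lambda)$ with the deterministic vector (given $\uperp\t\y$)
\begin{align*}
\bm{v}(\lambda) \;:=\; \tfrac{\sigma}{n}(\uperp\t\y)\t\big(\lambda\bm{I}_{p-r}-\uperp\t\sigmahat\uperp\big)^{-1}\uperp\t\a.
\end{align*}
Because $\U\t\y$ is a standard Gaussian $r\times n$ matrix independent of $\uperp\t\y$, $(\U\t\y)\bm{v}(\lambda)$ is a Gaussian vector in $\R^r$ with covariance $\|\bm{v}(\lambda)\|^2\bm{I}_r$, and standard Gaussian tail bounds (together with a union bound from the $\eps$-net) yield
\begin{align*}
\big\|\lamtilde^{1/2}(\U\t\y)\bm{v}(\lambda)\big\| \;\lesssim\; (\lambda_{\max}+\sigma^2)^{1/2}\,\|\bm{v}(\lambda)\|\,\big(\sqrt{r}+\sqrt{\log(n\vee p)}\big)
\end{align*}
uniformly on $I$, with probability at least $1-O((n\vee p)^{-10})$.

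It then remains to control $\|\bm{v}(\lambda)\|$.  The identity $\uperp\t\y\y\t\uperp/n=\sigma^{-2}\uperp\t\sigmahat\uperp$ gives
\begin{align*}
\|\bm{v}(\lambda)\|^2 \;=\; \tfrac{1}{n}\a\t\uperp\big(\lambda\bm{I}_{p-r}-\uperp\t\sigmahat\uperp\big)^{-1}\uperp\t\sigmahat\uperp\big(\lambda\bm{I}_{p-r}-\uperp\t\sigmahat\uperp\big)^{-1}\uperp\t\a,
\end{align*}
so that, using the resolvent bound $\|(\lambda\bm{I}-\uperp\t\sigmahat\uperp)^{-1}\|\lesssim 1/\lambda_j$ from \cref{fact4_PCA} together with the operator-norm concentration $\|\uperp\t\sigmahat\uperp\|\lesssim\sigma^2(1+p/n)$ from \cref{fact1_PCA}, I obtain $\|\bm{v}(\lambda)\|\lesssim \sigma\sqrt{1+p/n}\,\|\uperp\t\a\|/(\sqrt{n}\lambda_j)$.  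Combining these ingredients and taking $\lambda=\hat\lambda_j$ delivers the claimed inequality after noting that $(\sqrt{r}+\sqrt{\log(n\vee p)})\sqrt{1+p/n}\lesssim\sqrt{r}\log(n\vee p)$ under the signal-strength and aspect-ratio hypotheses of \cref{thm:distributionaltheory_PCA}.

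The main obstacle I anticipate is getting the $\sqrt{1+p/n}$ factor absorbed cleanly into the claimed $\sqrt{r}\log(n\vee p)$ when $p\gg n$: in that regime one must invoke the noise assumption \eqref{noiseassumption:pca}, which forces $\lambda_{\min}\gtrsim\sigma^2 p/n$ (up to logarithmic factors) and allows the excess $p/n$ to be traded against an extra $1/\lambda_j$ in the final estimate.  A secondary technicality is the $\eps$-net/Lipschitz argument on $I$, but since $\bm{v}(\lambda)$ is a resolvent-based quantity with derivative of operator norm at most $O(\lambda_j^{-2})$, a net of polynomial cardinality suffices and absorbs only into the logarithmic factors.
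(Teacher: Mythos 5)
Your overall scaffolding (conditioning on $\uperp\t\y$, exploiting independence of $\U\t\y$ from $\uperp\t\y$, and an $\eps$-net over $\lambda$ in the interval containing $\hat\lambda_j$) matches the paper's strategy, which simply runs the proof of \cref{lem:pca5} \emph{mutatis mutandis}. But there is a genuine gap in the step where you bound $\|\bm{v}(\lambda)\|$. Writing $\|\bm{v}(\lambda)\|^2 = \tfrac{1}{n}\a\t\uperp(\lambda\bm{I}-\uperp\t\sigmahat\uperp)^{-1}\uperp\t\sigmahat\uperp(\lambda\bm{I}-\uperp\t\sigmahat\uperp)^{-1}\uperp\t\a$ and bounding the sandwiched matrix by its operator norm costs you a factor of $\|\uperp\t\sigmahat\uperp\|\lesssim\sigma^2(1+p/n)$, hence $\sqrt{1+p/n}$ in the final estimate. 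When $p\gg n\,\mathrm{polylog}(n\vee p)$ this factor cannot be absorbed into the claimed $\sqrt{r}\log(n\vee p)$, and your proposed remedy of trading it against $1/\lambda_j$ via \eqref{noiseassumption:pca} does not work: $\lambda_j$ appears identically on both sides of the inequality you are trying to prove, so there is no slack to trade against. The lemma as stated is therefore not established by your argument in the regime $p\gg n$.

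The missing idea is that $\uperp\t\a$ is a fixed direction while the eigenbasis of $\uperp\t\sigmahat\uperp$ is rotationally invariant, so the quadratic form concentrates around $\tfrac{\|\uperp\t\a\|^2}{p-r}$ times the \emph{trace} of the sandwiched matrix rather than attaining its operator norm. Concretely, diagonalizing $\uperp\t\y/\sqrt{n}=\bm{U}^{(\bm{Z})}\sqrt{\bm{\Gamma}^{(\bm{Z})}}(\bm{V}^{(\bm{Z})})\t$, the vector $(\bm{U}^{(\bm{Z})})\t\uperp\t\a/\|\uperp\t\a\|$ is uniform on the sphere in $p-r$ dimensions and independent of $\bm{\Gamma}^{(\bm{Z})}$; since only $\min(p-r,n)$ eigenvalues are nonzero, spherical concentration (as in the proof of \cref{lem:pca5}) yields an effective variance of order $\tfrac{\min(p,n)}{p}\cdot\tfrac{\sigma^2(1+p/n)}{\lambda_j^2}\|\uperp\t\a\|^2\log(n\vee p)$, and the product $\tfrac{\min(p,n)}{p}(1+p/n)\asymp 1$ in both regimes. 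This recovers exactly the $\sqrt{n/p}$ gain you are missing when $p>n$ and delivers the stated bound.
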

\begin{proof}
     The proof follows \emph{mutatis mutandis} the proof of \cref{lem:pca5} only with the replacement $\uperp\t\a$ instead of $ \uperp\t \big( \sigmahat - \bSigma \big) \uperp \uperp\t\a.$
\end{proof}
As a consequence of this lemma, with probability at least $1- O((n\vee p)^{-10})$ it holds that
\begin{align*}
    | \rpca_4 | &\lesssim  \frac{\mathcal{E}\pca \sqrt{r\kappa \log(n\vee p)}}{\lambda_j} \frac{(\lambda_j + \sigma^2)^{1/2} \sigma}{\sqrt{n}\lambda_j} \|\uperp\t\a\|. \numberthis \label{eq:rpca4}
\end{align*}
\end{itemize}
We now complete the proof. By \eqref{eq:rpca1}, \eqref{rpca2}, \eqref{rpca3}, \eqref{eq:rpca4}  with probability at least $1- O((n\vee p)^{-9})$ it holds that
\begin{align*}
    |\rpca_1& + \rpca_2 + \rpca_3 + \rpca_4 | \\
    &\lesssim  \Bigg( \mathcal{F}\pca r \sqrt{\kappa \log(n\vee p)} + \kappa r \log^2(n\vee p) \frac{\sigma^2}{\lambda_j} \bigg( \frac{p}{n} + \sqrt{\frac{p}{n}}\bigg) \Bigg) \sqrt{\sum_{\substack{k\neq j\\k\leq r} } \frac{(\lambda_j + \sigma^2)(\lambda_k + \sigma^2)(\a\t\uk)}{n(\lambda_j - \lambda_k)^2}} \\
    &\quad + \sqrt{r\kappa} \log(n\vee p)\Bigg( \mathcal{F}\pca  + \frac{\sigma^2}{\lambda_j} \bigg( \frac{p}{n} + \sqrt{\frac{p}{n}} + \sqrt{\frac{\log(n\vee p)}{n}} \bigg) \Bigg) \frac{(\lambda_j + \sigma^2)^{1/2}\sigma \|\uperp\t\a\|}{\lambda_j \sqrt{n}} \\
    &\quad + r \sqrt{\kappa\log(n\vee p)} \Bigg( \frac{\sigma^2 p}{\lambda_j n} + \frac{\delta\pca}{\Delta_j} \bigg) \sqrt{\sum_{\substack{k\neq j\\k\leq r}} \frac{(\lambda_j + \sigma^2)(\lambda_k + \sigma^2) (\uk\t\a)^2}{n(\lambda_j - \lambda_k)^2}} \\
    &\quad + 
    \frac{\mathcal{E}\pca \sqrt{r\kappa \log(n\vee p)}}{\lambda_j} \frac{(\lambda_j + \sigma^2)^{1/2} \sigma}{\sqrt{n}\lambda_j} \|\uperp\t\a\|
    \\
    &\lesssim \kappa r \log^2( n\vee p) \Bigg(  \frac{\sigma^2}{\lambda_j} \bigg( \frac{p}{n} + \sqrt{\frac{p}{n}} + \sqrt{\frac{\log(n\vee p)}{n}}\bigg) + \mathcal{F}\pca + \frac{\mathcal{E}\pca}{\lambda_j} + \frac{\delta\pca}{\Delta_j} \Bigg) s_{\a,j}\pca \\
       &=: {\sf \tilde{ErrPCA}} \times s_{\a,j}\pca, 
\end{align*}
where $s_{\a,j}\pca$ takes the form
\begin{align*}
    (s\pca_{\a,j})^2 &= \sum_{\substack{k\neq j\\k\leq r}} \frac{(\lambda_j + \sigma^2)(\lambda_k + \sigma^2)}{n(\lambda_j - \lambda_k)^2} (\uk\t \a)^2 + \frac{(\lambda_j + \sigma^2)\sigma^2}{n\lambda_j^2} \|\uperp\t \a\|^2.
\end{align*}
We now bound ${\sf \tilde{ErrPCA}}$.  Plugging in the definition of $\mathcal{F}\pca$ from \eqref{fpcadef}, $\delta\pca$ from \eqref{deltapcadef}, and $\mathcal{E}\pca$ from \eqref{epcadef}, we see that
\begin{align*}
    {\sf \tilde{ErrPCA}} &\asymp \kappa r \log^2( n\vee p) \Bigg(  \frac{(\lambda_{\max} + \sigma^2)^{1/2} (\lambda_j +\sigma^2)^{1/2} \sqrt{r\log(n\vee p)}}{\Delta_j \sqrt{n}} + \frac{(\lambda_{\max} + \sigma^2) \sqrt{r}\log(n\vee p)}{\sqrt{n}\Delta_j} \Bigg) \\
    &\quad + \kappa r \log^2( n\vee p) \Bigg( \Bigg(  \frac{\sigma^2}{\lambda_j} \bigg( \frac{p}{n} + \sqrt{\frac{p}{n}} + \sqrt{\frac{\log(n\vee p)}{n}}\bigg) + \frac{(\lambda_j + \sigma^2)^{1/2} \sigma \sqrt{p}\log(n\vee p)}{\lambda_j \sqrt{n}} + \frac{\mathcal{E}\pca}{\lambda_j} \Bigg)  \\
     &\lesssim \frac{(\lambda_{\max} + \sigma^2) \kappa r^{3/2} \log^{5/2}(n\vee p)}{\Delta_j \sqrt{n}} + \frac{\kappa^2 r^{3/2} \log^{5/2}(n\vee p)}{\sqrt{n}} \\
    &\quad + \kappa^{3/2} r \log^3(n\vee p) \Bigg( \frac{\sigma^2}{\lambda_j} \bigg( \frac{p}{n} + \sqrt{\frac{p}{n}} + \sqrt{\frac{\log(n\vee p)}{n}} \bigg) +  \frac{\sigma}{\sqrt{\lambda_j}} \sqrt{\frac{p}{n}} \Bigg) \\
   & =: {{\sf ErrPCA}},
\end{align*}
which matches the bound in \cref{thm:distributionaltheory_PCA}.

\subsection{Completing the Proofs of \cref{thm:distributionaltheory_PCA,thm:civalidity_pca}}  
We have shown thus far that with probability at least $1- O( (n\vee p)^{-9})$ that
\begin{align*}
    \frac{1}{s_{\a,j}\pca} \bigg( \a\t \uhatj - \a\t \uj \uj\t \uhatj \bigg) =  \frac{1}{s_{\a,j}\pca}\bigg( \sum_{k\neq j} \frac{\uj\t \big(\sigmahat - \bSigma \big) \uk}{ \lambda_j -  \lambda_k} \uk\t \a \bigg) + {\sf ErrPCA}.
\end{align*}
We now study this leading-order term.  Recalling that $\bm{X}_i = \bSigma^{1/2} \bm{Y}_i$, where $\bm{Y}_i \sim \mathcal{N}(0,\bm{I}_p)$, we can write $\sigmahat = \bSigma^{1/2} \frac{\y \y\t }{n} \bSigma^{1/2}$.  Therefore, recognizing that the eigenvectors of $\bSigma^{1/2}$ are the same as $\bSigma$,  the leading-order term further decomposes as 
\begin{align*}
    \sum_{k\neq j} \frac{\uj\t \big( \sigmahat - \bSigma \big) \uk}{ \lambda_j - \lambda_k} \uk\t \a 
    &=   \sum_{\substack{k\neq j\\k\leq r} } \uj\t \bigg( \frac{\y \y\t }{n} \bigg) \uk    \frac{ (\lambda_j + \sigma^2)^{1/2}(\lambda_k + \sigma^2)^{1/2}\uk\t \a }{\lambda_j - \lambda_k} \\
    &\quad + \uj\t \bigg( \frac{\y \y\t }{n} \bigg) \uperp \frac{\uperp\t \a (\lambda_j + \sigma^2)^{1/2} \sigma}{\lambda_j} 
\end{align*}
Since $\uj\t \y$ and $\uk\t \y$ are independent for $j \neq k$, it is straightforward to demonstrate that the variance of this term is $(s_{\a,j}\pca)^2$.  
Unlike the matrix denoising setting, the leading-order term is not a Gaussian random variable, so we will demonstrate its approximate Gaussianity via the following lemma.
\begin{lemma} \label{lem:approximategaussian}
    Under the conditions of \cref{thm:distributionaltheory_PCA} it holds that
    \begin{align*}
     \sup_{z \in \mathbb{R}} \bigg| \p\bigg\{ \frac{1}{s_{\a,j}\pca} \sum_{k\neq j} \frac{\uj\t \big( \sigmahat - \bSigma \big) \uk}{ \lambda_j - \lambda_k} \uk\t \a \leq z \bigg\} - \Phi(z) \bigg| &\lesssim  \sqrt{\frac{\log(n\vee p)}{n}}.
    \end{align*}
\end{lemma}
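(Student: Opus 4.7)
The plan is to reduce the target quantity to an i.i.d.\ sum of products of two \emph{independent} Gaussian random variables, then apply the classical Berry--Esseen theorem. First, writing $\bm{X}_i = \bSigma^{1/2} \bm{Y}_i$ with $\bm{Y}_i \overset{\mathrm{i.i.d.}}{\sim} \mathcal{N}(0,\bm{I}_p)$ gives $\sigmahat - \bSigma = \bSigma^{1/2}(\y\y\t/n - \bm{I}_p)\bSigma^{1/2}$. Using $\bSigma^{1/2}\uk = \tilde\lambda_k^{1/2}\uk$ and $\uj\t\uk = 0$ for $k \neq j$, the leading-order term collapses to
\begin{align*}
\sum_{k\neq j} \frac{\uj\t(\sigmahat - \bSigma)\uk}{\lambda_j - \lambda_k} \uk\t\a \;=\; \tilde\lambda_j^{1/2}\, \uj\t \frac{\y\y\t}{n}\, \bm{w},
\end{align*}
where the deterministic vector $\bm{w} := \sum_{k\neq j} \tilde\lambda_k^{1/2}(\lambda_j - \lambda_k)^{-1}(\uk\t\a)\,\uk$ is orthogonal to $\uj$ (splitting the sum into $k\leq r$ and $k > r$ and noting $\tilde\lambda_k = \sigma^2$, $\lambda_k = 0$ in the latter regime). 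A direct arithmetic check shows that $\tilde\lambda_j\|\bm{w}\|^2/n = (s_{\a,j}\pca)^2$.

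Next, set $A_i := \uj\t \bm{Y}_i$ and $B_i := \bm{w}\t \bm{Y}_i$. Because $\uj \perp \bm{w}$ and $\bm{Y}_i$ is Gaussian, $A_i$ and $B_i$ are \emph{independent} with $A_i \sim \mathcal{N}(0,1)$ and $B_i \sim \mathcal{N}(0,\|\bm{w}\|^2)$, jointly i.i.d.\ across $i$. The normalized quantity of interest thus equals
\begin{align*}
\frac{1}{s_{\a,j}\pca} \sum_{k\neq j} \frac{\uj\t(\sigmahat - \bSigma)\uk}{\lambda_j - \lambda_k}\uk\t\a \;=\; \frac{1}{\sqrt{n}\,\|\bm{w}\|} \sum_{i=1}^n A_i B_i,
\end{align*}
which is a properly normalized i.i.d.\ sum of mean-zero products of independent Gaussians.

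Finally, apply the classical Berry--Esseen theorem to this sum. The summands satisfy $\E[A_i B_i] = 0$, $\mathrm{Var}(A_i B_i) = \|\bm{w}\|^2$, and by independence $\E|A_i B_i|^3 = \E|A_i|^3\,\E|B_i|^3 \lesssim \|\bm{w}\|^3$. Berry--Esseen then delivers
\begin{align*}
\sup_{z\in\R}\bigg| \p\bigg\{ \frac{1}{\sqrt{n}\,\|\bm{w}\|}\sum_{i=1}^n A_iB_i \leq z \bigg\} - \Phi(z) \bigg| \;\lesssim\; \frac{\E|A_iB_i|^3}{\sqrt{n}\,\|\bm{w}\|^3} \;\lesssim\; \frac{1}{\sqrt{n}},
\end{align*}
which is at least as strong as the claimed $\sqrt{\log(n\vee p)/n}$ bound.

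There is no real obstacle here beyond verifying the algebra: the crucial observation is that although the leading-order term looks like a complicated bilinear quadratic form in a Gaussian matrix, the orthogonality $\uj \perp \bm{w}$ together with the isotropy of $\bm{Y}_i$ decouples the two factors into genuinely independent univariate Gaussians per sample. Once this independence is in hand, the problem reduces to a textbook CLT for an i.i.d.\ sum with uniformly bounded normalized third moment. The main care is in constructing $\bm{w}$ so that both the orthogonality to $\uj$ and the variance identity $\tilde\lambda_j\|\bm{w}\|^2/n = (s_{\a,j}\pca)^2$ hold exactly, particularly when handling the $k > r$ contribution that collapses to the $\uperp$ block.
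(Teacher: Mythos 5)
Your proposal is correct, and it takes a genuinely different (and slightly sharper) route than the paper. The paper also reduces the leading term to $\frac{1}{n}\sum_i (\uj\t\y_i)\cdot(\text{linear form in the other coordinates of }\y_i)$ and exploits the same key structural fact you identify — that orthogonality of $\uj$ to the remaining directions makes the two Gaussian factors independent — but it then \emph{conditions} on $\uj\t\y$: conditionally the sum is exactly Gaussian with variance $(s_{\a,j}\pca)^2\cdot\frac{1}{n}\sum_i(\uj\t\y_i)^2$, and the error comes from controlling the fluctuation of $\frac{1}{n}\sum_i(\uj\t\y_i)^2$ around $1$ via Bernstein's inequality, which is the source of the $\sqrt{\log(n\vee p)/n}$ rate (plus a careful handling of the conditioning event). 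You instead recognize the normalized statistic as a textbook i.i.d.\ sum $\frac{1}{\sqrt{n}\|\bm{w}\|}\sum_i A_iB_i$ of mean-zero products of independent Gaussians with bounded normalized third moment and invoke the classical Berry--Esseen theorem directly, obtaining $1/\sqrt{n}$ with no logarithmic factor. Your variance identity $\tilde\lambda_j\|\bm{w}\|^2/n = (s_{\a,j}\pca)^2$ and the treatment of the $k>r$ block (where $\tilde\lambda_k=\sigma^2$ and $\lambda_j-\lambda_k=\lambda_j$) check out, so the argument is complete; the only thing the paper's conditioning route "buys" is that it generalizes more readily to settings where the conditional distribution is Gaussian but the summands are not identically distributed, which is not needed here.
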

\begin{proof}
    See \cref{sec:approximategaussianproof}.
\end{proof}
Therefore, for any $z \in \mathbb{R}$ it holds that
\begin{align*}
    \bigg| &\p\bigg\{ \frac{1}{s_{\a,j}\pca} \bigg( \a\t\uhatj - \a\t\uj\uj\t\uhatj \bigg) \leq z \bigg\} - \Phi(z) \bigg|\\
    &=   \bigg| \p\bigg\{\frac{1}{s_{\a,j}\pca} \sum_{k\neq j} \frac{\uj\t \big( \sigmahat - \bSigma \big) \uk}{ \lambda_j - \lambda_k} \uk\t \a   \leq z - \frac{1}{s_{\a,j}\pca}  \bigg( \rpca_1 + \rpca_2 + \rpca_3 + \rpca_4 \bigg) \bigg\} - \Phi(z) \bigg| \\
    &\lesssim \sqrt{\frac{\log(n\vee p)}{n}} + {\sf ErrPCA} + (n\vee p)^{-9} \lesssim  {\sf ErrPCA}.
\end{align*}
This completes the proof of \eqref{pca_firstresult}.

We now prove \eqref{pca_secondresult}. Let ${\sf ErrBiasPCA}$ be as in \cref{lem:bias_PCA},  absorbing constant factors as necessary. Observe that through a similar argument as the matrix denoising case,
\begin{align*}
    \bigg| &\p\bigg\{ \frac{1}{s_{\a,j}\pca} \bigg( \a\t\uhatj \sqrt{1 + b_j\pca} - \a\t\uj \bigg) \leq z \bigg\} - \Phi(z) \bigg| \\
    &\leq    \bigg| \p\bigg\{ \frac{1}{s_{\a,j}\pca} \bigg( \a\t\uhatj \sqrt{1 + b_j\pca} - \a\t\uj \sqrt{1 + b_j\pca}  + \a\t \uj \big( \uj\t\uhatj \sqrt{1 + b_j\pca} - 1 \big) \bigg) \leq z \bigg\} - \Phi(z) \bigg| \\
&\lesssim {\sf ErrPCA} + \frac{|\a\t\uj| {\sf ErrBiasPCA}}{s_{\a,j}\pca}.
\end{align*}
The conditions \eqref{atujconditionpca1} and \eqref{atujconditionpca2} imply that the right hand side is $o(1).$
This completes the proof of \cref{thm:distributionaltheory_PCA}. 

We now prove \cref{thm:civalidity_pca}.
First we state the following lemma concerning the estimate of the noise variance.
\begin{lemma} \label{lem:sigmahatapproxpca}
Define $\hat \sigma^2$ as in \eqref{alg:noisevarPCA}. 
    Then with probability at least $1 - O((n\vee p)^{-10})$ it holds that
    \begin{align*}
        \bigg|\frac{\hat \sigma^2}{\sigma^2} - 1 \bigg| &\lesssim \mathcal{E}_{\sigma},
    \end{align*}
where
\begin{align*}
    \mathcal{E}_{\sigma} & \coloneqq \begin{cases}
        \sqrt{r} \kappa\frac{\log^4(n\vee p)}{n} + \frac{\sqrt{r}}{p} \frac{\mathcal{E}\pca}{\lambda_{\min}} & n/\log^4(n\vee p) < p; \\
        \sqrt{\frac{p}{n}} + \sqrt{\frac{\log(p\vee n)}{n}} &  n/\log^4(n\vee p) \geq p.
    \end{cases}
\end{align*}
\end{lemma}

\begin{proof}
    See \cref{sec:sigmahatoversigmaproofpca}.
\end{proof}
Next we study the estimated bias-correction value $\hat{b_k\pca}$. 
\begin{lemma}\label{lem:biashatpca}
Suppose that $k \leq r$.  If $p > n$, with probability at least $1 - O((n\vee p)^{-10})$ it holds that
   \begin{align*}
    \bigg|   \sqrt{1+ \hat{b_k\pca}} - \sqrt{1 + b_k\pca} \bigg| &\lesssim \frac{\sigma^2 p}{\lambda_{\min} n} \mathcal{E}_{\sigma}, 
   \end{align*}
   with $\mathcal{E}_{\sigma}$ defined in \cref{lem:sigmahatapproxpca}.  
   If $n \geq p$ then $\hat{b_k\pca} = b_k\pca$.  
\end{lemma}
\begin{proof}
    See \cref{sec:biashatpcaproof}.
\end{proof}
In what follows, let
\begin{align*}
    {\sf ErrBiasApproxPCA} \coloneqq \begin{cases}
        \frac{\sigma^2 p}{\lambda_{\min} n}  \mathcal{E}_{\sigma}
        & p > n; \\
        0 & p\leq n,
    \end{cases} \numberthis \label{errbiasapproxpcadef}
\end{align*}
absorbing constants if necessary. 
As a consequence of \cref{lem:biashatpca}, with probability at least $1 - O( (n\vee p)^{-9})$,
\begin{align*}
   \bigg| \frac{\a\t\uhatj\sqrt{1 + \hat{b_j\pca}} - \a\t\uj}{s\pca_{\a,j}} -  \frac{\a\t\uhatj\sqrt{1 +  b_j\pca} - \a\t\uj}{s\pca_{\a,j}}\bigg| &\lesssim \frac{|\a\t\uhatj|}{s\pca_{\a,j}} {\sf ErrBiasApproxPCA}.
\end{align*}
Our previous analysis implies that
\begin{align*}
    |\a\t \uhatj| \lesssim |\a\t \uj| + \sqrt{\log(n\vee p)} s_{\a,j}\pca + {\sf ErrPCA} \times s\pca_{\a,j} \lesssim |\a\t \uj| + \sqrt{\log(n\vee p)} s_{\a,j}\pca,
\end{align*}
with probability at least $1 - O((n\vee p)^{-8})$, provided that ${\sf ErrPCA} \lesssim \sqrt{\log(n\vee p)}$, which holds by assumption  Therefore, with this same probability, 
\begin{align*}
      \bigg| \frac{\a\t\uhatj\sqrt{1 + \hat{b_k\pca}} - \a\t\uj}{s\pca_{\a,j}}& -  \frac{\a\t\uhatj\sqrt{1 +  b_k\pca} - \a\t\uj}{s\pca_{\a,j}}\bigg| \\
      &\lesssim \frac{|\a\t\uj|}{s\pca_{\a,j}} {\sf ErrBiasApproxPCA} + \sqrt{\log(n\vee p)} {\sf ErrBiasApproxPCA}.
\end{align*}
We next consider the approximated variance. 
The following result establishes the proximity of the estimated variance $\hat{s_{\a,j}\pca}$ to $s_{\a,j}\pca$.  

\begin{lemma} \label{lem:sapproxpca}
 Under the conditions of \cref{thm:civalidity_pca}, with probability at least $1 - O((n\vee p)^{-8})$ it holds that 
 \begin{align*}
       \bigg| \frac{\hat{s_{\a,j}\pca}}{s_{\a,j}\pca} - 1 \bigg| \ll  \frac{1}{\sqrt{\log(n\vee p)}}.
 \end{align*}
\end{lemma}
\begin{proof}
    See \cref{sec:sapproxpca_proof}.
\end{proof}
Let the error from \cref{lem:sapproxpca} be denoted ${\sf ErrCIPCA}$.  Then with probability at least $1 - O((n\vee p)^{-8})$,
\begin{align*}
      \bigg| &\frac{ \a\t \uhatj \sqrt{1 + \hat{b_j\pca}} - \a\t \uj}{\hat{s\pca_{\a,j}}} -\frac{ \a\t \uhatj \sqrt{1 +  b_j\pca} - \a\t \uj}{s\pca_{\a,j}} \bigg| \\
     &\lesssim  \bigg| \frac{ \a\t \uhatj \sqrt{1 + \hat{b_j\pca}} - \a\t \uj}{s\pca_{\a,j}} -\frac{ \a\t \uhatj \sqrt{1 +  b_j\pca} - \a\t \uj}{s\pca_{\a,j}} \bigg|  \\
     &\quad + \bigg| \frac{ \a\t \uhatj \sqrt{1 + \hat{b_j\pca}} - \a\t \uj}{s\pca_{\a,j}}\bigg(1 - \frac{s\pca_{\a,j}}{\hat{s\pca_{\a,j}}} \bigg) \bigg|  \\
       &\lesssim \frac{|\a\t\uj|}{s\pca_{\a,j}} {\sf ErrBiasApproxPCA}  + \sqrt{\log(n\vee p)} {\sf ErrBiasApproxPCA}  + {\sf ErrCIPCA} \sqrt{\log(n\vee p)}, \numberthis \label{boundedboy2}
\end{align*}
as long as ${\sf ErrCIPCA} = o(1)$, which we will verify at the end of the proof.  Here we have used the fact that the analysis leading to \cref{thm:distributionaltheory_PCA} implies that
\begin{align*}
    \bigg| \frac{\a\t\uhatj \sqrt{1 + b_j\pca} - \a\t \uj}{s\pca_{\a,j}} \bigg| \lesssim \sqrt{\log(n\vee p)}
\end{align*}
with probability at least $1 - O((n\vee p)^{-8})$ as long as each of the quantities in \cref{thm:distributionaltheory_PCA} are $o(1)$, which is straightforward to verify directly from our noise assumption \eqref{noisecondpca:inf} and our eigengap assumption \eqref{eigengapcondpca:inf}.  

With these results stated we are prepared to prove \cref{thm:civalidity_pca}.   For a given $z \in \mathbb{R}$, by a similar analysis to the proof of \cref{thm:civalidity_MD},
\begin{align*}
\Bigg|  \p\bigg\{ \frac{\a\t \uhatj \sqrt{1 + \hat{b_j\pca}} - \a\t \uj}{\hat{s\pca_{\a,j}}} \leq z \bigg\} -\Phi(z) \Bigg| 
&\lesssim {\sf ErrPCA} +  \frac{|\a\t \uj| {\sf ErrBiasPCA}}{s_{\a,j}\pca}+\bigg[\frac{|\a\t\uj|}{s\pca_{\a,j}}   + \sqrt{\log(n\vee p)} \bigg]{\sf ErrBiasApproxPCA}  \\
&\quad + {\sf ErrCIPCA} \sqrt{\log(n\vee p)}.
\end{align*}
 The proof of \cref{thm:civalidity_pca} immediately follows by taking $\pm z = \Phi\inv(1-\alpha/2)$, provided that each term on the right hand side above is $o(1)$.    It is immediate from \cref{lem:sapproxpca} that ${\sf ErrCIPCA} \sqrt{\log(n\vee p)} = o(1)$, and the condition \eqref{atujconditionpca1} and \eqref{atujconditionpca2} imply that $\frac{|\a\t\uj|}{s_{\a,j}\pca} {\sf ErrBiasPCA} = o(1)$.  
The proof of \cref{lem:sapproxpca} shows that $\sqrt{\log(n\vee p)} {\sf ErrBiasApproxPCA} = o(1)$.  Furthermore, if $p \leq n$, then ${\sf ErrBiasApproxPCA} = 0$, and hence it suffices to show that when $p > n$
 \begin{align*}
     \frac{|\a\t\uj|}{s_{\a,j}\pca} \frac{\sigma^2 p}{\lambda_{\min} n} \bigg( \frac{\sqrt{r} \kappa \log^4(n\vee p)}{n} + \frac{\sqrt{r}}{p} \frac{\mathcal{E}\pca}{\lambda_{\min}} \bigg) = o(1). \numberthis \label{atujcond3}
 \end{align*}
From the definition of $\mathcal{E}\pca$ in \eqref{fact1_PCA}, when $p > n$, we have
\begin{align*}
    \frac{\mathcal{E}\pca}{\lambda_{\min}} \lesssim \kappa \sqrt{\frac{r\log(p)}{n}} + \frac{\sqrt{\kappa} \sigma}{\sqrt{\lambda_{\min}}} \sqrt{\frac{p}{n}} \log(n\vee p) + \frac{\sigma^2}{\lambda_{\min}} \frac{p}{n} \log(n\vee p).
\end{align*}
It can be shown that since $p > n$ that the condition \eqref{atujconditionpca1} is stronger than \eqref{atujcond3}.  This completes the proof.

\subsection{Proofs of Additional PCA Lemmas} \label{sec:pca_lemmas_proofs}
In this section we prove all the additional lemmas required en route to the proof of \cref{thm:distributionaltheory_PCA,thm:civalidity_pca}.

\subsubsection{Proof of Lemma \ref{lem:pca1}}
\label{sec:pca_1_proof}
\begin{proof}
First we note that
\begin{align*}
    \bigg\| \sum_{\substack{k\neq j\\k\leq r}} \frac{\U\t \big(\sigmahat - \bSigma)\uk}{\lambda_j - \lambda_k} \a\t \uk \bigg\|  &\leq \sqrt{r}   \bigg\| \sum_{\substack{k\neq j\\k\leq r}} \frac{\U\t \big(\sigmahat - \bSigma)\uk}{\lambda_j - \lambda_k} \a\t \uk \bigg\|_{\infty}.
\end{align*}
Therefore, we will bound each of the $r$ entries and take a union bound.  

Fix an index $l$.  Note that $\U\t \sigmahat = \frac{1}{n} \lamtilde^{1/2} \U\t \y \y\t \bSigma^{1/2}$.  Therefore, $$\bm{e}_l\t \U\t \sigmahat  = \frac{1}{n} (\lambda_l + \sigma^2)^{1/2} \bm{u}_l\t \y \y\t \bSigma^{1/2}.$$ 
As consequence,
\begin{align*}
   \bm{e}_l\t  \sum_{\substack{k\neq j\\k\leq r}} \frac{\U\t \big(\sigmahat - \bSigma)\uk}{\lambda_j - \lambda_k} \a\t \uk &= \bm{e}_l\t  \lamtilde^{1/2} \U\t \frac{1}{n} \sum_{\substack{k\neq j\\k\leq r}} \sum_{i=1}^{n} \frac{\big(\y_i \y_i\t  -\bm{I}_{p}\big) \uk }{\lambda_j - \lambda_k }\a\t \uk (\lambda_k + \sigma^2)^{1/2} \\
      &=  \frac{1}{n}\sum_{i=1}^{n}\sum_{\substack{k\neq j\\k\leq r}}\big(\bm{u}_l\t \y_i\y_i\t \uk  - \mathbb{I}_{l=k} \big) \frac{(\lambda_l + \sigma^2)^{1/2}(\lambda_k + \sigma^2)^{1/2} \a\t \uk}{\lambda_j - \lambda_k }.
\end{align*}
Observe that the expression above is a sum of independent mean-zero sub-exponential random variables.  Let 
\begin{align*}
    \bm{S}_i &= \sum_{\substack{k\neq j\\k\leq r}} \big( \bm{u}_l\t \y_i\y_i\t \uk  - (\lambda_k + \sigma^2)^{1/2} \big) \frac{(\lambda_l + \sigma^2)^{1/2}(\lambda_k + \sigma^2)^{1/2}\bm \a\t \uk}{\lambda_j - \lambda_k }.
\end{align*}
To apply Bernstein's inequality, we need to study the $\psi_1$ norm of each $\bm{S}_i$, which by independence are all the same.  We split this into two cases based on whether or not $k = l$.
\begin{itemize}
    \item \textbf{Case 1: $k \neq l$}. For a given $l$ we note $\bm{u}_l\t \y_i$ is independent from $\uk\t \y_i$ by rotational invariance of Gaussian random variables, and moreover, $\uk\t\y_i$ and $\bm{u}_l\t\y_i$ are standard Gaussian random variables.  Consequently, 
    \begin{align*}
    \| \bm{S}_i \|_{\psi_1} &=         \bigg\| \sum_{k\neq ,k\leq r} \bm{u}_l\t \y_i \y_i\t \uk \frac{(\lambda_k + \sigma^2)^{1/2}(\lambda_l + \sigma^2)^{1/2} \a\t \uk}{\lambda_j - \lambda_k }  \bigg\|_{\psi_1}  \\
        &\leq \|  \bm{u}_l\t \y_i\|_{\psi_2 } \bigg\| \sum_{k\neq j,k\neq l,k\leq r}  \y_i\t \uk \frac{(\lambda_k + \sigma^2)^{1/2}(\lambda_l + \sigma^2)^{1/2} \a\t \uk}{\lambda_j - \lambda_k }  \bigg\|_{\psi_2} \\
    &\lesssim \sqrt{ \sum_{k\neq j,k\neq l,k\leq r} \frac{(\lambda_k + \sigma^2) (\lambda_l + \sigma^2) (\a\t\uk)^2}{(\lambda_j - \lambda_k)^2}} \\
        &\lesssim (\lambda_{\max} + \sigma^2)^{1/2} \sqrt{ \sum_{\substack{k\neq j\\k\leq r}} \frac{(\lambda_k + \sigma^2) (\a\t\uk)^2}{(\lambda_j - \lambda_k)^2}},
    \end{align*}
    where the penultimate line uses the standard property for sums of independent subgaussian random variables (Proposition 2.6.1 of \citet{vershynin_high-dimensional_2018}).  
    \item \textbf{Case 2: $k = l$.} Observe that the summation is a sum of independent sub-exponential random variables.  Furthermore, it is straightforward to observe that $\|\uk\t\y_i\y_i\t \uk - 1 \|_{\psi_1} \lesssim 1$.  Therefore,
    \begin{align*}
        \bigg\| \sum_{\substack{k\neq j\\k\leq r}}& \bigg( \uk\t\y_i \y_i\t \uk - 1 \bigg) \frac{(\lambda_k + \sigma^2)\a\t\uk}{\lambda_j - \lambda_k} \bigg\|_{\psi_1} \\
    &\lesssim \max_{\substack{k\neq j\\k\leq r}} \bigg\| \uk\t\y_i \y_i\t \uk - 1  \bigg\|_{\psi_1} \sum_{\substack{k\neq j\\k\leq r}} \bigg|  \frac{(\lambda_k + \sigma^2)\a\t\uk}{\lambda_j - \lambda_k} \bigg|\\
         &\lesssim  \sum_{\substack{k\neq j\\k\leq r}}  \bigg| \frac{(\lambda_k + \sigma^2)\a\t\uk}{\lambda_j - \lambda_k} \bigg|\\
      &\lesssim  \sqrt{r} (\lambda_{\max} + \sigma^2)^{1/2} \sqrt{\sum_{\substack{k\neq j\\k\leq r}} \frac{(\lambda_k + \sigma^2)(\a\t\uk)^2}{(\lambda_j - \lambda_k)^2}},
    \end{align*}
    where the final inequality is due to Cauchy-Schwarz.  
\end{itemize}
Combining these inequalities, we have that
\begin{align*}
    \| \bm{S}_i \|_{\psi_1} &\lesssim \sqrt{r} (\lambda_{\max} + \sigma^2)^{1/2} \sqrt{\sum_{\substack{k\neq j\\k\leq r}} \frac{(\lambda_k + \sigma^2)(\a\t\uk)^2}{(\lambda_j - \lambda_k)^2}} =: K.
\end{align*}
Therefore, by Bernstein's inequality, it holds that
\begin{align*}
    \p\bigg\{ \bigg|\frac{1}{n} \sum_{i=1}^{n} \bm{S}_i \bigg| \geq t K  \bigg\} &\leq 2 \exp\bigg( -c n \min\{ t^2, t\} \bigg).
\end{align*}
Therefore,  with $t = C \frac{ \sqrt{\log(n\vee p)}}{\sqrt{n}}$, together with the assumption $\log(p) \lesssim n$, we obtain that
\begin{align*}
\bigg| \frac{1}{n} \sum_{i=1}^{n} \bm{S}_i \bigg| &\lesssim \frac{\sqrt{r\log(n\vee p)}  (\lambda_{\max} + \sigma^2)^{1/2}}{\sqrt{n}} \sqrt{\sum_{\substack{k\neq j\\k\leq r}} \frac{(\lambda_k + \sigma^2)(\a\t\uk)^2}{(\lambda_j - \lambda_k)^2}} .
\end{align*}
with probability at least $1- O((n\vee p)^{-11})$. As a consequence, taking a union bound over at most $(n\vee p)$ entries completes the proof.
\end{proof}

\subsubsection{Proof of Lemma \ref{lem:pca4}} \label{sec:pca4proof}
\begin{proof}
The proof is by $\eps$-net.  Let  $\bm{x} \in \mathbb{R}^{r}$ be a fixed unit vector.  Note that
\begin{align*}
    \bm{x}\t \U\t \sigmahat \uperp \uperp\t\a &= \frac{\sigma}{n}\bm{x}\t \lamtilde \U\t \y \y\t \uperp \uperp\t \a.
\end{align*}
Note that by rotational invariance $\U\t\y$ and $\uperp\t \y$ are independent and hence equal in distribution to $\bm{W} \in \mathbb{R}^{r \times n}$ and $\bm{Z} \in \mathbb{R}^{p-r \times n}$, where $\bm{W}$ and $\bm{Z}$ have independent standard Gaussian entries.  Furthermore,
\begin{align*}
    \bm{x}\t \lamtilde^{1/2} \bm{W} \sim \mathcal{N}(0, \| \lamtilde \bm{x}\|^2 \bm{I}_n ); \qquad \a\t\uperp \bm{Z} \sim \mathcal{N}(0, \|\uperp\t\a\|^2 \bm{I}_n).  
\end{align*}
Therefore, conditional on $\bm{Z}$, Hoeffding's inequality implies that
\begin{align*}
\bigg|    \frac{\sigma}{n} \bm{x}\t \lamtilde^{1/2} \bm{W} \bm{Z}\t \uperp\t \a \bigg| 
&\lesssim \frac{\sigma (\lambda_{\max} +\sigma^2)^{1/2}}{n} \| \bm{Z}\t \uperp\t\ \a\| \sqrt{r\log(n\vee p)}
\end{align*}
with probability at least $1 - \exp(- c r \log(n\vee p))$. Therefore, by unfixing $\bm{x}$, a standard $\eps$-net argument implies that conditional on $\bm{Z}$ it holds that
\begin{align*}
    \frac{\sigma}{n} \| \lamtilde^{1/2} \bm{W} \bm{Z}\t \uperp\t\a \| &\lesssim \frac{\sigma (\lambda_{\max} + \sigma^2)^{1/2}}{n} \sqrt{r\log(n\vee p)} \| \bm{Z}\t \uperp\t \a\|.
\end{align*}
The result is completed as $\|\bm{Z}\t \uperp\t\a \| \lesssim \sqrt{n\log(n\vee p)} \|\uperp\t\a\|$ with probability at least $1 - O((n\vee p)^{-10})$.  
\end{proof}

\subsubsection{Proof of Lemma \ref{lem:pca5}} \label{sec:pca5proof}
\begin{proof}
First we note that
 \begin{align*}
     \bigg\|& \U\t \sigmahat \uperp \big(  \hat \lambda_j \bm{I}_{p-r} - \uperp\t \sigmahat \uperp\big)\inv \uperp\t \big( \sigmahat -\bSigma \big) \uperp \uperp\t\a \bigg\| \\
    &\lesssim \frac{\sqrt{r} (\lambda_{\max} + \sigma^2)^{1/2}}{\sqrt{n}} \\&\quad 
    \times \sup_{\lambda \in [2(\lambda_j + \sigma^2)/3, 4(\lambda_j + \sigma^2)/3]} \bigg\| \U\t \y \frac{\y\t\uperp}{\sqrt{n}}  \bigg( \lambda \bm{I}_{p-r} - \frac{\uperp\t \y\y\t\uperp}{n} \bigg)\inv \bigg( \frac{\uperp\t \y\y\t\uperp}{n} - \sigma^2 \bm{I}_{p-r} \bigg) \uperp\t\a \bigg\|_{\infty}, \numberthis \label{bigboi}
 \end{align*}
 where we have implicitly applied \cref{fact3_PCA}.   
Therefore, it suffices to bound the quantity above which we will achieve by bounding each entry and then taking a union bound over all entries.  To bound each entry, similar to the proof of \cref{lem:residual2_MD} in \cref{sec:residual2proof} we will first derive a concentration inequality for fixed $\lambda$ and then unfix $\lambda$.  
\\ \ \\
\noindent 
\textbf{Step 1: Bounding each entry for fixed $\lambda$}. Let $\bm{W}$ be a $r \times n$ matrix of i.i.d. $\mathcal{N}(0,1)$ random variables, and let $\bm{Z}$ be a $p-r \times n$ matrix with i.i.d. $\mathcal{N}(0, \sigma^2)$ random variables. Then it holds that
\begin{align*}
\U\t \y& \frac{\y\t \uperp}{\sqrt{n}} \bigg( \lambda \bm{I}_{p-r} - \frac{\uperp\t \y\y\t\uperp}{n} \bigg)\inv \bigg( \frac{\uperp\t \y\y\t\uperp}{n} - \sigma^2 \bm{I}_{p-r} \bigg) \uperp\t\a \\
&\overset{d}{=}
    \bm{W}\t \frac{\bm{Z}}{\sqrt{n}} \bigg( \lambda \bm{I}_{p-r} - \frac{\bm{ZZ}\t}{n} \bigg)\inv \bigg( \frac{\bm{ZZ}\t}{n} - \sigma^2 \bm{I}_{p-r}\bigg) \uperp\t\a ,
\end{align*}
where $\overset{d}{=}$ denotes equality in distribution.  Suppose $\bm{Z}/\sqrt{n}$ has SVD $\bm{U}^{(\bm{Z})} \sqrt{\bm{\Gamma}^{(\bm{Z})}}(\bm{V}^{(\bm{Z})})\t$.  Then it holds that
\begin{align*}
     \bm{W}\t &\frac{\bm{Z}}{\sqrt{n}} \bigg( \lambda \bm{I}_{p-r} - \frac{\bm{ZZ}\t}{n} \bigg)\inv \bigg( \frac{\bm{ZZ}\t}{n} - \sigma^2 \bm{I}_{p-r}\bigg) \uperp\t\a \\
    &\overset{d}{=} \bm{W}\t  \bm{\Gamma}^{(\bm{Z})}  \bigg( \lambda \bm{I}_{p-r} - \bm{\Gamma}^{(\bm{Z})}\bigg)\inv  \bigg( \bm{\Gamma}^{(\bm{Z})} - \sigma^2 \bm{I}_{p-r} \bigg) (\bm{V}^{(\bm{Z})})\t \uperp\t\a,
\end{align*}
where we have used the fact that $\bm{V}^{(\bm{Z})}$ and $\bm{\Gamma}^{(\bm{Z})}$, $\bm{\U}^{(\bm{Z})}$ are independent and, in addition, $\bm{W}\t \bm{U}^{(\bm{Z})} \overset{d}{=}\bm{W}$ by rotational invariance.  Consequently, the terms $\bm{W},\bm{\Gamma}^{(\bm{Z})}$, and $\bm{V}^{(\bm{Z})}$ are mutually independent.

Observe that the $l$'th entry can be written via
\begin{align*}
    \sum_{l'=1}^{n} \bm{W}_{l' l} \Bigg[& \bm{\Gamma}^{(\bm{Z})}  \bigg( \lambda \bm{I}_{p-r} - \bm{\Gamma}^{(\bm{Z})}\bigg)\inv  \bigg( \bm{\Gamma}^{(\bm{Z})} - \sigma^2 \bm{I}_{p-r} \bigg) (\bm{V}^{(\bm{Z})})\t \uperp\t\a \bigg]_{l'} \\
    &= \sum_{l'=1}^{\min(n,p)} \bm{W}_{l'l} \frac{(\bm{\Gamma}^{(\bm{Z})}_{l'l'})^{1/2}(\bm{\Gamma}^{(\bm{Z})}_{l'l'} - \sigma^2)}{\lambda - \bm{\Gamma}^{(\bm{Z})}_{l'l'}} \bigg(\big( \bm{V}^{(\bm{Z})}\big)\t \uperp\t\a \bigg)_{l'}
\end{align*}
where we have used the fact that $\bm{\Gamma}^{(\bm{Z})}$ is diagonal.  Let $\bm{w}$ denote the vector with entries given by
\begin{align*}
    \bm{w}_{l'} &= \bm{W}_{l'l} \frac{(\bm{\Gamma}^{(\bm{Z})}_{l'l'})^{1/2}(\bm{\Gamma}^{(\bm{Z})}_{l'l'} - \sigma^2)}{\lambda - \bm{\Gamma}^{(\bm{Z})}_{l'l'}}.
\end{align*}
Then we can write the above quantity via $\langle \bm{w}, \big( \bm{V}^{(\bm{Z})}\big)\t \uperp\t\a \rangle.$
Conditional on $\bm{W}$ and $\bm{\Gamma}^{(\bm{Z})}$, the random variable $\frac{\big( \bm{V}^{(\bm{Z})}\big)\t \uperp\t\a}{\|\uperp\t\a\|}$ is uniformly distributed on $p-r$-dimensional sphere. Therefore, by standard concentration inequalities for uniform spherical random variables (Theorem 3.4.6 of \citet{vershynin_high-dimensional_2018}) it holds that
\begin{align*}
    \big| \langle \bm{w}, \big( \bm{V}^{(\bm{Z})}\big)\t \uperp\t\a \rangle \big| &\lesssim \sqrt{\frac{\log(n\vee p)}{p-r}} \| \bm{w}\| \| \uperp\t\a \| \lesssim \sqrt{\frac{\log(n\vee p)}{p}} \|\bm{w}\| \|\uperp\t\a\| \numberthis \label{rotationbound}
\end{align*}
with probability at least $1- O((n\vee p)^{-12})$.  In addition,  conditional on $\bm{Z}$, $\bm{w}$ is a Gaussian random vector with diagonal covariance $\bm{C}$ given by 
\begin{align*}
    \bm{C}_{l'l'} &= \frac{(\bm{\Gamma}^{(\bm{Z})}_{l'l'})^{1/2}(\bm{\Gamma}^{(\bm{Z})}_{l'l'} - \sigma^2)}{\lambda - \bm{\Gamma}^{(\bm{Z})}_{l'l'}}.
\end{align*}
Consequently, with probability at least $1- O((n\vee p)^{-12})$, 
\begin{align*}
    \| \bm{w} \| &\lesssim \| \bm{C} \|_F \sqrt{\log(n\vee p)} \lesssim \sqrt{\min(p-r,n) \log(n\vee p)} \| \bm{C} \|, \numberthis \label{wbound}
\end{align*}
where we have used the fact that $\bm{C}$ has only $\min(p-r,n)$ nonzero eigenvalues.  Since $\bm{C}$ is a function of the matrix $\bm{\Gamma}^{(\bm{Z})}$, it therefore suffices to study the singular values of the matrix $\bm{Z}/\sqrt{n}$.

First we note that standard concentration for Gaussian covariance matrices imply that
\begin{align*}
    \bigg\| \frac{1}{n} \bm{ZZ}\t - \sigma^2 \bm{I}_{p-r} \bigg\| &\lesssim \sigma^2 \bigg( \sqrt{\frac{p}{n}} + \frac{p}{n} + \sqrt{\frac{\log(n\vee p)}{n}}\bigg);\numberthis \label{boundable} \\
    \bigg\| \frac{1}{n} \bm{Z}\t\bm{ Z} - \sigma^2 \frac{p-r}{n} \bm{I}_{p-r} \bigg\| &\lesssim \sigma^2 \big( 1  + \sqrt{\frac{p}{n}}\big).
\end{align*}
Consequently, since the eigenvalues of $\bm{ZZ}\t$ and $\bm{Z}\t\bm{Z}$ are the same the above inequalities imply that
\begin{align*}
      \max_{l'} \sqrt{ \bm{\Gamma}^{(\bm{Z})}_{l'l'} } &\lesssim \sigma \bigg( \sqrt{\frac{\max(n,p-r)}{n}} + \bigg( \frac{p}{n}\bigg)^{1/4} \bigg).
\end{align*}
Combining this bound with the bound \eqref{boundable} and \cref{fact3_PCA}, we obtain that
\begin{align*}
    \| \bm{C} \| &\lesssim \max_{l'}  \frac{\sigma^3}{\lambda - \bm{\Gamma}^{(\bm{Z})}} \left( \sqrt{\frac{\max(n,p-r)}{n}} + \left( \frac{p}{n}\right)^{1/4} \right)\bigg( \sqrt{\frac{p}{n}} + \frac{p}{n} + \sqrt{\frac{\log(n\vee p)}{n}}\bigg) \\
    &\lesssim  \frac{\sigma^3}{\lambda_j} \left( \sqrt{\frac{\max(n,p)}{n}} + \left( \frac{p}{n}\right)^{1/4} \right)\bigg( \sqrt{\frac{p}{n}} + \frac{p}{n} + \sqrt{\frac{\log(n\vee p)}{n}}\bigg) \\
    &\asymp  \frac{\sigma^3}{\lambda_j} \bigg( \sqrt{\frac{p}{n}}  + \sqrt{\frac{\log(n\vee p)}{n}} +  \frac{p^{3/2}}{n^{3/2}}\bigg).
\end{align*}
Therefore, plugging in this bound in \eqref{wbound} and combining with \eqref{rotationbound} we obtain that
\begin{align*}
    \big| \langle \bm{w}, \big( \bm{V}^{(\bm{Z})}\big)\t \uperp\t\a \rangle \big| &\lesssim   \frac{\sigma^3\log(n\vee p) \sqrt{\min(p,n)}}{\lambda_j\sqrt{p}} \bigg( \sqrt{\frac{p}{n}} + \frac{p^{3/2}}{n^{3/2}} + \sqrt{\frac{\log(n\vee p)}{n}} \bigg) \|\uperp\t\a\| \\
    &\lesssim \frac{\sigma^3 \log(n\vee p)}{\lambda_j} \bigg( \frac{p}{n} + \sqrt{\frac{p}{n}} + \sqrt{\frac{\log(n\vee p)}{n}} \bigg) \|\uperp\t\a\|.
\end{align*}
These bounds hold cumulatively with probability at least $1 - O((n\vee p)^{-11})$.  Concequently, we obtain that
\begin{align*}
    \bigg\|    \bm{W}\t  \bm{\Gamma}^{(\bm{Z})}  \bigg( \lambda &\bm{I}_{p-r} - \bm{\Gamma}^{(\bm{Z})}\bigg)\inv  \bigg( \bm{\Gamma}^{(\bm{Z})} - \sigma^2 \bm{I}_{p-r} \bigg) (\bm{V}^{(\bm{Z})})\t \uperp\t\a \bigg\|_{\infty} \\
    &\lesssim \frac{\sigma^3 \log(n\vee p)}{\lambda_j} \bigg( \frac{p}{n} + \sqrt{\frac{p}{n}} + \sqrt{\frac{\log(n\vee p)}{n}} \bigg) \|\uperp\t\a\|
\end{align*}
with probability at least $1 - O((n\vee p)^{-12})$.
\\ \ \\ \noindent
\textbf{Step 2: Bounding each entry by unfixing $\lambda$}.  
We now complete the proof for arbitrary $\lambda$.  
Let $\mathcal{E}_{\eps}$ be an $\eps$-net for the space $[2(\lambda_j + \sigma^2)/3, 4(\lambda_j + \sigma^2)/3]$, with $\eps = c \lambda_j/(n\vee p)$.  Let $\lambda' \in \mathcal{E}_{\eps}$ be such that $|\lambda -\lambda'| \leq \eps$.  Denote
\begin{align*}
    \bm{F}(\lambda) &\coloneqq    \bm{W}\t \frac{ \bm{Z}}{\sqrt{n}}\bigg( \lambda \bm{I}_{p-r} - \sigma^2 \frac{\bm{ZZ}\t}{n} \bigg)\inv \bigg( \frac{\bm{ZZ}\t}{n} - \sigma^2\bm{I}_{p-r} \bigg) \uperp\t\a .
\end{align*}
By a similar argument as previously, letting $\frac{\bm{Z}}{\sqrt{n}} = \bm{U}^{(\bm{Z})} \sqrt{\bm{\Gamma}^{(\bm{Z})}} (\bm{V}^{(\bm{Z})})\t$, observe that
\begin{align*}
   \| &\bm{F}(\lambda) - \bm{F}(\lambda') \|_{\infty} \\
     &\leq \bigg\| \bm{W}\t \frac{ \bm{Z}}{\sqrt{n}} \bm{V}^{(\bm{Z})} \left( \left( \lambda\bm{I}_{p-r} - \bm{\Gamma}^{(\bm{Z})} \right)\inv - \left( \lambda'\bm{I}_{p-r} - \bm{\Gamma}^{(\bm{Z})} \right)\inv \right) \left( \bm{\Gamma}^{(\bm{Z})} - \sigma^2 \bm{I}_{p-r} \right) \big(\bm{V}^{(\bm{Z})}\big)\t \uperp\t\a  \bigg\|  \\
    &\lesssim     \bigg\| \bm{W}\t\ \frac{ \bm{Z}}{\sqrt{n}} \bigg\|_{\infty} \bigg\|  \left( \lambda\bm{I}_{p-r} - \bm{\Gamma}^{(\bm{Z})} \right)\inv - \left( \lambda'\bm{I}_{p-r} - \bm{\Gamma}^{(\bm{Z})} \right)\inv \bigg\| \left\| \bm{\Gamma}^{(\bm{Z})} - \sigma^2 \bm{I}_{p-r} \right\| \| \uperp\t\a\| \\
    &\lesssim \sigma^2  \bigg( \frac{p}{n} + \sqrt{\frac{p}{n}} + \sqrt{\frac{\log(n\vee p)}{n}} \bigg) \bigg\| \bm{W}\t\ \frac{ \bm{Z}}{\sqrt{n}} \bigg\|_{\infty} \bigg\|  \left( \lambda\bm{I}_{p-r} - \bm{\Gamma}^{(\bm{Z})} \right)\inv - \left( \lambda'\bm{I}_{p-r} - \bm{\Gamma}^{(\bm{Z})} \right)\inv \bigg\|  \| \uperp\t\a\|, \end{align*}
where in the final line have made use of the previous covariance concentration inequality.  In addition, we note that standard Gaussian concentration inequalities imply
\begin{align*}
    \bigg\| \bm{W}\t\ \frac{ \bm{Z}}{\sqrt{n}} \bigg\|_{\infty} &\lesssim \sqrt{\frac{\log(n\vee p)}{n}} \| \bm{Z} \|_F \lesssim \sqrt{\frac{\log(n\vee p)\min(p-r,n)}{n}} \|\bm{Z} \|  \\
      &\lesssim \sigma \sqrt{\log(n\vee p) \min(p,n)} \bigg( 1 + \sqrt{\frac{p}{n}}\bigg) \asymp \sigma \sqrt{ (n\vee p)\log(n\vee p)}
\end{align*}
which holds by standard bounds on Gaussian random matrices (i.e. $\|\bm{Z}\| \lesssim \sigma (\sqrt{n} + \sqrt{p})$).  Consequently,
\begin{align*}
    \|\bm{F}(\lambda) - \bm{F}(\lambda') \|_{\infty} 
    &\lesssim \|\uperp\t\a\| \sigma^3 \sqrt{ (n\vee p)\log(n\vee p)} \bigg( \frac{p}{n} + \sqrt{\frac{p}{n}} + \sqrt{\frac{\log(n\vee p)}{n}} \bigg) \\
    &\quad \times \Bigg\| \bigg( \lambda \bm{I}_{p-r} - \sigma^2 \frac{\bm{ZZ}\t}{n} \bigg)\inv - \bigg( \lambda' \bm{I}_{p-r} - \sigma^2 \frac{\bm{ZZ}\t}{n} \bigg)\inv\Bigg\|.
\end{align*}
Let $\eta_i$ denote the eigenvalues of $\sigma^2 \frac{\bm{ZZ}\t}{n}$.  Turning to the remaining term it holds that
\begin{align*}
    \Bigg\| \bigg( \lambda \bm{I}_{p-r} - \sigma^2 \frac{\bm{ZZ}\t}{n} \bigg)\inv - \bigg( \lambda' \bm{I}_{p-r} - \sigma^2 \frac{\bm{ZZ}\t}{n} \bigg)\inv\Bigg\| 
      &= \max_{1\leq i \leq p-r} \bigg| \frac{\lambda' - \lambda }{(\lambda - \eta_i)(\lambda' - \eta_i)} \bigg| \lesssim \frac{\eps}{\lambda_j^2} \asymp \frac{1}{\lambda_j (n\vee p)}.
\end{align*}
Therefore, for any $\lambda$ and $\lambda'$ satisfying $|\lambda -\lambda'| \leq \eps$, we have that
\begin{align*}
    \| \bm{F}(\lambda) - \bm{F}(\lambda') \|_{\infty} &\lesssim \|\uperp\t\a\| \sigma^3 \sqrt{ (n\vee p)\log(n\vee p)} \bigg( \frac{p}{n} + \sqrt{\frac{p}{n}} + \sqrt{\frac{\log(n\vee p)}{n}} \bigg)   \frac{1}{\lambda_j (n\vee p)} \\
    &\lesssim   \frac{\|\uperp\t\a\| \sigma^3 \sqrt{\log(n\vee p)}}{\lambda_j \sqrt{n\vee p}} \bigg( \frac{p}{n} + \sqrt{\frac{p}{n}} + \sqrt{\frac{\log(n\vee p)}{n}} \bigg).
\end{align*}
This bound holds independent of $\lambda$ and $\lambda'$ (provided they are less than $\eps$) with probability at least $1- O( (n\vee p)^{-12})$.  Therefore, let $\lambda^*$ denote the maximizer, and pick $\lambda^{*'} \in \mathcal{E}_{\eps}$ satisfying $|\lambda^* - \lambda^{*'}|\leq \eps$.  Then taking a union bound over at most $(n\vee p)$ many terms, we arrive at
\begin{align*}
    \sup_{\lambda \in [2\lambda_j/3,4\lambda_j/3]} \| \bm{F}(\lambda)\|_{\infty} &= \| \bm{F}(\lambda^*) \|_{\infty} \leq\| \bm{F}(\lambda^{*'}) \|_{\infty}   + \| \bm{F}(\lambda^{*'}) - \bm{F}(\lambda) \|_{\infty} \\
    &\lesssim \frac{\sigma^3 \log(n\vee p) \|\uperp\t\a\|}{\lambda_j} \bigg( \frac{p}{n} + \sqrt{\frac{p}{n}} + \sqrt{\frac{\log(n\vee p)}{n}} \bigg).
\end{align*}
with probability at least $1- O((n\vee p)^{-10})$.  Therefore, combining this bound with \eqref{bigboi} we achieve the desired result.
\end{proof}

\subsubsection{Proof of Lemma \ref{lem:innerproduct_PCA}} \label{sec:innerproduct_PCA_proof}
\begin{proof}
    We slightly modify the proof of \cref{lem:innerproduct_MD}.   We note that
    \begin{align*}
        \uhatj\t \uk\big( \hat \lambda_j -(\lambda_k +\sigma^2) - \gamma\pca(\hat \lambda_j) (\lambda_k + \sigma^2) \big) &= \uhatj\t \big( \sigmahat - \bSigma \big) \uk - \gamma\pca(\hat \lambda_j) (\lambda_k + \sigma^2) \uhatj\t \uk.
    \end{align*}
    Consequently,
    \begin{align*}
        \uhatj\t\uk 
        &=\frac{1}{\hat \lambda_j -(\lambda_k +\sigma^2) - \gamma\pca(\hat \lambda_j) (\lambda_k + \sigma^2)}  \Bigg( \uhatj\t \U \U\t \bigg( \sigmahat - \bSigma \bigg) \uk \Bigg) \\
        &\quad + \frac{1}{\hat \lambda_j -(\lambda_k +\sigma^2) - \gamma\pca(\hat \lambda_j) (\lambda_k + \sigma^2)}  \Bigg( \uhatj\t \per \bigg( \sigmahat - \bSigma \bigg) \uk - \gamma\pca(\hat \lambda_j) (\lambda_k + \sigma^2) \uhatj\t \uk  \Bigg),
    \end{align*}
    where we have implicitly assumed that the denominator is nonzero.  However, by \cref{lem:eigenvalueconcentration_PCA}, we note that
    \begin{align*}
\bigg|        \hat \lambda_j  - (\lambda_k +\sigma^2) \big( 1 +  \gamma\pca(\hat \lambda_j) \bigg| &= (1 + \gamma\pca(\hat \lambda_j)) \bigg| \frac{\hat \lambda_j}{1 + \gamma\pca(\hat \lambda_j)} - \lambda_k\bigg| \\
&\gtrsim |\lambda_j - \lambda_k| - \delta\pca  \\
&\gtrsim \Delta_j,
    \end{align*}
    as long as $\Delta_j \gg \delta\pca$, which is guaranteed by the noise  assumption \eqref{noiseassumption:pca}.  Consequently,
    \begin{align*}
        | \uhatj\t \uk| &\lesssim \frac{1}{\Delta_j} \bigg\| \U \U\t \bigg(\sigmahat - \bSigma \bigg) \uk \bigg\| + \frac{1}{\Delta_j} \bigg| \uhatj \t\uperp \uperp\t  \bigg( \sigmahat - \bSigma \bigg) \uk - \gamma\pca(\hat \lambda_j) (\lambda_k + \sigma^2) \uhatj\t \uk \bigg|.
    \end{align*}
    By a similar argument to the proof of \cref{fact2_PCA}  it holds that
    \begin{align*}
        \bigg\| \U \U\t \bigg( \sigmahat- \bSigma \bigg) \uk \bigg\| &\lesssim \sqrt{\frac{r\log(n\vee p)}{n}} (\lambda_{\max} + \sigma^2)^{1/2} (\lambda_k + \sigma^2)^{1/2}.
    \end{align*}
    In addition, by \cref{lem:uhatjuperpidentity} and \cref{fact4_PCA} we have that
    \begin{align*}
        \uhatj\t \uperp \uperp\t \bigg( \sigmahat - \bSigma\bigg) \uk 
             &= \uhatj\t \U \U\t \sigmahat \uperp\big( \hat \lambda_j \bm{I}_{p-r} - \uperp\t \sigmahat \uperp \big)\inv \uperp\t \sigmahat \uk 
    \end{align*}
   Consequently,
    \begin{align*}
        \bigg| \uhatj \t\uperp \uperp\t & \bigg( \sigmahat - \bSigma \bigg) \uk - \gamma\pca(\hat \lambda_j) (\lambda_k + \sigma^2) \uhatj\t \uk \bigg| \\
        &\lesssim \bigg| \uk\t  \sigmahat \uperp\big( \hat \lambda_j \bm{I}_{p-r} - \uperp\t \sigmahat \uperp \big)\inv \uperp\t \sigmahat \uk - \gamma\pca(\hat \lambda_j) (\lambda_k + \sigma^2) \bigg| \\
        &\quad + \bigg\| (\U^{-k})\t  \sigmahat \uperp\big( \hat \lambda_j \bm{I}_{p-r} - \uperp\t \sigmahat \uperp \big)\inv \uperp\t \sigmahat \uk \bigg\| \\
      &\lesssim \| \bm{G}\pca(\hat \lambda_j) - \bm{\tilde G}\pca(\hat \lambda_j) \| 
    \end{align*}
    where $\bm{G}\pca(\lambda)$ and $\bm{\tilde G}\pca(\lambda)$ are defined as in \cref{lem:Gpca}.  By \cref{lem:Gpca} with probability at least $1 - O((n\vee p)^{-10})$ it holds that
    \begin{align*}
        \sup_{\lambda \in [2(\lambda_j+\sigma^2)/3, 4(\lambda_j+\sigma^2)/3]}\| \bm{G}( \lambda) - \bm{\tilde G}( \lambda) \| &\lesssim (\lambda_{\max} + \sigma^2) \frac{\sigma^2}{\lambda_j^2} \bigg( \frac{p}{n} + \sqrt{\frac{p}{n}} \bigg) \sqrt{\frac{r}{n}} \log^2(n\vee p) \\
        &\ll \sqrt{\frac{r\log(n\vee p)}{n}} (\lambda_{\max} + \sigma^2)^{1/2} (\lambda_k + \sigma^2)^{1/2}.
    \end{align*}
    As a consequence,
    \begin{align*}
        | \uhatj\t \uk | &\lesssim \sqrt{\frac{r\log(n\vee p)}{n}} \frac{(\lambda_{\max} + \sigma^2)^{1/2} (\lambda_k + \sigma^2)^{1/2}}{\Delta_j}.
    \end{align*}
    Therefore, the result is completed by taking a union bound over all $k\neq j$ with $k \leq r$.
\end{proof}

\subsubsection{Proof of Lemma \ref{lem:approximategaussian}} \label{sec:approximategaussianproof}
\begin{proof}
Recalling that $\bm{X}_i = \bm{\Sigma}^{1/2} \y_i$ where $\y_i$ are i.i.d $\mathcal{N}(0, \bm{I}_p)$ random variables, we observe that
    \begin{align*}
        \sum_{k\neq j} &\frac{\uj\t \big( \sigmahat - \bSigma\big)\uk}{\lambda_j - \lambda_k} \uk\t \a\\
        &= \frac{1}{n} \sum_{i=1}^{n} \uj\t \y_i \bigg( \sum_{\substack{k\neq j\\k\leq r}} \uk\t \y_i \frac{(\lambda_j + \sigma^2)^{1/2} (\lambda_k + \sigma^2)^{1/2} (\uk\t\a)}{(\lambda_j - \lambda_k)} + \a\t \uperp \uperp\t\y_i \frac{(\lambda_j + \sigma^2)^{1/2} \sigma}{\lambda_j} \bigg).
    \end{align*}
Consider each summand above.  By independence of $\uj\t \y$ and $\uk\t \y$ for $k\neq j$, we can condition on $\uj\t \y$ to observe that
\begin{align*}
     \frac{1}{n} \sum_{i=1}^{n}& \uj\t \y_i \bigg( \sum_{\substack{k\neq j\\k\leq r}} \uk\t \y_i \frac{(\lambda_j + \sigma^2)^{1/2} (\lambda_k + \sigma^2)^{1/2} (\uk\t\a)}{(\lambda_j - \lambda_k)} + \a\t \uperp \uperp\t\y_i \frac{(\lambda_j + \sigma^2)^{1/2} \sigma}{\lambda_j} \bigg) \\
     &\sim \mathcal{N}\big(0, (\tilde{s_{\a,j}\pca})^2 \big),
\end{align*}
where
\begin{align*}
    \big(\tilde{s_{\a,j}\pca} \big)^2 &= \frac{1}{n} \sum_{i=1}^{n} (\uj\t \y_i)^2 \bigg( \sum_{\substack{k\neq j\\k\leq r}}  \frac{(\lambda_j + \sigma^2) (\lambda_k + \sigma^2) (\uk\t\a)^2}{(\lambda_j - \lambda_k)^2 n} + \| \a\t \uperp \| \frac{(\lambda_j + \sigma^2) \sigma^2}{\lambda_j^2 n} \bigg)\\
    &= \frac{(s_{\a,j}\pca)^2}{n} \sum_{i=1}^{n} (\uj\t \y_i)^2.
\end{align*}
Therefore, conditional on $\uj\t \y$, 
\begin{align*}
    \frac{1}{s_{\a,j}\pca}  \sum_{k\neq j} \frac{\uj\t \big( \sigmahat - \bSigma\big)\uk}{\lambda_j - \lambda_k} \uk\t \a &= \frac{\tilde{s_{\a,j}\pca}}{s_{\a,j}\pca} \frac{1}{\tilde{s_{\a,j}\pca}}  \sum_{k\neq j} \frac{\uj\t \big( \sigmahat - \bSigma\big)\uk}{\lambda_j - \lambda_k} \uk\t \a \sim \mathcal{N}\bigg(0,    (\tilde{s_{\a,j}\pca})^2 \frac{( s_{\a,j}\pca)^2 }{(\tilde{s_{\a,j}\pca})^2 } \bigg).
\end{align*}
Consequently,
\begin{align*}
   \p\bigg\{  \frac{1}{s_{\a,j}\pca}  \sum_{k\neq j} \frac{\uj\t \big( \sigmahat - \bSigma\big)\uk}{\lambda_j - \lambda_k} \uk\t \a \leq z  \  \bigg| \  \uj\t \y \bigg\} - \Phi(z) 
   &= \Phi\bigg( z \frac{ s_{\a,j}\pca}{\tilde{s_{\a,j}\pca}} \bigg) - \Phi(z).\numberthis \label{conditionalthing}
\end{align*}
In addition, we note that
\begin{align*}
\bigg|    \frac{  \big(\tilde{s_{\a,j}\pca} \big)^2 }{(s_{\a,j}\pca)^2} - 1 \bigg| &= \bigg| \frac{1}{n} \sum_{i=1}^{n} (\uj\t \y_i)^2 - 1 \bigg| \lesssim \sqrt{\frac{\log(n\vee p)}{n}},
\end{align*}
with probability at least $1-O((n\vee p)^{-10})$ by Bernstein's inequality.  As a consequence, with probability at least $1 - O((n\vee p)^{-10})$,
\begin{align*}
    \bigg| \frac{\tilde{s_{\a,j}\pca}}{s_{\a,j}\pca} - 1 \bigg|&= \frac{1}{1 + \frac{\tilde{s_{\a,j}\pca}}{s_{\a,j}\pca} } \bigg| \frac{  \big(\tilde{s_{\a,j}\pca} \big)^2 }{(s_{\a,j}\pca)^2} - 1 \bigg| \lesssim \bigg| \frac{  \big(\tilde{s_{\a,j}\pca} \big)^2 }{(s_{\a,j}\pca)^2} - 1 \bigg| \lesssim \sqrt{\frac{\log(n\vee p)}{n}}. \numberthis \label{eventholds1}
\end{align*}
Let $\mathcal{E} \coloneqq \{ \eqref{eventholds1} \text{ holds} \}$. Then by \eqref{conditionalthing} on the event $\mathcal{E}$ it holds that
\begin{align*}
 \bigg|   \p\bigg\{  \frac{1}{s_{\a,j}\pca}  \sum_{k\neq j} \frac{\uj\t \big( \sigmahat - \bSigma\big)\uk}{\lambda_j - \lambda_k} \uk\t \a \leq z  \  \bigg| \  \uj\t \y \bigg\} - \Phi(z) \bigg| &= \bigg| \Phi\bigg( z \frac{ s_{\a,j}\pca}{\tilde{s_{\a,j}\pca}} \bigg) - \Phi(z) \bigg| \\
 &\lesssim \bigg| \frac{ s_{\a,j}\pca}{\tilde{s_{\a,j}\pca}} - 1 \bigg| \lesssim \sqrt{\frac{\log(n\vee p)}{n}}, \numberthis \label{conditionalthing2}
\end{align*}
where we have used the Lipschitz property of the Gaussian cumulative distributional function.  

We are now prepared to complete the proof. Define
\begin{align*}
    \xi \coloneqq \sum_{k\neq j} \frac{\uj\t \big( \sigmahat - \bSigma\big)\uk}{\lambda_j - \lambda_k} \uk\t \a.
\end{align*}
Then it holds that
\begin{align*}
    \p\bigg\{ \frac{\xi}{s_{\a,j}\pca}  \leq z \bigg\}= \mathbb{E}\left[ \mathbb{E} \left[ \mathbb{I}_{\xi/s_{\a,j}\pca \leq z}  | \uj\t \y \right] \right] &= \mathbb{E} \left[ \mathbb{E} \left[\mathbb{I}_{\xi/s_{\a,j}\pca \leq z} \mathbb{I}_{\mathcal{E}} \mid \uj\t \y \right] \right] + \mathbb{E} \left[ \mathbb{E} \left[\mathbb{I}_{\xi/s_{\a,j}\pca \leq z} \mathbb{I}_{\mathcal{E}^c} \mid \uj\t \y \right] \right]. \numberthis \label{decomp}
\end{align*}
Note that $\mathcal{E}$ depends only on $\uj\t\y$.  Therefore, it holds that
\begin{align*}
   \mathbb{E} \left[ \mathbb{I}_{\xi/s_{\a,j}\pca \leq z} \mathbb{I}_{\mathcal{E}} \mid \uj\t \y \right]  &=  
 \mathbb{I}_{\mathcal{E}}\mathbb{E} \left[ \mathbb{I}_{\xi/s_{\a,j}\pca \leq z}  \mid  \uj\t \y \right]  \\
 &= 
 \mathbb{I}_{\mathcal{E}} \p\bigg\{ \xi/s_{\a,j}\pca \leq z  \mid  \uj\t \y \bigg\} = \mathbb{I}_{\mathcal{E}} \Bigg( \Phi(z) + O\bigg( \sqrt{\frac{\log(n\vee p)}{n}} \bigg) \Bigg),
\end{align*}
where the final inequality holds by \eqref{conditionalthing2}.  Plugging this into \eqref{decomp} it holds that
\begin{align*}
    \bigg| \p\bigg\{ \frac{\xi}{s_{\a,j}\pca} \leq z \bigg\} - \Phi(z) \bigg| &\leq\mathbb{E} \bigg[ \Phi(z) \mathbb{I}_{\mathcal{E}^c}  \bigg] + O\bigg( \sqrt{\frac{\log(n\vee p)}{n}} \bigg) \\
    &\lesssim (n \vee p)^{-10} + \sqrt{\frac{\log(n\vee p)}{n}} \lesssim \sqrt{\frac{\log(n\vee p)}{n}},
\end{align*}
where the final inequality holds since $\Phi(z) \leq 1$ and $\p\big\{ \mathcal{E}^{c} \big\} \lesssim (n\vee p)^{-10}$.  This completes the proof.
\end{proof}

\subsubsection{Proof of Lemma \ref{lem:sigmahatapproxpca}} \label{sec:sigmahatoversigmaproofpca}

\begin{proof}
First suppose that $p > n/\log^4(n\vee p)$.  Recall that
\begin{align*}
    \bm{X} = \bSigma^{1/2} \y = \U \bm{\Lambda}^{1/2} \U\t \bm{Y} + \sigma \uperp\uperp\t \y,
\end{align*}
where $\y$ consists of independent standard Gaussian random variables. 

Suppose that
\begin{align*}
    \U\t \bm{\Lambda}^{1/2} \U\t \frac{\y\y\t}{n} \U \bm{\Lambda}^{1/2} \U\t &= \bm{\tilde U} \bm{\Gamma} \bm{\tilde U}\t,
\end{align*}
where it is readily seen that $\bm{\Gamma}$ has at most $r$ nonzero eigenvalues.  
 Define the event
\begin{align*}
    \mathcal{A} &\coloneqq \bigg\{ \lambda_{\min} \lesssim \bm{\Gamma}_{ii} \lesssim \lambda_{\max} \bigg\}  \\
    &\qquad  \bigcap \bigg\{ \bigg\| \frac{\y\y\t}{n} - \bm{I}_p \bigg\| \lesssim \frac{p}{n} + \sqrt{\frac{p}{n}} + \sqrt{\frac{\log(n\vee p)}{n}}\bigg\} \\
    &\qquad \bigcap \bigg\{ \| \U\t \bm{\Lambda} ^{1/2} \U\t \bigg( \frac{\y\y\t}{n} - \bm{I}_p \bigg) \U \bm{\Lambda}^{1/2} \U\t \| \lesssim  \lambda_{\min}  \sqrt{\frac{r\log(n\vee p)}{n}} \bigg\}.
\end{align*}
  By  a similar analysis to \cref{fact2_PCA} it holds that
\begin{align*}
    \| \U\t \bm{\Lambda} ^{1/2} \U\t \bigg( \frac{\y\y\t}{n} - \bm{I}_p \bigg) \U \bm{\Lambda}^{1/2} \U\t \| \lesssim \lambda_{\max}  \sqrt{\frac{r\log(n\vee p)}{n}} \lesssim \kappa  \lambda_{\min}  \sqrt{\frac{r\log(n\vee p)}{n}},
\end{align*}
which implies that $\bm{\Gamma}_{ii} \gtrsim \lambda_{\min}$. 
Similarly, $\max_{i} \bm{\Gamma}_{ii} \lesssim \lambda_{\max}$. Therefore, it is readily seen that $\p(\mathcal{A}) \geq 1 - O((n\vee p)^{-10})$ by the union bound. 

 We have that
\begin{align*}
    \sigmahat = \bm{\tilde U \Gamma \tilde U}\t + \sigma^2 \uperp\uperp\t \frac{\y\y\t}{n} \uperp\uperp\t + \sigma^2 \U \U\t \frac{\y\y\t}{n} \U \U\t,
\end{align*}
and hence 
\begin{align*}
    \sigmahat - \sigmahat_r &= \bm{\tilde U \Gamma \tilde U}\t + \sigma^2 \uperp\uperp\t \frac{\y\y\t}{n} \uperp\uperp\t - \uhat \uhat\t \bm{\tilde U \Gamma \tilde U}\t \uhat \uhat\t -  \sigma^2\uhat \uhat\t \uperp\uperp\t \frac{\y\y\t}{n} \uperp\uperp\t \uhat\uhat\t  \\
    &\quad + \sigma^2 \U \U\t \frac{\y\t\y}{n} \U \U\t - \sigma^2 \uhat \uhat\t \U \U\t \frac{\y\t\y}{n} \U \U\t \uhat\uhat\t,
\end{align*}
which implies that
\begin{align*}
    \Bigg| \tr\bigg( \sigmahat &- \sigmahat_r \bigg) - \sigma^2 \tr\bigg( \uperp \uperp\t \frac{\y\y\t}{n} \uperp\uperp\t \bigg) \bigg| \\
    &\lesssim \sqrt{r} \underbrace{\bigg\|  \bm{\tilde U \Gamma \tilde U}\t - \uhat \uhat\t \bm{\tilde U \Gamma \tilde U}\t \uhat \uhat\t \bigg\|}_{\beta_1} \\&\quad
    + \sqrt{r} \sigma^2 \underbrace{\bigg\| \uhat \uhat\t \uperp\uperp\t \frac{\y\y\t}{n} \uperp\uperp\t \uhat\uhat\t \bigg\|}_{\beta_2} \\
    &\quad + \sigma^2 \sqrt{r} \underbrace{\bigg\| \U \U\t \frac{\y\y\t}{n} \U \U\t - \uhat \uhat\t \U \U\t \frac{\y\t\y}{n} \U \U\t \uhat\uhat\t \bigg\|}_{\beta_3},
\end{align*}
where we have used the fact that $| \tr( \bm{M}) | \leq \sqrt{{\sf Rank}(\bm{M})} \|\bm{M}\|$.  We analyze each term in turn. 
\begin{itemize}
    \item \textbf{The term $\beta_1$}. For $\beta_1$, we have that
    \begin{align*}
    \beta_1 &= \bigg\|\bm{\tilde U \Gamma \tilde U}\t - \uhat \uhat\t \bm{\tilde U \Gamma \tilde U}\t \uhat \uhat\t \bigg\| \\
    &\leq \bigg\| \big( \bm{I} - \uhat\uhat\t \big) \bm{\tilde U\Gamma\tilde U}\t \bigg\| + \bigg\| \uhat \uhat\t \bm{\tilde U \Gamma \tilde U}\t \big( \bm{I} - \uhat\uhat\t \big) \bigg\| \\
    &\lesssim \bigg\| \big( \bm{I} - \uhat\uhat\t \big) \bm{\tilde U} \bigg\| \lambda_{\max},
    \end{align*}
    where the final bound holds with probability at least $1 - O( (n\vee p)^{-10})$.  
    
    Observe that $\utilde$ are the eigenvectors of the matrix $\utilde \bm{\Gamma} \utilde\t + \sigma^2 \bm{I}_p$, which has $r$'th largest eigenvalue at least $\lambda_{\min}  + \sigma^2$ on the event $\mathcal{A}$.  In addition, on this event it holds that
    \begin{align*}
        \bigg\| \sigma^2\frac{\y\y\t}{n} - \sigma^2 \bm{I}_p \bigg\| \lesssim \sigma^2 \bigg( \frac{p}{n} + \sqrt{\frac{p}{n}} + \sqrt{\frac{\log(n\vee p)}{n}} \bigg).  
    \end{align*}
    Therefore, viewing $\sigmahat$ as a perturbation of $\utilde \bm{\Gamma} \utilde\t + \sigma^2 \bm{I}_p$, we have that
    \begin{align*}
        \| \sigmahat - \utilde \bm{\Gamma} \utilde\t - \sigma^2 \bm{I}_p \| \leq  \bigg\| \frac{\y\y\t}{n} - \sigma^2 \bm{I}_p \bigg\| \lesssim \sigma^2 \bigg( \frac{p}{n} + \sqrt{\frac{p}{n}} + \sqrt{\frac{\log(n\vee p)}{n}} \bigg).
    \end{align*}
    Consequently, by Weyl's inequality,
    \begin{align*}
        \bigg| \lambda_{r+1} \big( \sigmahat \big) - \sigma^2 \bigg| \lesssim \sigma^2 \bigg( \frac{p}{n} + \sqrt{\frac{p}{n}} + \sqrt{\frac{\log(n\vee p)}{n}} \bigg)
    \end{align*}
    which implies that 
    \begin{align*}
        \lambda_{r+1}\big( \sigmahat ) \lesssim \sigma^2 \bigg(1 +  \frac{p}{n} + \sqrt{\frac{p}{n}} + \sqrt{\frac{\log(n\vee p)}{n}} \bigg) \ll \lambda_{\min} + \sigma^2,
    \end{align*}
    which holds by the noise assumption \eqref{noiseassumption:pca}.  Therefore,
    \begin{align*}
        \| \big( \bm{I} - \uhat\uhat\t \big) \utilde \| \lesssim \frac{\sigma^2 \bigg( \frac{p}{n} + \sqrt{\frac{p}{n}} + \sqrt{\frac{\log(n\vee p)}{n}} \bigg)}{\lambda_{\min}}.
    \end{align*}
    Hence, on this event  it holds that
    \begin{align*}
        \beta_1 \lesssim \kappa \sigma^2 \bigg( \frac{p}{n} + \sqrt{\frac{p}{n}} + \sqrt{\frac{\log(n\vee p)}{n}} \bigg).
    \end{align*}
    This bound is deterministic on the event $\mathcal{A}$, which holds with probability at least $1 - O((n\vee p)^{-10})$.
     \item \textbf{The term $\beta_2$}. For $\beta_2$, we observe that
     \begin{align*}
         \beta_2 
         &\lesssim  \|\uhat\t\uperp \|^2 \bigg\| \frac{\uperp\t \y\y\t \uperp}{n} \bigg\| \\
         &\lesssim \bigg( \frac{\mathcal{E}\pca}{\lambda_{\min}} \bigg)^2 \bigg( 1 + \frac{p}{n} + \sqrt{\frac{p}{n}} + \sqrt{\frac{\log(n\vee p)}{n}} \bigg),
     \end{align*}
     where the final line is due to \cref{fact1_PCA}, the Davis-Kahan Theorem, and the noise assumption \ref{noiseassumption:pca}.  
      \item \textbf{The term $\beta_3$}. Note that
      \begin{align*}
          \beta_3 &\leq \| \big(\bm{I} - \uhat\uhat\t \big) \U \U\t \frac{\y\y\t}{n} \U \U\t \| + \| \uhat\uhat\t \U \U\t \frac{\y\y\t}{n} \U \U\t\big(\bm{I} - \uhat\uhat\t \big) \| \\
          &\leq 2 \| \big( \bm{I} - \uhat\uhat\t \big) \U \| \| \U\t \frac{\y\y\t}{n} \U \| \\
          &\lesssim \frac{\mathcal{E}\pca}{\lambda_{\min}}
      \end{align*}
      where the penultimate inequality follows from the covariance concentration for Gaussian random variables, and the final inequality is due to the assumption on $r$.
\end{itemize}
Combining our bounds for $\beta_1$, $\beta_2$, and $\beta_3$, we obtain that
\begin{align*}
     \Bigg| \tr\bigg( \sigmahat - \sigmahat_r \bigg) - \sigma^2 \tr\bigg( \uperp \uperp\t \frac{\y\y\t}{n} \uperp\uperp\t \bigg) \bigg|  &\lesssim \sqrt{r} \kappa \sigma^2 \bigg( \frac{p}{n} + \sqrt{\frac{p}{n}} + \sqrt{\frac{\log(n\vee p)}{n}} \bigg) \\&\quad + \sqrt{r}\sigma^2  \bigg( \frac{\mathcal{E}\pca}{\lambda_{\min}}\bigg)^2 \bigg( 1 + \frac{p}{n} + \sqrt{\frac{p}{n}} + \sqrt{\frac{\log(n\vee p)}{n}} \bigg) \\
     &\quad + \sqrt{r} \sigma^2  \frac{\mathcal{E}\pca}{\lambda_{\min}},
\end{align*}
which holds with probability at least $1 - O((n\vee p)^{-10})$. As a result,
\begin{align*}
    \Bigg| \frac{\tr\bigg( \sigmahat - \sigmahat_r \bigg)}{\sigma^2 p-r} -  \frac{1}{p-r} \tr\bigg( \uperp\uperp\t \frac{\y\y\t}{n} \uperp\uperp\t \bigg) \Bigg| &\lesssim \sqrt{r} \kappa  \bigg( \frac{1}{n} + \sqrt{\frac{1}{pn}} +\frac{1}{p} \sqrt{\frac{\log(n\vee p)}{n}} \bigg) \\&\quad + \sqrt{r}  \bigg( \frac{\mathcal{E}\pca}{\lambda_{\min}}\bigg)^2 \bigg( \frac{1}{p} + \frac{1}{n} + \sqrt{\frac{1}{pn}} +\frac{1}{p} \sqrt{\frac{\log(n\vee p)}{n}} \bigg)\\
     &\quad + \sqrt{r}   \frac{\mathcal{E}\pca}{\lambda_{\min} p}.
\end{align*}
Furthermore, by Lemma 1 of \citet{laurent_adaptive_2000} with probability at least $1- O((n\vee p)^{-10})$, 
\begin{align*}
    \bigg| \| \uperp\t \y \|_F^2 - \sigma^2 (p-r)n \bigg| &\lesssim \sigma^2 \sqrt{(p-r)n \log(n\vee p)}+ \log(n\vee p) \lesssim \sigma^2 \sqrt{pn\log(n\vee p)}.
\end{align*}
Consequently,
\begin{align*}
    \bigg| \frac{\tr\big( \uperp\t \y\y\t \uperp\big)}{\sigma^2 n(p-r)} - 1 \bigg| &\lesssim \sqrt{\frac{\log(n\vee p)}{np}}.
\end{align*}
Therefore, combining our bounds, we obtain
\begin{align*}
    \bigg| \frac{\hat \sigma^2}{\sigma^2} - 1 \bigg| &\lesssim  \sqrt{\frac{\log(n\vee p)}{np}} + \sqrt{r} \kappa  \bigg( \frac{1}{n} + \sqrt{\frac{1}{pn}} +\frac{1}{p} \sqrt{\frac{\log(n\vee p)}{n}} \bigg)  \\
    &\qquad + \sqrt{r}  \bigg( \frac{\mathcal{E}\pca}{\lambda_{\min}}\bigg)^2 \bigg( \frac{1}{p} + \frac{1}{n}+ \sqrt{\frac{1}{pn}} +\frac{1}{p} \sqrt{\frac{\log(n\vee p)}{n}} \bigg) + \sqrt{r}   \frac{\mathcal{E}\pca}{\lambda_{\min} p} \\
    &\lesssim \sqrt{r} \kappa  \frac{\log^4(n\vee p)}{n} + \frac{\sqrt{r}}{p} \frac{\mathcal{E}\pca}{\lambda_{\min}},
\end{align*}
with probability at least $1- O((n\vee p)^{-10})$, where we have used the assumption that $p > n/\log^4(n\vee p)$.  This proves the first assertion.

We now prove the second assertion, where we assume that $p \leq n/\log^4(n\vee p)$, which in particular implies $p \leq n$.  First, by the Poincare Separation Theorem (Corollary 4.3.37 of \citet{horn_matrix_2012}) it holds that
\begin{align*}
    \hat \lambda_{r+1} \leq \lambda_r \bigg( \uperp\t \frac{\bm{XX}\t}{n} \uperp \bigg) = \sigma^2 \lambda_r\bigg( \frac{\uperp\t \bm{YY}\t \uperp}{n} \bigg).
\end{align*}
Note that $\uperp\t \y$ is a $p -r \times n$-dimensional matrix of independent standard Gaussian random variables.  Therefore, the covariance concentration inequality (e.g., \cref{fact2_PCA}) implies that with probability at least $1 - O((n\vee p)^{-10})$,
\begin{align*}
    \bigg\| \frac{\uperp\t \bm{YY}\t \uperp}{n} - \bm{I}_{p-r} \bigg\| &\lesssim \sqrt{\frac{p}{n}} + \sqrt{\frac{\log(p\vee n)}{n}}, \numberthis \label{bbbbb}
\end{align*}
since $p \leq n$.  Therefore, Weyl's inequality implies that
\begin{align*}
    \sigma^2 \lambda_r\bigg( \frac{\uperp\t \bm{YY}\t \uperp}{n} \bigg) &\leq \sigma^2 + O\bigg( \sigma^2 \sqrt{\frac{p}{n}} + \sigma^2 \sqrt{\frac{\log(p\vee n)}{n}} \bigg).
\end{align*}
Next, we have that
\begin{align*}
    \hat \lambda_{r+1} 
    &\geq \sigma^2 \lambda_{r+1} \bigg( \frac{\y\y\t}{n} \bigg) \\
    &\geq \sigma^2 - O\bigg( \sigma^2 \sqrt{\frac{p}{n}} + \sigma^2 \sqrt{\frac{\log(p\vee n)}{n}} \bigg),
\end{align*}
where we have applied Weyl's inequality and the bound \eqref{bbbbb}.  Consequently, combining the upper and lower bounds completes the proof.
\end{proof}

\subsubsection{Proof of Lemma \ref{lem:biashatpca}} \label{sec:biashatpcaproof}

\begin{proof}[Proof of \cref{lem:biashatpca}]
Observe that $\hat{b_k\pca} = b_k\pca$ when $n \geq p$; therefore, it suffices to consider the setting that $n < p$.  In this case, by \cref{lem:sigmahatoversigma} it holds that
\begin{align*}
    \bigg| \hat \sigma^2 - \sigma^2 \bigg| 
    &\lesssim \sigma^2 \mathcal{E}_{\sigma} = o(\sigma^2)
\end{align*}
with probability at least  $1 - O((n\vee p)^{-10})$.  Let $\Delta_{\sigma} = \hat \sigma^2 - \sigma^2$.  Then
\begin{align*}
    \hat{b_k\pca} &= \frac{\hat \sigma^2 \frac{p}{n}}{\hat \lambda_k - \hat \sigma^2 \frac{p}{n}} + \frac{\hat \lambda_k}{\hat \lambda_k - \hat \sigma^2 \frac{p}{n}} \frac{\hat \lambda_k}{n + \sum_{r < i \leq n} \frac{\hat \lambda_i}{\hat \lambda_k - \hat \lambda_i}} \sum_{r< i \leq n} \frac{\hat \lambda_i - \hat \sigma^2 \frac{p}{n}}{(\hat \lambda_k - \hat \lambda_i)^2} \\
    &= \frac{\big( \sigma^2 + \Delta_{\sigma} \big) \frac{p}{n}}{\hat \lambda_k - \big(\sigma^2 + \Delta_{\sigma}\big) \frac{p}{n}} + \frac{\hat \lambda_k}{\hat \lambda_k - \big( \sigma^2 +\Delta_{\sigma}\big) \frac{p}{n}} \frac{\hat \lambda_k}{n + \sum_{r < i \leq n} \frac{\hat \lambda_i}{\hat \lambda_k - \hat \lambda_i}} \sum_{r< i \leq n} \frac{\hat \lambda_i - \big( \sigma^2 +\Delta_{\sigma} \big) \frac{p}{n}}{(\hat \lambda_k - \hat \lambda_i)^2}.
\end{align*}
We claim that 
\begin{align*}
    \hat{b_k\pca} = b_k\pca + O\bigg( \frac{\Delta_{\sigma} \frac{p}{n}}{\lambda_{\min}} \bigg) = b_k\pca + o(1) . \numberthis \label{taylorguy}
\end{align*}
If this is indeed the case, then
\begin{align*}
\bigg|    \sqrt{1 + \hat{b_k\pca}} - \sqrt{1 + b_k\pca} \bigg| &= O\bigg( \frac{\Delta_{\sigma} \frac{p}{n}}{\lambda_{\min}} \bigg),
\end{align*}
which follows from the fact that $|b_k\pca| \lesssim \frac{\sigma^2 p}{\lambda_k n} = o(1)$.  Therefore, the result will be proven using the bound for $\Delta_{\sigma}$.  It therefore suffices to demonstrate that \eqref{taylorguy} holds.  Therefore, it suffices to demonstrate that one has
\begin{align*}
      \sum_{r < i \leq n} \frac{\hat \lambda_k}{\hat \lambda_k - \big( \sigma^2 +\Delta_{\sigma}\big) \frac{p}{n}}& \frac{\hat \lambda_k}{n + \sum_{r < i \leq n} \frac{\hat \lambda_i}{\hat \lambda_k - \hat \lambda_i}} \frac{\hat \lambda_i - \big( \sigma^2 +\Delta_{\sigma} \big) \frac{p}{n}}{(\hat \lambda_k - \hat \lambda_i)^2} \\
       &=   \sum_{r < i \leq n}\frac{\hat \lambda_k}{\hat \lambda_k -  \sigma^2 \frac{p}{n}} \frac{\hat \lambda_k}{n + \sum_{r < i \leq n} \frac{\hat \lambda_i}{\hat \lambda_k - \hat \lambda_i}} \frac{\hat \lambda_i -  \sigma^2  \frac{p}{n}}{(\hat \lambda_k - \hat \lambda_i)^2} +  O\bigg( \frac{\Delta_{\sigma} \frac{p}{n}}{ \lambda_{\min}} \bigg) ,\numberthis \label{taylor1}
\end{align*}
as well as that
\begin{align*}
     \frac{(\sigma^2 + \Delta_{\sigma})\frac{p}{n}}{\hat \lambda_k - (\sigma^2 + \Delta_{\sigma})\frac{p}{n}} &=  \frac{\sigma^2 \frac{p}{n}}{\hat \lambda_k - \sigma^2 \frac{p}{n}} + O\bigg( \frac{\Delta_{\sigma} \frac{p}{n}}{\lambda_{\min}} \bigg). \numberthis \label{taylor2}
\end{align*}
Note that with  probability at least $1 - O((n\vee p)^{-10})$, $\Delta_{\sigma} = o(\sigma^2)$, and hence $\frac{\Delta_{\sigma} \frac{p}{n}}{\lambda_{\min}} = o(1)$ with this same probability by the noise assumption \eqref{noiseassumption:pca}.  Henceforth we fix this event. We now prove \eqref{taylor1} and \eqref{taylor2} sequentially.
\begin{itemize}
    \item \textbf{Showing \eqref{taylor1}}. Denote for convenience $C = \frac{\hat \lambda_k}{n + \sum_{r < i \leq n} \frac{\hat \lambda_i}{\hat \lambda_k - \hat \lambda_i}}$ which does not depend on $\Delta_{\sigma}$.   We then have that
    \begin{align*}
        C \frac{\hat \lambda_k}{\hat \lambda_k - (\sigma^2 + \Delta_{\sigma})\frac{p}{n}} \frac{\hat \lambda_i - (\sigma^2 + \Delta_{\sigma}) \frac{p}{n}}{(\hat \lambda_k - \hat \lambda_i)^2} 
        &= C \frac{\hat \lambda_k}{(\hat \lambda_k - \hat \lambda_i)^2}  \frac{\hat \lambda_i - (\sigma^2 + \Delta_{\sigma}) \frac{p}{n}}{\hat \lambda_k - (\sigma^2 + \Delta_{\sigma})\frac{p}{n}} \\\numberthis \label{plugbaby}
    \end{align*}
     Observe that by Taylor Series,
        \begin{align*}
          \frac{\hat \lambda_i - (\sigma^2 + \Delta_{\sigma}) \frac{p}{n}}{\hat \lambda_k - (\sigma^2 + \Delta_{\sigma}) \frac{p}{n}} &= \frac{\hat \lambda_i - \sigma^2 \frac{p}{n}}{\hat \lambda_k - \sigma^2 \frac{p}{n}} + \frac{\hat \lambda_i - \hat \lambda_k}{\hat \lambda_k - \sigma^2 \frac{p}{n}} \sum_{n=1}^{\infty } \bigg[ \frac{\Delta_{\sigma} \frac{p}{n}}{\hat \lambda_k - \sigma^2 \frac{p}{n}} \bigg]^{n} \\
          &= \frac{\hat \lambda_i - \sigma^2 \frac{p}{n}}{\hat \lambda_k - \sigma^2 \frac{p}{n}} + \frac{\hat \lambda_i - \hat \lambda_k}{\hat \lambda_k - \sigma^2 \frac{p}{n}}  \frac{\frac{\Delta_{\sigma} \tfrac{p}{n}}{\hat \lambda_k - \sigma^2 \tfrac{p}{n}} }{1 - \tfrac{\Delta_{\sigma} \tfrac{p}{n}}{\hat \lambda_k - \sigma^2 \tfrac{p}{n}} } 
    \end{align*}
    where the final series expansion converges by \cref{fact3_PCA} together with the fact that $\lambda_{\min} \gg \sigma^2 \frac{p}{n}$.  Plugging this into \eqref{plugbaby} we have that
    \begin{align*}
        C \frac{\hat \lambda_k}{(\hat \lambda_k - \hat \lambda_i)^2}  \frac{\hat \lambda_i - (\sigma^2 + \Delta_{\sigma}) \frac{p}{n}}{\hat \lambda_k - (\sigma^2 + \Delta_{\sigma})\frac{p}{n}} &=  C \frac{\hat \lambda_k}{(\hat \lambda_k - \hat \lambda_i)^2}  \frac{\hat \lambda_i - \sigma^2 \frac{p}{n}}{\hat \lambda_k - \sigma^2 \frac{p}{n}} +  C \frac{\hat \lambda_k}{(\hat \lambda_k - \hat \lambda_i)^2}  \frac{\hat \lambda_i - \hat \lambda_k}{\hat \lambda_k - \sigma^2 \frac{p}{n}}  \frac{\frac{\Delta_{\sigma} \tfrac{p}{n}}{\hat \lambda_k - \sigma^2 \tfrac{p}{n}} }{1 - \tfrac{\Delta_{\sigma} \tfrac{p}{n}}{\hat \lambda_k - \sigma^2 \tfrac{p}{n}} }.
    \end{align*}
    We note that
    \begin{align*}
      \bigg|   C \frac{\hat \lambda_k}{(\hat \lambda_k - \hat \lambda_i)^2}  \frac{\hat \lambda_i - \hat \lambda_k}{\hat \lambda_k - \sigma^2 \frac{p}{n}}  \frac{\frac{\Delta_{\sigma} \tfrac{p}{n}}{\hat \lambda_k - \sigma^2 \tfrac{p}{n}} }{1 - \tfrac{\Delta_{\sigma} \tfrac{p}{n}}{\hat \lambda_k - \sigma^2 \tfrac{p}{n}} }  \bigg| &\lesssim   \frac{\hat \lambda_k^2}{n |\hat \lambda_k - \hat \lambda_i| | \hat \lambda_k - \sigma^2 \frac{p}{n}|} \frac{\Delta_{\sigma} \frac{p}{n}}{\hat \lambda_k - \sigma^2 \frac{p}{n}}  \lesssim \frac{1}{n} \frac{\Delta_{\sigma}\frac{p}{n}}{\lambda_{\min}},
    \end{align*}
    where the final inequality holds by \cref{fact3_PCA} and the noise assumption \eqref{noiseassumption:pca} (which implies that $\sigma^2 = o(\lambda_{\min})$ when $p > n$), and hence that $\frac{\hat \lambda_k}{|\hat \lambda_k - \hat \lambda_i|} \lesssim 1$ and that $\frac{\hat \lambda_k}{\hat \lambda_k - \sigma^2 \frac{p}{n}} \lesssim 1$.  Therefore,
    \begin{align*}
         \sum_{r < i \leq n} \frac{\hat \lambda_k}{\hat \lambda_k - \big( \sigma^2 +\Delta_{\sigma}\big) \frac{p}{n}}& \frac{\hat \lambda_k}{n + \sum_{r < i \leq n} \frac{\hat \lambda_i}{\hat \lambda_k - \hat \lambda_i}} \frac{\hat \lambda_i - \big( \sigma^2 +\Delta_{\sigma} \big) \frac{p}{n}}{(\hat \lambda_k - \hat \lambda_i)^2} \\
       &=   \sum_{r < i \leq n}\frac{\hat \lambda_k}{\hat \lambda_k -  \sigma^2 \frac{p}{n}} \frac{\hat \lambda_k}{n + \sum_{r < i \leq n} \frac{\hat \lambda_i}{\hat \lambda_k - \hat \lambda_i}} \frac{\hat \lambda_i -  \sigma^2  \frac{p}{n}}{(\hat \lambda_k - \hat \lambda_i)^2} +   \sum_{r < i \leq n} O\bigg( \frac{\Delta_{\sigma} \frac{p}{n}}{n \lambda_{\min}} \bigg) \\
       &= \sum_{r < i \leq n}\frac{\hat \lambda_k}{\hat \lambda_k -  \sigma^2 \frac{p}{n}} \frac{\hat \lambda_k}{n + \sum_{r < i \leq n} \frac{\hat \lambda_i}{\hat \lambda_k - \hat \lambda_i}} \frac{\hat \lambda_i -  \sigma^2  \frac{p}{n}}{(\hat \lambda_k - \hat \lambda_i)^2} +    O\bigg( \frac{\Delta_{\sigma} \frac{p}{n}}{ \lambda_{\min}} \bigg),
    \end{align*}
 which proves \eqref{taylor1}.  
    \item \textbf{Showing \eqref{taylor2}}. We simply proceed by Taylor expansion to observe that
    \begin{align*}
        \frac{(\sigma^2 + \Delta_{\sigma}) \frac{p}{n}}{\hat \lambda_k - (\sigma^2 + \Delta_{\sigma})\frac{p}{n}} &= \frac{\sigma^2 \frac{p}{n}}{\hat \lambda_k - \sigma^2 \frac{p}{n}} + O\bigg( \Delta_{\sigma} \frac{\hat \lambda_k \frac{p}{n}}{(\hat \lambda_k - \sigma^2 \frac{p}{n})^2} \bigg) = \frac{\sigma^2 \frac{p}{n}}{\hat \lambda_k - \sigma^2 \frac{p}{n}} + O\bigg( \frac{\Delta_{\sigma} \frac{p}{n}}{\lambda_{\min}} \bigg),
    \end{align*}
    where we have again used the noise assumption \eqref{noiseassumption:pca} and \cref{fact3_PCA}.  This proves \eqref{taylor2}.
\end{itemize}
\end{proof}

\subsubsection{Proof of Lemma \ref{lem:sapproxpca}} \label{sec:sapproxpca_proof}

\begin{proof}[Proof of \cref{lem:sapproxpca}]
Motivated by the decomposition in the proof of \cref{lem:sigma_md_approx} in \cref{sec:sigma_approx_MD_proof}, it holds that
\begin{align*}
    \big( \hat{s_{\a,j}\pca} \big)^2 - (s_{\a,j}\pca)^2
    &= \underbrace{\sum_{\substack{k\neq j\\k\leq r}} \frac{\check \lambda_j \check \lambda_k}{(\check \lambda_j - \check \lambda_k )^2 n} (\bm{\hat u}_k\t\a)^2 (1 + \hat{b_k\pca} )  - \sum_{\substack{k\neq j\\k\leq r}} \frac{\check \lambda_j \check \lambda_k}{(\check \lambda_j - \check \lambda_k )^2 n} (\uk\t\a)^2}_{\alpha_1} \\
    &\quad + \underbrace{\sum_{\substack{k\neq j\\k\leq r}} \frac{\check \lambda_j \check \lambda_k}{(\check \lambda_j -\check \lambda_k )^2 n} (\uk\t\a)^2 - \sum_{\substack{k\neq j\\k\leq r}} \frac{(\lambda_j + \sigma^2)(\lambda_k+\sigma^2)}{(\lambda_j - \lambda_k)^2 n}  (\uk\t\a)^2}_{\alpha_2} \\
    &\quad + \underbrace{\frac{\check \lambda_j \hat \sigma^2}{(\check \lambda_j - \hat \sigma^2)^2 n} \| \uperp\t\a \|^2 - \frac{(\lambda_j + \sigma^2)\sigma^2}{\lambda_j^2 n} \|\uperp\t\a\|^2}_{\alpha_3} \\
    &\quad + \underbrace{\frac{\check \lambda_j \hat \sigma^2}{(\check \lambda_j - \hat \sigma^2)^2 n}\bigg(  \| \uperp\t\a \|^2 - \| \uhat_{\perp}\t\a\|^2 \bigg)}_{\alpha_4}.
\end{align*}
We will bound $\alpha_1$ through $\alpha_4$ in turn. At the outset we make note of the following consequences of \cref{lem:eigenvalueconcentration_PCA,lem:sigmahatapproxpca}.  For each $k \in [r]$ with probability at least $1 - O((n\vee p)^{-10})$ it holds that
\begin{align*}
   \big| \check \lambda_k - (\lambda_k + \sigma^2) \big| &= \big|\frac{\hat \lambda_k}{1 + \hat \gamma\pca(\hat \lambda_k)} - (\lambda_k + \sigma^2) \big| \\
   &\lesssim \frac{\hat \lambda_k}{1 + \gamma\pca(\hat \lambda_k)} \big| \hat \gamma\pca(\hat \lambda_k) - \gamma\pca(\hat \lambda_k) \big| + \bigg| \frac{\hat \lambda_k}{1 + \gamma\pca} - (\lambda_k + \sigma^2) \bigg| \\
   &\lesssim (\lambda_{k} +\sigma^2) \kappa\sqrt{\frac{r}{n}}\log(n\vee p). \numberthis \label{eigenvaluebound} 
\end{align*}
As a consequence, $\check\lambda_k \lesssim \lambda_k + \sigma^2$, and $|\check \lambda_j - \check \lambda_k | \gtrsim |\lambda_j - \lambda_k|.$
In addition, by \cref{lem:sigmahatapproxpca}, with this same probability
\begin{align*}
    \big| \check \lambda_j - \hat \sigma^2 - \lambda_j \big| &\lesssim (\lambda_{j} +\sigma^2) \kappa\sqrt{\frac{r}{n}}\log(n\vee p) + |\hat \sigma^2 - \sigma^2 | \\
    &\lesssim (\lambda_{j} +\sigma^2) \kappa\sqrt{\frac{r}{n}}\log(n\vee p) + \sigma^2 \mathcal{E}_{\sigma}.
\end{align*}
As a result, $\big| \check \lambda_j - \hat \sigma^2 \big| \geq \lambda_j + o(\lambda_j) \gtrsim \lambda_j,$
which follows from the noise assumption \eqref{noiseassumption:pca} and the definition of $\mathcal{E}_{\sigma}$.
Finally, by \cref{lem:sigmahatapproxpca} it holds that $\hat \sigma^2 \lesssim \sigma^2$ with probability at least $1- O((n\vee p)^{-10})$.  Armed with these results we are now prepared to bound $\alpha_1$ through $\alpha_4$.
\begin{itemize}
    \item \textbf{The term $\alpha_1$.}
    For a given $k \in [r]$ we have that
    \begin{align*}
        \frac{\check \lambda_j \check \lambda_k}{(\check \lambda_j - \check \lambda_k )^2n} \bigg| (\bm{\hat u}_k\t\a)^2 (1 + \hat{b_k\pca}) - (\uk\t\a)^2 \bigg| &\lesssim \frac{(\lambda_j + \sigma^2)(\lambda_k + \sigma^2)}{(\lambda_j - \lambda_k)^2 n}\bigg| (\bm{\hat u}_k\t\a)^2 (1 + \hat{b_k\pca}) - (\uk\t\a)^2 \bigg|.
    \end{align*}
  First by \cref{lem:biashatpca} it holds that
    \begin{align*}
        \sqrt{1 + \hat{b_k\pca}} = \sqrt{1 + b_k\pca} +  {\sf ErrBiasApproxPCA}. 
    \end{align*}
    Furthermore, by \cref{lem:bias_PCA}, with probability at least $1 - O((n \vee p)^{-10})$ it holds that
    \begin{align*}
         \bigg| 1 - \sqrt{1 + b_k\pca} \uk\t\bm{\hat u}_k \bigg| &\lesssim {\sf ErrBiasPCA}, 
    \end{align*}
    where we slightly redefine ${\sf ErrBiasPCA}$ to depend only on $\Delta_{\min}$ and $\lambda_{\min}$ instead of $\Delta_k$ and $\lambda_k$.  
    Therefore, 
    \begin{align*}
        \bigg| \sqrt{1 + \hat{b_k\pca}} \uk\t \bm{\hat u}_k -1 \bigg| 
     &\lesssim {\sf ErrBiasApproxPCA} + {\sf ErrBiasPCA}.
    \end{align*}
    By the analysis leading to the proof of \cref{thm:distributionaltheory_PCA}, we have that with probability at least $1 - O((n\vee p)^{-8})$,
\begin{align*}
 \big| \a\t \bm{\hat u}_k \sqrt{1 + \hat{b_k\pca}} - \a\t \uk \big| &\leq \bigg| \big(\a\t \bm{\hat u}_k - \a\t\uk\uk\t \bm{\hat u}_k\big) \sqrt{1 + \hat{b_k\pca}}\bigg|  + \big| \a\t \uk \big| \big| 1 - \uk \uk\t \bm{\hat u}_k \sqrt{1 + \hat{b_k\pca}} \big| \\
 &\leq \bigg| \bigg( \sum_{l\neq k} \frac{\bm{u}_k\t \big( \sigmahat -\bSigma \big) \bm{u}_l}{\lambda_k - \lambda_l}(\bm{u}_l\t\a) + s\pca_{\a,k} \times {\sf ErrPCA} \bigg) \sqrt{1 + \hat{b_k\pca}}\bigg|  \\
 &\quad + \big| \a\t \uk \big| \big| 1 - \uk \uk\t \bm{\hat u}_k \sqrt{1 + \hat{b_k\pca}} \big| \\
 &\lesssim s_{\a,k}\pca \bigg( \sqrt{\log(n\vee p)} + {\sf ErrPCA} \bigg) + |\a\t \uk| \bigg( {\sf ErrBiasApproxPCA} + {\sf ErrBiasPCA} \bigg) \\
 &\lesssim s_{\a,k}\pca \sqrt{\log(n\vee p)}  + |\a\t \uk| \bigg( {\sf ErrBiasApproxPCA} + {\sf ErrBiasPCA} \bigg),
\end{align*}
as long as ${\sf ErrPCA} = o(1)$, which is assumed.     
The bound above implies that with probability at least $1 - O((n\vee p)^{-8})$,
    \begin{align*}
        \bigg| (\bm{\hat u}_k\t\a)^2 (1 + \hat{b_k\pca}) - (\uk\t\a)^2 \bigg| &\lesssim  (s_{\a,k}\pca)^2 \log(n\vee p)  + |\a\t\uk|^2 \bigg( {\sf ErrBiasApproxPCA} + {\sf ErrBiasPCA} \bigg)^2 \\
        &\quad + |\a\t\uk| s_{\a,k}\pca \sqrt{\log(n\vee p)}  \bigg( {\sf ErrBiasApproxPCA} + {\sf ErrBiasPCA} \bigg).
    \end{align*}
    As a result,
    \begin{align*}
        |\alpha_1| &\leq \sum_{\substack{k\neq j\\k\leq r}} \frac{(\lambda_j + \sigma^2)(\lambda_k + \sigma^2)}{(\lambda_j - \lambda_k)^2 n}  (s_{\a,k}\pca)^2 \log(n\vee p) \\
        &\quad +  \bigg( {\sf ErrBiasApproxPCA} + {\sf ErrBiasPCA} \bigg)^2\sum_{\substack{k\neq j\\k\leq r}} \frac{(\lambda_j + \sigma^2)(\lambda_k + \sigma^2)}{(\lambda_j - \lambda_k)^2 n}  (\a\t\uk)^2  \\
        &\quad + \sum_{\substack{k\neq j\\k\leq r}} \frac{(\lambda_j + \sigma^2)(\lambda_k + \sigma^2)}{(\lambda_j - \lambda_k)^2 n} |\a\t\uk| s_{\a,k}\pca \sqrt{\log(n\vee p)}\bigg( {\sf ErrBiasApproxPCA} + {\sf ErrBiasPCA} \bigg) 
\\       
&\lesssim (s_{\a,j}\pca)^2\bigg( \max_{k \neq j} \frac{(s_{\a,k}\pca)^2}{(s_{\a,j}\pca)^2} \frac{(\lambda_{\max} + \sigma^2)^2}{\Delta_j^2 n} \log(n\vee p) + \big( {\sf ErrBiasApproxPCA} + {\sf ErrBiasPCA} \big)^2 \\
&\qquad + \max_{k \neq j}\frac{s_{\a,k}\pca \sqrt{\log(n\vee p)}}{s_{\a,j}\pca} \frac{\lambda_{\max} + \sigma^2}{\Delta_j \sqrt{n}} \big(  {\sf ErrBiasApproxPCA} + {\sf ErrBiasPCA} \big) \bigg). 
    \end{align*}
        \item \textbf{The term $\alpha_2$.} 
        For a given $k \in [r]$, it holds that
        \begin{align*}
\bigg|            \frac{\check \lambda_j \check \lambda_k}{(\check\lambda_j - \check\lambda_k)^2 n} - \frac{(\lambda_j + \sigma^2)(\lambda_k + \sigma^2)}{(\lambda_j  -\lambda_k)^2 n} \bigg| &\leq \bigg| \frac{(\check \lambda_j - \lambda_j - \sigma^2) \check \lambda_k}{(\check \lambda_j - \check \lambda_k)^2n} \bigg| + \bigg| \frac{ (\lambda_j + \sigma^2)(\check\lambda_k - \lambda_k - \sigma^2)}{(\check \lambda_j - \check \lambda_k)^2n} \bigg|\\
&\quad + \bigg| \frac{ (\lambda_j + \sigma^2)(\lambda_k + \sigma^2)}{(\check \lambda_j - \check \lambda_k)^2n} -\frac{ (\lambda_j + \sigma^2)(\lambda_k + \sigma^2)}{(\lambda_j -  \lambda_k)^2n} \bigg|.
        \end{align*}
By the fact that $|\check \lambda_k - \lambda_k - \sigma^2| \lesssim (\lambda_k + \sigma^2) \kappa \sqrt{\frac{r}{n}}\log(n\vee p)$, the first two terms above satisfy
\begin{align*}
    \bigg| \frac{(\check \lambda_j - \lambda_j - \sigma^2) \check \lambda_k}{(\check \lambda_j - \check \lambda_k)^2n} \bigg| + \bigg| \frac{ (\lambda_j + \sigma^2)(\check\lambda_k - \lambda_k - \sigma^2)}{(\check \lambda_j - \check \lambda_k)^2n} \bigg| &\lesssim \kappa \sqrt{\frac{r}{n}}\log(n\vee p) \frac{(\lambda_j + \sigma^2)(\lambda_k + \sigma^2)}{(\lambda_j - \lambda_k)^2 n}.
\end{align*}
    For the remaining term, by  Taylor expansion we have that
    \begin{align*}
         \bigg| \frac{ (\lambda_j + \sigma^2)(\lambda_k + \sigma^2)}{(\check \lambda_j - \check \lambda_k)^2n} -\frac{ (\lambda_j + \sigma^2)(\lambda_k + \sigma^2)}{(\lambda_j -  \lambda_k)^2n} \bigg| &\lesssim \frac{(\lambda_j + \sigma^2)(\lambda_k + \sigma^2)}{n}  \frac{|\check \lambda_j - \check \lambda_k - \lambda_j - \sigma^2 + \lambda_k + \sigma^2|}{|\lambda_j - \lambda_k|^3} \\
         &\lesssim \frac{(\lambda_j + \sigma^2)(\lambda_k + \sigma^2)}{n(\lambda_j - \lambda_k)^2} \frac{(\lambda_{\max} + \sigma^2)}{\Delta_j} \sqrt{\frac{r}{n}}\log(n\vee p).
    \end{align*}
        As a consequence, with probability at least $1 - O((n\vee p)^{-10})$,
        \begin{align*}
            |\alpha_2| &\lesssim \bigg( \kappa \sqrt{\frac{r}{n}}\log(n\vee p) +\frac{(\lambda_{\max} + \sigma^2)}{\Delta_j} \sqrt{\frac{r}{n}} \bigg)  (s\pca_{\a,j})^2.
        \end{align*}
    \item \textbf{The term $\alpha_3$.} By decomposing we have that
    \begin{align*}
        |\alpha_3| &\lesssim \underbrace{\bigg| \frac{(\check \lambda_j - \lambda_j - \sigma^2)\hat \sigma^2}{(\check \lambda_j - \hat \sigma^2)^2 n} \bigg|}_{\beta_1} \|\uperp\t\a\|^2  + \underbrace{\bigg| \frac{(\lambda_j + \sigma^2)(\hat \sigma^2 - \sigma^2)}{(\check \lambda_j - \hat \sigma^2)^2n}\bigg|}_{\beta_2} \|\uperp\t\a\|^2  \\
        &\quad + \underbrace{\bigg|\frac{(\lambda_j + \sigma^2)\sigma^2}{\lambda_j^2 n} - \frac{(\lambda_j + \sigma^2)\sigma^2}{(\check\lambda_j - \hat \sigma^2)^2 n} \bigg|}_{\beta_3} \|\uperp\t\a\|^2.
        \end{align*}
    We bound $\beta_1$,$\beta_2$, and $\beta_3$ in turn.
    \begin{itemize}
        \item \textbf{Bounding $\beta_1$}. By the eigenvalue bounds in \eqref{eigenvaluebound}, with probability at least $1- O((n\vee p)^{-10})$ it holds that
        \begin{align*}
            \beta_1 &= \bigg| \frac{(\check \lambda_j - \lambda_j - \sigma^2)\hat \sigma^2}{(\check \lambda_j - \hat \sigma^2)^2 n} \bigg| \lesssim \kappa \sqrt{\frac{r}{n}}\log(n\vee p) \frac{(\lambda_j + \sigma^2)  \sigma^2}{\lambda_j^2 n}.
        \end{align*}
          \item \textbf{Bounding $\beta_2$}.  By \cref{lem:sigmahatapproxpca}, it holds that
          \begin{align*}
              |\hat \sigma^2 - \sigma^2| &\lesssim \sigma^2  \mathcal{E}_{\sigma}.
          \end{align*}
          Consequently,
          \begin{align*}
              |\beta_2| &\lesssim \mathcal{E}_{\sigma} \frac{(\lambda_j + \sigma^2) \sigma^2}{\lambda_j^2 n}.
          \end{align*}
            \item \textbf{Bounding $\beta_3$}. Finally, for $\beta_3$, we note that 
            \begin{align*}
\big|                \check \lambda_j - \hat \sigma^2 - \lambda_j \big| &\lesssim (\lambda_j + \sigma^2) \kappa \sqrt{\frac{r}{n}}\log(n\vee p) + \sigma^2 \mathcal{E}_{\sigma} = \lambda_j\big(1 + o(1) \big).
            \end{align*}
            Therefore,
            \begin{align*}
\bigg|                \frac{1}{\lambda_j^2} - \frac{1}{(\check \lambda_j - \hat \sigma^2)^2} \bigg| &\lesssim  \frac{1}{\lambda_j^3} \bigg( (\lambda_j + \sigma^2)\kappa \sqrt{\frac{r}{n}}\log(n\vee p) + \sigma^2 \mathcal{E}_{\sigma} \bigg),
            \end{align*}
        and hence
        \begin{align*}
            \beta_3 &\lesssim \frac{(\lambda_j + \sigma^2)\sigma^2}{\lambda_j^2 n} \bigg( \big(1 + \frac{\sigma^2}{\lambda_j} \big) \kappa \sqrt{\frac{r}{n}}\log(n\vee p) + \frac{\sigma^2}{\lambda_j } \mathcal{E}_{\sigma} \bigg).
        \end{align*}
    \end{itemize}
    Therefore, combining these bounds, we obtain that with probability at least $1- O((n\vee p)^{-10})$,
    \begin{align*}
        |\alpha_3| &\lesssim \frac{(\lambda_j + \sigma^2)\sigma^2}{\lambda_j^2 n} \|\uperp\t\a\|^2  \bigg(  1 +\frac{\sigma^2}{\lambda_j} \bigg)\bigg( \kappa \sqrt{\frac{r}{n}}\log(n\vee p) +  \mathcal{E}_{\sigma} \bigg)  
    \end{align*}
    \item \textbf{The term $\alpha_4$.} We observe that
    \begin{align*}
        |\alpha_4 | 
        &\lesssim \frac{(\lambda_j + \sigma^2) \sigma^2}{\lambda_j^2 n}\bigg| \|\uperp\t\a\|^2 - \|\uhat_{\perp}\t\a\|^2 \bigg|.
    \end{align*}
By orthonormality, we have that
    \begin{align*}
        \big| \|\uhat_{\perp}\t\a\| - \|\uperp\t\a\|\big| &= \big| \| \uhat_{\perp} \uhat_{\perp}\t\a\| - \|\uperp\uperp\t\a\|\big| \\
        &\leq \frac{\mathcal{E}\pca}{\lambda_{\min} } \| \a \| \\
        &\leq \frac{\mathcal{E}\pca}{\lambda_{\min}} \bigg( \|\uperp\t\a\| + \|\U\t\a\| \bigg),
    \end{align*}
    where the final inequality holds on the event  in \cref{fact1_PCA} by the Davis-Kahan Theorem.  
    Consequently, 
    \begin{align*}
        \bigg| \|\uhat_{\perp}\t\a\|^2 - \|\uperp\t\a\|^2 \bigg| &= \bigg( \| \uhat_{\perp}\t\a \| + \|\uperp\t\a\| \bigg) \frac{\mathcal{E}\pca}{\lambda_{\min}} \bigg( \|\uperp\t\a\| + \|\U\t\a\| \bigg) \\
        &\lesssim \frac{\mathcal{E}\pca}{\lambda_{\min}} \bigg( \| \uperp\t\a \| + \frac{\mathcal{E}\pca}{\lambda_{\min}} \|\U\t\a\| \bigg) \bigg( \|\uperp\t\a\| + \|\U\t\a\| \bigg) \\
        &\asymp \frac{\mathcal{E}\pca}{\lambda_{\min}}  \| \uperp\t\a \|^2  +\bigg(\frac{\mathcal{E}\pca}{\lambda_{\min}}\bigg)^2 \|\U\t\a\| \|\uperp\t\a \| +   \bigg(\frac{\mathcal{E}\pca}{\lambda_{\min}}\bigg)^2 \|\U\t\a\|^2.
    \end{align*}
    We therefore have that
    \begin{align*}
        |\alpha_4| &\lesssim ( s_{\a,j}\pca)^2 \frac{\mathcal{E}\pca}{\lambda_{\min}}  + \frac{(\lambda_j + \sigma^2)\sigma^2}{\lambda_j^2 n} \big(\frac{\mathcal{E}\pca}{\lambda_{\min}}\big)^2 \|\uperp\t\a\| \|\U\t\a\| + \frac{(\lambda_j + \sigma^2)\sigma^2}{\lambda_j^2 n}\big(\frac{\mathcal{E}\pca}{\lambda_{\min}}\big)^2 \|\U\t\a\|^2.
    \end{align*}
    If $\|\U\t\a\| \leq \|\uperp\t\a \|$, then it holds that $|\alpha_4| \lesssim  (s_{\a,j}\pca)^2 \frac{\mathcal{E}\pca}{\lambda_{\min}}$.  Therefore, it suffices to consider when $\|\U\t\a\| > \|\uperp\t\a\|$.  In this case, considering the cross-term, we have that
\begin{align*}
    \frac{(\lambda_j+\sigma^2)\sigma^2}{\lambda_j^2 n}& \bigg(\frac{\mathcal{E}\pca}{\lambda_{\min}} \bigg)^2 \| \uperp\t\a\| \|\U\t\a\| \\
    &=  \frac{(\lambda_j+\sigma^2)\sigma^2}{\lambda_j^2 n} \bigg(\frac{\mathcal{E}\pca}{\lambda_{\min}} \bigg)^2 \| \uperp\t\a\| \sqrt{\sum_{\substack{k\neq j\\k\leq r}} (\uk\t\a)^2 + (\uj\t\a)^2} \\
    &\leq \frac{\kappa \sigma^2}{\lambda_j^2 \sqrt{n}} \bigg( \frac{\mathcal{E}\pca}{\lambda_{\min}} \bigg)^2 \|\uperp\t\a\| \Delta_{\max} \sqrt{\sum_{\substack{k\neq j\\k\leq r}} \frac{(\lambda_j + \sigma^2)(\lambda_k + \sigma^2)}{(\lambda_j - \lambda_k)^2 n} (\uk\t\a)^2} + \frac{(\lambda_j + \sigma^2)\sigma^2}{\lambda_j^2 n} \| \uperp\t\a \| |\uj\t\a| \bigg( \frac{\mathcal{E}\pca}{\lambda_{\min}} \bigg)^2 \\
    &\lesssim (s_{\a,j}\pca)^2 \frac{(\mathcal{E}\pca)^2 \kappa^{3/2}}{\lambda_{\min}^2} + o\bigg( \frac{1}{\sqrt{\log(n\vee p)}} \bigg) (s_{\a,j}\pca)^2
\end{align*}
where we used the assumption that
\begin{align*}
    \frac{\sigma \sqrt{\lambda_j + \sigma^2}}{\lambda_j \sqrt{n}} \frac{\mathcal{E}\pca}{\lambda_{\min}} |\uj\t\a| = o \bigg( \frac{s_{\a,j}\pca}{\sqrt{\log(n\vee p)}} \bigg).
\end{align*}
Similarly,
\begin{align*}
  \frac{(\lambda_j + \sigma^2 )\sigma^2}{\lambda_j^2 n}  &\bigg( \frac{\mathcal{E}\pca}{\lambda_{\min}} \bigg)^2 \|\U\t\a \|^2 \\
  &=   \frac{(\lambda_j + \sigma^2 )\sigma^2}{\lambda_j^2 n}  \bigg( \frac{\mathcal{E}\pca}{\lambda_{\min}} \bigg)^2 \bigg[\sum_{\substack{k\neq j\\k\leq r}} (\uk\t\a)^2 + (\uj\t\a)^2 \bigg] \\
  &\leq \frac{\sigma^2 \Delta_{\max}^2}{\lambda_j^2(\lambda_{\min} + \sigma^2)} \bigg( \frac{\mathcal{E}\pca}{\lambda_{\min}} \bigg)^2  \sum_{\substack{k\neq j\\k\leq r}} \frac{(\lambda_j + \sigma^2)(\lambda_k + \sigma^2)}{(\lambda_j - \lambda_k)^2 n} (\uk\t\a)^2 + \bigg( \frac{\mathcal{E}\pca}{\lambda_{\min}} \bigg)^2 \frac{(\lambda_j + \sigma^2 )\sigma^2}{\lambda_j^2 n} (\uj\t\a)^2\\
  &\leq \frac{(\mathcal{E}\pca)^2 \kappa^2}{\lambda_{\min}^2} (s_{\a,j}\pca)^2 + \frac{(s_{\a,j}\pca)^2}{\log(n\vee p)}
\end{align*}
    Consequently, 
\begin{align*}
    |\alpha_4| \lesssim (s_{\a,j}\pca)^2 \bigg( \frac{\kappa^2 (\mathcal{E}\pca)^2}{\lambda_{\min}^2} + \frac{\mathcal{E}\pca}{\lambda_{\min}} + o\bigg( \frac{1}{\sqrt{\log(n\vee p)}} \bigg) \bigg).
\end{align*}
\end{itemize}
Combining all of our bounds we arrive at the bound
\begin{align*}
    \big| \big( \hat{s\pca_{\a,j}}\big)^2 - \big( s\pca_{\a,j} \big)^2 \big| &\lesssim (s_{\a,j}\pca)^2 \Bigg( \max_{k \neq j} \frac{(s_{\a,k}\pca)^2}{(s_{\a,j}\pca)^2} \frac{(\lambda_{\max} + \sigma^2)^2}{\Delta_j^2 n} \log(n\vee p) + \big( {\sf ErrBiasApproxPCA} + {\sf ErrBiasPCA} \big)^2 \\
&\qquad \qquad + \max_{k \neq j}\frac{s_{\a,k}\pca \sqrt{\log(n\vee p)}}{s_{\a,j}\pca} \frac{\lambda_{\max} + \sigma^2}{\Delta_j \sqrt{n}} \big(  {\sf ErrBiasApproxPCA} + {\sf ErrBiasPCA} \big) \bigg) \\
&\qquad \qquad +  \big( 1 + \frac{\sigma^2}{\lambda_j} \big) \kappa \sqrt{\frac{r}{n}} \log(n\vee p) + \frac{(\lambda_{\max} + \sigma^2)\sqrt{r}}{\Delta_j \sqrt{n}} + \big( 1 + \frac{\sigma^2}{\lambda_j} \big) \mathcal{E}_{\sigma} \\
&\qquad \qquad + \frac{\kappa\mathcal{E}\pca}{\lambda_{\min}} + o \bigg( \frac{1}{\sqrt{\log(n\vee p)}} \bigg) \Bigg)
\end{align*}
which holds with probability at least $1- O((n\vee p)^{-8})$.  Let the right hand side above be denoted as $\Delta_s$.  Then we have that
\begin{align*}
    |\hat{s_{\a,j}\pca} - s_{\a,j}\pca| \leq \frac{\big| \big( \hat{s\pca_{\a,j}}\big)^2 - \big( s\pca_{\a,j} \big)^2 \big|}{|\hat{s_{\a,j}\pca} + s_{\a,j}\pca|} \lesssim s_{\a,j}\pca \Delta_s.
\end{align*}
Therefore, it suffices to show that $\Delta_s \ll \frac{1}{\sqrt{\log(n\vee p)}}$ to complete the proof.  

Thus, it suffices to have that
\begin{align}
    \frac{\kappa \mathcal{E}\pca \sqrt{\log(n\vee p)}}{\lambda_{\min}} &\ll 1; \label{firstcondition} \\
    \frac{(s_{\a,k}\pca)^2}{(s_{\a,j}\pca)^2} \frac{(\lambda_{\max} + \sigma^2)}{\Delta_j^2 n} \log^{3/2}(n\vee p) &\ll 1; \label{secondcond} \\
    {\sf ErrBiasApproxPCA}\sqrt{\log(n\vee p)} &\ll 1 \label{thirdcond} \\
    {\sf ErrBiasPCA} \sqrt{\log(n\vee p)} &\ll 1; \label{fourthcond} \\
    \max_{k \neq j,k\leq r} \frac{s_{a,k}\pca}{s_{\a,j}\pca} \frac{\log(n\vee p) (\lambda_{\max} + \sigma^2)}{\Delta_j \sqrt{n}} {\sf ErrBiasApproxPCA} &\ll 1;  \label{fifthcond} \\
     \max_{k \neq j,k\leq r} \frac{s_{a,k}\pca}{s_{\a,j}\pca} \frac{\log(n\vee p) (\lambda_{\max} + \sigma^2)}{\Delta_j \sqrt{n}} {\sf ErrBiasPCA} &\ll 1 ; \label{sixthcond} \\
     \frac{(\lambda_{\max} + \sigma^2) \sqrt{r\log(n\vee p)}}{\Delta_j \sqrt{n}} &\ll 1; \label{seventhcond} \\
     (1 + \frac{\sigma^2}{\lambda_j} ) \mathcal{E}_{\sigma}\sqrt{\log(n\vee p)} &\ll 1. \label{eighthcond}
    \end{align}
Recall that ${\sf ErrBiasApproxPCA}$ is defined as in \eqref{errbiasapproxpcadef}, $\mathcal{E}\pca$ is defined in \cref{fact1_PCA},
$\mathcal{E}_{\sigma}$ is defined in \cref{lem:sigmahatapproxpca}, and ${\sf ErrBiasPCA}$ is defined in \cref{lem:bias_PCA}.  

The condition \eqref{firstcondition} holds by our assumption \eqref{noisecondpca:inf} and the assumption on $r$.  The condition \eqref{secondcond} holds directly by assumption \eqref{sakassumption:pca}.  The condition \eqref{thirdcond} is immediately satisfied when $p \leq n$, since then ${\sf ErrBiasApproxPCA} = 0$.  Therefore, it suffices to consider when $p > n$, in which case it holds that ${\sf ErrBiasApproxPCA}$ satisfies
\begin{align*}
    {\sf ErrBiasApproxPCA} = \frac{\sigma^2 p}{\lambda_{\min} n} \mathcal{E}_{\sigma} = \frac{\sigma^2 p}{\lambda_{\min}n} \frac{\sqrt{r} \kappa \log^4(n\vee p)}{n} + \frac{\sigma^2 p}{\lambda_{\min}n} \frac{\sqrt{r}}{p} \frac{\mathcal{E}\pca}{\lambda_{\min}}.
\end{align*}
Indeed, this quantity is $o(\sqrt{\log(n\vee p)})$ by the assumption \eqref{noisecondpca:inf} and the assumption that $r^3 \lesssim \frac{n}{\kappa^6\log^{12}(n\vee p)}$.  The condition \eqref{fourthcond} holds by our assumption on $\Delta_{\min}$ in \eqref{eigengapcondpca:inf} and our assumption on $\lambda_{\min}$ in \eqref{noisecondpca:inf}.  The next three conditions \eqref{fifthcond}, \eqref{sixthcond}, and \eqref{seventhcond}  hold by our previous discussions.  Finally, the condition \eqref{eighthcond} is significantly weaker than the condition on $\lambda_{\min}$ in \eqref{noisecondpca:inf}.  This completes the proof. 
\end{proof}

\section{Proofs of Lower Bounds} \label{sec:lowerboundproofs}
In this section we prove \cref{thm:minimaxlowerbd} and \cref{thm:minimaxlowerbound_PCA}.

\subsection{Proof of \cref{thm:minimaxlowerbd}} 
\label{sec:minimaxlowerbound}
\begin{proof}
Suppose we observe $\bS + \bN$, and consider observing $\tilde{\bS} + \bN$, where $\tilde{\bS} = \sum_{i=1}^{r} \lambda_i \tilde{\bm{u}}_i \tilde{\bm{u}_i}\t$, where $\tilde{\bm{u}}_i$ will be chosen momentarily.  Let $P$ and $\tilde P$ denote the corresponding distribution with $\bS$ and $\tilde{\bS}$ respectively.  By (a minor generalization of) Lemma 1 of \citet{cai_confidence_2017}, 
    \begin{align*}
        \inf_{{\sf C.I.} \in \mathcal{I}_{\alpha,\a}(\mathcal{P})} {\sf L}_{{\sf C.I.}}(\bS) \geq |\a\t \bm{u}_j \pm \a\t \bm{\tilde u}_j | \bigg(1 - 2\alpha - {\sf TV}\big( P, \tilde P) \bigg)_+,
    \end{align*}
where ${\sf TV}(\cdot,\cdot)$ denotes the total variation distance. (The generalization only requires considering $\pm \a\t\uj$ instead, and yields the same result above.) Thus, since ${\sf TV}(\cdot,\cdot) \leq \sqrt{\frac{1}{2}{\sf KL}(\cdot,\cdot)}$, it suffices to upper bound the ${\sf KL}$ divergence and lower bound $|\a\t \uj \pm \a\t \tilde{\uj}|$.

     First, assume that $\sqrt{\sum_{\substack{k\neq j\\k\leq r}} \frac{\sigma^2}{(\lambda_j - \lambda_k)^2} (\a\t\uk)^2} \geq \frac{\sigma^2 \| \uperp\t\a\|^2}{\lambda_j^2}.$
    Then the left hand side dominates $s_{\a,j}\md$.    
   We let $\bm{U}$ and $\bm{\tilde U}$ be defined via $\bm{\tilde U} = \bm{U}\bm{R}$, where $\bm{R}$ is an orthogonal matrix that we construct explicitly as follows. 
      For $k \neq j$ set \begin{align*}
    \theta_k =  \pm \frac{1}{4} \frac{\sigma^2 \a\t\uk}{(\lambda_j - \lambda_k)^2 \sqrt{\sum_{l\neq j}\frac{\sigma^2(\a\t\bm{u}_l)^2}{(\lambda_j - \lambda_l)^2}}},
\end{align*}
and let $\bm{\theta}$ be the vector with $\theta_j = 0$, where the sign is chosen to be  $\mathrm{sgn}(\a\t\uj)$ if $|\a\t\uj| \leq \sqrt{\sum_{\substack{k\neq j\\k\leq r}} \frac{(\a\t\uk)^2 \sigma^2 }{(\lambda_j - \lambda_k)^2}}, $ and $-\mathrm{sgn}(\a\t\uj)$ otherwise, where the convention that $\mathrm{sgn}(0)= 1$.  
Define
\begin{align*}
    \bm{R} = \bm{I}_r + \bigg( \cos( \| \bm{\theta}\|) - 1  \bigg) \bigg( e_j e_j\t + \frac{\bm{\theta}\bm{\theta}\t}{\|\bm{\theta}\|^2} \bigg) + \frac{\sin\|\bm{\theta}\|}{\|\bm{\theta}\|} (\bm{\theta} e_j\t - e_j \bm{\theta}\t ),
\end{align*}
where $e_j$ is a standard basis vector. 
We then have that
\begin{align*}
    \tilde{\uj} &= \bm{U}\bm{R} e_j = \cos(\|\bm{\theta}\|) \uj + \frac{\sin\|\bm{\theta}\|}{4\|\bm{\theta}\|} \sum_{k\neq j}  \theta_k \uk\\
    &= \cos \|\bm{\theta} \| \uj \pm \frac{\sin\|\bm{\theta}\|}{2\|\bm{\theta}\|} \sum_{k\neq j}  \frac{\sigma^2 \a\t\uk }{(\lambda_j - \lambda_k)^2 \sqrt{\sum_{l\neq j}\frac{\sigma^2 (\a\t \bm{u}_l)^2}{(\lambda_j - \lambda_l)^2}}} \uk,
\end{align*}
and hence that
\begin{align*}
    \a\t\tilde{\uj} &= \a\t\uj \cos\|\bm{\theta} \| \pm \frac{\sin\|\bm{\theta}\|}{4\|\bm{\theta}\|}  \sqrt{\sum_{k\neq j}  \frac{\sigma^2 (\uk\t\a)^2}{(\lambda_j - \lambda_k)^2 }}.
\end{align*}
If $|\a\t\uj| \leq T_j$, then both terms above have the same sign, which shows that 
\begin{align*}
    |\a\t\uj| \geq \frac{\sin \|\bm{\theta}\|}{4\|\bm{\theta}\|}  \sqrt{\sum_{k\neq j}  \frac{\sigma^2(\uk\t\a)^2}{(\lambda_j - \lambda_k)^2 }} + |\a\t\uj| \cos \|\bm{\theta}\|
\end{align*}
and thus
\begin{align*}
    \big| |\a\t\uj| - |\a\t \tilde{\uj} | \big| \geq \frac{\sin \|\bm{\theta}\|}{4\|\bm{\theta}\|} \sqrt{\sum_{k\neq j}  \frac{\sigma^2(\uk\t\a)^2}{(\lambda_j - \lambda_k)^2 }} - (1 - \cos \|\bm{\theta} \|) |\a\t\uj|.
\end{align*}
Next, we note that since $\|\bm{\theta}\| \leq \frac{\sigma}{\Delta_j} = o(1)$, it holds that $\frac{\sin \|\bm{\theta}\|}{\|\bm{\theta}\|} \geq \frac{3}{4}$ and $\cos(\|\bm{\theta}\|) \geq \frac{3}{4}$, then the above is at least $\frac{1}{8}  \sqrt{\sum_{k\neq j}  \frac{\sigma^2(\uk\t\a)^2}{(\lambda_j - \lambda_k)^2 }}.$  

We now bound the ${\sf KL}$ divergence.  We note that because $\bN$ is a ${\sf GOE}$ matrix, it holds that
\begin{align*}
    {\sf KL}(P, \tilde P) &= \frac{1}{4\sigma^2} \| \bm{U\Lambda U}\t - \bm{UR \Lambda R\t U}\t \|_F^2      = \frac{1}{2\sigma^2}\| \bm{R} \bm{\Lambda} - \bm{\Lambda} \bm{R} \|_F^2 \\
    &= \frac{1}{2\sigma^2}\bigg\|\bigg[ (\cos(\|\bm{\theta}\|) - 1) \bigg(e_j e_j\t + \frac{\bm{\theta} \bm{\theta}\t}{\|\bm{\theta}\|^2} \bigg) + \frac{\sin\bm{\theta}}{\|\bm{\theta}\|} ( \bm{\theta} e_j\t - e_j \bm{\theta}\t ) \bigg] \bm{\Lambda} \\
    &\qquad \qquad - \bm{\Lambda} \bigg[ (\cos(\|\bm{\theta}\|) - 1) \bigg(e_j e_j\t + \frac{\bm{\theta} \bm{\theta}\t}{\|\bm{\theta}\|^2}\bigg) + \frac{\sin\bm{\Theta}}{\|\bm{\Theta}\|} ( \bm{\theta} e_j\t - e_j \bm{\theta}\t ) \bigg)  \bigg] \bigg\|_F^2 \\
    &\leq \frac{1}{2\sigma^2}\frac{(\cos\|\bm{\theta}\| - 1)^2}{\|\bm{\theta}\|^4} \sum_{i,k\neq j,i \neq k} \theta_i^2 \theta_k^2 (\lambda_i - \lambda_k)^2 + 2 \frac{1}{2\sigma^2}\frac{\sin^2 \|\bm{\theta}\|}{\|\bm{\theta}\|^2} \sum_{i \neq j} \theta_i^2 (\lambda_i - \lambda_j)^2.
    \end{align*}
Noting that $\sin^2\|\bm{\theta}\| \leq \|\bm{\theta}\|^2$ and $( 1 - \cos \|\bm{\theta}\|)^2 \leq \| \bm{\theta}\|^4$ for small values of $\|\bm{\theta} \| \leq \frac{\sigma}{\Delta_j} = o(1)$, by plugging in the definition of $\theta_k$ we can bound
\begin{align*}
{\sf KL}(P,\tilde P) 
    &\leq \frac{1}{8}  + \frac{\sigma^2}{16 (\sum_{k\neq j} \frac{\sigma^2 (\a\t\uk)^2}{(\lambda_k - \lambda_j)^2})^2} \sum_{i,k\neq j,i\neq k} \frac{\sigma^4 (\a\t\uk)^2 (\a\t \bm{u}_i)^2}{(\lambda_i - \lambda_j)^4(\lambda_k - \lambda_j)^4} (\lambda_i - \lambda_k)^2 \\
    &\leq \frac{1}{8} + \frac{\sigma^2}{4( \sum_{k\neq j} \frac{\sigma^2 (\a\t\uk)^2}{(\lambda_k - \lambda_j)^2})^2} \sum_{i,k\neq j,i\neq k} \frac{\sigma^4 (\a\t\uk)^2 (\a\t \bm{u}_i)^2}{(\lambda_i - \lambda_j)^2(\lambda_k - \lambda_j)^4}  \\
    &\leq \frac{1}{8} + \frac{\sigma^2}{4\Delta_j^2}  = \frac{1}{8}(1 + o(1)).
\end{align*}

Next, assume that $\frac{\sigma^2}{\lambda_j^2} \|\uperp\t\a\|^2 \geq \sigma^2 \sum_{\substack{k\neq j\\k\leq r}} \frac{(\a\t\uk)^2}{(\lambda_j - \lambda_k)^2}.$  We consider all $\uk$ the same except for $\tilde{\uj}$, which we now define as
\begin{align*}
    \tilde{\uj} &= \frac{\uj + \delta \frac{\uperp \uperp\t \a}{\|\uperp\t\a\|} }{\sqrt{1 + \delta^2}},
\end{align*}
which is a unit vector, and is orthogonal to $\uk$ for $k \neq j$. Without loss of generality we assume $\a\t\uj \geq 0$.
We note that
\begin{align*}
    \a\t \tilde{\uj} &= \frac{\a\t\uj}{\sqrt{1 + \delta^2 }} + \frac{\delta}{\sqrt{1 + \delta^2 }} \| \uperp\t\a\|.
\end{align*}
Let $\delta = c\frac{\sigma}{\lambda_j}$, where $c > 0$ is some constant to be chosen later.   First, suppose that $c$ is fixed as $c = \frac{1}{1600}$.  We note that with this definition
\begin{align*}
    | \a\t (\uj - \tilde{\uj})| &= \bigg| \a\t\uj \bigg(1 - \frac{1}{\sqrt{1 + \delta^2}}\bigg) - \frac{\delta}{\sqrt{1 + \delta^2 }} \|\uperp\t\a \| \bigg|.
\end{align*}
If $ \frac{\delta}{\sqrt{1 + \delta^2 }} \|\uperp\t\a \| \geq 2 \a\t\uj (1 - \frac{1}{\sqrt{1 + \delta^2}})$, then $|\a\t(\uj - \tilde{\uj})| \geq\frac{1}{2} \frac{\delta}{\sqrt{1 + \delta^2 }} \|\uperp\t\a \| \geq \frac{1}{4} \delta \|\uperp\t\a\|$.  Instead, if $\a\t\uj (1 - \frac{1}{\sqrt{1 + \delta^2}}) \geq 2\frac{\delta}{\sqrt{1 + \delta^2 }} \|\uperp\t\a \| $, then $|\a\t(\uj - \tilde{\uj})| \geq  \frac{1}{2} \delta \|\uperp\t\a\|$.  If neither of these conditions hold, we can either change $c$ to be $\frac{1}{400}$ or $\frac{1}{6400},$ which rescales $\a\t\uj ( 1 - \frac{1}{\sqrt{1 + \delta^2}})$ by at least $8$ and rescales $\frac{\delta}{\sqrt{1 + \delta^2 }} \|\uperp\t\a \|$ by at most 4, which again falls into the first case.  Since $\frac{\delta}{\sqrt{1 + \delta^2 }} \|\uperp\t\a \|$ changes at most linearly in $c$ and $\a\t\uj ( 1 - \frac{1}{\sqrt{1 + \delta^2}})$ grows quadratically in $c$, we can adjust $c$ so that we fall into one of these two cases.  Regardless, we have $\frac{1}{6400} \leq  c\leq \frac{1}{400}$.

Similarly,
\begin{align*}
    |\a\t(\uj + \tilde{\uj}) | &= \bigg|\a\t\uj + \frac{\a\t\uj}{\sqrt{1 + \delta^2}} + \frac{\delta}{\sqrt{1 + \delta^2 }} \|\uperp\t\a \| \bigg| \\
    &\geq |\a\t\uj| + \frac{1}{2}\delta \|\uperp\t\a\| \\
    &\geq \frac{1}{2} \delta \|\uperp\t\a\|.
\end{align*}
In either case we have 
\begin{align*}
    |\a\t\uj \pm \a\t \tilde{\uj} | \gtrsim  \frac{\sigma}{\lambda_j} \|\uperp\t\a\|.
\end{align*}
We now compute the ${\sf KL}$ divergence. We have that \begin{align*}
    {\sf KL}(P,\tilde P) &\leq \frac{\lambda_j^2}{4\sigma^2} \| \uj \uj\t - \tilde{\uj} \tilde{\uj}\t \|_F^2 \leq \frac{\lambda_j^2}{2\sigma^2} \| \uj -\tilde{\uj} \|^2.
\end{align*}
Finally, one can straightforwardly check that $\| \uj - \tilde{\uj} \| \leq  \delta \leq \frac{1}{400}\frac{\sigma}{\lambda_j}$.  
  This completes the proof.
\end{proof}

\subsection{Proof of \cref{thm:minimaxlowerbound_PCA}}
\label{sec:minimaxlowerbound_pca}
\begin{proof}
    The proof is similar to the previous proof, only with different calculations for the constructions.  Consider observing $\bm{\Sigma}$ and $\tilde{\bSigma}$ of the form
    \begin{align*}
        \bSigma = \sum_{k=1}^{r} \lambda_k \uk\uk\t + \sigma^2 \bm{I}_p; \qquad \tilde{\bSigma} = \sum_{k=1}^{r} \lambda_k \tilde{\uk\uk}\t + \sigma^2 \bm{I}_p.
    \end{align*}
By Lemma 1 of \citet{cai_confidence_2017} it suffices to construct $\tilde{\bSigma}$ such that the quantity $|\a\t\uj \pm \a\t \tilde{\uj}|$ is lower bounded by $s_{\a,j}\pca$ and the ${\sf KL}$ divergence is upper bounded.

First, if $\sqrt{\sum_{\substack{k\neq j\\k\leq r}} \frac{(\a\t\uk)^2 (\lambda_j +\sigma^2(\lambda_k + \sigma^2)}{n(\lambda_j - \lambda_k)^2}} \leq \sqrt{\frac{(\lambda_j+\sigma^2)\sigma^2 \|\uperp\t\a\|^2}{n\lambda_j^2}}$, then by applying the same construction from \citet{li_minimax_2025} in the proof of Equation E.2 in the appendix, we can prove the result.  

Therefore, it suffices to prove the result when  $\sqrt{\sum_{\substack{k\neq j\\k\leq r}} \frac{(\a\t\uk)^2 (\lambda_j +\sigma^2(\lambda_k + \sigma^2)}{n(\lambda_j - \lambda_k)^2}} \geq \sqrt{\frac{(\lambda_j+\sigma^2)\sigma^2 \|\uperp\t\a\|^2}{n\lambda_j^2}}$
which implies that the left hand side is larger than $C s_{\a,j}\pca$.  As in the previous proof, define $\tilde{\U} = \bm{UR}$, where $\bm{R}$ is a rotation matrix defined as follows. 
For $k \neq j$ set \begin{align*}
    \theta_k =  \pm \frac{1}{4} \frac{(\lambda_j + \sigma^2)(\lambda_k + \sigma^2) \a\t\uk}{n (\lambda_j - \lambda_k)^2 \sqrt{\sum_{l\neq j}\frac{(\lambda_l + \sigma^2)(\lambda_j + \sigma^2)(\a\t\bm{u}_l)^2}{n(\lambda_j - \lambda_l)^2}}},
\end{align*}
and let $\bm{\theta}$ be the vector with $\theta_j = 0$, where the sign is chosen to be  $\mathrm{sgn}(\a\t\uj)$ if $|\a\t\uj| \leq \sqrt{\sum_{\substack{k\neq j\\k\leq r}} \frac{(\a\t\uk)^2 (\lambda_j +\sigma^2(\lambda_k + \sigma^2)}{n(\lambda_j - \lambda_k)^2}}, $ and $-\mathrm{sgn}(\a\t\uj)$ otherwise, where the convention that $\mathrm{sgn}(0)= 1$.  
Define
\begin{align*}
    \bm{R} = \bm{I}_r + \bigg( \cos( \| \bm{\theta}\|) - 1  \bigg) \bigg( e_j e_j\t + \frac{\bm{\theta}\bm{\theta}\t}{\|\bm{\theta}\|^2} \bigg) + \frac{\sin\|\bm{\theta}\|}{\|\bm{\theta}\|} (\bm{\theta} e_j\t - e_j \bm{\theta}\t ),
\end{align*}
where $e_j$ is a standard basis vector. 
We can similarly show that 
\begin{align*}
    \a\t\tilde{\uj} &= \a\t\uj \cos\|\bm{\theta} \| \pm \frac{\sin\|\bm{\theta}\|}{4\|\bm{\theta}\|}  \sqrt{\sum_{k\neq j}  \frac{(\lambda_j + \sigma^2)(\lambda_k +\sigma^2)(\uk\t\a)^2}{n(\lambda_j - \lambda_k)^2 }},
\end{align*}
and through the same analysis as the previous proof we can demonstrate 
that $|\a\t\uj \pm \a\t\tilde{\uj}| \gtrsim  \sqrt{\sum_{k\neq j}  \frac{(\lambda_j + \sigma^2)(\lambda_k +\sigma^2)(\uk\t\a)^2}{n(\lambda_j - \lambda_k)^2 }}.$  

Therefore, it suffices to calculate the ${\sf KL}$ divergence and show it is bounded.  It is well-known that the ${\sf KL}$ divergence between mean-zero multivariate Gaussians is given by
\begin{align*}
    {\sf KL}(\tilde P, P) = \frac{n}{2} \bigg( {\sf Tr} \big( \bm{\Sigma}\inv \tilde{\bSigma} \big) - p \bigg),
\end{align*}
where we have implicitly used the fact that the determinants of both matrices are the same since their eigenvalues are the same.  We further have that since $\bSigma\inv = \sum_{\substack{k\neq j\\k\leq r}} \frac{1}{\lambda_k + \sigma^2} \uk\uk\t + \frac{1}{\sigma^2} \uperp\uperp\t$, then
\begin{align*}
    {\sf Tr}\big( \bSigma\inv \tilde{\bSigma} \big) - p &= {\sf Tr} \big( \tilde{\U} ( \bm{\Lambda} + \sigma^2 \bm{I}_r )\inv \tilde{\U}\t \U (\bm{\Lambda} + \sigma^2 \bm{I}_r  ) \U\t \big) + p - r - p \\
    &= {\sf Tr} \bigg( \bm{R} ( \bm{\Lambda} + \sigma^2 \bm{I}_r )\inv \bm{R}\t ( \bm{\Lambda} + \sigma^2 \bm{I}_r ) \bigg) - r \\
     &= \bigg( \frac{\sin\|\bm{\theta}\|}{4\|\bm{\theta}\|} \bigg)^2 \sum_{k\neq j} \frac{(\lambda_k - \lambda_j)^2}{(\lambda_j + \sigma^2)(\lambda_k + \sigma^2)} \theta_k^2 \leq \frac{1}{16n},
\end{align*}
where the final bound comes from the definition of $\theta_k$.  
\end{proof}

\section{Numerical Simulations}
\begin{figure}[t]
\centering
  \begin{subfigure}[b]{0.7\textwidth}
            \subfloat[$\lambda_{\min} = 2 \sqrt{n}$, \\ \ \ \ \ $\Delta_1 =  \log(n)$]
                {%
            \includegraphics[width=.33\linewidth]{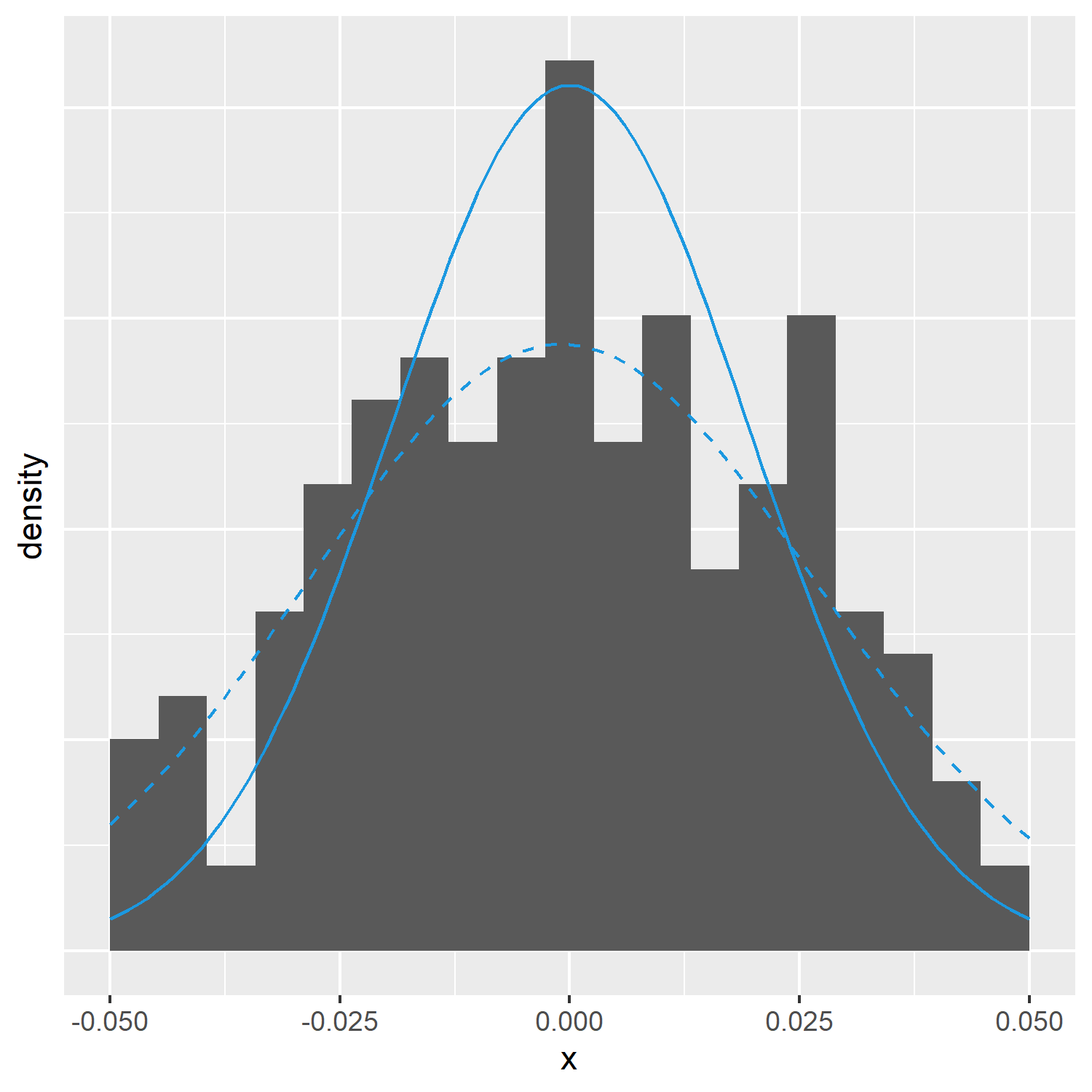}%
            \label{subfig:aveerror}%
        }\hfill
         \subfloat[$\lambda_{\min} = 4\sqrt{n}$, \\ \ \ \ \ $\Delta_1 =  \log(n)$]
                {%
            \includegraphics[width=.33\linewidth]{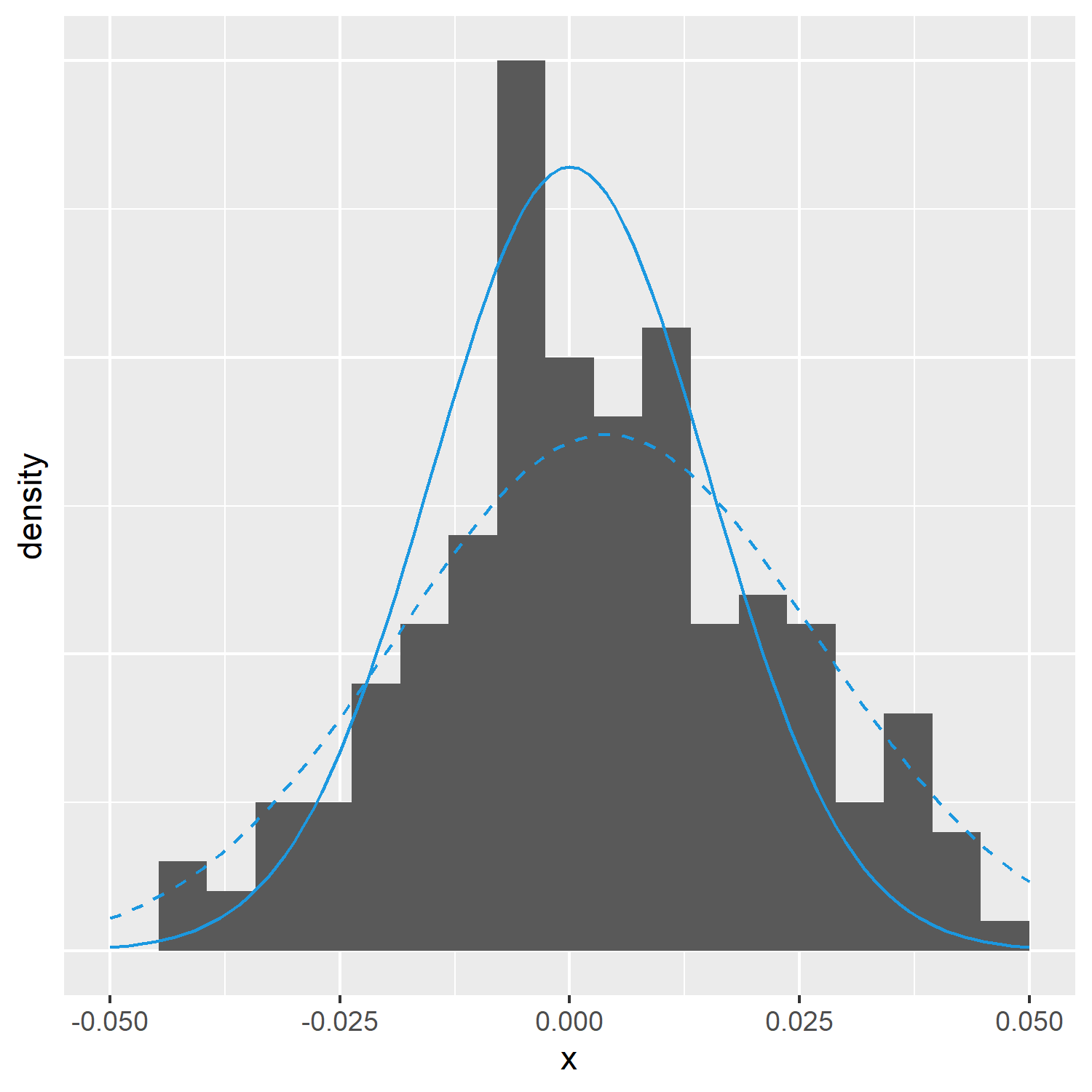}%
            \label{subfig:aveerror}%
        }   \hfill
         \subfloat[$\lambda_{\min} = 6 \sqrt{n}$, \\ \ \ \ \ $\Delta_1 = \log(n)$]
                {%
            \includegraphics[width=.33\linewidth]{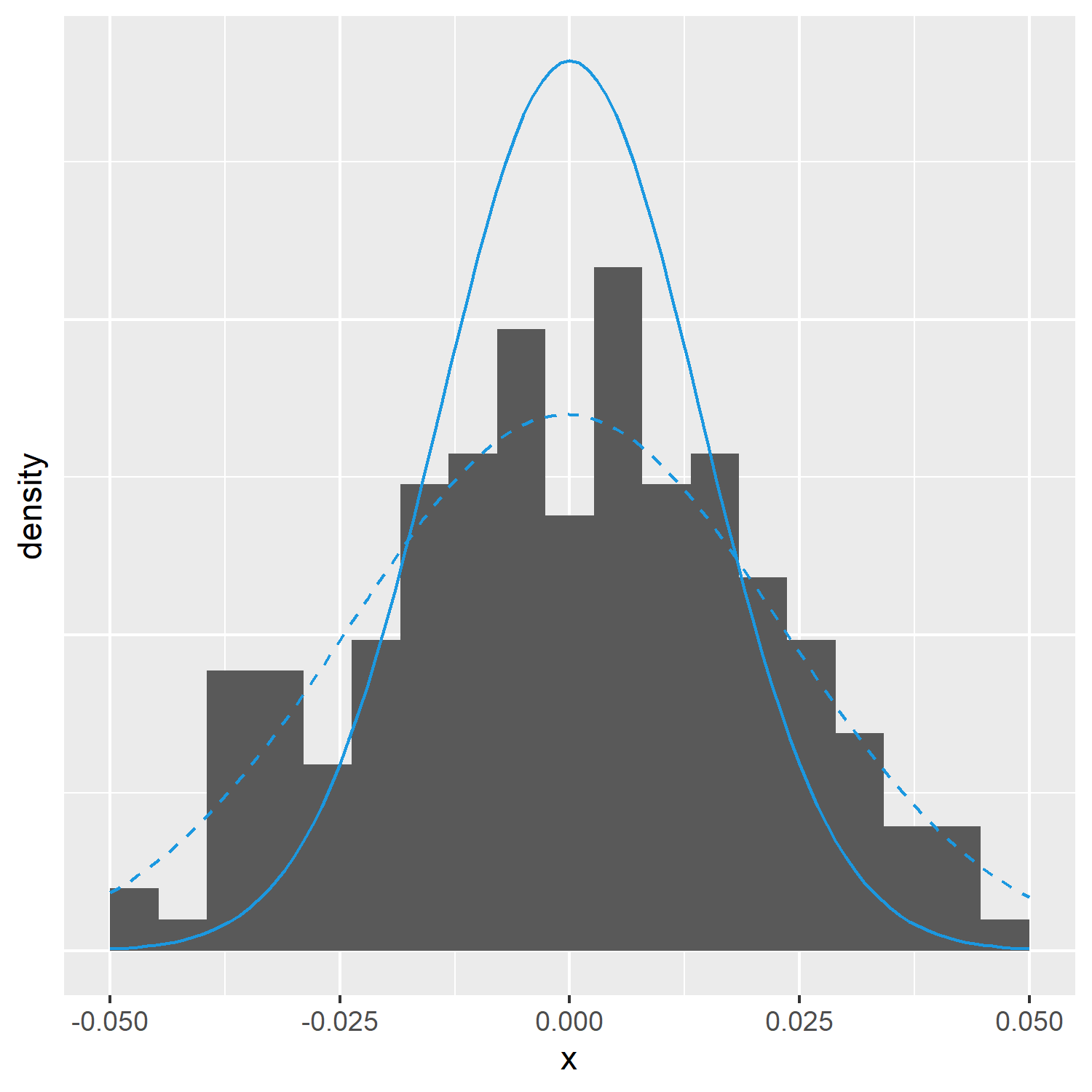}%
            \label{subfig:aveerror}%
        } \\
       \subfloat[$\lambda_{\min} = 2 \sqrt{n}$, \\ \ \ \ \ $\Delta_1 = 2\log(n)$]
                {%
            \includegraphics[width=.33\linewidth]{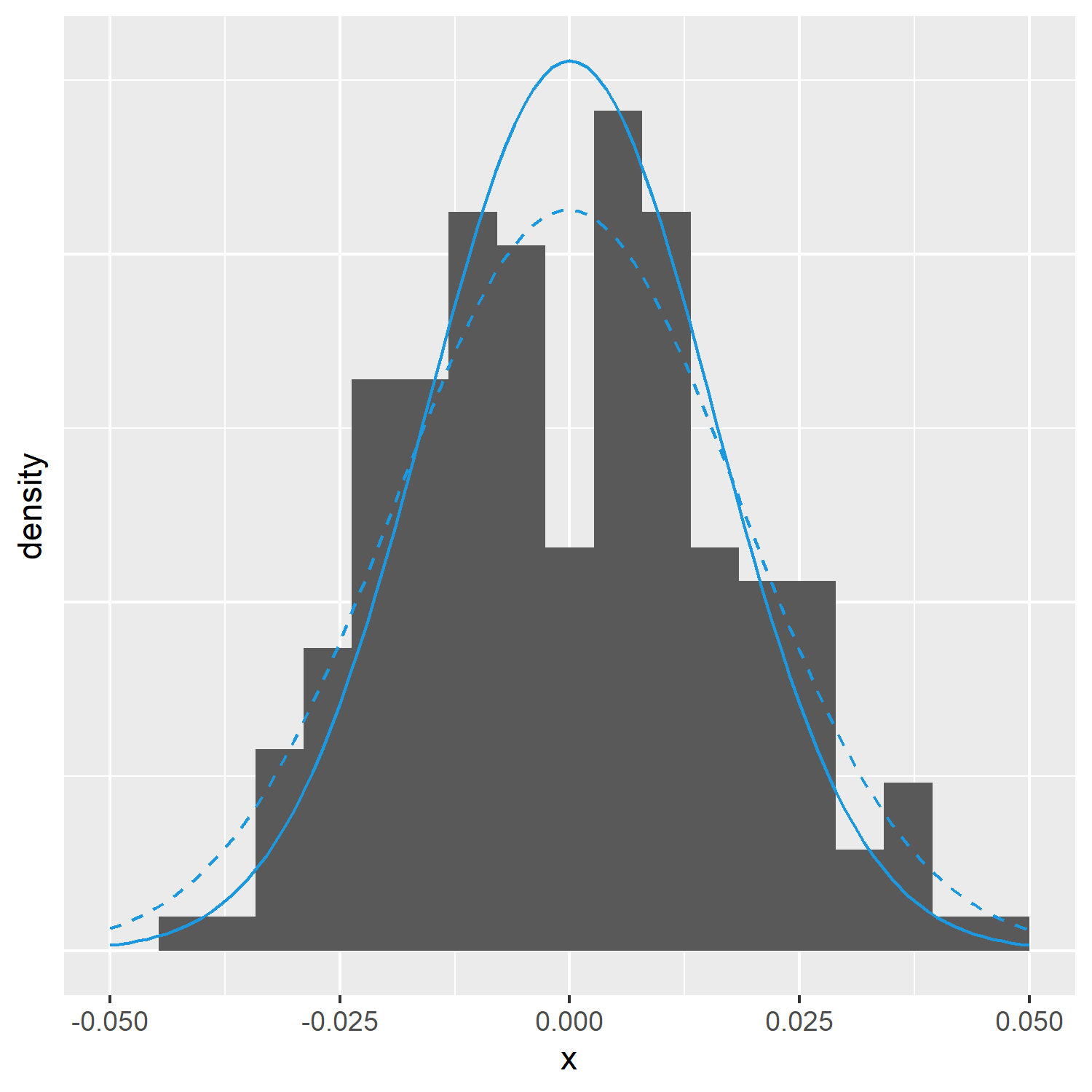}%
            \label{subfig:aveerror}%
        }\hfill
         \subfloat[$\lambda_{\min} = 4 \sqrt{n}$, \\ \ \ \ \ $\Delta_1 = 2\log(n)$]
                {%
            \includegraphics[width=.33\linewidth]{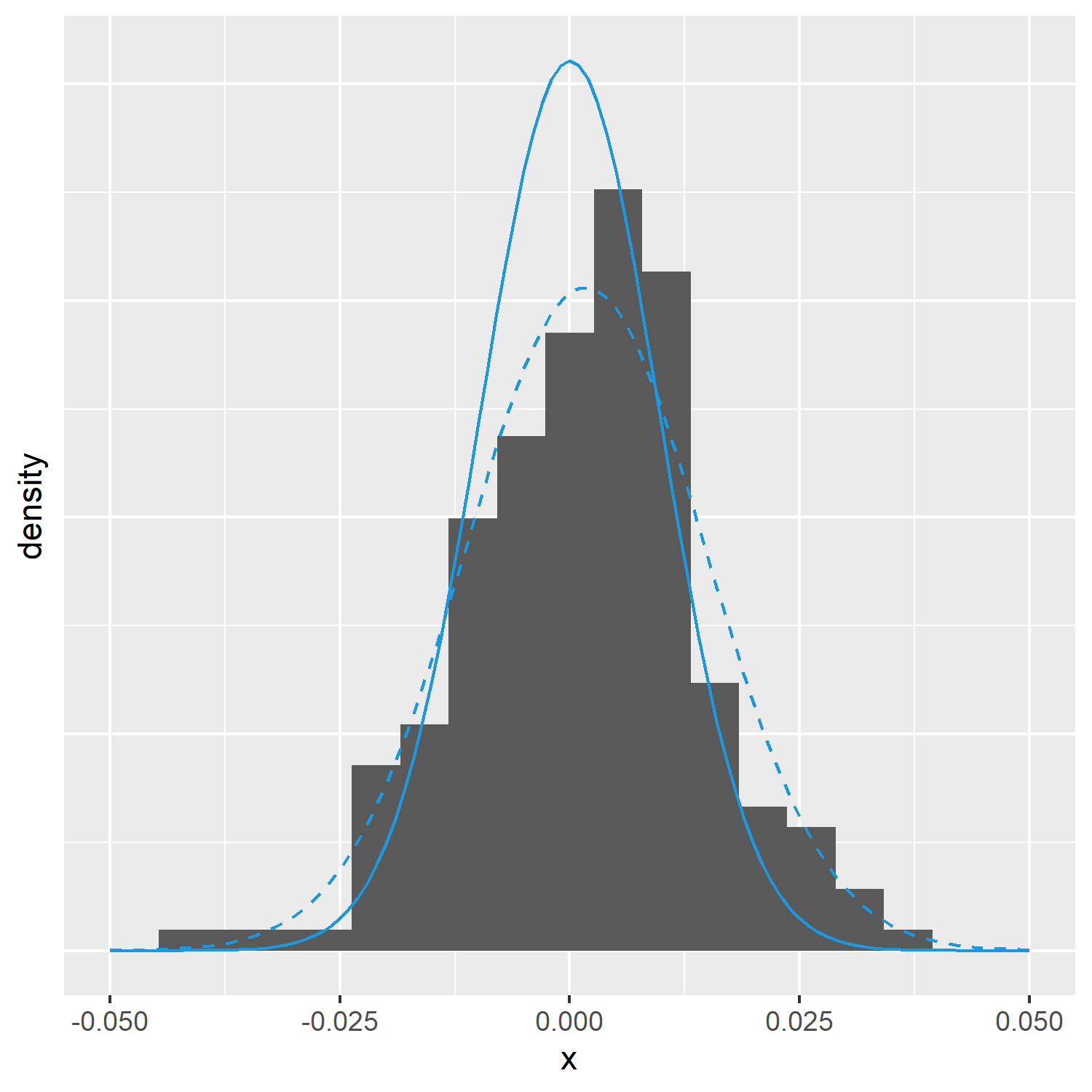}%
            \label{subfig:aveerror}%
        }   \hfill
         \subfloat[$\lambda_{\min}= 6\sqrt{n}$, \\ \ \ \ \ $\Delta_1 = 2\log(n)$]
                {%
            \includegraphics[width=.33\linewidth]{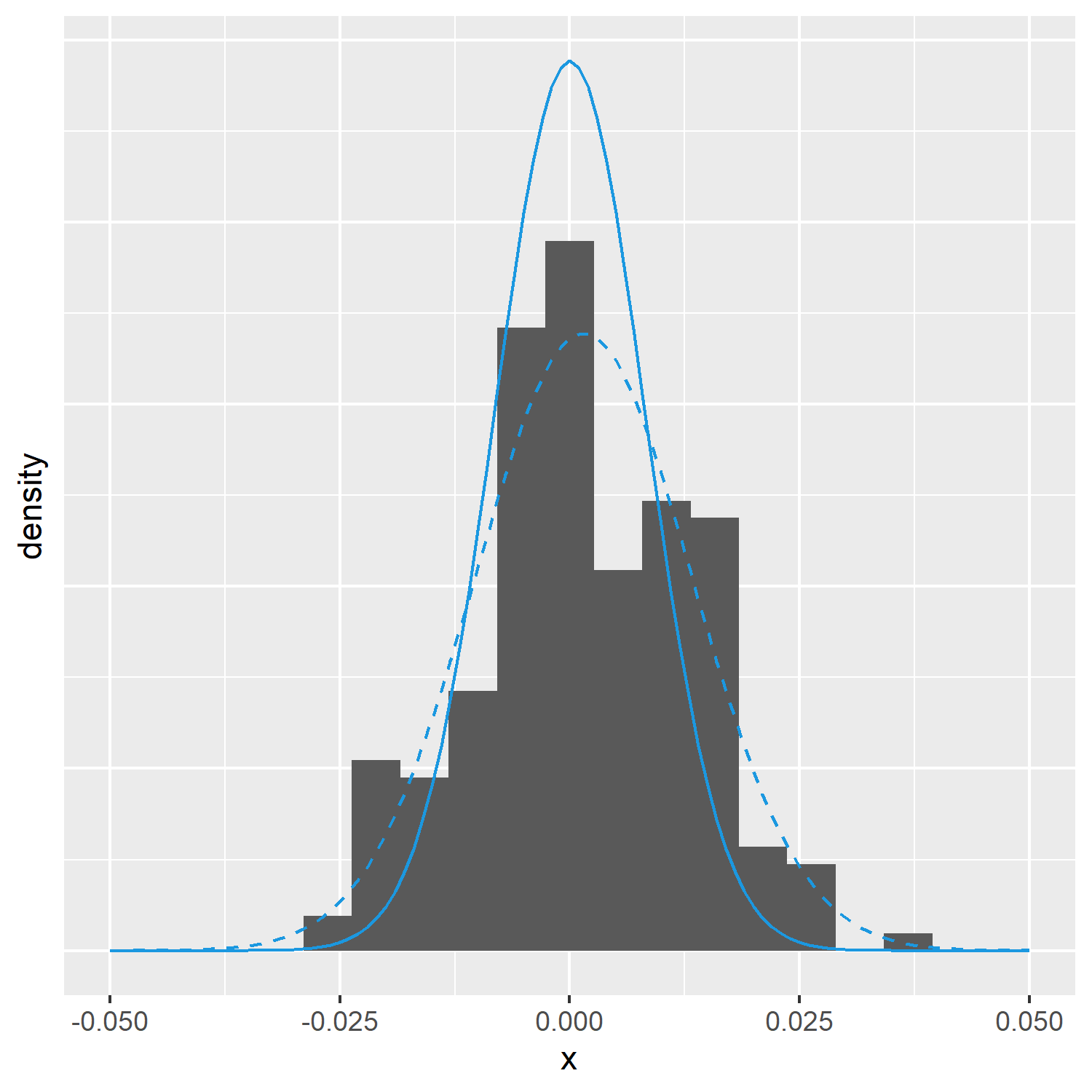}%
            \label{subfig:aveerror}%
        }\\
       \subfloat[$\lambda_{\min}= 2 \sqrt{n}$, \\ \ \ \ \ $\Delta_1 = 3\log(n)$]
                {%
            \includegraphics[width=.33\linewidth]{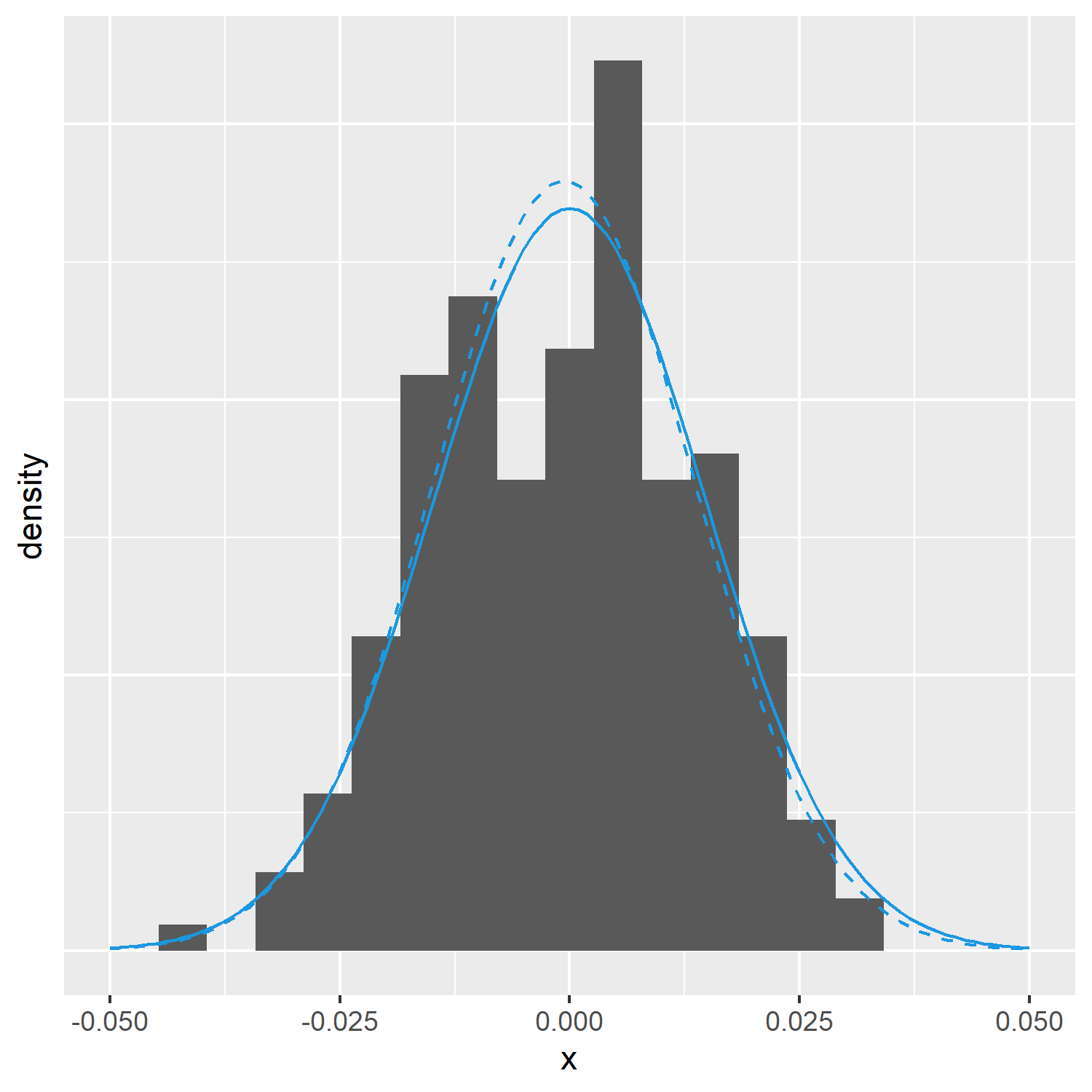}%
            \label{subfig:aveerror}%
        }\hfill
         \subfloat[$\lambda_{\min} = 4\sqrt{n}$, \\ \ \ \ \ $\Delta_1 = 3\log(n)$]
                {%
            \includegraphics[width=.33\linewidth]{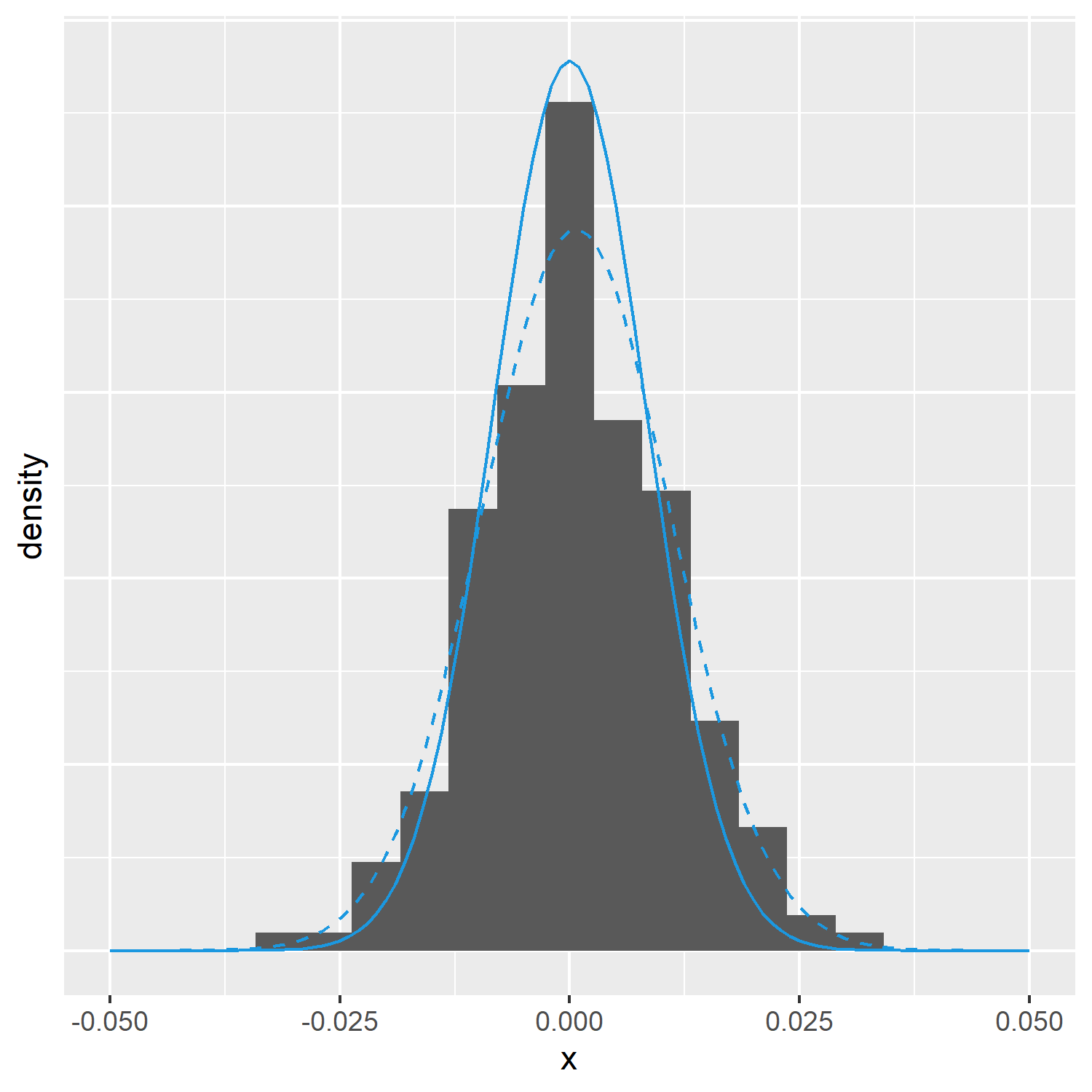}%
            \label{subfig:aveerror}%
        }   \hfill
         \subfloat[$\lambda_{\min} = 6 \sqrt{n}$, \\ \ \ \ \ $\Delta_1 = 3\log(n)$]
                {%
            \includegraphics[width=.33\linewidth]{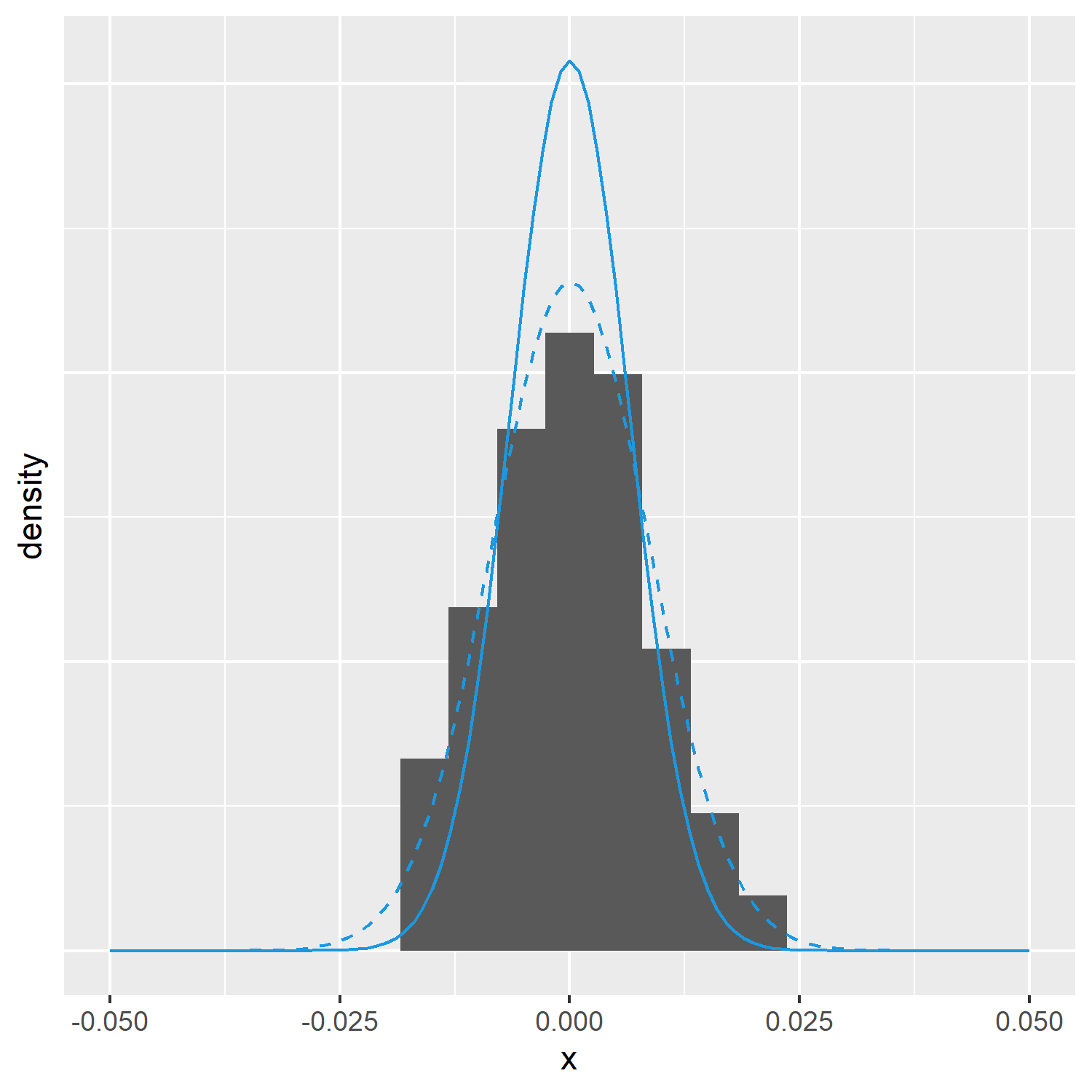}%
            \label{subfig:aveerror}%
        }
\end{subfigure}
        \caption{Empirical (dotted) and theoretical (solid) ellipses for the quantity $\a\t \bm{\hat u}_1 - \a\t \bm{u}_1 \bm{u}_1\t \bm{\hat u}_1$ with varying signal-strength and eigengaps under the matrix denoising model \eqref{MD}. From top to bottom the eigengap increases, and from left to right the signal strength increases. } \label{fig:mddistributionaltheory}
    \end{figure}
\subsection{Numerical Simulations for Matrix Denoising}

In this section we demonstrate our theory through simulations.  Each result is generated through 200 Monte Carlo iterations.   
\\ \ \\
\noindent
\textbf{Setup:} In all simulations we set $n = 200$ and $\sigma^2 = 1$, and $r = 3$.  To generate the matrix $\bm{S}$, we first define a vector $\bm{v}$ with entries consisting of $n$ independent $U(-1,1)$ random variables, and we define $\bm{u}_1= \frac{\bm{v}}{\|\bm{v}\|}$.  We then draw $\bm{u}_2$ by drawing a new vector $\bm{v}'$ with $U(-1,1)$ entries and setting $\bm{u}_2 = \big( \bm{I} - \bm{u}_1 \bm{u}_1\t\big) \frac{\bm{v}'}{\|\bm{v}'\|}$ to ensure orthogonality, and then normalizing.  Finally we set $\bm{u}_3$ by drawing another vector $\bm{v}''$ with $U(-1,1)$ entries and setting $\bm{u}_3 =\big( \bm{I} - \bm{u}_1 \bm{u}_1\t - \bm{u}_2 \bm{u}_2\t\big) \frac{\bm{v}}{\|\bm{v}\|}$.  Finally, for the varying levels of signal-to-noise ratio $\lambda_{\min}/\sigma \equiv \lambda_{\min}$ and eigengap $\Delta_1$, we define $\lambda_1 = 5 \lambda_{\min}$, $\lambda_2 = \lambda_1 - \Delta_1$, $\lambda_3 = \lambda_{\min}$.  We first draw the matrix $\bm{S}$ before running simulations, and we keep it fixed throughout all simulations.  The matrix $\bm{N}$ is re-drawn at each Monte Carlo iteration.
\\ \ \\
\textbf{Distributional Theory:} Plotted in \cref{fig:mddistributionaltheory} is the empirical histogram of the random variable $\a\t \bm{\hat u}_1 - \a\t \bm{u}_1 \bm{u}_1\t \bm{\hat u}_1$, with solid line corresponding to the theoretical distribution (mean zero and variance $s\md_{\a,1}$) and dotted line corresponding to the empirical distribution, with $\bm{a}$ equal to the constant vector with entries $\frac{1}{\sqrt{n}}$. By \cref{thm:distributionaltheory_MD} the approximate Gaussianity is governed by the eigengap $\Delta_1/\sigma$ and the signal strength $\lambda_{\min}/\sigma$.  From left to right we consider increasing signal strength and from top to bottom we consider increasing eigengap, so that top left has the smallest eigengap and signal strength, and bottom right has the largest eigengap and signal strength.  Furthermore, by \cref{thm:distributionaltheory_MD}, the asymptotic variance decreases with larger eigengap and signal strength, and observe that the theoretical and empirical histograms become narrower as these quantities increase.  We also ran simulations for the difference $\a\t \bm{\hat u}_1 \sqrt{1 + b_j\md} - \a\t\bm{u}_1$, but the figures are not materially different. \\ \ \\
    \noindent
\textbf{Approximate Coverage Rates:} We also consider approximate coverage rates for $\a\t \bm{u}_1$ using \cref{alg:ciMD}. 
Similar to the previous setting we examine increasing eigengaps and signal-strengths with $\alpha = .05$.  The column denoted ``Mean'' corresponds to the number of times the true value of $\a\t \bm{u}_1$ 
was in the outputted confidence interval, and the column denoted ``Std'' denotes the standard deviation of this value.  Due to the sign ambiguity, we multiplied $\a\t \bm{u}_1$ 
by ${\sf sign}(\bm{u}_1\t \bm{\hat u}_1)$, as we assume this quantity is positive throughout our theory. The coverage remains close to $.95$, though it appears to be slightly conservative. 
\begin{table}[htb]
    \centering
    \begin{tabular}{|c|c | c |c|}
    \hline
\multicolumn{4}{|c|}{Coverage Rates for $\a\t \bm{u}_1$}   \\ 
\hline 
  $\Delta_1$ & $ \lambda_{\min} $ & Mean  & Std  \\
  \hline 
  $\log(n)$ & $2 \sqrt{n}$ & 0.955   &  0.015 
\\ 
\hline 
  
 $\log(n)$ & $4 \sqrt{n}$ &   0.980  
   &    0.010  
\\
\hline 
  
 $\log(n)$ & $6 \sqrt{n}$ & 0.990 
 & 0.007
\\
\hline 
  
 $2\log(n)$ & $2 \sqrt{n}$ & 0.935  
  &  0.017  
 \\
 \hline 
  
 $2\log(n)$ & $4 \sqrt{n}$ & 0.965 
& 0.013   
\\\hline 
  
 $2\log(n)$  & $6 \sqrt{n}$ &   0.975 & 0.011
\\ \hline 
 $3\log(n)$ & $2 \sqrt{n}$ & 0.900 
  &  0.021  
 \\
 \hline 
  
 $3\log(n)$ & $4 \sqrt{n}$ &  0.965 
& 0.013  
\\\hline 
  
 $3\log(n)$  & $6\sqrt{n}$ &   0.990 & 0.007
\\ \hline 
  
\end{tabular} 
  
  
  
  
  
  
  
  
    \caption{Matrix Denoising empirical coverage rates for
    $\a\t \bm{u}_1$
    \cref{alg:ciMD} 
    for varying signal $\lambda_{\min}$ and eigengap $\Delta_1$.  Here $\a \equiv \frac{1}{\sqrt{n}}$.  The column ``Mean'' represents the empirical probability of coverage averaged over $200$ Monte Carlo iterations, and the column ``Std'' denotes the standard deviation of this coverage rate.}
    \label{table:coverageMD}
\end{table}

\subsection{Numerical Simulations for PCA}
\begin{figure}[H]
\centering
  \begin{subfigure}[b]{0.7\textwidth}
        \subfloat[{$\lambda_{\min}/\sigma^2 = 2\log(n)$, \\ \ \ \ \ $\Delta_1 = (\lambda_{\min} + \sigma^2) \frac{\log(n)}{\sqrt{n}}$}]
                {%
            \includegraphics[width=.33\linewidth]{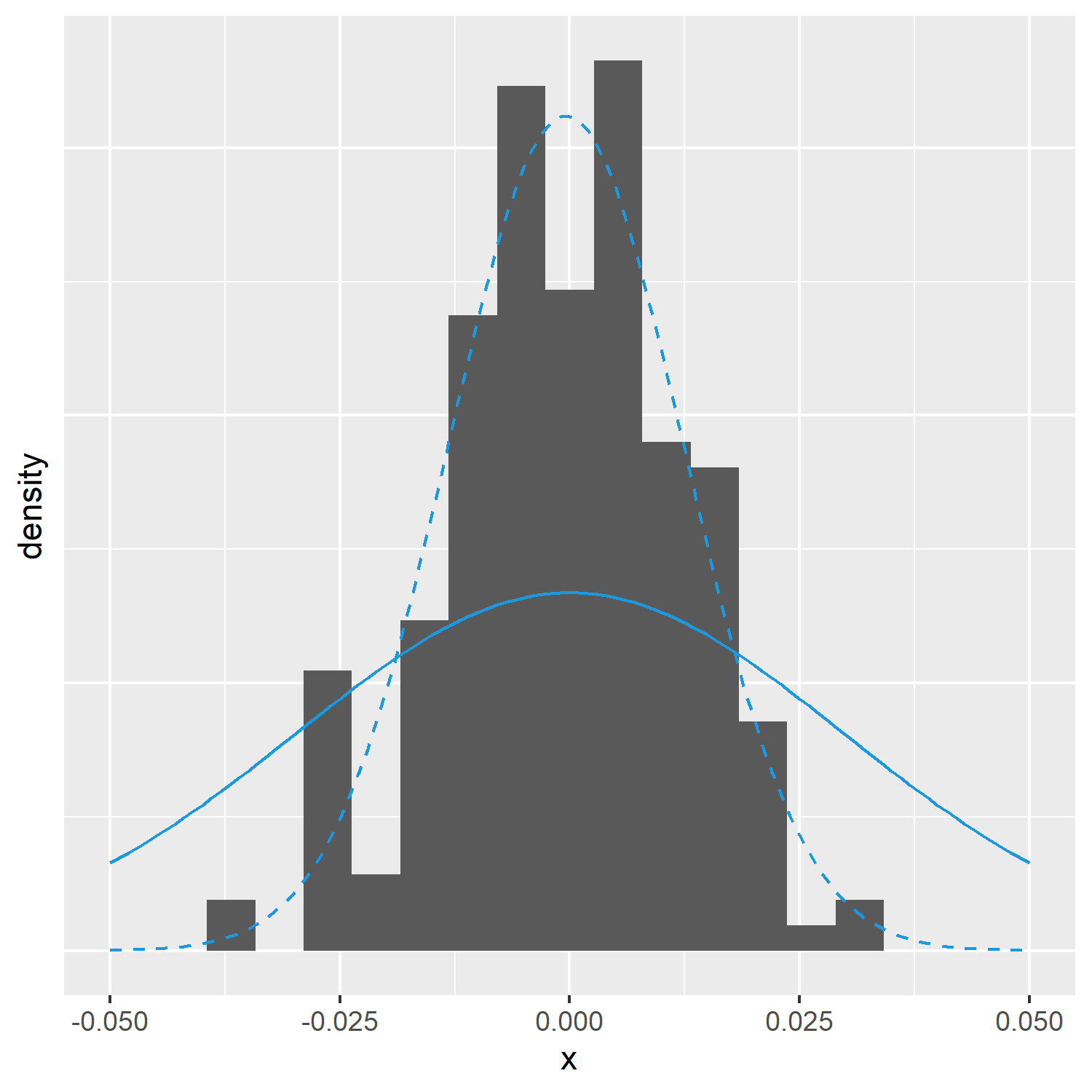}%
            \label{subfig:aveerror}%
        }\hfill
         \subfloat[{ $\lambda_{\min}/\sigma^2 = 4\log(n)$,  \\ \ \ \ \ $\Delta_1 = (\lambda_{\min} + \sigma^2) \frac{\log(n)}{\sqrt{n}}$}]
                {%
            \includegraphics[width=.33\linewidth]{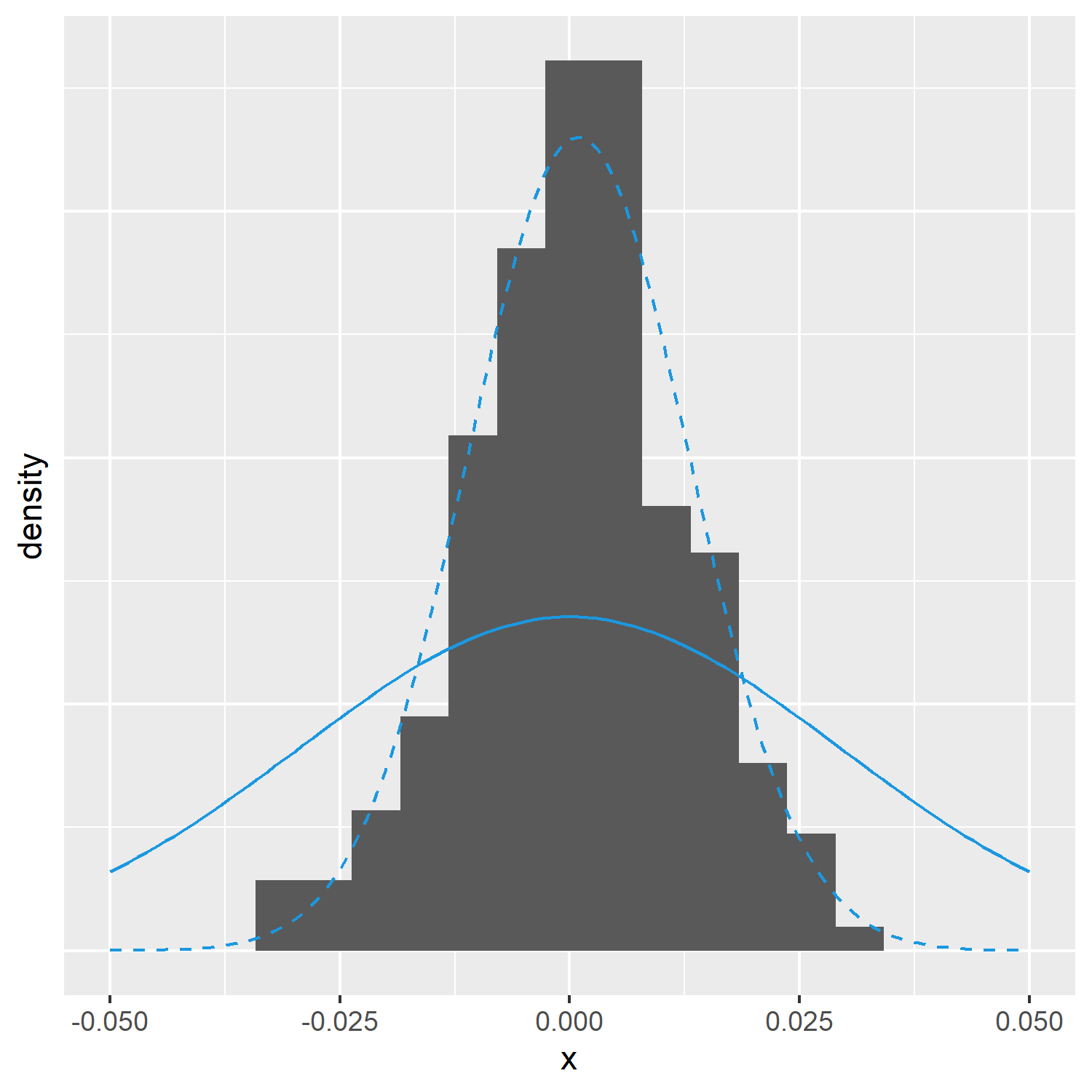}%
            \label{subfig:aveerror}%
        }   \hfill
         \subfloat[{ $\lambda_{\min}/\sigma^2 = 6\log(n)$,  \\ \ \ \ \ $\Delta_1 = (\lambda_{\min} + \sigma^2) \frac{\log(n)}{\sqrt{n}}$}]
                {%
            \includegraphics[width=.33\linewidth]{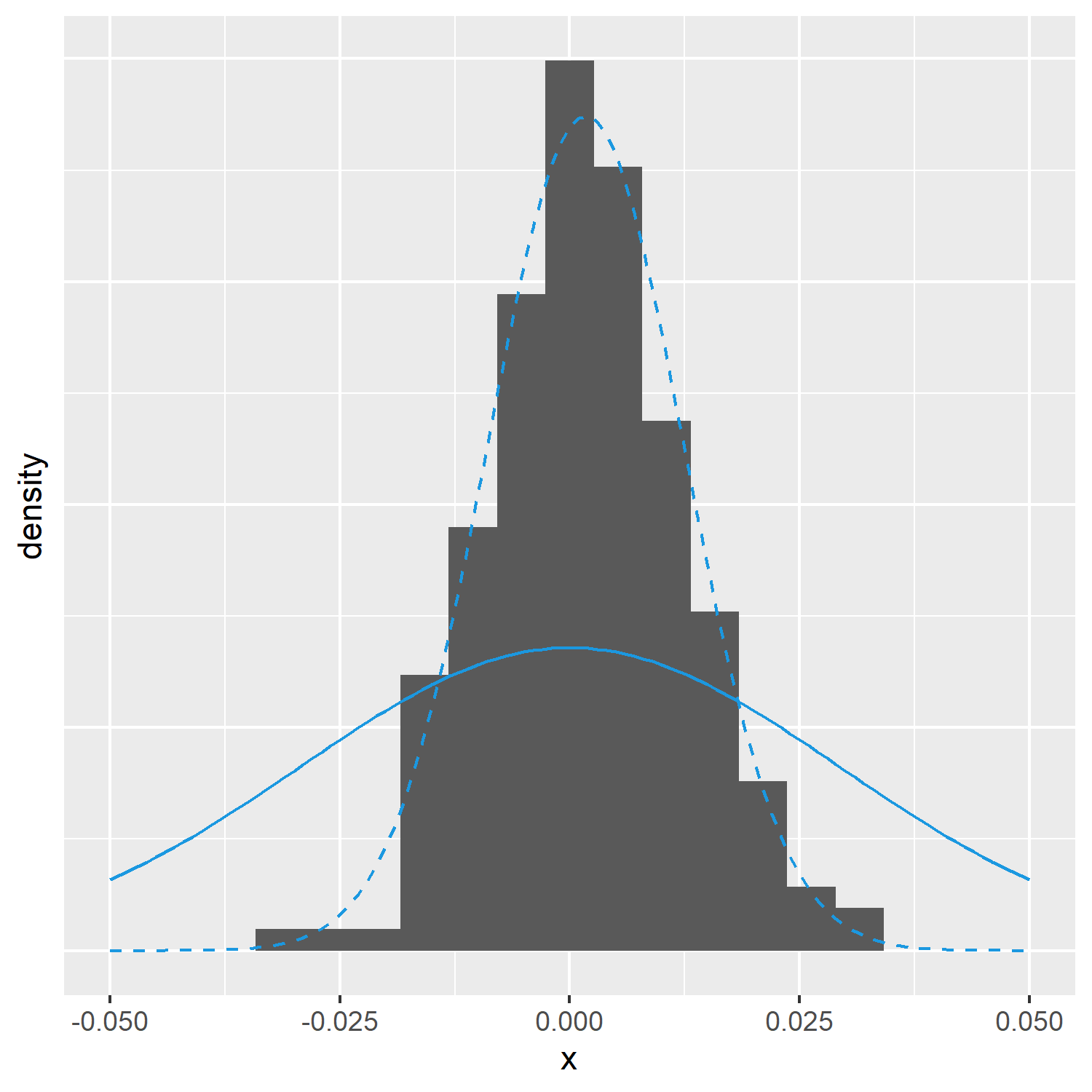}%
            \label{subfig:aveerror}%
        } \\
       \subfloat[{ $\lambda_{\min}/\sigma^2 = 2\log(n)$,  \\ \ \ \ \ $\Delta_1 = 2(\lambda_{\min} + \sigma^2) \frac{\log(n)}{\sqrt{n}}$}]
                {%
            \includegraphics[width=.33\linewidth]{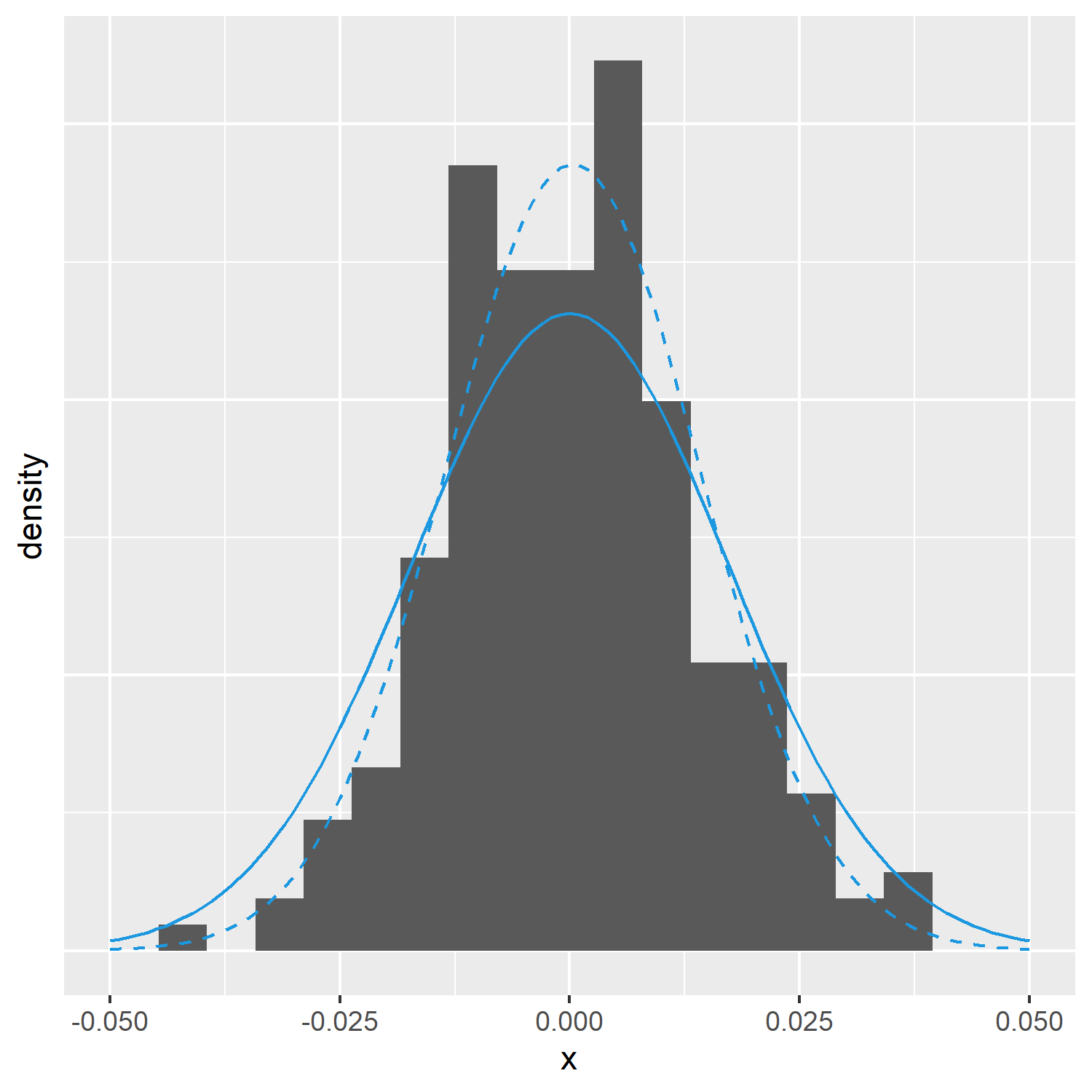}%
            \label{subfig:aveerror}%
        }\hfill
         \subfloat[{ $\lambda_{\min}/\sigma^2 = 4\log(n)$,  \\ \ \ \ \ $\Delta_1 = 2(\lambda_{\min} + \sigma^2) \frac{\log(n)}{\sqrt{n}}$}]
                {%
            \includegraphics[width=.33\linewidth]{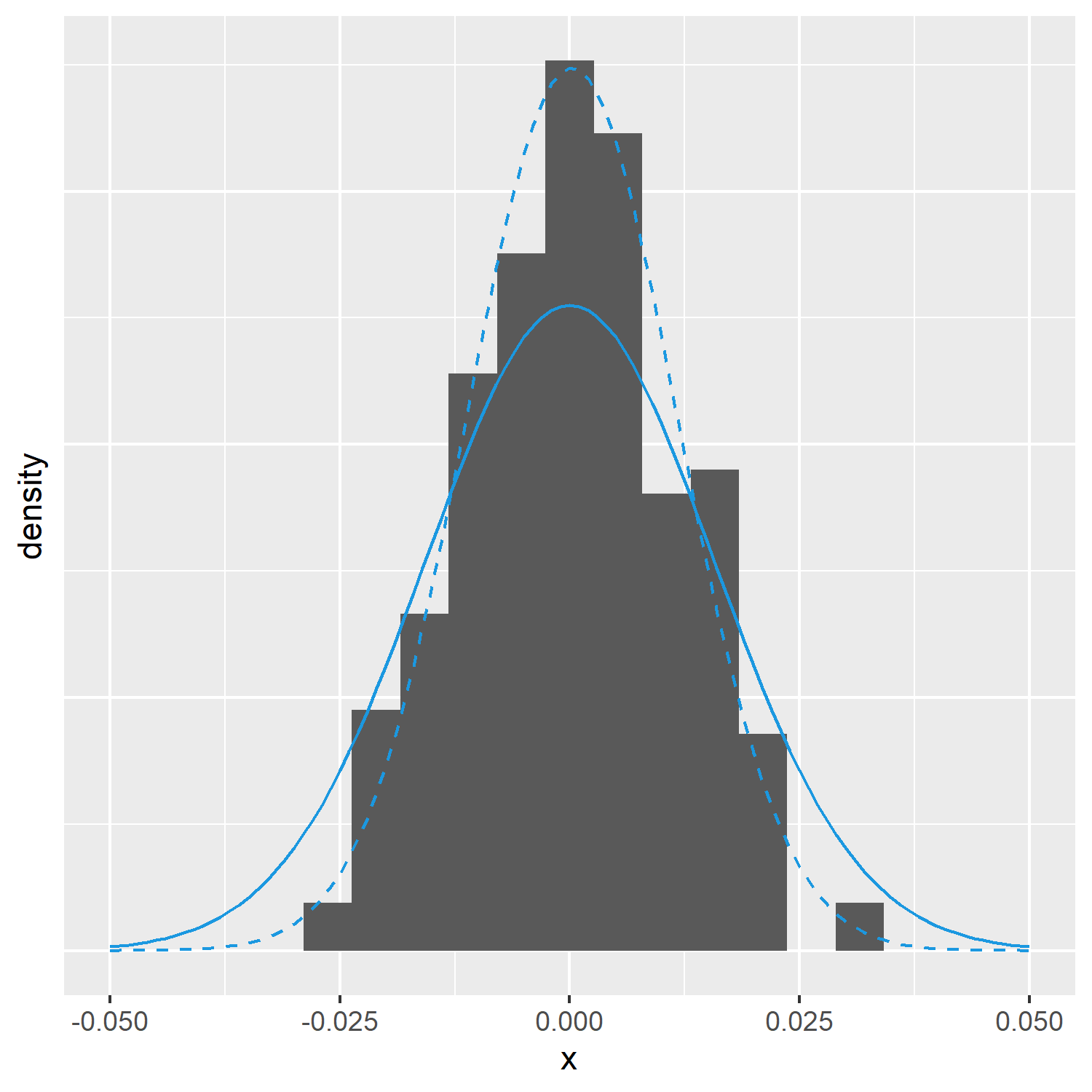}%
            \label{subfig:aveerror}%
        }   \hfill
         \subfloat[{ $\lambda_{\min}/\sigma^2 = 6\log(n)$,  \\ \ \ \ \ $\Delta_1 = 2(\lambda_{\min} + \sigma^2) \frac{\log(n)}{\sqrt{n}}$}]
                {%
            \includegraphics[width=.33\linewidth]{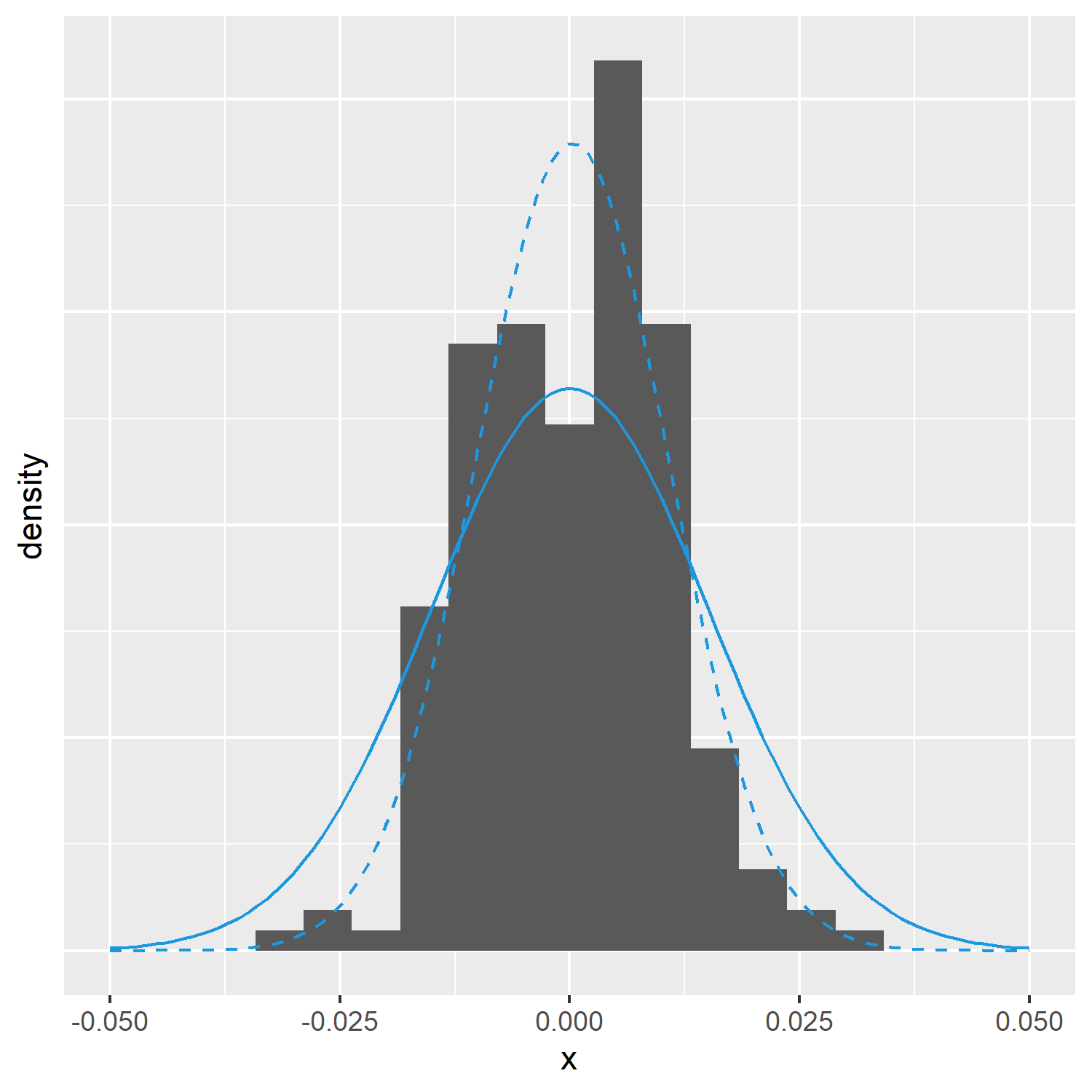}%
            \label{subfig:aveerror}%
        }\\
       \subfloat[{ $\lambda_{\min}/\sigma^2 = 2\log(n)$, \\ \ \ \ \ $\Delta_1 = 3(\lambda_{\min} + \sigma^2) \frac{\log(n)}{\sqrt{n}}$}]
                {%
            \includegraphics[width=.33\linewidth]{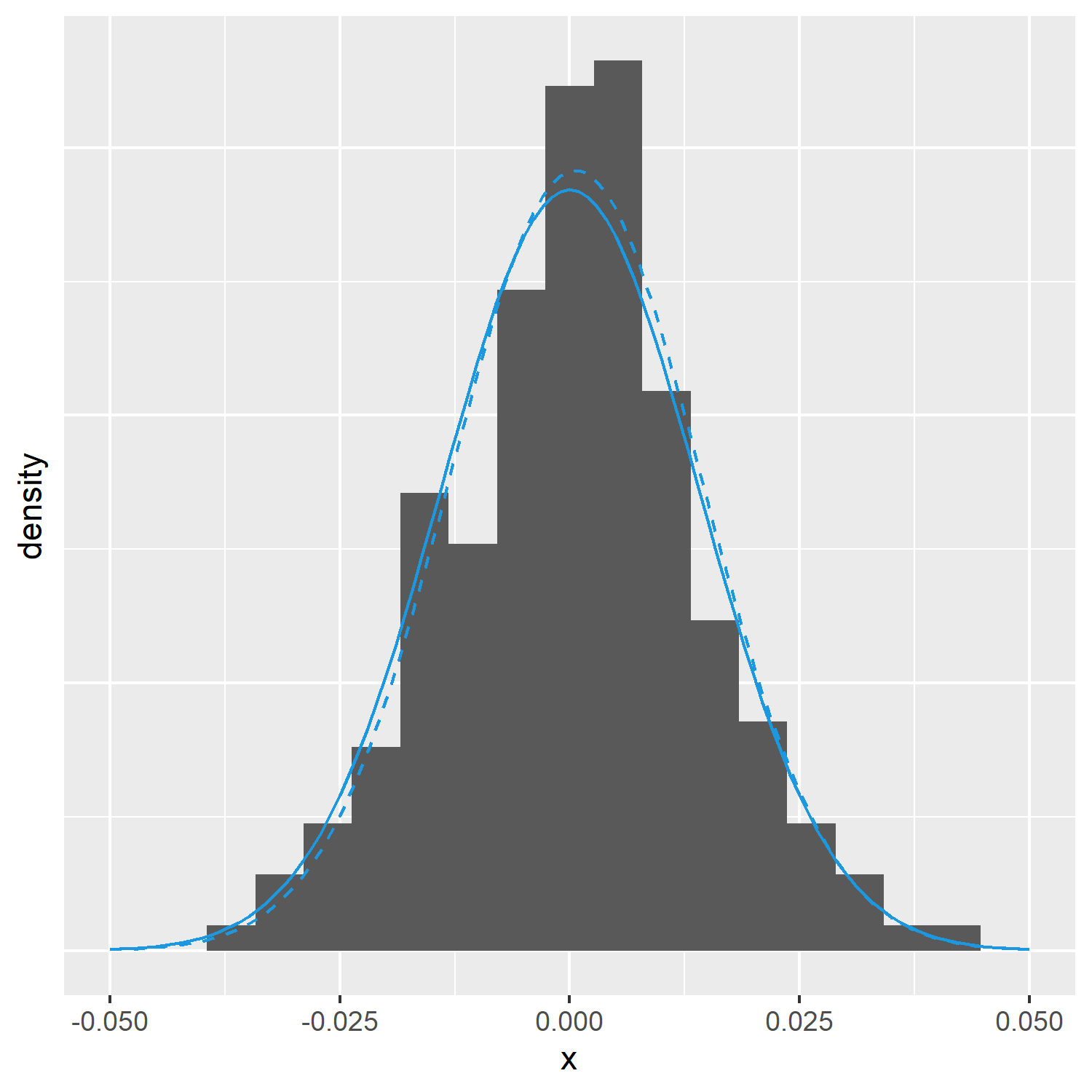}%
            \label{subfig:aveerror}%
        }\hfill
         \subfloat[{ $\lambda_{\min}/\sigma^2 = 4\log(n)$,  \\ \ \ \ \ $\Delta_1 = 3(\lambda_{\min} + \sigma^2) \frac{\log(n)}{\sqrt{n}}$}]
                {%
            \includegraphics[width=.33\linewidth]{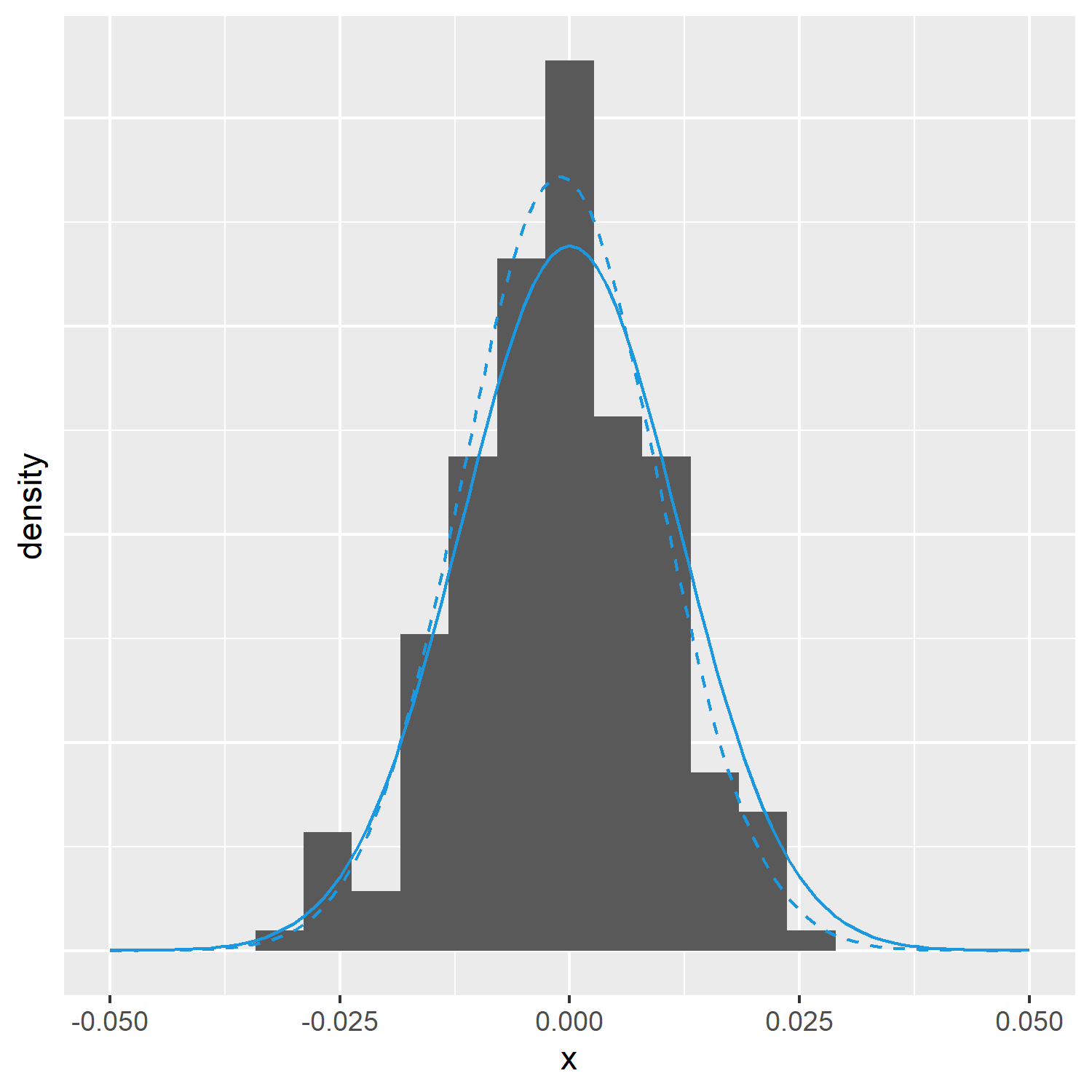}%
            \label{subfig:aveerror}%
        }   \hfill
         \subfloat[{ $\lambda_{\min}/\sigma^2 = 6\log(n)$, \\ \ \ \ \  $\Delta_1 = 3(\lambda_{\min} + \sigma^2) \frac{\log(n)}{\sqrt{n}}$}]
                {%
            \includegraphics[width=.33\linewidth]{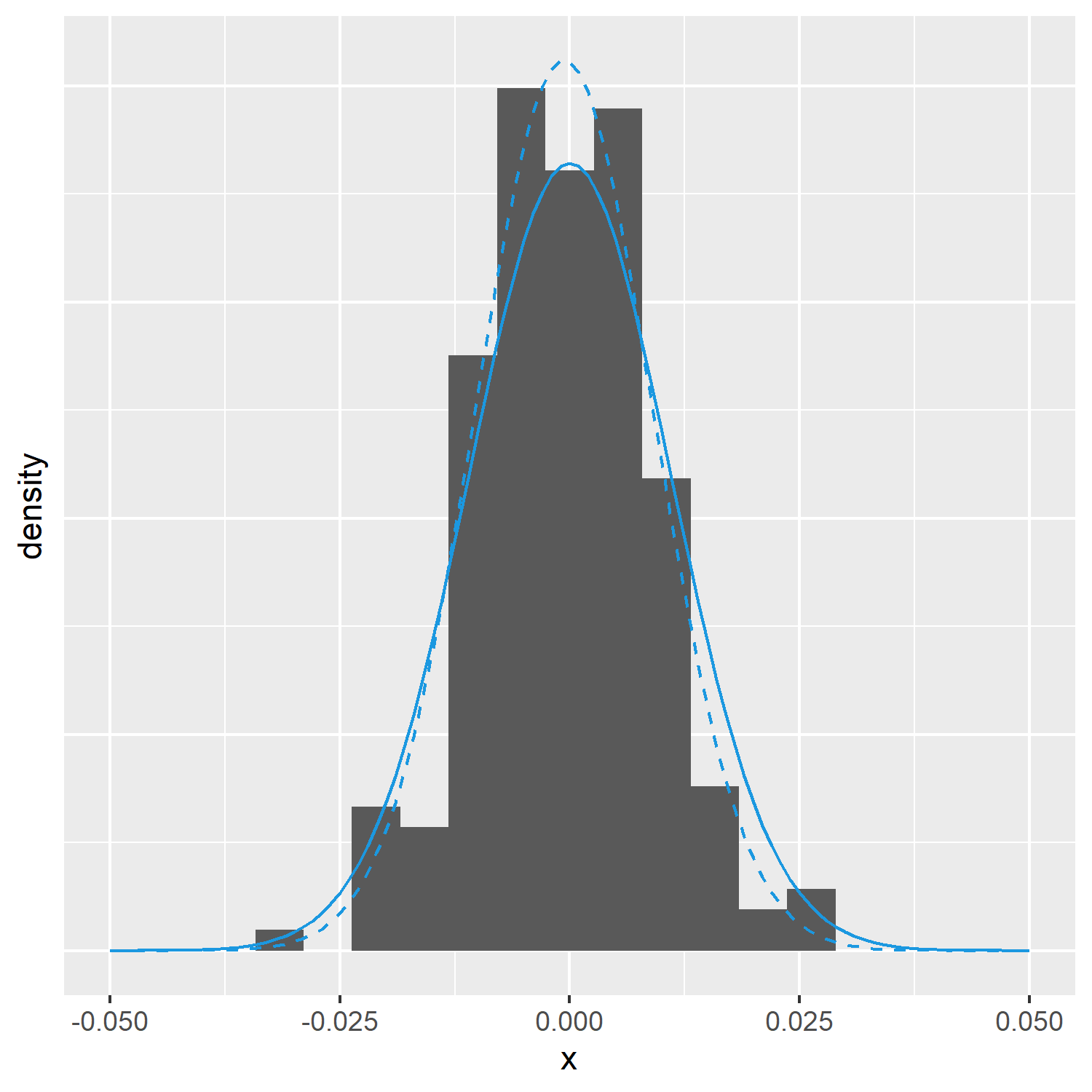}%
            \label{subfig:aveerror}%
        }
  \end{subfigure}
        \caption{Empirical (dotted) and theoretical (solid) ellipses for the quantity $\a\t \bm{\hat u}_1 - \a\t \bm{u}_1 \bm{u}_1\t \bm{\hat u}_1$ with varying signal-strength and eigengaps under the matrix denoising model \eqref{pcadef} From top to bottom the eigengap increases, and from left to right the signal strength increases.} \label{fig:pca}
    \end{figure}
In this section we demonstrate our theory through simulations.  Each result is generated through 200 Monte Carlo iterations.  \\ \ \\
\noindent \textbf{Setup: } In all simulations we set $n = 300$, $p = 200$ and $r= 3$, and we draw $\bm{u}_1,\bm{u}_2,\bm{u}_3$ the same way as in the matrix denoising simulations.  We keep $\sigma^2 = 1$, and for a given eigengap $\Delta_1$ and signal strength $\lambda_{\min}/\sigma^2 \equiv \lambda_{\min}$, we set $\lambda_1 = 5 \lambda_{\min}$, $\lambda_2 = \lambda_1 - \Delta_1$, and $\lambda_3 = \lambda_{\min}$.  \\ \ \\ \noindent
\noindent \textbf{Distributional Theory:} Plotted in \cref{fig:pca} is the empirical histogram of the random variable $\a\t \bm{\hat u}_1 - \a\t \bm{u}_1 \bm{u}_1\t \bm{\hat u}_1$, with solid line corresponding to the theoretical distribution (mean zero and variance $s_{\a,1}\pca$), and dotted line corresponding to the empirical distribution with $\bm{a}$ equal to the constant vector with entries $\frac{1}{\sqrt{p}}$.  From left to right we consider increasing signal strength and from top to bottom we consider increasing eigengap.  Similar to matrix denoising, the Gaussian approximation improves and the theoretical variance decreases as the eigengap and signal strength increase, which we see demonstrated in the figure.
\\ \ \\
    \noindent
\textbf{Approximate Coverage Rates:} We also consider approximate coverage rates for $\a\t \bm{u}_1$ using \cref{alg:ciPCA}.  Similar to the matrix denoising we examine increasing eigengaps and signal-strengths with $\alpha = .05$.  The column denoted ``Mean'' corresponds to the number of times the true value of $\a\t \bm{u}_1$ was in the outputted confidence interval, and the column denoted ``Std'' denotes the standard deviation of this value.  Due to the sign ambiguity, we multiplied $\a\t \bm{u}_1$ by ${\sf sign}(\bm{u}_1\t \bm{\hat u}_1)$, as we assume this quantity is positive throughout our theory.  Observe that empirically the coverage improves as the signal strength and eigengap increases.  
 \begin{table}[H]
    \centering
    \begin{tabular}{|c|c | c | c |}
    \hline
    \multicolumn{4}{|c|}{PCA} \\
    \hline 
\multicolumn{4}{|c|}{Coverage Rates for $\a\t \bm{u}_1$}   \\ [1ex]
\hline 
  { \small $\frac{\Delta_1}{\lambda_{\min} + \sigma^2}$ } & $ \lambda_{\min}/\sigma^2 $ & Mean  & Std  \\  [1ex]
  \hline 
  $\frac{\log(n)}{\sqrt{n}}$ & $2\log(n)$ & 0.935   &  0.017 
\\ 
\hline 
  
 $\frac{\log(n)}{\sqrt{n}}$ & $4\log(n)$ &   0.92 
   &     0.019 
\\
\hline 
  
 $\frac{\log(n)}{\sqrt{n}}$ & $6\log(n)$ & 0.93 
 &0.018
\\
\hline 
  
 $2\frac{\log(n)}{\sqrt{n}}$ & $2\log(n)$ &  0.860 
  &  0.025 
 \\
 \hline 
  
 $2 \frac{\log(n)}{\sqrt{n}}$ & $4\log(n)$ &  0.90 
&    0.021 
\\\hline 
  
 $2 \frac{\log(n)}{\sqrt{n}}$  & $6\log(n)$ & 0.92   & 0.019
\\ \hline 
 $3\frac{\log(n)}{\sqrt{n}}$ & $2\log(n)$ &0.885 
  &   0.023 
 \\
 \hline 
  
 $3\frac{\log(n)}{\sqrt{n}}$ & $4\log(n)$ &   0.92 
&   0.019 
\\\hline 
  
 $3 \frac{\log(n)}{\sqrt{n}}$  & $6\log(n)$ &  0.92 & 0.019
\\ \hline 
  
\end{tabular} 
  
  
  
  
  
  
  
  
    \caption{PCA empirical coverage rates for   $\a\t \bm{u}_1$  using \cref{alg:ciMD} for varying signal $\lambda_{\min}/\sigma^2$ and eigengap $\Delta_1$.  Here $\a \equiv \frac{1}{\sqrt{p}}$.  The column ``Mean'' represents the empirical probability of coverage averaged over $200$ Monte Carlo iterations, and the column ``Std'' denotes the standard deviation of this coverage rate.}
    \label{fig:coverageratespca}
\end{table}

\bibliography{linear_forms.bib}

\end{document}